\newtheorem{theorem}{Theorem}[section]
\newtheorem{proposition}[theorem]{Proposition}
\newtheorem{lemma}[theorem]{Lemma}
\newtheorem{corollary}[theorem]{Corollary}
\newtheorem{conjecture}[theorem]{Conjecture}
\newtheorem{assumption}[theorem]{Assumption}
\newtheorem{D}[theorem]{Definition}
\newenvironment{definition}{\begin{D} \rm }{\end{D}}
\newtheorem{R}[theorem]{Remark}
\newenvironment{remark}{\begin{R}\rm }{\end{R}}
\newtheorem{E}[theorem]{Example}
\def\Zee{\mathbb{Z}}
\def\Q{\mathbb{Q}}
\def\Ar{\mathbb{R}}
\def\Cee{\mathbb{C}}
\def\Pee{\mathbb{P}}
\def\Id{\operatorname{Id}}
\def\Ker{\operatorname{Ker}}
\def\Hom{\operatorname{Hom}}
\def\Ext{\operatorname{Ext}}
\def\Aut{\operatorname{Aut}}
\def\Pic{\operatorname{Pic}}
\def\im{\operatorname{Im}}
\def\Gr{\operatorname{Gr}}
\def\scrO{\mathcal{O}}
\def\spcheck{^{\vee}}
\def\hY{\widetilde{Y}}
\title{Limiting  mixed Hodge structures associated to I-surfaces with simple elliptic singularities}
\begin{document}
\author[R. Friedman]{Robert Friedman}
\address{Columbia University, Department of Mathematics, New York, NY 10027}
\email{rf@math.columbia.edu}
\author[P. Griffiths]{Phillip   Griffiths}
\address{Institute for Advanced Study, Princeton, NJ  08540 and IMSA, The University of Miami, Coral Gables, FL 33146}
\email{pg@ias.edu}

\begin{abstract}  An  I-surface  $X$ is a surface of general type with $K_X^2 =1$ and $p_g(X) =2$. This paper studies the asymptotic behavior of the period map for I-surfaces acquiring simple elliptic singularities. First we describe the relationship between the deformation theory of such surfaces and their $d$-semistable models. Next we analyze the mixed Hodge structures on the $d$-semistable models, the corresponding limiting mixed Hodge structures, and the monodromy.  There are $6$ possible boundary strata for which the relevant limiting mixed Hodge structures satisfy:  $\dim W_1 = 4$, and hence $W_2/W_1$ is of pure type $(1,1)$. We show that, in each case, the nilpotent orbit of limiting mixed Hodge structures determines the boundary stratum and prove a global Torelli theorem for one such  stratum. 
\end{abstract}

\bibliographystyle{amsalpha}
\maketitle

\section*{Introduction}

An \textsl{I-surface} $X$ is a surface of general type with $K_X^2 =1$ and $p_g(X) =2$. Recent research has centered on  classifying  the  various strata of the KSBA compactification of the moduli space of such surfaces. In particular, in \cite{FPR1}, \cite{FPR2},  Franciosi, Pardini, and Rollenske have given  a description of the boundary strata with Gorenstein singularities, and Coughlan, Franciosi, Pardini, and Rollenske \cite{CFPR} have given a qualitative  description of the corresponding limiting mixed Hodge structures (including  some non-Gorenstein cases). Beyond the intrinsic interest in these results, it is natural to ask if the study of singular I-surfaces has applications to the period map and in particular if it can be used to prove a Torelli theorem for I-surfaces. There is a  well established strategy for approaching such questions (see for example \cite{FriedmanTorelli}), which very roughly goes as follows: 

\begin{enumerate}
\item Given a moduli space $\mathcal{M}$ of algebraic varieties satisfying some weak version of local Torelli and a corresponding period map $\Phi\colon \mathcal{M} \to \Gamma\backslash D$ with image $Z$, partially compactify the map $\Phi$. Explicitly, this means: Find
\begin{enumerate}
\item A partial compactification  $\overline{\mathcal{M}}$, typically the KSBA compactification;
\item  A partial compactification  $\overline{Z}$ of the image of the period map $\Phi$, typically such that  (up to   finite group actions) $\overline{Z}$ is smooth and $\overline{Z}-Z$  is   a divisor with normal crossings; 
\item An explicit  blowup $\widetilde{\mathcal{M}} \to \overline{\mathcal{M}}$ which is an isomorphism over $\mathcal{M}$ such that the period map $\Phi$ extends to a  holomorphic map $\widetilde{\Phi} \colon \widetilde{\mathcal{M}}\to \overline{Z}$. Typically,  $\widetilde{\mathcal{M}}$ is an orbifold and   the complement of $\mathcal{M}$ in $\widetilde{\mathcal{M}}$ is an orbifold  divisor with normal crossings. In this case, the hope is that, for $x\in \widetilde{\mathcal{M}}$, $\widetilde{\Phi} (x)$   records both the nilpotent orbit of the  limiting mixed Hodge structure of the corresponding degeneration as well as  other data related to monodromy.
\end{enumerate} 
\item Identify a convenient stratum $\mathcal{S}$ of $\widetilde{\mathcal{M}} -\mathcal{M}$ and check that $\widetilde{\Phi} ^{-1}(\widetilde{\Phi} (\mathcal{S})) =\mathcal{S}$.  In practice, if $\widetilde{\mathcal{M}} -\mathcal{M}$ is a divisor with normal crossings, then $\mathcal{S}$ will be a connected component of the natural stratification of $\widetilde{\mathcal{M}} -\mathcal{M}$. 
\item Prove a Torelli theorem for the variation of mixed Hodge structure  corresponding to the morphism $\widetilde{\Phi} |\mathcal{S}$.
\item Compute the differential of $\widetilde{\Phi} $ at a point $x\in \mathcal{S}$ in the directions normal to $\mathcal{S}$.
\item Prove that the map $\widetilde{\Phi} \colon \widetilde{\mathcal{M}}  \to \overline{Z}$ is proper.
\end{enumerate}

We now describe this program in more detail for I-surfaces. For Step (1a), it is natural first to allow I-surfaces with either rational double point (RDP) or simple elliptic singularities or some combination of these. We will ignore the issue of RDP singularities as this is not a major problem. As for simple elliptic singularities, allowing these types of singularities as well as RDP singularities leads to a partial compactification $\overline{\mathcal{M}}$ of $\mathcal{M}$ which is an open subset of the KSBA compactification.  Unfortunately,  the  complement  $\overline{\mathcal{M}}-  \mathcal{M}$ has fairly high codimension. However, there is a standard procedure for replacing this compactification by one such that the boundary has normal crossings, by replacing a normal surface with simple elliptic singularities by a surface with simple normal crossings of a special type ($d$-semistable in the terminology of \cite{FriedmanSmoothings}). These models are well-suited to understanding the corresponding limiting mixed Hodge structures. As for Steps (1b) and (1c), we will not attempt here to find a partial  Hodge-theoretic compactification of the image of the period map. While various such compactifications have been proposed, the overall picture has not yet been fully clarified. In our situation, there is a partial compactification due to Deng-Robles \cite{DengRobles}, based on work of Kato-Nakayama-Usui \cite{KU}, \cite{KNU}.
Recent work in progress of Deng-Robles is likely to establish the existence of a partial compactification which has some of the necessary properties  (and in fact in much greater generality). See Conjectures~\ref{extendedperiodmapthm} and \ref{extendedperiodmapthm2} for more details. 

For Step (2), the simplest strata would be I-surfaces $Y$ with one simple elliptic singularity corresponding to limiting mixed Hodge structures of type $\lozenge_{0,1}$ in the notation of \cite[Example 4.9]{Robles} (see Definition~\ref{definelozenge}). A $d$-semistable model  for such a surface is of the form $X_0= \hY \amalg_DZ$, where $\hY$ is the minimal resolution of $Y$, $D$ is the exceptional divisor in $\hY$ and hence is an elliptic curve, $\hY \amalg_DZ$ denoted the normal crossing surface obtained by gluing $\hY$ and $Z$ along an isomorphism from $D$ to an isomorphic curve in $Z$, and $\hY$, $Z$ satisfy: Either 
\begin{enumerate}
\item[\rm(i)] $\hY$ is a minimal elliptic surface with $\kappa =1$,  $p_g(\hY) = 1$,   a  multiple fiber of multiplicity $2$, and a smooth bisection $D$  with $D^2 =-1$, and $Z$ is a del Pezzo surface of degree one containing $D$ as an anticanonical divisor, or
\item[\rm(ii)] $\hY$ is the blowup of a $K3$ surface $Y_0$ at a point $p$, $D$ is the proper transform of a  curve $\Gamma$  on $Y_0$ of arithmetic genus $2$ with a node  or cusp at $p$, and $Z$ is a del Pezzo surface of degree two containing $D$ as an anticanonical divisor.
\end{enumerate}
If one chooses to work with such surfaces, the information in the limiting mixed Hodge structure is captured by an extension of pure Hodge structures of the form
$$0 \to W_1 \to W_2 \to W_2/W_1 \to 0.$$
Here $W_1$ is the pure weight one Hodge structure corresponding to $D$, and $W_2/W_1$ is essentially the Hodge structure on $H^2(\hY)$. 
For example, in Case (ii) above, the limiting mixed Hodge structure determines the polarized Hodge structure of $Y_0$. Hence, by the global Torelli theorem for $K3$ surfaces, it determines $Y_0$ together with the linear system  $|\Gamma|$, whose general member is a smooth  curve of genus $2$. If $Y_0$ is otherwise generic, there are only finitely many singular curves in $|\Gamma|$ whose normalizations are an elliptic curve with a given $j$-invariant, so the period map is generically finite-to-one. However, it seems hard to go further and use the full information of the limiting mixed Hodge structure to determine the pair $(\hY, D)$, or equivalently to determine exactly which singular element of $|\Gamma|$ corresponds to the limiting mixed Hodge structure. Concretely, since $H^{2,0}(\hY)$ has dimension one, mixed Hodge structures which are given as extensions of the above type have not yet been understood geometrically.  The problem is roughly equivalent to finding a way to exploit the information contained in certain points  of the intermediate Jacobian of $\hY \times D$. 

Thus it is natural to  further degenerate to I-surfaces $Y$ with two or three simple elliptic singularities, corresponding to limiting mixed Hodge structures of type $\lozenge_{0,2}$.  For instance, in the case of two  simple  elliptic singularities, there are $4$ posssible boundary strata and the   $d$-semistable models for such surfaces are of the form $X_0= \hY \amalg_{D_1}Z_1\amalg_{D_2}Z_2$. In this case, $\hY$, the minimal resolution of $Y$, is either a rational surface or an Enriques surface blown up at a point, $D_1$ and $D_2$ are the exceptional divisors of the morphism $\hY \to Y$,   $Z_1$, $Z_2$ are del Pezzo surfaces of degree $-D_i^2$ containing $D_i$ as an anticanonical divisor, and $Z_i$ is glued to $\hY$ along an isomorphism from $D_i\subseteq \hY$ to $D_i \subseteq Z_i$. Thus $H^{2,0}(\hY) = H^{2,0}(Z_i) = 0$, and the corresponding extension of pure Hodge structures is of ``classical" type: the weight two part  $W_2/W_1$ is pure of type $(1,1)$, $J^0W_1$ is essentially $JD_1 \oplus JD_2$, where $JD_i =\Pic^0D_i$ is the Jacobian of the elliptic curve $D_i$, and the extension can be understood  geometrically via Carlson's theory of extensions of mixed Hodge structures \cite{Carlson0}, \cite{Carlson2}, \cite{Carlson3}.

For I-surfaces of type $\lozenge_{0,2}$, denoting by $(W_2/W_1)_0$ the corresponding graded piece of the polarized limiting mixed Hodge structure, the lattice $(W_2/W_1)_0$ is an even negative definite unimodular lattice $\Lambda$ of rank $24$, and such lattices have been classified by Niemeyer \cite{Niemeier}. Up to isomorphism, there are $24$ possibilities and they are classified by the root system $R(\Lambda)$ formed by the vectors of square $-2$ in $\Lambda$. (Here and throughout the paper we use the convention in algebraic geometry that root systems are \textbf{negative definite}.) While \emph{a priori} all of these could appear in limiting mixed Hodge structures of I-surfaces, it turns out that, in the cases studied in this paper, only two of them are relevant: the lattice $\Lambda$ for  which $R(\Lambda)=E_8+ E_8 + E_8$ and the lattice $\Lambda$ for  which $R(\Lambda)=E_7+ E_7 + D_{10}$.  (See   Remark~\ref{otherrootsystems} for a more detailed discussion.) More precisely, for three of the four strata of I-surfaces with two  simple  elliptic singularities, $R(\Lambda) = E_8+ E_8 + E_8$, and for the remaining  case,    $R(\Lambda) =E_7+ E_7 + D_{10}$. Although it is not essential for the overall strategy, we show that the Hodge theory in the three  cases where $R(\Lambda) = E_8+ E_8 + E_8$ look different from each other. For  the case $R(\Lambda) =E_7+ E_7 + D_{10}$, we look at the limiting mixed Hodge structure in detail. However, we are not quite able to establish a Torelli theorem for this stratum. The situation is analogous to  that coming from the $\lozenge_{0,1}$ case when $\hY$ is a blown up $K3$ surface: it is not too hard to show that, for $\hY$ generic,  there are only finitely many possibilities for the the pair $(\hY, D_1 + D_2)$ or the corresponding $d$-semistable model $X_0$, but it is not clear how to fully determine $(\hY, D_1 + D_2)$ or equivalently $X_0$.  The difficulty is similar to that encountered  by Engel-Greer-Ward in their study of the global Torelli problem for elliptic surfaces with $p_g=1$ over an elliptic base \cite{EGW}. 

Instead, we consider I-surfaces with three  simple  elliptic singularities, where there are $2$ possible boundary strata. For such surfaces, the geometry and relevant mixed Hodge structures are much simpler. The price that must be paid for this simplification is that the corresponding image in the compactification of period space is more complicated. The picture is formally analogous to the period map for a stable curve $C = C_1\cup C_2$, where $C_1$ and $C_2$ are two smooth curves meeting at three points. The limiting mixed Hodge structures of such curves look  like those of an irreducible stable curve with two nodes, but the monodromy and the image in a ``reasonable" compactification of period space such as the second Voronoi compactification are quite different. The moral here for weight two Hodge structures is that, unlike the $K3$ case or the case considered in \cite{EGW}, the fact that the period domain is not Hermitian symmetric offers an advantage by giving us extra room to maneuver. We show that, for both strata for which the singular I-surface has three simple  elliptic singularities, the limiting mixed Hodge structure  is of type $\lozenge_{0,2}$, the lattice $\Lambda$ satisfies $R(\Lambda) = E_8+ E_8 + E_8$, and the limiting mixed Hodge structures together with associated monodromy data distinguish the two strata for these surfaces both from the strata for I-surfaces  with two  simple  elliptic singularities and from each other. Finally, we prove a global Torelli theorem for one of the two strata. 

As for Step (4), this   amounts to understanding  the arithmetic of the relevant Picard-Lefschetz transformations. For two simple elliptic singularities, this can be done  by analogy with \cite[3.8]{FriedmanTorelli}. For three simple elliptic singularities, the monodromy picture is slightly more complicated  and is described in detail in \S\ref{PLfmlas}.

This leaves Step (5), the question of properness. Some other I-surfaces with   limiting mixed Hodge structures of type $\lozenge_{0,2}$ are described in \cite{CFPR}, and it would be interesting to work out the corresponding semistable reductions,  lattice theory, weight one Hodge structure on $W_1$,  and monodromy. Typically, however, these examples seem to have either a different weight one piece $W_1$ and/or a different lattice $\Lambda$. At the moment, however, a complete classification of all possible such I-surfaces seems out of reach, since it seems very difficult to enumerate the various strata of non-Gorenstein I-surfaces. Even for Gorenstein I-surfaces with minimally elliptic (i.e.\ elliptic Gorenstein) singularities which are worse than simple elliptic or cusp singularities,  the problem of understanding the possible limiting mixed Hodge structures and the relevant monodromy seems very daunting. Of course, such surfaces will not be semi log canonical and thus will not appear in the KSBA compactification. 

The contents of this paper are as follows. Section 1 collects facts about I-surfaces, both smooth and with simple elliptic singularities, as well as standard results about lattices and anticanonical pairs. Section 2 analyzes the $d$-semistable versions of I-surfaces with simple elliptic singularities and compares their deformation theory to that of the original surfaces. Section 3 studies the mixed Hodge structures on the $d$-semistable models. Section 4 deals with the corresponding limiting mixed Hodge structures of smoothings and gives a detailed description of the monodromy for the case of $3$ simple elliptic singularities. Finally, in Section 5, we show that the limiting mixed Hodge structure and the monodromy distinguish the various boundary strata, and prove a global Torelli theorem for one such stratum,   namely the one consisting of I-surfaces with $3$ simple elliptic singularities, all of   multiplicity one. In an appendix, we outline the theory of simultaneous log resolutions for simple elliptic singularities. While there is nothing here which is not  well-known to specialists, it is hard to find explicit statements in the literature for the results that are used in this paper. 

\subsection*{Acknowledgements} It is a pleasure to thank Colleen Robles and Haohua Deng for many discussions during which they patiently explained to us the subtleties involved in constructing partial compactifications of the images of period maps. We would also like to thank Johan de Jong, Mark Green, Radu Laza, John Morgan, and Nick Shepherd-Barron, for conversations and correspondence on matters related to this paper and spanning several decades.

\section{Preliminaries} 

\subsection{The smooth case}\label{smoothcase}    If $X$ is an I-surface, from $c_1^2(X) + c_2(X) = 12\chi(\scrO_X)$, $c_2(X) = 35$, $b_2(X) = 33$ and $H^2_0(X;\Zee)= [K_X]^\perp \subseteq H^2(X;\Zee)$ is an even unimodular lattice of signature $(4, 28)$. Thus as a lattice 
$$H^2_0(X;\Zee) \cong U^4\oplus (\Lambda_{E_8})^3,$$ where $U$ is the hyperbolic (rank two even unimodular) lattice and $\Lambda_{E_8}$ is the (negative definite) $E_8$ lattice, i.e.\ the unique negative definite even unimodular lattice of rank $8$.

\subsection{Geometry of certain I-surfaces with elliptic singularities}
As outlined in the introduction, we are concerned with minimal resolutions of Gorenstein I-surfaces with two or three simple elliptic singularities. The following gives the rough classification of such surfaces, due to   Franciosi-Pardini-Rollenske \cite[Theorem 4.1]{FPR1}, \cite[Proposition 4.3]{FPR2}:

\begin{theorem} Suppose that $Y$ is a normal Gorenstein I-surface  with two or three simple elliptic singularities, with minimal resolution $\pi\colon \hY\to Y$, and that $D_i$ are the exceptional fibers of $\pi$, $1\le i \le k$, with $m_i = -D_i^2$. Then $\hY$ satisfies exactly one of the following:
\begin{enumerate} 
\item[\rm(i)] $\hY$ is the blowup of an Enriques surface $Y_0$ at one point, $k=2$, and the exceptional fibers $D_i$ are the proper transforms of two smooth elliptic curves on $Y_0$ meeting transversally at a point. In this case, $m_1 = m_2 = 1$. 
\item[\rm(ii)] $\hY$ is a rational surface, $k=2$,  and the possibilities for the pair $(m_1, m_2)$ up to order are $(2,2)$, $(2,1)$, and $(1,1)$. 
\item[\rm(iii)] $\hY$ is the blowup of an  elliptic ruled surface over a base elliptic curve $B$, $k=3$, and the possibilities for the pair $(m_1, m_2, m_3)$ up to order are $(2,1,1)$, and $(1,1,1)$.  \qed
\end{enumerate}
\end{theorem}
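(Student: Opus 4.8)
The plan is to reduce the classification to a sequence of numerical constraints coming from adjunction on $\hY$ and then run a case analysis on the Kodaira dimension of $\hY$. The starting point is the standard package of invariants for the minimal resolution. Since $Y$ is a Gorenstein I-surface with simple elliptic singularities, the canonical bundle $K_Y$ is Cartier and we have $K_{\hY} = \pi^*K_Y - \sum_i a_i D_i$ for the simple elliptic exceptional curves $D_i$; because a simple elliptic singularity is minimally elliptic and Gorenstein, the discrepancies $a_i$ all equal $-1$, so $K_{\hY} = \pi^*K_Y + \sum_i D_i$. From $K_Y^2 = K_X^2 = 1$ and $K_{\hY}\cdot D_i = 0$ (adjunction on the elliptic curve $D_i$, using $D_i^2 = -m_i$) one reads off $K_{\hY}^2 = 1 - \sum_i m_i$ and $p_g(\hY) = p_g(Y) = 2$, while $\chi(\scrO_{\hY}) = \chi(\scrO_Y) = 1$ so $q(\hY) = p_g(\hY) - \chi + 1 = 2$. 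I would record these as the master identities, noting $D_i^2 = -m_i$ and that each $D_i$ is a smooth elliptic curve with $K_{\hY}\cdot D_i = 0$.

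\textbf{Next} I would determine the Kodaira dimension of $\hY$. Since $K_{\hY} = \pi^*K_Y + \sum_i D_i$ and $K_Y$ is nef and big (being the pullback of an ample class from the canonical model), $K_{\hY}$ cannot be negative, so $\kappa(\hY)\ge 0$; but $K_{\hY}^2 = 1 - \sum_i m_i \le -1 < 0$ forces $\hY$ to be non-minimal, hence $\hY$ is a blowup of its minimal model $S$. The case division in the theorem is then governed by $\kappa(S)\in\{-\infty, 0, 1\}$ (a surface of general type is excluded since it would force $K_{\hY}^2>0$ after accounting for the $(-1)$-curves). For $\kappa(S) = -\infty$ I would use $q(\hY) = 2$ to split into the rational case ($q=0$, impossible directly, so the irregularity must come from an irrational ruling) versus ruled over a curve of genus $g$: the Leray/Albanese analysis forces the base genus, giving either an elliptic ruled surface (case (iii), $k=3$) or, after contracting, a rational surface (case (ii), $k=2$). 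For $\kappa(S) = 0$ the constraint $p_g = 2$ together with $q = 2$ identifies $S$ as an abelian or bielliptic or Enriques or K3 type; matching $p_g(\hY) = 2$, $\chi = 1$ pins down the Enriques-blown-up-at-a-point case (i). For $\kappa(S) = 1$ one gets the properly elliptic surface, which in the two-or-three singularity Gorenstein setting is excluded or reabsorbed.

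\textbf{Then}, within each Kodaira class, I would pin down $k$ and the multiplicities $(m_1,\dots,m_k)$ by combining $\sum_i m_i = 1 - K_{\hY}^2$ with the number of blowups needed to reach $\hY$ from $S$ and the genus-$2$ / anticanonical constraints on the $D_i$. The key arithmetic lever is that $K_{\hY}^2 = K_S^2 - (\text{number of blowups})$, and the $D_i$ must be disjoint (or meet as specified) smooth elliptic curves with prescribed self-intersections $-m_i$; combined with $p_g(\hY) = 2$ this bounds $\sum_i m_i$ and hence enumerates the finitely many tuples $(2,2),(2,1),(1,1)$ for $k=2$ and $(2,1,1),(1,1,1)$ for $k=3$. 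In case (i) the transversal meeting of the two elliptic curves on the Enriques surface, together with $D_1^2=D_2^2=-1$, is what forces $m_1=m_2=1$.

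\textbf{The main obstacle} I expect is the Kodaira-dimension-zero analysis and the exclusion/placement of the properly elliptic ($\kappa=1$) and K3 cases: ruling these out or sorting them correctly requires more than the crude numerics $K^2$, $p_g$, $q$, $\chi$ — one must control the structure of the pluricanonical system and the position of the elliptic curves $D_i$ relative to the canonical map of $Y$, and verify that the $D_i$ are genuinely disjoint simple elliptic configurations rather than forming a cusp or longer cycle. In particular, showing that \emph{exactly} the listed multiplicity tuples occur (and no others, e.g.\ $(3,\dots)$ or configurations with $k\ge 4$) requires the sharper input from \cite{FPR1}, \cite{FPR2} on how the anticanonical/genus-$2$ pencil on $Y$ restricts the elliptic curves, and this is where I would lean on the detailed geometry in those references rather than on the master identities alone.
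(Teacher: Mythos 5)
The paper does not actually prove this statement: it is quoted, with a \qed, directly from \cite[Theorem 4.1]{FPR1} and \cite[Proposition 4.3]{FPR2}, so the only thing to assess is whether your outline could stand as a proof. It cannot, because the ``master identities'' it is built on are false, and several of them contradict the theorem's own conclusions. The discrepancy of a simple elliptic singularity is indeed $-1$, but that means $K_{\hY} = \pi^*K_Y - \sum_i D_i$ (the exceptional curves are \emph{subtracted}), not $\pi^*K_Y + \sum_i D_i$; adjunction on the smooth elliptic curve $D_i$ then gives $K_{\hY}\cdot D_i = -D_i^2 = m_i > 0$, not $K_{\hY}\cdot D_i = 0$ as you assert. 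Because the $D_i$ enter with negative coefficient, nefness of $K_Y$ gives no conclusion $\kappa(\hY)\ge 0$: cases (ii) and (iii) of the theorem have $\hY$ rational or a blown-up elliptic ruled surface, hence $\kappa(\hY) = -\infty$, so your very first reduction step would ``prove'' that those cases cannot occur.

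The cohomological bookkeeping fails for a related reason: it ignores the defining property of an elliptic singularity, namely $(R^1\pi_*\scrO_{\hY})_{p_i}\cong \Cee$ at each singular point. By the Leray spectral sequence, $\chi(\scrO_{\hY}) = \chi(\scrO_Y) - k$, and for an I-surface $\chi(\scrO_Y) = 3$ (see \S\ref{smoothcase}: $12\chi = c_1^2 + c_2 = 36$), not $1$. Hence $\chi(\scrO_{\hY}) = 3-k$, and neither $p_g$ nor $q$ passes through $\pi$ unchanged: in every case of the theorem one has $p_g(\hY) = 0$ and $q(\hY)\in\{0,1\}$, never $p_g(\hY) = q(\hY) = 2$. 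With your numbers the case analysis collapses in the wrong direction --- $q(\hY)=2$ would rule out the rational case (ii) entirely, and $p_g(\hY)=2$ is incompatible with an Enriques blowup, which has $p_g = 0$. The identities a correct proof can start from are $K_{\hY} = \pi^*K_Y - \sum_i D_i$, $K_{\hY}\cdot D_i = m_i$, $K_{\hY}^2 = 1 - \sum_i m_i$ (this one formula of yours survives, since the sign error squares away), and $\chi(\scrO_{\hY}) = 3-k$; but even granting these, pinning down $k$, the multiplicity tuples, and the precise birational models requires the pluricanonical and bicanonical geometry developed in \cite{FPR1}, \cite{FPR2}, which --- as your final paragraph concedes --- your outline does not supply.
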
 

We will refer to these cases as the \textsl{Enriques}, \textsl{rational}, or \textsl{elliptic ruled} cases respectively (and will call $\hY$ an Enriques surface even though it is not minimal). In the first two cases, we will call the unordered pair $(m_1, m_2)$ the \textsl{multiplicities} of $\hY$, and similarly in the elliptic ruled case for the unordered triple $(m_1, m_2, m_3)$. By convention, however, we order the $m_i$ so that $m_1 \ge m_2 \ge \cdots$. 

For the rest of this section, we assume that $\hY$ is the minimal resolution of a normal Gorenstein I-surface $Y$ with two or three simple elliptic singularities. We will need more precise information in  the rational and elliptic ruled cases. For the rational case, we have the following:

\begin{theorem}\label{22case} \cite[Example 5.2, Proposition 5.3]{FG24}  Suppose that $\hY$ is a rational surface  with multiplicities  $(m_1, m_2)=(2,2)$. Then there exists an exceptional curve $C$ on $\hY$ such that $C\cdot D_1 =2$ and  $C\cdot D_2 =0$. If $\rho_1\colon \hY \to Y_0^{(1)}$ is the contraction of $C$, then $(Y_0^{(1)}, D_2)$ is an anticanonical pair (i.e.\ $D_2 = -K_{Y_0^{(1)}}$) and the image $\overline{D}_1$ is a curve of arithmetic genus $2$ with a node or a cusp at the image of $C$. The linear system $|\overline{D}_1|$ defines a morphism $\nu\colon Y_0^{(1)} \to \mathbb{F}_1$. 

Assume that $Y$ has no RDP singularities. If $\sigma_0$ is the negative section of $\mathbb{F}_1$ and $f$ is a fiber, the morphism $\nu$   is a double cover branched along a smooth element in $|2\sigma_0 + 6f|$, $\overline{D}_1 \equiv \nu^*(\sigma_0 + f)$, and $D_1 = \nu^*\sigma_0$.  \qed
\end{theorem}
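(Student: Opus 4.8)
The plan is to exhibit the $(-1)$-curve $C$ as an explicit divisor class and then read off every remaining assertion from the contraction of $C$ and from a single linear system. Throughout I use that $Y$ is the (stable) canonical model, so $K_Y$ is ample and $\pi^*K_Y$ is nef, and that a simple elliptic singularity is strictly log canonical, whence $K_{\hY}=\pi^*K_Y-D_1-D_2$ and $K_{\hY}\cdot D_i=-D_i^2=2$. Since the $D_i$ are disjoint and $K_Y^2=1$, a direct computation gives $K_{\hY}^2=-3$. I set $C=K_{\hY}+D_2=\pi^*K_Y-D_1$ and verify $C^2=-1$, $C\cdot K_{\hY}=-1$, $C\cdot D_1=2$, and $C\cdot D_2=0$; by adjunction any irreducible curve in this class is a $(-1)$-curve with the asserted intersections.

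First I would show $C$ is represented by such a curve. From $0\to\scrO_{\hY}(K_{\hY})\to\scrO_{\hY}(C)\to\omega_{D_2}\to 0$, together with $h^0(K_{\hY})=p_g(\hY)=0$ and $h^1(K_{\hY})=q(\hY)=0$ (as $\hY$ is rational), one gets $h^0(C)=h^0(\omega_{D_2})=1$, so $C$ has a unique effective representative $C_0$. To see $C_0$ is an irreducible $(-1)$-curve, write $C_0=\Gamma+aD_1+bD_2$ with $a,b\ge 0$, where $\Gamma$ collects the components meeting $\pi^*K_Y$ positively; since $C_0\cdot\pi^*K_Y=1$ and $\pi^*K_Y$ is nef and vanishes exactly on the $\pi$-exceptional curves $D_1,D_2$, the curve $\Gamma$ is reduced and irreducible with $\Gamma\cdot\pi^*K_Y=1$. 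Substituting the class $\Gamma\equiv K_{\hY}+(1-b)D_2-aD_1$ into the adjunction formula yields $p_a(\Gamma)=-a^2-b^2-3a-b$, which is $\ge 0$ only for $a=b=0$. Hence $C_0=\Gamma$ is an irreducible $(-1)$-curve meeting $D_1$ twice and disjoint from $D_2$, proving the first assertion.

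Next I would contract $C$. Writing $K_{\hY}=\rho_1^*K_{Y_0^{(1)}}+C$ and comparing with $C=K_{\hY}+D_2$ gives $\rho_1^*K_{Y_0^{(1)}}=-D_2$; since $C\cdot D_2=0$ we have $\rho_1^*\overline{D}_2=D_2$, so $-K_{Y_0^{(1)}}=D_2$ and $(Y_0^{(1)},D_2)$ is an anticanonical pair. Because $C\cdot D_1=2$, the image $\overline{D}_1$ acquires a double point at $\rho_1(C)$ (a node if $C$ meets $D_1$ at two points, a cusp if tangentially), so $p_a(\overline{D}_1)=p_a(D_1)+1=2$; pushing forward, $\overline{D}_1^2=(D_1+2C)^2=2$ and $\overline{D}_1\cdot K_{Y_0^{(1)}}=-\overline{D}_1\cdot D_2=0$.

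The main work, and where I expect the real difficulty, is the analysis of $|\overline{D}_1|$ and the double cover. Using $-K_{Y_0^{(1)}}=D_2$ and $\overline{D}_1\cap D_2=\varnothing$, adjunction identifies $\scrO_{\overline{D}_1}(\overline{D}_1)$ with the dualizing sheaf $\omega_{\overline{D}_1}$; then $0\to\scrO_{Y_0^{(1)}}\to\scrO_{Y_0^{(1)}}(\overline{D}_1)\to\omega_{\overline{D}_1}\to 0$ and $h^1(\scrO_{Y_0^{(1)}})=0$ give $h^0(\overline{D}_1)=1+p_a(\overline{D}_1)=3$, with restriction to $\overline{D}_1$ equal to the hyperelliptic system $|\omega_{\overline{D}_1}|$. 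I would check that $|\overline{D}_1|$ is base-point free (it is along $\overline{D}_1$ because $|\omega_{\overline{D}_1}|$ is, and the only negative curve that could interfere, $D_2$, is contracted since $\overline{D}_1\cdot D_2=0$) and that, as $\overline{D}_1^2=2>0$, the system is not composed with a pencil, so its morphism $Y_0^{(1)}\to\mathbb{P}^2$ has degree two. Letting $\iota$ be the covering involution, whose fixed locus is the smooth ramification curve, the quotient $Y_0^{(1)}/\iota$ is a smooth rational surface mapping birationally to $\mathbb{P}^2$, which I would identify with $\mathbb{F}_1$, taking $\nu$ to be the quotient map. Granting this, $K_{Y_0^{(1)}}=\nu^*(K_{\mathbb{F}_1}+\mathcal{L})$ with $K_{\mathbb{F}_1}=-2\sigma_0-3f$ and $K_{Y_0^{(1)}}=-\nu^*\sigma_0$ forces $\mathcal{L}=\sigma_0+3f$, so the branch divisor $B=2\mathcal{L}$ lies in $|2\sigma_0+6f|$, while $\nu^*\sigma_0=-K_{Y_0^{(1)}}=D_2$ and $\overline{D}_1=\nu^*(\sigma_0+f)$; smoothness of $B$ is forced by smoothness of $Y_0^{(1)}$. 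The crux is pinning down the quotient as exactly $\mathbb{F}_1$, equivalently that the branch sextic in $\mathbb{P}^2$ has a single quadruple point (at the image of $D_2$) and no other singularities, which I would verify by matching $K^2$, the irregularity, and the local structure of the branch curve against the invariants of a double cover of $\mathbb{F}_1$ branched in $|2\sigma_0+6f|$.
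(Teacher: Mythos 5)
This theorem is stated in the paper with no proof of its own: it is quoted from \cite[Example 5.2, Proposition 5.3]{FG24}, so there is no internal argument to compare yours with, and the only question is whether your proposal is correct and complete. Up through the construction of the degree-two map it essentially is: the identification of $C$ as the unique effective divisor in $|K_{\hY}+D_2|=|\pi^*K_Y-D_1|$, the exclusion of $D_1$- and $D_2$-components via the computation $p_a(\Gamma)=-a^2-3a-b^2-b$, the contraction step giving $-K_{Y_0^{(1)}}=D_2$ and the nodal or cuspidal genus-$2$ curve $\overline{D}_1$, and the analysis $h^0(\scrO_{Y_0^{(1)}}(\overline{D}_1))=3$, base-point freeness, and the generically two-to-one morphism $\phi\colon Y_0^{(1)}\to\Pee^2$ are all correct. (Your conclusion $\nu^*\sigma_0=D_2$ is also the right one; the ``$D_1=\nu^*\sigma_0$'' in the statement is evidently a typo, since $D_1$ lives on $\hY$, not on $Y_0^{(1)}$.)

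The genuine gap is the step you yourself flag as the crux, and the method you propose there would not close it. First, the ``covering involution'' $\iota$ is not automatic: $\phi$ is not finite (it contracts $D_2$), so one must pass to the Stein factorization $W\to\Pee^2$ --- a finite double cover, with $W$ the contraction of $D_2$, a simple elliptic singularity --- where the Galois involution is biregular, and then lift it to $Y_0^{(1)}$ as the minimal resolution of $W$. Second, and more seriously, your assertion that the fixed locus of $\iota$ ``is the smooth ramification curve'' presupposes that $\iota$ has no isolated fixed points, i.e.\ that $Y_0^{(1)}/\iota$ is smooth; that is exactly what must be proved, and matching $K^2$ and the irregularity cannot do it. Indeed, the scenario in which two of the four fixed points of $\iota|_{D_2}$ are isolated fixed points of $\iota$, so that the quotient has two $A_1$ points on the image of $D_2$, passes every such numerical test: the Euler-number relation $\chi_{\mathrm{top}}(Y_0^{(1)})=2\chi_{\mathrm{top}}(V)-\chi_{\mathrm{top}}(\mathrm{Fix}\,\iota)$ balances (with a ramification curve of genus $5$), and $K^2$ and adjunction are likewise consistent. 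What rules it out is structural, not numerical: isolated fixed points away from $D_2$ would produce $A_1$ singularities on $W/\iota\cong\Pee^2$, which is smooth; and if $k>0$ of the fixed points on $D_2$ were isolated, then on the minimal resolution of the quotient the curves contracted to $P=\phi(D_2)$ would be the proper transform of the image of $D_2$, of self-intersection $-1-k/2$ (forcing $k$ even, hence self-intersection $\le -2$), together with $k$ curves of self-intersection $-2$ --- a configuration containing no $(-1)$-curve, contradicting Castelnuovo's factorization of a birational morphism onto the smooth surface $\Pee^2$. Hence $k=0$, the quotient is smooth and, contracting the single irreducible curve $q(D_2)$, is $\mathbb{F}_1$ with $q(D_2)=\sigma_0$; your Hurwitz computation then pins down the branch class and its smoothness. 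Alternatively one can bypass the quotient entirely: the branch curve of $W\to\Pee^2$ is a sextic, smooth away from $P$ because $W$ is smooth away from its cone point, and the singularity of $W$ over $P$ is a hypersurface double point which is simple elliptic with exceptional curve of square $-2$, hence of type $\widetilde{E}_7$, i.e.\ $z^2=f(x,y)$ with $f$ of multiplicity exactly $4$ and distinct tangent lines; blowing up $P$ then exhibits $Y_0^{(1)}$ as the double cover of $\mathbb{F}_1$ branched along the smooth proper transform, which lies in $|2\sigma_0+6f|$. Either argument completes your outline; as written, the final identification is unproven.
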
  

\begin{theorem}\label{21case} \cite[Proposition 5.10]{FG24}  Suppose that $\hY$ is a rational surface with multiplicities  $(m_1, m_2)=(2,1)$.  Then there exists a rational elliptic surface $X$ with a multiple fiber $F$ of multiplicity $2$, a smooth  nonmultiple fiber $G$, and a smooth bisection $\Gamma$, such that $D_1$ and $D_2$ are the proper transforms of $G$, $\Gamma$ respectively, $\Gamma$ meets $G$ transversally at two  points $p_1$ and $p_2$, and   $\hY$ is the blowup of $X$ at $p_1$ and $p_2$. Finally, in the generic case, i.e.\ if $X$ does not contain a smooth rational curve of self-intersection $-2$, then  $\Gamma \in |F+E|$ for some exceptional curve $E$. \qed
\end{theorem}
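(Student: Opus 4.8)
The plan is to build the elliptic fibration directly on $\hY$ out of the resolution data and then pass to its relatively minimal model. Write $K = K_{\hY}$. Since the $D_i$ are the exceptional divisors of simple elliptic singularities the discrepancies are $-1$, so $K = \pi^*K_Y - D_1 - D_2$; adjunction gives $K\cdot D_i = -D_i^2 = m_i$, so in the present case $D_1^2 = -2$, $D_2^2 = -1$, $K\cdot D_1 = 2$, $K\cdot D_2 = 1$, while $\pi^*K_Y\cdot D_i = 0$ and $(\pi^*K_Y)^2 = K_Y^2 = 1$. The key observation is that the class
\[ M = \pi^*K_Y - D_2 \]
satisfies $M^2 = 0$ and $M\cdot K = 0$, together with $M\cdot D_1 = 0$ and $M\cdot D_2 = 1$. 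Since $K - M = -D_1$ is not effective, $h^2(M) = 0$, and Riemann--Roch gives $h^0(M) = 1 + h^1(M)\ge 1$; thus $M$ is represented by a unique effective divisor $F$.

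Next I would argue that $F$ is a half-fiber and that $|2M|$ is a genuine genus-one pencil. Concretely, $F^2 = 0$, $F\cdot K = 0$ and $p_a(F) = 1$, and one checks that $F|_F$ is a nontrivial $2$-torsion class in $\Pic^0 F$, so that $h^0(2M) = 2$ and $|2M|$ is a base-point-free pencil defining a genus-one fibration $g\colon\hY\to\Pee^1$ with $F$ a multiple fiber of multiplicity $2$. Contracting the vertical $(-1)$-curves yields the relatively minimal model $\sigma\colon\hY\to X$. As $\hY$ is rational so is $X$, and $\chi(\scrO_X) = 1$, so $X$ is a rational elliptic surface; the image of $F$, which I again denote $F$, is a multiplicity-two multiple fiber, and the canonical bundle formula gives $-K_X = F$ and shows this is the only multiple fiber.

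It remains to locate $D_1, D_2$ and to pin down $\sigma$. Since $M\cdot D_1 = 0$ the curve $D_1$ is vertical, whereas $M\cdot D_2 = 1$ forces $f\cdot D_2 = 2$ for the general fiber $f\sim 2F$, so $D_2$ is a bisection; because $\sigma$ contracts only vertical curves, $D_2$ survives, and $G := \sigma(D_1)$, $\Gamma := \sigma(D_2)$ are a fiber and a bisection of $X$. Comparing $K_{\hY}^2 = -2$ with $K_X^2 = 0$ shows that $\sigma$ is the blow-down of exactly two $(-1)$-curves. To recover $G^2 = 0$ from $D_1^2 = -2$ and $\Gamma^2 = 1$ from $D_2^2 = -1$, each contracted curve must meet both $D_1$ and $D_2$; and since $D_1, D_2$ are smooth and disjoint (being the exceptional divisors over two distinct singular points of $Y$), the two blown-up points must be distinct points $p_1, p_2$ at which $\Gamma$ crosses $G$ transversally. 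Hence $\hY$ is the blow-up of $X$ at $p_1, p_2$ with $D_1 = \widetilde G$, $D_2 = \widetilde\Gamma$, and $G$, as the image of the smooth elliptic curve $D_1$, is a smooth nonmultiple fiber.

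The main obstacle is the fiber analysis underlying the last paragraph: one must verify that $|2M|$ is genuinely base-point free on $\hY$, so that no further blow-ups intervene, and that the two contracted curves form the expected configuration---two disjoint $(-1)$-curves, each meeting $D_1$ and $D_2$ once---rather than an infinitely near pair that would produce a $(-2)$-curve in the fiber over $g(D_1)$ and destroy the smoothness or disjointness of the $D_i$. This is exactly the point at which the hypothesis that $D_1, D_2$ are smooth and disjoint must be fed in. The final generic assertion is then a short computation: $(\Gamma - F)^2 = -1$ and $(\Gamma - F)\cdot K_X = -1$, while $K_X - (\Gamma - F) = -\Gamma$ is not effective, so Riemann--Roch gives $h^0(\Gamma - F)\ge 1$; for generic $Y$ the effective class $\Gamma - F$ is represented by an irreducible $(-1)$-curve $E$, and therefore $\Gamma\in|F + E|$.
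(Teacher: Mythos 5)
Your proposal cannot be compared against an internal argument, because the paper does not prove this statement at all: it is imported wholesale from \cite[Proposition 5.10]{FG24}. Judged on its own terms, your strategy is the natural one and your numerology is correct: $M = \pi^*K_Y - D_2 = K_{\hY}+D_1$ is indeed the class that pulls back the half-fiber, and the closing Riemann--Roch computation showing $\Gamma - F$ is effective is fine. But the two steps you wave at are genuine gaps, not routine verifications.

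First, the multiplicity-two structure. You assert that ``one checks that $F|_F$ is a nontrivial $2$-torsion class,'' but no check is offered, and this claim, together with the nefness of $M$ (equivalently, base-point-freeness of $|2M|$) which you yourself flag as ``the main obstacle,'' is the entire content of the construction of the fibration. A priori the effective divisor $F\in |M|$ could be reducible or non-reduced, could contain components $C$ with $M\cdot C<0$, and $|2M|$ could have a fixed part; none of this is addressed. Even your ``unique effective divisor'' needs $h^0(M)=1$, i.e.\ $h^1(M)=0$, which is not proved.

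Second, the contraction analysis is circular and, as written, would prove a false statement. You pin down the two contracted curves in order ``to recover $\Gamma^2=1$ from $D_2^2=-1$,'' but $\Gamma^2=1$ is part of the conclusion, not a known quantity. What you actually know, from $D_1\cdot D_2=0$ and $D_1=\sigma^*G-\phi_1-\phi_2$, is only $\phi_1\cdot D_2+\phi_2\cdot D_2=\Gamma\cdot G=2$, and nothing in your argument excludes the split $(\phi_1\cdot D_2,\phi_2\cdot D_2)=(2,0)$: that is, $\Gamma=\sigma(D_2)$ a bisection of arithmetic genus $2$ with a node at a point $p_1\in G$, and $p_2\in G$ not on $\Gamma$. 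In that configuration $D_1$ and $D_2$ are still smooth, elliptic and disjoint, with $D_1^2=-2$, $D_2^2=-1$ and $(K_{\hY}+D_1+D_2)^2=1$, so every hypothesis your proof actually uses is satisfied, yet the theorem's conclusion fails. What rules this out is not smoothness or disjointness of the $D_i$ but the ampleness of $K_Y$ (equivalently, that $D_1\cup D_2$ is the entire exceptional locus of the minimal resolution of a stable surface): in the bad configuration one computes $(K_{\hY}+D_1+D_2)\cdot\phi_2=-1+1+0=0$, so the image of $\phi_2$ in $Y$ would be a curve on which the supposedly ample $K_Y$ has degree zero. Your proof never invokes ampleness anywhere, so it cannot be complete; the same input is what is needed to exclude your ``infinitely near pair'' scenario, where a $(-2)$-curve disjoint from $D_1\cup D_2$ would likewise map to a $K_Y$-trivial curve.
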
  

Before dealing with the case of three simple elliptic singularities, we make the following definition:

\begin{definition}\label{defY0}  Let $B$ be an elliptic curve and let $Y_0$ be the unique elliptic ruled surface over $B$ with invariant $e=-1$. Thus $Y_0= \Pee(W)$, where $W$ is a rank $2$ stable bundle over $B$ with $\deg \det W=1$.  In particular, there exists a one parameter family of sections $\sigma$ of $Y_0$ with $\sigma^2 =1$, and there exist exactly three disjoint bisections $\Gamma$ of $Y_0$ with $\Gamma^2=0$ and $\sigma \cdot \Gamma =1$, indexed by the $2$-torsion points of $B$.
\end{definition} 

\begin{theorem}\label{211case} \cite[Proposition 7.1]{FG24}   Suppose that $\hY$ has three elliptic singularities with multiplicities $(m_2, m_2, m_3) = (2,1,1)$. Then $\hY$ is the blowup of $Y_0$ at three points. More precisely, there exist:  
\begin{enumerate} 
\item[\rm(i)] A smooth bisection $\Gamma$  of $Y_0 \to B$  with $\Gamma^2 =0$; 
\item[\rm(ii)] Sections $\sigma_2$ and $\sigma_3$   with  $\sigma_i^2 = \sigma_i\cdot \sigma_j = \sigma_i\cdot \Gamma = 1$, with   $\Gamma \cap \sigma_i = p_i$, $i=2,3$, and $\sigma_2\cap \sigma_3 = p_1$, where the points $p_1, p_2, p_3$ are all distinct,
\end{enumerate}   such that $\hY$ is the blowup of $Y_0$ at $p_1, p_2, p_3$,   $D_1$ is the proper transform of $\Gamma$ and  $D_2$  and $D_3$ are the proper transforms of $\sigma_2$, $\sigma_3$ respectively.  \qed
\end{theorem}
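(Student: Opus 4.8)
The plan is to recover the fine structure from the rough classification (the elliptic ruled case) together with adjunction on the minimal resolution. First I would fix the birational morphism $\mu\colon \hY \to R$ onto a relatively minimal elliptic ruled surface $\rho\colon R \to B$ supplied by that classification, so that $D_1, D_2, D_3$ are disjoint smooth elliptic curves with $D_1^2 = -2$ and $D_2^2 = D_3^2 = -1$. Since the singularities are Gorenstein simple elliptic, $K_{\hY} = \pi^*K_Y - (D_1 + D_2 + D_3)$; using $\pi^*K_Y\cdot D_i = 0$ and the disjointness of the $D_i$ one gets $K_{\hY}^2 = K_Y^2 + \sum_i D_i^2 = 1 - 4 = -3$. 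Because $K_R^2 = 0$ for every relatively minimal elliptic ruled surface, $\mu$ is the blowup of $R$ at exactly three points $p_1, p_2, p_3$, with exceptional curves $E_1, E_2, E_3$.

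Next I would study the images $\bar D_i = \mu(D_i)$. Writing $D_i = \mu^*\bar D_i - \sum_k m_{ik}E_k$ and letting $f$ be the fiber class, the genus-one curves $D_i$ map to $B$ by \'etale covers of degree $d_i = D_i\cdot f$, and disjointness gives $\bar D_i\cdot\bar D_j = \sum_k m_{ik}m_{jk}$ while $\bar D_i^2 = D_i^2 + \sum_k m_{ik}^2$. The goal is to show that $\bar D_1$ is a smooth bisection $\Gamma$ with $\Gamma^2 = 0$, that $\bar D_2,\bar D_3$ are smooth sections $\sigma_2,\sigma_3$ with $\sigma_i^2 = 1$, and that, because only three points are available to separate three pairwise-meeting curves, the three images meet pairwise transversally in three distinct points $p_1 = \sigma_2\cap\sigma_3$, $p_2 = \Gamma\cap\sigma_2$, $p_3 = \Gamma\cap\sigma_3$, each blown up once. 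Concretely this amounts to forcing $m_{ik}\in\{0,1\}$ and $\bar D_i\cdot\bar D_j = 1$, which then reproduces $D_1^2 = -2$, $D_2^2 = D_3^2 = -1$ and the disjointness exactly as in the statement.

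To identify $R$ with the surface $Y_0$ of Definition~\ref{defY0}, I would exploit the existence of the smooth irreducible bisection $\Gamma$ of square zero. By Nagata's bound the invariant of a ruled surface over the elliptic curve $B$ satisfies $e \ge -1$. If $e \ge 0$, let $C_0$ be a section with $C_0^2 = -e \le 0$; any bisection of square zero is numerically $2C_0 + ef$, so $\Gamma\cdot C_0 = -e \le 0$, and a short case analysis then shows that $|2C_0 + ef|$ contains no smooth irreducible member (for $e > 0$ the section $C_0$ is a fixed component, and for $e = 0$ the class $2C_0$ is non-reduced or reducible). Hence $e = -1$, the minimal section has square $1$, and $R\cong Y_0 = \Pee(W)$ with $W$ a stable bundle of degree $1$. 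The three disjoint bisections of square zero indexed by the $2$-torsion of $B$, recorded in Definition~\ref{defY0}, are then the only candidates for $\Gamma$.

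Finally I would check the construction in reverse: blowing up $Y_0$ at $p_1\in\sigma_2\cap\sigma_3$, $p_2\in\Gamma\cap\sigma_2$, $p_3\in\Gamma\cap\sigma_3$ yields disjoint curves of self-intersection $-2,-1,-1$, and contracting them produces a Gorenstein surface with $K^2 = 1$, $p_g = 2$, $q = 0$, i.e.\ an I-surface, so that the description is both necessary and sufficient. The main obstacle is the rigidity asserted in the second paragraph: the numerical data alone permit other distributions of the degrees $d_i$ and the multiplicities $m_{ik}$ --- for instance the $(-2)$-curve could \emph{a priori} be a section meeting all three exceptional curves --- so one must combine the disjointness relations, the smoothness of the images, and the requirement that exactly three blowups separate all three pairs in order to eliminate these alternatives. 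By contrast, once a smooth bisection of square zero is known to exist, the identification $e = -1$ in the third paragraph is immediate.
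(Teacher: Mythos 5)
The paper itself does not reprove this statement---it is quoted directly from \cite[Proposition 7.1]{FG24}---so your proposal has to stand on its own, and at its center it does not. Your first paragraph is fine ($K_{\hY} = \pi^*K_Y - \sum_i D_i$ gives $K_{\hY}^2 = 1 + \sum_i D_i^2 = -3$, hence $\mu\colon \hY \to R$ is a composition of three blowups), and the reverse check in the last paragraph is routine. But the second paragraph, which must show $d_1 = 2$, $d_2 = d_3 = 1$, that all multiplicities $m_{ik}$ are $0$ or $1$, that the three centers are distinct (not infinitely near) points, and the precise incidence pattern, is only a statement of the goal; you concede as much when you call this ``the main obstacle'' and say one ``must combine'' disjointness, smoothness of the images, and the count of blowups to eliminate the alternatives. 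That elimination \emph{is} the proposition. Carrying it out requires inputs your outline never invokes: the nefness of $L = K_{\hY}+\sum_i D_i = \pi^*\omega_Y$, with $L\cdot C = 0$ exactly for $C \in \{D_1,D_2,D_3\}$ (this forces every blown-up point to lie on some $\bar D_i$, since $L\cdot E_k = \sum_i m_{ik} - 1 \ge 0$, and forces $\sum_i d_i \ge 3$, since $L\cdot f = \sum_i d_i - 2 > 0$), together with $L^2 = 1$ and $h^0(L) = p_g(Y) = 2$. Numerics plus disjointness alone do not pin down the configuration, and nothing in the proposal does the pinning.

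Worse, the step you call ``immediate'' is false as stated. An elliptic ruled surface with invariant $e = 0$ \emph{can} carry a smooth irreducible bisection of square zero: on $R = \Pee(\scrO_B \oplus \eta)$ with $\eta$ a nontrivial $2$-torsion line bundle, the \'etale double cover $B_\eta \to B$ embeds as such a bisection (pull back to $B_\eta \times \Pee^1$, where $R$ trivializes, and take the image of a generic constant section; this curve is numerically equivalent to $2C_0$ but lies in $|2C_0|$ twisted by the pullback of a degree-zero line bundle on $B$, not in $|2C_0|$ itself). Your case analysis conflates the numerical class $2C_0$ with the linear system $|2C_0|$. So a square-zero bisection only yields $e \in \{-1,0\}$---and note that the bisections $\Gamma$ in Theorem~\ref{211case} are exactly such \'etale curves $B_\eta$, so this is not a peripheral case. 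To force $e = -1$ one needs the sections $\sigma_2, \sigma_3$ with $\sigma_i^2 = 1$: a section of a surface of invariant $e$ has self-intersection $\equiv e \pmod 2$, so odd square forces $e$ odd, which combined with the bisection argument (excluding $e \ge 1$) and Nagata's bound $e\ge -1$ gives $e = -1$. But those sections are precisely what your unproven second paragraph was supposed to produce, so the identification of $R$ with $Y_0$ cannot be decoupled from the missing step.
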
  

\begin{theorem}\label{111case} \cite[Proposition 7.2]{FG24}  Suppose that $\hY$ has three elliptic singularities with multiplicities $(m_2, m_2, m_3) = (1,1,1)$.   Then $\hY$ is the blowup of $Y_0$ at two points. More precisely, there exist: 
\begin{enumerate} 
\item[\rm(i)] Two smooth bisections $\Gamma_1$,    $\Gamma_2$ of $Y_0 \to B$  with $\Gamma_i^2 =\Gamma_1 \cdot \Gamma_2 = 0$; 
\item[\rm(ii)] A smooth section  $\sigma_3$   with  $\sigma_3^2 =   \sigma_3\cdot \Gamma = 1$, with   $\Gamma_i \cap \sigma_3 = p_i$, $i=1,2$;
\end{enumerate}   
such that $\hY$ is the blowup of $Y_0$ at $p_1, p_2$,   $D_1$ and  $D_2$ are the proper transforms of $\Gamma_1$, $\Gamma_2$ respectively,  and $D_3$ is the proper transform  of   $\sigma_3$. \qed
\end{theorem}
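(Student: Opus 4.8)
The plan is to reconstruct $Y_0$ together with the configuration of curves directly from the numerical data on $\hY$, using the classification recalled above, which already tells us that $\hY$ is the blowup of some minimal elliptic ruled surface $S\to B$.

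First I would fix the numerics. Since simple elliptic singularities have discrepancy $-1$, one has $K_{\hY} = \pi^*K_Y - D_1 - D_2 - D_3$, so $K_{\hY}^2 = K_Y^2 - (m_1+m_2+m_3) = 1 - 3 = -2$. As a minimal elliptic ruled surface has $K_S^2 = 8(1-g(B)) = 0$ and each blowup drops $K^2$ by one, this forces $\rho\colon \hY\to S$ to be the blowup at two points $p_1,p_2$ (a priori possibly infinitely near, though we will see they are distinct), with exceptional classes $E_1,E_2$ satisfying $E_i^2=-1$, $E_1\cdot E_2=0$. Each $D_i$ is a smooth elliptic curve with $D_i^2=-1$, disjoint from the others, and cannot lie in a fiber of the ruling (fibers are rational); hence its image $\bar D_i := \rho(D_i)$ is an irreducible elliptic multisection of $S\to B$ of degree $d_i = D_i\cdot f\ge 1$, where $f$ is the class of a fiber. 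Throughout I set $L := \pi^*K_Y$; it is nef and big with $L^2=1$ and $L\cdot D_i = 0$.

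The key elementary input is the computation of $L^2$. Writing $L = \rho^*K_S + E_1+E_2 + \sum_i D_i$ and $m_{i,j}$ for the multiplicity of $\bar D_i$ at $p_j$ (so $D_i = \rho^*\bar D_i - \sum_j m_{i,j}E_j$ and $\bar D_i^2 = -1 + \sum_j m_{i,j}^2$), expanding $L^2 = 1$ and using $K_S^2=0$ together with $(K_S+\bar D_i)\cdot\bar D_i = 0$ collapses to $\sum_{i,j} m_{i,j}(m_{i,j}-1) = 0$. Since the $m_{i,j}$ are nonnegative integers, every $m_{i,j}\in\{0,1\}$: each $\bar D_i$ is smooth and passes through each $p_j$ at most once and transversally, and $\bar D_i^2 = -1 + n_i$ where $n_i := \#\{j : p_j\in\bar D_i\}$. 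A second elementary input is the multisection identity: writing $\bar D_i \equiv d_iC_0 + b_if$ (with $C_0$ a section, $C_0^2=-e$) and imposing $g(\bar D_i)=1$ gives $(d_i-1)(2b_i - ed_i)=0$, so either $d_i = 1$ ($\bar D_i$ is a section) or $\bar D_i^2 = 0$ and $\bar D_i \equiv \tfrac{d_i}{2}(-K_S)$.

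Next I would pin down the degree type. The all-sections case ($d_1=d_2=d_3=1$) is impossible: then $L\cdot f = \sum d_i - 2 = 1$, but a general member of the genus-$2$ canonical pencil $|L|$ maps to $B$ with degree $L\cdot f$, and a genus-$2$ curve admits no degree-$1$ map to an elliptic curve. Among the multisections ($d_i\ge 2$) the classes are all proportional to $-K_S$, so any two satisfy $\bar D_i\cdot\bar D_k = \tfrac{d_id_k}{4}K_S^2 = 0$ and are disjoint on $S$; since each has $\bar D_i^2=0$, i.e. $n_i=1$, no two multisections pass through the same one of $p_1,p_2$, and with only two points a pigeonhole argument gives at most two multisections. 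That there is not exactly one multisection I would rule out by the same disjointness bookkeeping: computing $\bar D_i\cdot \bar D_k$ from the proportional classes and matching it against the number of common base points forces, in each of the few remaining configurations, a non-integer or sign-violating intersection number. Hence there are exactly two multisections $\bar D_1,\bar D_2$ and one section $\bar D_3$; a short divisibility argument (using $d_i(b_3 - e/2)\in\{0,1\}$ with $d_i\ge 2$) then forces $d_1=d_2=2$, so $\bar D_1,\bar D_2\equiv -K_S$ are smooth elliptic bisections, while $\bar D_3$ is a section through both $p_1$ and $p_2$ with $\bar D_3^2 = 1$.

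Finally I would identify $S$ and assemble the configuration. Atiyah's classification gives $e\ge -1$ for elliptic ruled surfaces, and the existence of two \emph{disjoint smooth irreducible} elliptic bisections numerically equal to $-K_S$ forces $e=-1$, hence $S\cong Y_0$: for $e\ge 1$ one has $(-K_S)\cdot C_0 = -e < 0$, so every anticanonical divisor contains the negative section $C_0$ and cannot be a smooth irreducible bisection, while the three surfaces with $e=0$ are excluded by a direct check that none carries two disjoint smooth elliptic bisections. On $Y_0$ the curves $\bar D_1,\bar D_2$ are then two of the three distinguished bisections of Definition~\ref{defY0} (so $\Gamma_i := \bar D_i$ satisfy $\Gamma_i^2 = \Gamma_1\cdot\Gamma_2 = 0$, the last because distinct anticanonical curves are disjoint), and $\sigma_3 := \bar D_3$ is a section with $\sigma_3^2 = 1$ and $\sigma_3\cdot\Gamma_i = (-K_{Y_0})\cdot C_0 = 1$; since $D_i\cdot D_3 = 0$, the single point of $\Gamma_i\cap\sigma_3$ must be a blown-up point, giving $p_i = \Gamma_i\cap\sigma_3$ (and $p_1\ne p_2$ because $\Gamma_1,\Gamma_2$ are disjoint). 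This is exactly the configuration (i)--(ii), and blowing up $p_1,p_2$ recovers $\hY$. The main obstacle is the identification $e=-1$, i.e. distinguishing $Y_0$ from the other elliptic ruled surfaces; the degree analysis, though only a finite case-check, is the other delicate point, since it relies on combining the global disjointness of the three exceptional curves with the integrality of intersection numbers rather than on any single numerical invariant.
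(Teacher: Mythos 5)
Since the paper does not prove Theorem~\ref{111case} at all --- it is quoted verbatim from \cite[Proposition 7.2]{FG24} --- your argument has to stand on its own, and as written it has a fatal gap at its core. The ``key elementary input'' is circular: to derive $\sum_{i,j} m_{i,j}(m_{i,j}-1)=0$ from $L^2=1$ you invoke $(K_S+\bar D_i)\cdot\bar D_i = 0$, but that is adjunction for a curve of \emph{arithmetic} genus one, and since the normalization $D_i$ is elliptic, $p_a(\bar D_i)=1$ is exactly the smoothness statement you are trying to prove. The correct identity is $(K_S+\bar D_i)\cdot\bar D_i = 2\delta_i = \sum_j m_{i,j}(m_{i,j}-1)$, and if you redo the expansion with it, $L^2=1$ collapses to a tautology carrying no information about the $m_{i,j}$: indeed, working directly on $\hY$,
$$L^2 \;=\; L\cdot\Bigl(K_{\hY}+\sum_i D_i\Bigr) \;=\; L\cdot K_{\hY} \;=\; K_{\hY}^2+\sum_i K_{\hY}\cdot D_i \;=\; -2+3 \;=\;1$$
automatically, using only $L\cdot D_i=0$, disjointness, and $K_{\hY}\cdot D_i=-D_i^2=1$. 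So singular images are \emph{not} excluded by your numerics: for instance a bisection $\bar D_i\equiv 2C_0+(e+1)f$ with a node at $p_1$ and passing simply through $p_2$ (so $(m_{i,1},m_{i,2})=(2,1)$, $\delta_i=1$, $\bar D_i^2=4$) has smooth elliptic proper transform with $D_i^2=-1$, yet its class is not proportional to $-K_S$. Everything downstream --- the multisection identity, $\bar D_i^2=0$ and $n_i=1$, the disjointness pigeonhole, the pinning of $d_1=d_2=2$ --- presupposes that the $\bar D_i$ are smooth with classes proportional to $-K_S$, so the case analysis never gets started. Some genuinely geometric input (ampleness of $\omega_Y$, the structure of the canonical pencil, or an honest enumeration of the singular solutions such as $(m_{i,1},m_{i,2})=(d_i-1,0)$ and $(d_i,1)$, which all satisfy your constraints) is required to close this.

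There is a second, independent error in the identification $e=-1$. Atiyah's classification does not give ``three'' surfaces with $e=0$: there is the indecomposable surface $\Pee(F_2)$ together with the infinite family $\Pee(\scrO_B\oplus\lambda)$, $\lambda\in\Pic^0B$. Worse, the claimed ``direct check'' is false: when $\lambda$ is a nontrivial $2$-torsion point, $\Pee(\scrO_B\oplus\lambda)$ contains a one-parameter family of pairwise disjoint smooth elliptic bisections numerically equivalent to $-K_S$ (pull back to the \'etale double cover $B'\to B$ trivializing $\lambda$, where they become the constant sections of $B'\times\Pee^1$ away from $0$ and $\infty$). What actually excludes $e=0$ in your setup is a parity observation you have already established but did not use: the section satisfies $\bar D_3^2=2b_3-e=1$, forcing $e$ to be odd; combined with $e\ge -1$ and your (correct) exclusion of $e\ge 1$, this gives $e=-1$. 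So that step is repairable from your own divisibility computation, but as written it would fail. Finally, two smaller points: the exclusion of the all-sections case quietly uses that the general member of $|L|$ is an irreducible curve of geometric genus $2$, which should be proved or cited from \cite{FPR1}, \cite{FPR2}; and the bookkeeping $D_i=\rho^*\bar D_i-\sum_j m_{i,j}E_j$ needs a word of justification in the infinitely near case before you can conclude the two points are distinct.
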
 

\begin{remark} In the elliptic ruled case, $Y$ has no RDPs, and its only singular points are the three simple elliptic singularities. Hence $\omega_Y$ is ample. 
\end{remark}  

\subsection{Definition of the Hodge diamond}

We recall the following notation from \cite[Example 4.9]{Robles}:

\begin{definition}\label{definelozenge} Let $H$ be an $N$-polarized mixed Hodge structure whose weight filtration has the form
$$W_0 \subseteq W_1 \subseteq W_2 \subseteq W_3 \subseteq W_4,$$
where $W_i/W_{i-1}$ is a pure (effective) Hodge structure of weight $i$. We say that $H$ is \textsl{of Hodge type $\lozenge_{r, s}$}  if $\dim\Gr_F^0\Gr^W_0H=h^{0,0} = r$ and $\dim\Gr_F^1\Gr^W_1H=h^{1,0} = s$, i.e.\  if $(W_0)^{0,0}$ has dimension $r$ and $(W_1/W_0)^{1,0}$ and  $(W_1/W_0)^{0,1}$ both have dimension $s$.  In particular, the statement that $H$ is  of Hodge type $\lozenge_{0, s}$ means that $W_0 =0$ and $\dim W_1 = 2s$. 
\end{definition}

\begin{remark} If $H$ is the $N$-polarized limiting mixed Hodge structure of Hodge type $\lozenge_{0, 2}$ corresponding to the primitive cohomology of a degeneration of I-surfaces, then $W_1$ is a primitive rank $4$ isotropic subspace of $H$.  Thus  $(W_2/W_1)_0$ is a negative definite even unimodular lattice of rank $24$.  Compare also Proposition~\ref{latticeLambda} and Corollary~\ref{elllatticeLambda}.
\end{remark} 

\subsection{Some relevant lattice theory}\label{latticessect}

Let $\Lambda$ be a negative definite even unimodular lattice of rank $24$ and let
$$R(\Lambda) =\{\alpha \in \Lambda: \alpha^2 = -2\}.$$
Let $\Lambda_R$ be the sublattice of $\Lambda$ spanned by $R(\Lambda)$.  More generally for an arbitrary simply laced  root system  or generalized root system $R$, we denote by $\Lambda_R$   the  lattice with a  $\Zee$-basis   given by  a set of simple roots of $R$, and  with intersection form specified by the Dynkin diagram of $R$:   each root $\alpha$ satisfies $\alpha^2 = -2$ and, for simple roots,  $\alpha_i \cdot \alpha_j =1$ if $\alpha_i $ and $\alpha_j$ are connected by an edge in the Dynkin diagram and $\alpha_i \cdot \alpha_j =0$ otherwise.

The following is then an easy consequence of  the  fundamental classification results  of Niemeier \cite{Niemeier}:

\begin{theorem}\label{Niem}  \begin{enumerate} \item[\rm(i)] Two negative definite even unimodular lattices $\Lambda_1$ and $\Lambda_2$  of rank $24$ are isomorphic $\iff$ the root systems $R(\Lambda_1)$ and $R(\Lambda_2)$ are isomorphic root systems.
\item[\rm(ii)]  Given a   negative definite even unimodular lattice $\Lambda$ of rank $24$, either $R(\Lambda)$ has rank $24$ or $\Lambda$ is the Leech lattice and $R(\Lambda) =\emptyset$. 
\item[\rm(iii)] If $R(\Lambda)$ contains a sub-root system of type $E_7 + E_7$, then $R(\Lambda)$ is either $E_8+ E_8 + E_8$ or $E_7+ E_7 + D_{10}$.  In the first case, $\Lambda_R = \Lambda$ is the orthogonal direct sum of three copies of the $E_8$ lattice $\Lambda_{E_8}$. In the second case, $\Lambda_R$ has index $4$ in $\Lambda$. \qed
\end{enumerate}
\end{theorem}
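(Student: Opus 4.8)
The plan is to pass to the positive-definite lattice $\Lambda(-1)$, so as to use the standard conventions for theta series and root systems, and to organize everything around the relationship between $\Lambda$, its root sublattice $\Lambda_R$, and the \emph{glue group} $\Lambda/\Lambda_R$. Since $\Lambda$ is unimodular with $\Lambda_R \subseteq \Lambda \subseteq \Lambda_R^*$ of finite index, one has $|A_R| = [\Lambda:\Lambda_R]^2$, where $A_R = \Lambda_R^*/\Lambda_R$ is the discriminant group, and $G := \Lambda/\Lambda_R$ is a totally isotropic subgroup of $A_R$ for the discriminant quadratic form, of order $|A_R|^{1/2}$; conversely $\Lambda$ is recovered from the pair $(\Lambda_R, G)$. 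The forward implication of (i) is immediate, since $R(\Lambda)$ is an isomorphism invariant of $\Lambda$. The substance of (i) is the reverse implication: that $R(\Lambda)$ determines $G$ up to the action of $\Aut(R(\Lambda))$ (the Weyl group together with diagram automorphisms), and hence determines $\Lambda$.

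For (ii) I would use modular forms. The theta series $\theta(\tau) = \sum_x q^{\langle x, x\rangle/2}$ is a modular form of weight $12$ for $\mathrm{SL}_2(\Zee)$; since $\dim M_{12} = 2$, it is determined by its first two Fourier coefficients $1$ and $N_2 = |R(\Lambda)|$. If $N_2 = 0$ then $\theta = E_4^3 - 720\,\Delta$ has exactly the coefficients of the Leech lattice, and a separate argument identifies $\Lambda$ with it. The equal-Coxeter-number statement, which underlies full rank, I would prove by Venkov's harmonic method: for a harmonic polynomial $P$ of degree $2$ the series $\theta_P(\tau) = \sum_x P(x) q^{\langle x,x\rangle/2}$ is a cusp form of weight $14$, and $S_{14}(\mathrm{SL}_2(\Zee)) = 0$, so $\theta_P \equiv 0$. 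Applied to the norm-$2$ coefficient this forces $\sum_{\alpha \in R(\Lambda)} \alpha\otimes\alpha = \tfrac{N_2}{12}\,\mathrm{Id}$. Restricting to an irreducible component $R_i$ of rank $r_i$ and Coxeter number $h_i$, the left side equals $2h_i$ times the identity on the span of $R_i$, so $2h_i = N_2/12$ is independent of $i$; hence all Coxeter numbers equal a common value $h$, with $N_2 = 24h$, and the relation $\sum_i r_i h_i = N_2$ becomes $h\sum_i r_i = 24h$, giving $\sum_i r_i = 24$, i.e.\ $R(\Lambda)$ has full rank unless $R(\Lambda) = \emptyset$.

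For (iii), assume $R(\Lambda) \supseteq E_7 + E_7$. By (ii), $R(\Lambda)$ has rank $24$ and all its irreducible components share one Coxeter number $h$. An $E_7$ subsystem embeds only inside a component of type $E_7$ or $E_8$ (it embeds in no $A$, $D$, or $E_6$ system, by Borel--de Siebenthal), so $h \in \{18, 30\}$, and there are at least two components of type $E_7$ (if $h = 18$) or of type $E_8$ (if $h = 30$). The ADE Coxeter-number table together with the constraint $\sum_i r_i = 24$ then leaves only $E_7 + E_7 + D_{10}$ (for $h = 18$, using $h(D_{10}) = 18$) and $E_8 + E_8 + E_8$ (for $h = 30$); in each case one checks that a valid isotropic glue code $G \subseteq A_R$ exists, so the lattice occurs. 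For the index statements: $E_8$ is unimodular, so $\Lambda_{E_8}^{\,3}$ is already even unimodular of rank $24$ and $\Lambda_R = \Lambda$; while $\det \Lambda_{E_7} = 2$ and $\det \Lambda_{D_{10}} = 4$ give $|A_R| = 2\cdot 2\cdot 4 = 16$, whence $[\Lambda:\Lambda_R] = |A_R|^{1/2} = 4$.

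The main obstacle is the glue-code uniqueness needed for (i): showing that $R(\Lambda)$ pins down the isotropic subgroup $G \subseteq A_R$ up to $\Aut(R(\Lambda))$ is the genuinely hard, case-by-case heart of Niemeier's original classification, and I would ultimately invoke Venkov's treatment for the full statement. For the application in this paper, however, only the two root systems appearing in (iii) are relevant, and there the discriminant groups are small enough that the isotropic subgroups and their orbits under the diagram automorphisms can be enumerated directly; this is the step I would carry out by hand, leaving the general uniqueness in (i) to the cited literature.
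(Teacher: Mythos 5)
Your proposal is correct in its mathematical content, but it is worth noting that the paper does not prove Theorem~\ref{Niem} at all: the statement is quoted as Niemeier's classification theorem and closed with a \qed referring to \cite{Niemeier}, so there is no internal argument to compare against. What you have reconstructed is essentially the standard modern (Venkov-style) proof: the weight-$12$ theta series argument and the vanishing of $S_{14}$ giving $\sum_{\alpha\in R(\Lambda)}\alpha\otimes\alpha = \tfrac{N_2}{12}\operatorname{Id}$, hence equal Coxeter numbers and full rank (or $R(\Lambda)=\emptyset$); the Borel--de Siebenthal observation that $E_7$ embeds only in components of type $E_7$ or $E_8$, which together with the rank and Coxeter-number bookkeeping pins down $E_7+E_7+D_{10}$ (with $h=18$) and $E_8+E_8+E_8$ (with $h=30$) as the only options for (iii); and the discriminant computation $[\Lambda:\Lambda_R]^2=|\Lambda_R^*/\Lambda_R|$, giving index $1$ and $4$ in the two cases. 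I checked the details and they are sound (e.g.\ $E_4^3-720\Delta$ does have constant term $1$, no $q$-term, and $196560$ in degree $2$; $h(D_{10})=18$; and the two orthogonal $E_7$'s necessarily lie in distinct components since no component of type $E_7$ or $E_8$ has rank $\ge 14$ --- a point you should make explicit). Your deferrals are also appropriately placed: the glue-code uniqueness underlying the hard direction of (i), and the identification of a root-free lattice with the Leech lattice (equality of theta series alone does not give isometry; this is Conway's uniqueness theorem), are exactly the deep parts that the paper itself outsources wholesale to \cite{Niemeier}. What your route buys is a self-contained verification of parts (ii) and (iii), which are the only parts the paper actually uses (in Theorem~\ref{listroots} and Lemma~\ref{22roots}); what the paper's citation buys is brevity, which is reasonable for a classification result of this depth.
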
 

\begin{remark} Clearly, the Weyl group $W(R(\Lambda))$ is a normal subgroup of the group $\Aut \Lambda$ of integral isometries of $\Lambda$. Conversely, if $A(R(\Lambda))$ is the group of automorphisms of the root system $R(\Lambda)$, then there is a homomorphism $\Aut \Lambda \to A(R(\Lambda))$ which is injective as long as $R(\Lambda)$ spans $\Lambda$ over $\Q$, i.e.\ as long as $R(\Lambda)$ has rank $24$ or equivalently $\Lambda$ is not the Leech lattice. If $R(\Lambda)= E_8+ E_8+ E_8$, then $\Aut \Lambda  \cong A(R(\Lambda))$, $W(R(\Lambda)) \cong W(E_8)^3$,  and  $A(R(\Lambda))/W(R(\Lambda)) \cong \mathfrak{S}_3$, the symmetric group on three letters. If $R(\Lambda)= E_7+ E_7 + D_{10}$, then $A(R(\Lambda))/W(R(\Lambda)) \cong (\Zee/2\Zee)\times (\Zee/2\Zee)$, where one generator switches the two $E_7$ factors and is the identity on $D_{10}$, and a second generator is the outer automorphism of the $D_{10}$ root system and is the identity on the two copies of $E_8$.
\end{remark} 

\begin{remark}\label{otherrootsystems}  For the cases considered in this paper ($Y$ a Gorenstein I-surface with simple elliptic singularities), only the root systems $E_8+ E_8 + E_8$ and $E_7+ E_7 + D_{10}$ arise. This is to be expected  by the discussion in \S3, because we only consider surfaces with two or three simple elliptic singularities of multiplicities one or two. There is no compelling reason why any other of the $24$ negative definite even unimodular lattices  of rank $24$ should necessarily arise, but it is certainly natural to speculate that  some of the other root systems appear in the examples of \cite{CFPR} or otherwise in degenerations of I-surfaces of Hodge type $\lozenge_{0,2}$.  This issue is related to the structure of the monodromy group of the universal family of I-surfaces (smooth or with rational double points). More precisely, given any negative definite even unimodular lattice $\Lambda$  of rank $24$, we have $H^2_0(S;\Zee) \cong U^4\oplus  \Lambda$ by the classification of indefinite even unimodular lattices, and it is easy to construct unipotent integral isometries $T$ of $H^2_0(S;\Zee)$, such that, if $N =\log T$, then the associated weight filtration $W_{\bullet}H^2_0(S;\Zee)$ of $H^2_0(S;\Zee)$  satisfies: $\Gr_W^2H^2_0(S;\Zee) \cong \Lambda$. Hence, if the monodromy group is of finite index in the automorphism group of the lattice $H^2_0(S;\Zee)$, then it will contain such elements. (In unpublished notes, the second author along with Green, Laza and Robles has outlined an argument for a similar result for the monodromy group of H-surfaces \cite{GGLR}, and it is likely that these methods will also handle the case of I-surfaces.)  The question is then whether such $T$ can arise as the monodromy of a one parameter degeneration of I-surfaces. In particular, it would interesting to construct a degeneration of I-surfaces of type $\lozenge_{0, 2}$ for which $\Lambda$ is the Leech lattice.
\end{remark}

\subsection{Torelli for anticanonical pairs}\label{anticanonTorellisub}  Recall the $E_n$ diagram for $n\ge 3$:

$$\xymatrix@R=.25cm@C=.25cm{
\alpha_1&&\alpha_2&&\alpha_{n-5}&&\alpha_{n-4}&&\alpha_{n-3}&&\alpha_{n-2}&&\alpha_{n-1}\\
 {\bullet}\ar@{-}[rr]&&{\bullet}\ar@{.}[rr]&&{\bullet}
\ar@{-}[rr]&&{\bullet}
\ar@{-}[rr]&&{\bullet}\ar@{-}[dd]\ar@{-}[rr]&& {\bullet}\ar@{-}[rr]
&&{\bullet}\\  &&&&&&&&&&&&&\\
 &&&&&&&&{\bullet}&&&&&&\\
 &&&&&&&&{\alpha_n}&&&&&&\\ 
}$$
 Consistent with the notation of \S\ref{latticessect}, we define the lattice $\Lambda_{E_n}$ as follows:  the $\alpha_i$ are a basis for  $\Lambda_{E_n}$, and the intersection form is specified by the root diagram for $E_n$. In other words,  $\alpha_i^2 =-2$,  $\alpha_i\cdot \alpha_j=0$ if $\alpha_i, \alpha_j$ are not connected by an edge in the diagram, and $\alpha_i\cdot \alpha_j=1$ if $\alpha_i, \alpha_j$ are   connected by an edge in the diagram. The lattice $\Lambda_{E_n}$ is negative definite if $n\le 8$, negative semi-definite if $n=9$,  and indefinite of signature $(1,n-1)$ if $n \ge 10$. The absolute value of the discriminant of $\Lambda_{E_n}$ is $|9-n|$. 
 
 The lattice $\Lambda_{E_n}$ arises very naturally in algebraic geometry as follows: Let $D\subseteq \Pee^2$ be a smooth cubic, let $h$ be the class of a line in $\Pee^2$ (in $\Pic \Pee^2$ or in $H^2(\Pee^2;\Zee)$),  and let $X$ be the blowup of $\Pee^2$ at $n\ge 3$ points $p_1, \dots, p_n$ (for simplicity assumed distinct). Let $\varepsilon_1, \dots, \varepsilon_n$ be the corresponding exceptional curves. We will also denote by $h$ the pullback of $h$ to $\Pic X$ or   $H^2(X;\Zee)$ and by $\varepsilon_i$ the corresponding element of $\Pic X$ or of $H^2(X;\Zee)$. Then $[K_X] = -3h +\sum_{i=1}^n\varepsilon_i$, and a basis for $[K_X]^\perp \subseteq H^2(X;\Zee)$ is given by
 $$\alpha_1 = \varepsilon_2 - \varepsilon_1, \alpha_2 = \varepsilon_3-\varepsilon_2, \dots, \alpha_{n-1} = \varepsilon_n - \varepsilon_{n-1}, \alpha_n = h - \varepsilon_{n-2}-\varepsilon_{n-1} - \varepsilon_n.$$
 In fact, given a  pair $(X,D)$, where $X$ is a smooth surface, $D\subseteq X$ is an elliptic curve, and $K_X \cong \scrO_X(-D)$, then either $X$ is a blowup of $\Pee^2$ at $n$ points or $X\cong \mathbb{F}_0$ or $\mathbb{F}_2$. 
 
 Following  \cite{GHK} and \cite[Definition 5.4]{Friedanticanon}, we define  $\overline{\mathcal{A}}_{\text{\rm{gen}}}(X)$,  the \textsl{generic ample cone of $X$} by
 $$\overline{\mathcal{A}}_{\text{\rm{gen}}}(X) =\{x\in H^2(X;\Ar): x \cdot \alpha \ge 0 \text{ for all effective numerical exceptional curves $\alpha$}\},$$
 where an effective numerical exceptional curve  $\alpha$ is an element $\alpha \in H^2(X;\Zee)$ such that $\alpha^2 = \alpha \cdot [K_X]  =-1$ and $\alpha$ is the class of an effective divisor. 
We also have   the \textsl{period map} $\varphi_X \colon [K_X]^\perp\cong \Lambda_{E_n} \to JD$ defined by: if $\xi\in [K_X]^\perp$, then $\xi$ is the class of a unique line bundle $\lambda \in \Pic X$, and $\deg(\lambda|D) =0$. Then define 
 $$\varphi_X(\xi) = \lambda|D \in \Pic^0D \cong JD.$$ 
 Of course, $\varphi_X$ is specified by its values on the $\alpha_i$. In particular, note that
 $$\varphi_X(\alpha_i) = \begin{cases} \scrO_D(p_{i+1}-p_i), &\text{if $1\le i \le n-1$;}\\
 \scrO_D(h - p_{n-2}-p_{n-1} - p_n), &\text{if $i=n$.}
 \end{cases}$$

 With this said,  the methods of  \cite{Carlson0}, \cite{Carlson3}, \cite[Theorem I.5.1]{Looi3}, \cite[Theorem 1.8]{GHK}, \cite[Theorem 8.5]{Friedanticanon} can easily be adapted to prove the following    theorem of Torelli type for the pair $(X,D)$:
 
 \begin{theorem}\label{anticanonTorelli}  Let $(X,D)$ and $(X', D')$ be two blowups of $\Pee^2$ at two smooth cubic curves $D$, $D'$. Suppose that $D\cong D'$ and that, fixing an isomorphism $JD\to JD'$ and identifying $JD$ and  $JD'$  via this isomorphism, there is an integral isometry $f \colon H^2(X;\Zee) \to H^2(X'; \Zee)$ such that
 \begin{enumerate}
 \item[\rm(i)] $f([D]) = [D']$.
 \item[\rm(ii)] $f(\overline{\mathcal{A}}_{\text{\rm{gen}}}(X) ) = \overline{\mathcal{A}}_{\text{\rm{gen}}}(X')$.
  \item[\rm(iii)] $f(\Delta_X) = \Delta_{X'}$, where $\Delta_X$ denotes the set of elements of $H^2(X;\Zee)$ of the form $\delta = [C]$, where $C$ is a smooth rational curve of self-intersection $-2$, and similarly for $\Delta_{X'}$.
 \item[\rm(iv)] $\varphi_{X'}\circ f = \varphi_X$. 
 \end{enumerate}
 Then there is a unique isomorphism $g\colon X'\to X$ with $g(D') = D$ and $g^* = f$. \qed
 \end{theorem}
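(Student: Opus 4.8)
The plan is to reduce the statement to a matching of geometric data on $\Pee^2$ and then to reconstruct $g$ from a projective transformation. Write $X$ as the blowup of $\Pee^2$ at $p_1,\dots,p_n\in D$ with line class $h$ and exceptional curves $\varepsilon_i$, so that $h,\varepsilon_1,\dots,\varepsilon_n$ is a geometric marking and the $\alpha_i$ of the statement are the associated basis of $[K_X]^\perp\cong\Lambda_{E_n}$. I would organize the proof into three movements: (A) show that $f$ carries this marking to a \emph{geometric} marking $h',\varepsilon_1',\dots,\varepsilon_n'$ of $X'$; (B) use the period compatibility (iv) to produce an isomorphism $D\to D'$ matching the point configurations and the hyperplane classes; and (C) extend this isomorphism to the ambient planes, lift it to the blowups, and verify $g(D')=D$, $g^*=f$, and uniqueness.

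For (A), condition (i) gives $f(K_X)=-f([D])=-[D']=K_{X'}$, so $f$ is an isometry of the $K$-lattices. The geometric markings of a blowup of $\Pee^2$ form a torsor under the group generated by $W(\Lambda_{E_n})$ and the permutations of the $\varepsilon_i$, and the generic ample cone is cut out inside the positive cone by the walls coming from effective numerical exceptional curves and from $\Delta_X$ (cf.\ \cite[Theorem I.5.1]{Looi3}, \cite{GHK}, \cite{Friedanticanon}); conditions (ii) and (iii) say exactly that $f$ respects this chamber together with its walls. Hence the image basis $h':=f(h)$, $\varepsilon_i':=f(\varepsilon_i)$ satisfies the numerical conditions defining a geometric marking of $X'$, and since $X'$ is by hypothesis a blowup of $\Pee^2$, any such numerical marking is realized by an actual birational morphism $X'\to\Pee^2$. \textbf{This identification of geometric markings is the main obstacle}: it is where the full force of (ii) and (iii) is used, and it must be handled carefully when the configuration is special (points collinear or infinitely near), precisely the situation in which the $(-2)$-classes recorded by $\Delta_X$ become relevant. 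Granting it, $X'$ is the blowup of $\Pee^2$ at points $p_1',\dots,p_n'\in D'$, and $f(\alpha_i)=\alpha_i'$ for the corresponding basis.

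For (B), I would fix an isomorphism $\psi\colon D\to D'$ of elliptic curves inducing the chosen isomorphism $JD\to JD'$; such a $\psi$ exists and is unique up to a translation of $D'$. Writing the group law additively, (iv) applied to $\alpha_i=\varepsilon_{i+1}-\varepsilon_i$ gives $\scrO_{D'}(p_{i+1}'-p_i')=\psi_*\scrO_D(p_{i+1}-p_i)$ in $JD'$, so $p_i'-\psi(p_i)$ is independent of $i$; calling this constant $a$ and setting $\psi'=\tau_a\circ\psi$ (translation by $a$), we get $\psi'(p_i)=p_i'$ for all $i$, while $\psi'$ still induces $JD\to JD'$. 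Applying (iv) to $\alpha_n=h-\varepsilon_{n-2}-\varepsilon_{n-1}-\varepsilon_n$ and cancelling the now-matched points $p_j'=\psi'(p_j)$ forces $\scrO_{D'}(h')\cong(\psi'^{-1})^*\scrO_D(h)$, that is $\psi'^*\scrO_{D'}(h')\cong\scrO_D(h)$. Thus $\psi'$ matches the configurations and is compatible with the hyperplane classes; this computation is essentially Carlson's identification of the extension class of the relevant mixed Hodge structure \cite{Carlson0}, \cite{Carlson3}.

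For (C), since $D\subseteq\Pee^2$ and $D'\subseteq\Pee^2$ are embedded by the complete linear systems $|\scrO_D(h)|$ and $|\scrO_{D'}(h')|$, the pair $(\psi',\,\psi'^*\scrO_{D'}(h')\cong\scrO_D(h))$ induces an isomorphism of ambient planes $\Psi\colon\Pee^2\to\Pee^2$ restricting to $\psi'$ on $D$; hence $\Psi(D)=D'$ and $\Psi(p_i)=p_i'$. Blowing up, $\Psi$ lifts to $\widetilde\Psi\colon X\to X'$ with $\widetilde\Psi(D)=D'$ and $\widetilde\Psi^*(h')=h$, $\widetilde\Psi^*(\varepsilon_i')=\varepsilon_i$, i.e.\ $\widetilde\Psi^*=f^{-1}$. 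Setting $g=\widetilde\Psi^{-1}$ yields $g(D')=D$ and, since the marking spans $H^2(X;\Zee)$, $g^*=f$. For uniqueness, if $g_1,g_2$ both satisfy the conclusion then $g_2^{-1}g_1$ is an automorphism of $X$ acting trivially on $H^2(X;\Zee)$ and fixing $D$; it descends to an automorphism of $\Pee^2$ fixing $D$ and each $p_i$, and the rigidity of anticanonical pairs (cf.\ \cite[Theorem 1.8]{GHK}, \cite[Theorem 8.5]{Friedanticanon}) forces it to be the identity. The exceptional cases $X\cong\mathbb{F}_0$ or $\mathbb{F}_2$ do not arise, being excluded by the hypothesis that both surfaces are blowups of $\Pee^2$.
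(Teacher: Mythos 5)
Your proposal is correct and follows essentially the same route as the paper's proof: conditions (i)--(iii) are used to transport the geometric marking $h,\varepsilon_1,\dots,\varepsilon_n$ to one on $X'$, condition (iv) together with Abel--Jacobi arithmetic on the cubic matches the point configurations, and $g$ is reconstructed from a projective transformation, with uniqueness handled by rigidity. The only difference is bookkeeping of the translation ambiguity: you absorb it into the choice of $\psi'$ by matching the points first and then derive the hyperplane-class compatibility from $\varphi$ on $\alpha_n$, whereas the paper fixes a hyperplane-compatible identification of the two planes first and then finds that the residual ambiguity is translation by a $3$-torsion point; the two orderings are equivalent.
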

 
We sketch the proof of Theorem~\ref{anticanonTorelli} under the simplifying assumption that $X$ contains no smooth rational curves of self-intersection $-2$, i.e.\  $\Delta_X =\emptyset$. It is easy to see that the same must be true for $X'$. As above, let $X$ be the blowup of $\Pee^2$ at $n\ge 3$ distinct points $p_1, \dots, p_n$ and let $\varepsilon_1, \dots, \varepsilon_n$ be the classes of the corresponding exceptional curves. The hypotheses imply that $f(\varepsilon_1)= \varepsilon_1', \dots, f(\varepsilon_n)= \varepsilon_n'$ are the classes of disjoint exceptional curves on $X'$ and that blowing them down gives a morphism $X'\to \Pee^2$ for which $D'$ is a smooth cubic. There is then an isomorphism of pairs $(\Pee^2, D') \to (\Pee^2, D)$ inducing the given isomorphism $D\cong D'$, and it is unique up to a projective automorphism of $\Pee^2$ which fixes $D$ and is given by translation by a $3$-torsion point of $\Pic ^0D$.  
Use this identification to identify $(\Pee^2, D')$ and $ (\Pee^2, D)$. 

Given the set $\{\alpha_1, \dots, \alpha_n\} \subseteq \Lambda_{E_n}$, the period point $\varphi_X$ determines the differences  $\varphi_X(\alpha_i) = p_{i+1}-p_i$, $i< n$,  and $\varphi_X(\alpha_n) = h - p_{n-2}-p_{n-1} - p_n$ in $JD$. 
By construction,  $X'$ is the blowup of $\Pee^2$ at $n$ distinct points $p_1', \dots, p_n'$ of $D$, and $f(\alpha_i) = \alpha_i'$, where $\alpha_i' = \varepsilon_{i+1}' - \varepsilon_i'$ for $i < n$ and  $\alpha_n' = h - \varepsilon_{n-2}'-\varepsilon_{n-1}' - \varepsilon_n'$. Since  $\varphi_{X'}(\alpha_i' )=    p_{i+1}'-p_i'$ for $i< n$ and   $\varphi_{X'}(\alpha_n' )= h - p_{n-2}'-p_{n-1}' - p_n'$ in $JD$,  Condition (iv) implies that  $p_{i+1}-p_i = p_{i+1}'-p_i'$ and $h - p_{n-2}-p_{n-1} - p_n = h - p_{n-2}'-p_{n-1}' - p_n'$.  The points $p_i$ are determined up to translation by a point $\xi\in \Pic ^0D$. Replacing $p_i$ by $p_i+ \xi$ leaves the differences $p_{i+1}-p_i$ unchanged  and replaces $h - p_{n-2}-p_{n-1} - p_n$ by $h - p_{n-2}-p_{n-1} - p_n-3\xi$, and hence $3\xi=0$. Thus $p_i' = p_i + \xi$, where $\xi$ is a point of order $3$. Then there is   an isomorphism of pairs $(\Pee^2, D) \to (\Pee^2, D)$ which fixes $h$ but which acts on $D$ as translation by $\xi$. Hence the configurations
  $(D, p_1, \dots, p_n)$ and  $(D, p_1', \dots, p_n')$ are projectively equivalent, and therefore the corresponding blowups $X$ and $X'$ of $\Pee^2$ are isomorphic, via an isomorphism $g\colon X'\to X$ as in the statement of the theorem. It is then easy to check that  $g$ is unique.
  
  \medskip
 
Condition (ii) is automatic if $n \le 9$ and in particular if $X$ and $X'$ are (generalized) del Pezzo surfaces. More generally, we have the following:

\begin{lemma}\label{genample}  Suppose that $(X,D)$ and $(X', D')$ are two blowups of $\Pee^2$ at two smooth cubic curves $D$, $D'$ and that $f \colon H^2(X;\Zee) \to H^2(X'; \Zee)$  is an integral isometry  such that  $f([D]) = [D']$. Finally suppose that one of the following holds:
\begin{enumerate}
 \item[\rm(i)] $n \le 9$. 
 \item[\rm(ii)] $n \ge 11$, i.e.\ $D^2\le -2$,  and there exist nef and big divisors $H$ and $H'$ on $X$ and $X'$, respectively, such that, for every irreducible curve $C$ on $X$, $H\cdot C =0$ $\iff$ $C=D$, and similarly for $H'$, and such that $f([H]) = [H']$. 
  \end{enumerate}
  Then $f(\overline{\mathcal{A}}_{\text{\rm{gen}}}(X) ) = \overline{\mathcal{A}}_{\text{\rm{gen}}}(X')$.
\end{lemma}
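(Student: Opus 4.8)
The plan is to reduce the assertion to a purely numerical statement about which classes are effective, and then to check that the hypotheses on $f$ force effectivity to be preserved. First I would record that $f$ respects the canonical class: since $K_X \cong \scrO_X(-D)$ we have $[K_X] = -[D]$, so $f([D]) = [D']$ gives $f([K_X]) = [K_{X'}]$. As $f$ is an isometry, it then carries the set of numerical exceptional classes of $X$---those $\alpha$ with $\alpha^2 = \alpha\cdot[K_X] = -1$---bijectively onto the corresponding set for $X'$. Since $\overline{\mathcal{A}}_{\text{\rm{gen}}}(X)$ is by definition cut out by the inequalities $x\cdot\alpha\ge 0$ as $\alpha$ runs over the \emph{effective} numerical exceptional classes, it suffices to show that $f$ carries the effective numerical exceptional classes of $X$ onto those of $X'$; the equality of cones then follows formally by applying the same reasoning to $f^{-1}$.

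The main tool I would use is a Riemann--Roch dichotomy. For a numerical exceptional class $\alpha$ one has $\chi(\scrO_X(\alpha)) = 1$, so $h^0(\scrO_X(\alpha)) + h^0(\scrO_X([K_X]-\alpha)) \ge 1$ and at least one of $\alpha$, $[K_X]-\alpha$ is effective; since $[K_X] = -[D]$ is not effective, they cannot both be. Thus \emph{exactly one} of $\alpha$, $[K_X]-\alpha$ is effective, and the problem becomes to decide which one using intersection numbers alone.

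In case (i), $D$ is a smooth irreducible elliptic curve with $D^2 = 9 - n \ge 0$, hence nef. A direct computation gives $([K_X]-\alpha)\cdot[D] = -[K_X]^2 + \alpha\cdot[K_X] = n - 10 < 0$ for $n \le 9$, which is incompatible with $[K_X]-\alpha$ being effective. Hence $\alpha$ is always effective: every numerical exceptional class is effective on both $X$ and $X'$, and $f$ carries one set onto the other. In case (ii), $D$ is no longer nef, and I would replace $D$ by the polarization $H$. Here $H\cdot[D]=0$, so $H\cdot[K_X]=0$ and $H\cdot([K_X]-\alpha) = -H\cdot\alpha$; nefness of $H$ then forces $\alpha$ effective $\Rightarrow H\cdot\alpha\ge 0$ and $[K_X]-\alpha$ effective $\Rightarrow H\cdot\alpha\le 0$. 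To turn this into a clean criterion I would rule out $H\cdot\alpha = 0$: an effective class orthogonal to the nef and big $H$ is supported on the curves $C$ with $H\cdot C = 0$, i.e.\ on $D$ alone, hence equals $m[D]$ for some $m\ge 0$; but $(m[D])\cdot[K_X] = m(n-9)$ can never equal $-1$ for $n\ge 11$, so no numerical exceptional class is effective and $H$-orthogonal. Therefore $\alpha$ is effective $\iff H\cdot\alpha>0$, a condition manifestly preserved by $f$ because $f([H])=[H']$ gives $H'\cdot f(\alpha) = H\cdot\alpha$.

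The step I expect to be the main obstacle is the effectivity dichotomy in case (ii): one must verify that $H$ genuinely separates the two candidates $\alpha$ and $[K_X]-\alpha$ (that is, that $H\cdot\alpha\ne 0$) and that the resulting effectivity criterion depends only on the intersection-theoretic data $([H],\,[K_X])$ that $f$ is assumed to preserve. Once effectivity has been translated into these numerical conditions, the remainder of the argument is formal.
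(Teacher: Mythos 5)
Your proposal is correct and takes essentially the same route as the paper: the paper's proof simply reduces the lemma to the claim that $\alpha$ is an effective numerical exceptional curve for $X$ $\iff$ $f(\alpha)$ is one for $X'$, and cites \cite[Lemma 5.2]{Friedanticanon} and \cite[Lemma 5.9(iii)]{Friedanticanon} for the standard argument, which is precisely the Riemann--Roch dichotomy (exactly one of $\alpha$, $[K_X]-\alpha$ is effective) combined with separation by the nef class $[D]$ in case (i) and by $[H]$ in case (ii) that you spell out. Your write-up just makes the cited argument explicit, including the needed check that no effective numerical exceptional class is $H$-orthogonal.
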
 
\begin{proof} A standard argument along the lines of the proof of \cite[Lemma 5.2]{Friedanticanon} and \cite[Lemma 5.9(iii)]{Friedanticanon} shows that, under either hypothesis, $\alpha$ is an effective numerical exceptional curve for $X$ $\iff$ $f(\alpha)$ is an effective numerical exceptional curve for $X'$. Thus either (i) or (ii)  implies that $f(\overline{\mathcal{A}}_{\text{\rm{gen}}}(X) ) = \overline{\mathcal{A}}_{\text{\rm{gen}}}(X')$.
\end{proof} 

\begin{remark}\label{moregenample}   Similar methods show the following, which leads to a more general version of Lemma~\ref{genample} assuming that $f$ satisfies (i), (iii), and (iv) of Theorem~\ref{anticanonTorelli}: Suppose that $D^2\le -2$ and  that $y$ is a nef $\Ar$-divisor on $X$ such that $y\cdot [D] =0$ and, if $C$ is an irreducible curve on $X$ such that $y\cdot [C] =0$, then $C^2=-2$. Then a numerical exceptional curve $\alpha$ is effective $\iff$ $\alpha \cdot y \ge 0$.
\end{remark} 

\begin{remark} Condition (iii) of Theorem~\ref{anticanonTorelli} is also easy to deal with. In fact, reflections in the classes of elements of $\Delta_X$ generate a reflection group $W(\Delta_X)$ preserving the period map. Given $f \colon H^2(X;\Zee) \to H^2(X'; \Zee)$ satisfying (i), (ii), and (iv) of Theorem~\ref{anticanonTorelli}, after pre-composing $f$ with an element $w\in \Delta_X$, we can assume that $f$ satisfies (iii) as well.
\end{remark}

\section{$d$-semistable models and their deformation theory}

\subsection{Construction and  deformation theory}\label{ssect21}  Here and for future reference, we fix some terminology regarding del Pezzo surfaces:

\begin{definition}\label{defdP} An \textsl{almost del Pezzo surface} $Z$  is a smooth projective surface such that  $-K_Z$ is nef and big. A  \textsl{generalized  del Pezzo surface} $Z$  is a  projective surface with at worst rational double points such that  $\omega_Z^{-1}$ is ample. Thus the minimal resolution of a generalized  del Pezzo surface is an almost del Pezzo surface, and conversely the anticanonical model of an almost del Pezzo surface, i.e.\ the normal surface obtained by contracting all of the curves $C$ such that $K_Z\cdot C =0$, is a generalized  del Pezzo surface. 
\end{definition}

Next we describe a class of surfaces with simple elliptic singularities whose $d$-semistable models are well-behaved:

\begin{assumption}\label{Yassump}    $Y$ is a normal Gorenstein surface whose only singularities are $k$ simple elliptic singularities $p_1, \dots, p_k$. Let $\pi\colon\hY \to Y$ be a minimal resolution, so that the exceptional set of $\pi$ consists of $k$ disjoint smooth elliptic curves $D_1, \dots, D_k$ and let $D =D$. Let $m_i = -D_i^2$. Suppose that the singularities are locally smoothable. Then $1\le m_i \le 9$. We will further assume that $p_1, \dots, p_k$ are not base points for the line bundle $\omega_Y$.
\end{assumption} 

\begin{lemma}\label{IsurfYassump} If $Y$ is a normal Gorenstein I-surface with at worst simple elliptic singularities and no RDP singularities, then $Y$ satisfies Assumption~\ref{Yassump}.
\end{lemma}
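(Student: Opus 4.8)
The statement packages several conditions from Assumption~\ref{Yassump}. That $Y$ is normal and Gorenstein and that its only singularities are simple elliptic is the hypothesis; that the minimal resolution $\pi\colon \hY\to Y$ has exceptional set a disjoint union of smooth elliptic curves $D_1,\dots,D_k$ with $m_i=-D_i^2\ge 1$ is the defining property of simple elliptic singularities (the $p_i$ being distinct points). The substance is therefore (a) local smoothability, equivalently $m_i\le 9$, and (b) that no $p_i$ is a base point of $\omega_Y$. Throughout I will use that simple elliptic singularities are log canonical with discrepancy $-1$, so that $K_{\hY}=\pi^*K_Y-\sum_j D_j$; restricting to $D_i$ and using adjunction together with the disjointness of the $D_j$ gives the key identities $\pi^*\omega_Y|_{D_i}\cong \omega_{D_i}\cong \scrO_{D_i}$ and $K_{\hY}\cdot D_i=m_i$.

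For (a), a simple elliptic singularity is smoothable iff $m_i\le 9$ (the criterion underlying the $d$-semistable smoothing theory of \cite{FriedmanSmoothings}), so it suffices to bound $m_i$. Here I would invoke the Franciosi--Pardini--Rollenske classification (\cite{FPR1}, \cite{FPR2}, and Theorems~\ref{22case}, \ref{21case}, \ref{211case}, \ref{111case}): in every case $\hY$ is an explicit blowup of a rational, Enriques, or elliptic ruled surface, and the multiplicities satisfy $m_i\in\{1,2\}$, hence $m_i\le 9$ and the singularities are smoothable. Since $m_i\ge 1$ always, the full range $1\le m_i\le 9$ asserted in Assumption~\ref{Yassump} holds.

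For (b), by the projection formula and the minimally elliptic identity $\pi_*\scrO_{\hY}(-D_i)=\mathfrak m_{p_i}$ (valid since $D_i$ is the fundamental cycle of $p_i$), the sections of $\omega_Y$ vanishing at $p_i$ are exactly $H^0(\hY,\pi^*\omega_Y(-D_i))=H^0(\hY,K_{\hY}+\sum_{j\ne i}D_j)$. Thus $p_i$ fails to be a base point iff this space has dimension $p_g(Y)-1=1$ rather than $2$; equivalently, using $\pi^*\omega_Y|_{D_i}\cong\scrO_{D_i}$, a canonical section vanishes at $p_i$ iff its restriction to $D_i$ vanishes, so $p_i$ is a base point iff the restriction $H^0(\pi^*\omega_Y)\to H^0(\scrO_{D_i})=\Cee$ is zero. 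I would first record the intersection-theoretic constraint: since $K_Y$ is ample and $K_Y^2=1$, the pencil $|\omega_Y|$ has no fixed part and its base scheme is a single reduced point $x_0$, and writing $\pi^*C=\tilde C+a_iD_i$ for general members and using $\tilde C_1\cdot\tilde C_2\ge 0$ forces $a_i^2m_i\le 1$. Hence $x_0=p_i$ is possible only when $m_i=1$ (and $a_i=1$), which already disposes of every $D_i$ with $m_i=2$.

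The remaining, and genuinely delicate, case is $m_i=1$, where intersection theory alone does not exclude $x_0=p_i$. To finish I would use $p_g(\hY)<2=p_g(Y)$. If $x_0=p_i$ with $m_i=1$, then $D_i$ is precisely the fixed part of $|\pi^*\omega_Y|$ and the moving part $|\pi^*\omega_Y-D_i|$ is a base-point-free genus-one pencil; when $k=1$ this gives $K_{\hY}=\pi^*\omega_Y-D_i$, so $p_g(\hY)=h^0(K_{\hY})=2$, contradicting the classification value $p_g(\hY)\le 1$. For $k=2,3$ one argues similarly after subtracting the remaining $D_j$ from the moving pencil, reading off the relevant intermediate $h^0$ (and the value $p_g(\hY)=0$) from the explicit description of $\hY$ in Theorems~\ref{211case} and \ref{111case} and the rational and Enriques cases. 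The main obstacle is exactly this step: showing that the base point of the canonical pencil avoids the singular points in the multiplicity-one cases cannot be seen numerically and requires the identification of $\hY$ -- hence its geometric genus and the precise behavior of the canonical pencil -- furnished by the classification theorems.
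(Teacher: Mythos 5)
Your proposal is correct, and its engine is the same as the paper's. The paper's proof is a citation of \cite[Proposition 3.6]{FPR2} followed by the remark that the lemma ``also follows directly from the exact sequence $0 \to K_{\hY} \to K_{\hY}(\sum_iD_i) \to \bigoplus_i\scrO_{D_i} \to 0$, with some care in the elliptic ruled case''; your base-point criterion --- $p_i$ is a base point of $\omega_Y$ iff the restriction $H^0(\pi^*\omega_Y) \to H^0(\scrO_{D_i})$ vanishes, iff $h^0(K_{\hY}+\sum_{j\ne i}D_j)=2$ --- is exactly the cohomology of that sequence, and you draw the same external inputs ($m_i\le 2$, hence local smoothability, and the values of $p_g(\hY)$) from the Franciosi--Pardini--Rollenske classification. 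The differences are organizational. Your intersection-theoretic step (the base scheme of $|K_Y|$ is a single reduced point, so $x_0=p_i$ forces $a_i^2m_i\le 1$, killing $m_i=2$) is valid but not needed: since $p_g(\hY)=0$ when $k=2$, the map $H^0(\pi^*\omega_Y)\to\bigoplus_iH^0(\scrO_{D_i})$ is an injection of $\Cee^2$ into $\Cee^2$, hence an isomorphism, which disposes of both singular points at once regardless of multiplicity; and for $k=1$ the kernel $H^0(K_{\hY})$ is $1$-dimensional, so the single restriction map is automatically surjective. Where your write-up stops short is precisely where the paper's does: for $k=3$ the soft bound only gives $h^0(K_{\hY}+D_j+D_l)\le h^0(K_{\hY})+2=2$, which is not yet a contradiction, so one must use the explicit elliptic ruled geometry to see that $h^0(K_{\hY}+D_j)=0$ for every $j$ (for the bisections because $K_{Y_0}+\Gamma$ is the pullback of a nontrivial $2$-torsion line bundle on $B$; for the section because the relevant class is negative on fibers), whence $h^0(K_{\hY}+D_j+D_l)\le 1$. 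This is the ``care in the elliptic ruled case'' the paper alludes to, and you correctly identify it as the one genuinely delicate point; your proof is complete once that computation is written out.
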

\begin{proof} This is proved in \cite[Proposition 3.6]{FPR2}. It also follows directly from the exact sequence
$$0 \to K_{\hY} \to K_{\hY}\otimes \scrO_{\hY}(D)  \to \bigoplus_i\scrO_{D_i} \to 0,$$
with some care in the elliptic ruled case. 
\end{proof} 

\begin{remark} A similar result holds in case $Y$ is the minimal resolution of the RDP singularities of a normal Gorenstein I-surface with at worst simple elliptic singularities. 
\end{remark} 

\begin{definition}\label{defdssmodel}  Given a surface $Y$ satisfying Assumption~\ref{Yassump},  make a $d$-semistable normal crossing model for $Y$ as follows: For each $i$, choose a pair $(Z_i, D_i')$, where $Z_i$ is an almost del Pezzo surface   of degree $(-K_{Z_i})^2 = m_i$  and $D_i'\in |-K_{Z_i}|$ is an anticanonical divisor isomorphic to $D_i \subseteq \hY$. 
Choose an isomorphism $\varphi_i \colon D_i \to D_i'$ such that $N_{D_i/\hY}\otimes \varphi_i^*N_{D_i'/Z_i} \cong \scrO_{D_i}$ (the $d$-semistable condition) and use $\varphi_i $ to identify  $D_i'$ with  $D_i$. Note that, for each $i$, there are exactly $m_i^2$ choices for $\varphi_i$. Then  glue $Z_i$ to $\hY$ along $D_i$ via the isomorphism $\varphi_i$. Let $X_0= \hY\amalg_{D_i}\left(\coprod_iZ_i\right)$ be the resulting surface with normal crossings, let $a\colon \widetilde{X}_0 \to X_0$ be the normalization map, and let $j\colon \coprod _i D_i \to X_0$ be the inclusion. Of course, we can make the same construction when $Z_i$ is assumed instead to be a generalized del Pezzo surface.

If $\omega_{X_0}$ is the dualizing sheaf of $X_0$, then $\omega_{X_0}|Z_i\cong \scrO_{Z_i}$ and $\omega_{X_0}|\hY \cong  K_{\hY}\otimes \scrO_{\hY}(D) =L$, in the notation of \cite{FG24}. In particular, $\omega_{X_0}$ is trivial in a neighborhood of $Z_i$, and the map $H^0(X_0; \omega_{X_0}) \to H^0(\hY; L)$ is an isomorphism. Thus by Assumption~\ref{Yassump} there exist sections of $\omega_{X_0}$ which vanish only along a curve in $\hY -\bigcup_iD_i$. 
\end{definition}

Next   recall the basic setup of \cite{FriedmanSmoothings}. By construction, $T^1_{X_0} = \mathit{Ext}^1(\Omega^1_{X_0}, \scrO_{X_0}) \cong \bigoplus _i\scrO_{D_i}$. The set of first order deformations of $X_0$ is classified by
$$\mathbb{T}^1_{X_0} =\Ext^1(\Omega^1_{X_0}, \scrO_{X_0}) $$ and the obstruction space to deforming $X_0$ is given by 
$$\mathbb{T}^2_{X_0} =\Ext^2(\Omega^1_{X_0}, \scrO_{X_0}).$$
Because the singularities of $X_0$ are local complete intersections, $T^2_{X_0} = \mathit{Ext}^2(\Omega^1_{X_0}, \scrO_{X_0})=0$ and there is an exact sequence
$$0 \to H^1(X_0; T^0_{X_0}) \to \mathbb{T}^1_{X_0} \to H^0(X_0; T^1_{X_0}) \to H^2(X_0; T^0_{X_0}) \to \mathbb{T}^2_{X_0} \to H^1(X_0; T^1_{X_0}) \to 0,$$
where $T^0_{X_0}$ is the sheaf of derivations of $X_0$.  The sheaf $\bigoplus_iT^i_{X_0} = T^0_{X_0}\oplus T^1_{X_0}$ is a sheaf of graded Lie algebras. Likewise, the vector space 
$$\bigoplus_i\mathbb{T}^i_{X_0} = \mathbb{T}^0_{X_0}\oplus \mathbb{T}^1_{X_0}\oplus \mathbb{T}^2_{X_0}$$
is a graded Lie algebra and there is a natural compatibility between this  Lie bracket and the  pairing 
$$H^p(X_0; T^i_{X_0}) \otimes H^q(X_0; T^j_{X_0}) \to H^{p+q}(X_0; T^{i+j}_{X_0})$$ induced by cup product using the sheaf Lie bracket.

The group $H^1(X_0; T^0_{X_0}) $ is the Zariski tangent space to the locally trivial deformations of $X_0$, and $H^2(X_0; T^0_{X_0})$ is the obstruction space for these deformations. We then have the following:

\begin{lemma}\label{locallytriv} There is an exact sequence
$$0 \to T^0_{X_0} \to a_*\Big(T_{\hY}(-\log D)  \oplus \bigoplus_i T_{Z_i}(-\log  D_i')\Big)\to j_*\Big(\bigoplus_iT_{D_i} \Big)\to 0.$$
 Thus there is an exact sequence
\begin{gather*}
\bigoplus_iH^0(D_i;T_{D_i}) \to  H^1(X_0; T^0_{X_0}) \to H^1(\hY; T_{\hY}(-\log D)) \oplus \bigoplus_i H^1(Z_i; T_{Z_i}(-\log  D_i'))\to  \\
\to \bigoplus_i H^1(D_i; T_{D_i})  \to H^2(X_0; T^0_{X_0}) \to H^2(\hY; T_{\hY}(-\log D)) \oplus \bigoplus_i H^2(Z_i; T_{Z_i}(-\log  D_i')) \to 0.
\end{gather*}
\end{lemma}
\begin{proof} The  exact sequence of sheaves is \cite[Lemma 4.6(iv)]{FL23}. The second exact sequence is then the corresponding long exact cohomology sequence.
\end{proof}

\begin{remark}\label{loctrivremark} The image of the coboundary map $\bigoplus_iH^0(D_i; T_{D_i}) \to H^1(X_0; T^0_{X_0})$ is the tangent space to the deformations of $X_0$ obtained by deforming the gluings $\varphi_i \colon D_i \subseteq \hY \to D_i'\subseteq Z_i$. The geometric content of the above exact sequence is the following: A first order deformation of $X_0$ induces first order deformations of the pairs $(Z_i, D_i)$ and $(\hY,D)$, which preserve to first order the fact  that $\varphi_i \colon D_i \subseteq \hY$ is isomorphic to  $D_i'\subseteq Z_i$. Conversely, given   a collection of  first order deformations of the pairs $(Z_i, D_i)$  and $(\hY,D)$ satisfying the above condition, there is a first order deformation of $X_0$ and it is unique up to a choice of gluings. 
\end{remark}

We turn now to describing the obstruction space $\mathbb{T}^2_{X_0}$. Unfortunately, because 
$$H^1(X_0; T^1_{X_0}) = \bigoplus_iH^1(X_0; \scrO_{D_i}) \cong \Cee^k,$$
the deformations of $X_0$ are always obstructed. The obstructions correspond as in \cite[4.5, 5.10]{FriedmanSmoothings} to deforming the sheaf $T^1_{X_0}$ away from being trivial by e.g.\ deforming the gluings $\varphi_i$.  While one can adapt the arguments of \cite[5.10]{FriedmanSmoothings} to this situation, it is somewhat simpler to use instead the logarithmic deformation theory of Kawamata-Namikawa \cite{KawamataNamikawa}. In particular, they construct a functor $\mathbf{LD}_{X_0}$ (for logarithmic deformations) corresponding to smoothings of $X_0$ or locally trivial deformations which remain $d$-semistable. However, the price we have to pay in this approach is that we have to carry along the additional data of a log structure.    To describe this picture in more detail, using the notation of  \cite{FriedmanSmoothings}, let $\Lambda^1_{X_0}$ denote the ``abstract relative log complex" of \cite[\S3]{FriedmanSmoothings} and let $S_{X_0}$ denote its dual. (In \cite{KawamataNamikawa}, these are denoted by $\Omega^1_{X_0/\Cee}(\log)$ and $T_{X_0/\Cee}(\log)$ respectively.) Fix an everywhere generating section $\xi$ of $H^0(X_0; T^1_{X_0})$. Then there is an exact sequence
$$0 \to S_{X_0} \to T^0_{X_0} \xrightarrow{[\cdot, \xi]}   T^1_{X_0} \cong \bigoplus_i\scrO_{D_i} \to 0.$$
It is easy to see that $S_{X_0}$ is independent of the choice of $\xi$. There is then an exact sequence
$$H^0(X_0; T^0_{X_0}) \to   H^0(X_0; T^1_{X_0}) \to H^1(X_0; S_{X_0}) \to H^1(X_0; T^0_{X_0}).$$
The image of $H^0(X_0; T^1_{X_0})\cong \bigoplus_iH^0(D_i; \scrO_{D_i}) $ in $H^1(X_0; S_{X_0})$ records the deformation of the given logarithmic structure on $X_0$, keeping $X_0$ itself fixed. The map from  $H^1(X_0; S_{X_0})$ to $H^1(X_0; T^0_{X_0})$ corresponds to ``forgetting the logarithmic structure," and its image is the tangent space $T_{\Xi,x_0}$ to the space (or functor) of locally trivial deformations of $X_0$ preserving the $d$-semistability condition, by \cite[4.5]{FriedmanSmoothings}. 
More precisely,  the argument of \cite[Corollary 2.4]{KawamataNamikawa} essentially shows the following:

\begin{theorem}\label{versal}  Suppose that $H^2(X_0; S_{X_0}) =0$. Then there is a germ of a smooth manifold $(\Xi, x_0)$ and  a ``logarthmic semi-universal deformation" $\mathcal{X} \to \Xi\times \Delta^k$, where $\Delta$ is the unit disk, such that 
 \begin{enumerate} 
 \item[\rm(i)] The total space $\mathcal{X}$ is smooth and the fiber over $(x_0, 0)$ is $X_0$.
 \item[\rm(ii)] The Kodaira-Spencer map induces an isomorphism from the tangent space  $T_{\Xi, x_0}$ of $\Xi$ at $x_0$ to the tangent space of locally trivial deformations of $X_0$ for which the $d$-semistability condition holds, i.e.\ for which $T^1_{X_0}$ remains trivial. Moreover, $T_{\Xi, x_0}$ is the image of $H^1(X_0; S_{X_0})$ in $H^1(X_0; T^0_{X_0})$, i.e.\ $T_{\Xi, x_0} \cong H^1(X_0; S_{X_0})/ \im H^0(X_0; T^1_{X_0})$.
 \item[\rm(iii)] The restriction of  $\mathcal{X}$ to  $\Xi\times (\Delta^*)^k$ is a smooth morphism. 
  \item[\rm(iv)] Let  $(z_1, z_2, \dots, z_k)$ be the product  coordinates on $\Delta^k$ and let
 $$\theta\colon T_{\Xi, x_0} \oplus \bigoplus_i \Cee\frac{\partial}{\partial z_i} \to \mathbb{T}^1_{X_0}$$ be the Kodaira-Spencer map at $(x_0,0)$.  Then the image of $\theta(\partial/\partial z_i)$ in $$H^0(X_0; T^1_{X_0}) \cong \bigoplus_iH^0(D_i; \scrO_{D_i})$$ is an $i^{\text{\rm{th}}}$ basis vector.  \qed
 \end{enumerate}
\end{theorem}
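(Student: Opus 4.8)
The plan is to reduce the statement to the logarithmic deformation theory of Kawamata--Namikawa \cite{KawamataNamikawa}, of which \cite[Corollary 2.4]{KawamataNamikawa} is the model, and then to extract the finer product structure and Kodaira--Spencer data from the exact sequences already established. Recall that the functor $\mathbf{LD}_{X_0}$ of logarithmic deformations of $X_0$, equipped with the canonical log structure coming from $d$-semistability, has tangent space $H^1(X_0; S_{X_0})$ and obstructions valued in $H^2(X_0; S_{X_0})$, where $S_{X_0}$ is the dual of the abstract relative log complex $\Lambda^1_{X_0}$. This is the content of the Kawamata--Namikawa theory transported to our setting: since the singularities of $X_0$ are local complete intersections, $\Lambda^1_{X_0}$ is locally free and log-smooth deformation theory applies verbatim.

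First I would invoke the hypothesis $H^2(X_0; S_{X_0}) = 0$. With the obstruction space vanishing, $\mathbf{LD}_{X_0}$ is unobstructed, so its hull is a smooth formal germ of dimension $\dim H^1(X_0; S_{X_0})$; by Grauert's existence theorem this formal log-smooth deformation is effective and is induced from an analytic log-smooth family over a smooth base germ. To recognize the base as $\Xi \times \Delta^k$, I would use the exact sequence
$$H^0(X_0; T^0_{X_0}) \to   H^0(X_0; T^1_{X_0}) \to H^1(X_0; S_{X_0}) \xrightarrow{\ r\ } H^1(X_0; T^0_{X_0}),$$
where $r$ forgets the log structure. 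Its image is the locally trivial $d$-semistable tangent space $T_{\Xi, x_0}$ (by \cite[4.5]{FriedmanSmoothings}), while the image of $H^0(X_0; T^1_{X_0}) \cong \bigoplus_i H^0(D_i; \scrO_{D_i}) \cong \Cee^k$ records the $k$ deformations of the log structure with $X_0$ held fixed, i.e.\ the $k$ smoothing directions. Because the double locus $\coprod_i D_i$ has $k$ connected components, these directions split off as a factor $\Delta^k$ carrying the standard log structure from $\{z_1\cdots z_k = 0\}$, with $\Xi$ carrying the trivial log structure; choosing a splitting of the sequence gives the product base.

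For the verification, properties (i) and (iii) follow from the local normal form of a log-smooth morphism: near a point of $D_i$ the family is analytically $uv = z_i$, so $\mathcal{X}$ is smooth and the morphism is smooth exactly where every $z_i\ne 0$, namely over $\Xi \times (\Delta^*)^k$. Property (iv) is immediate from the construction, since by design $z_i$ is dual to the $i^{\text{th}}$ summand of $H^0(X_0; T^1_{X_0})$, so the Kodaira--Spencer image of $\partial/\partial z_i$ is the $i^{\text{th}}$ basis vector under $H^0(X_0; T^1_{X_0}) \cong \bigoplus_i H^0(D_i; \scrO_{D_i})$. Property (ii) then follows because the Kodaira--Spencer map of the $\Xi$-factor is $r$ composed with the identification of $\im r$ with the locally trivial $d$-semistable deformations; smoothness of $\Xi$ and the fact that the $\Xi$-directions inject under $r$ give the asserted isomorphism onto the image of $H^1(X_0; S_{X_0})$ in $H^1(X_0; T^0_{X_0})$.

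The main obstacle I expect is the passage from the abstract log-smooth semi-universal deformation to the concrete family over $\Xi \times \Delta^k$ with smooth total space and genuinely decoupled smoothing parameters. Kawamata--Namikawa's theory naturally produces a family over a base carrying a log structure, and one must check that the $k$ smoothing directions split into independent $\Delta$-factors rather than being entangled with the locally trivial deformations, and that the resulting total space is smooth. This amounts to matching the global log-smooth family with the local smoothing models $uv = z_i$ along each $D_i$ and verifying compatibility of the log structures, which is precisely where the $d$-semistability condition $N_{D_i/\hY} \otimes N_{D_i'/Z_i} \cong \scrO_{D_i}$ enters essentially.
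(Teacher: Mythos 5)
Your proposal follows exactly the paper's route: the paper gives no independent argument for this theorem but derives it by invoking the logarithmic deformation theory of Kawamata--Namikawa (specifically, the argument of their Corollary 2.4) applied with the hypothesis $H^2(X_0;S_{X_0})=0$, together with the exact sequence $H^0(X_0;T^0_{X_0})\to H^0(X_0;T^1_{X_0})\to H^1(X_0;S_{X_0})\to H^1(X_0;T^0_{X_0})$ to interpret the base as $\Xi\times\Delta^k$ — which is precisely your reduction. Your elaboration of the local normal form $uv=z_i$ and the identification of the $k$ smoothing directions with the connected components of the double locus fills in details the paper leaves implicit, but the underlying approach is the same.
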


Fixing an everywhere generating section of $T^1_{X_0}$ as  above,  the Lie bracket $[\cdot, \xi]\colon \mathbb{T}^1_{X_0} \to \mathbb{T}^2_{X_0} \cong H^1(X_0; T^1_{X_0})$   is compatible with the bracket $[\cdot, \xi]\colon H^1(X_0; T^0_{X_0}) \to   H^1(X_0; T^1_{X_0})$. Define $ (\mathbb{T}^1_{X_0})_{\mathbf{s}}$ to be $\Ker \{[\cdot, \xi]\colon \mathbb{T}^1_{X_0} \to \mathbb{T}^2_{X_0}\}$. As before, $ (\mathbb{T}^1_{X_0})_{\mathbf{s}}$ is independent of the choice of $\xi$. 
 
 \begin{corollary}\label{gentangsp}  Suppose that $H^2(X_0; S_{X_0}) = 0$. Then $ (\mathbb{T}^1_{X_0})_{\mathbf{s}}$ is the Zariski tangent space to the smoothing component of the functor of deformations of $X_0$ and  there is an exact sequence
$$
0 \to T_{\Xi,x_0} \to (\mathbb{T}^1_{X_0})_{\mathbf{s}} 
\to H^0(X_0; T^1_{X_0}) \cong \Cee^k \to 0.  \qed
$$
 \end{corollary}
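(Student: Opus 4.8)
The goal of Corollary~\ref{gentangsp} is to identify the Zariski tangent space to the smoothing component of the deformation functor and to fit it into a short exact sequence with the logarithmic tangent space $T_{\Xi,x_0}$ and $H^0(X_0; T^1_{X_0})$.

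The plan is to combine Theorem~\ref{versal} with the characterization of $(\mathbb{T}^1_{X_0})_{\mathbf{s}}$ as the kernel of the bracket map $[\cdot,\xi]\colon \mathbb{T}^1_{X_0}\to \mathbb{T}^2_{X_0}$. First I would argue that the total space $\mathcal{X}\to \Xi\times \Delta^k$ of the logarithmic semi-universal deformation, being smooth by Theorem~\ref{versal}(i) and smooth over $\Xi\times(\Delta^*)^k$ by (iii), genuinely smooths $X_0$: the generic fiber is smooth, so $\Xi\times\Delta^k$ maps to the smoothing component and the Kodaira-Spencer map $\theta$ of Theorem~\ref{versal}(iv) lands inside $(\mathbb{T}^1_{X_0})_{\mathbf{s}}$. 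This gives one containment, namely that the image of $\theta$ is contained in the smoothing tangent space, and shows dimension count from below. The key point is that a first-order deformation coming from an honest smoothing must have its class annihilated by $[\cdot,\xi]$, since otherwise it would deform the sheaf $T^1_{X_0}$ away from triviality (this is exactly the obstruction mechanism recalled before the statement), which is incompatible with smoothing all $k$ singular curves simultaneously.

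Next I would establish the exact sequence itself. Applying $[\cdot,\xi]$ to $\mathbb{T}^1_{X_0}$ and using that it is compatible with the bracket on $H^1(X_0;T^0_{X_0})$, one has $(\mathbb{T}^1_{X_0})_{\mathbf{s}} = \Ker\{[\cdot,\xi]\colon \mathbb{T}^1_{X_0}\to \mathbb{T}^2_{X_0}\}$ by definition. The map $(\mathbb{T}^1_{X_0})_{\mathbf{s}}\to H^0(X_0;T^1_{X_0})$ is the restriction of the natural map $\mathbb{T}^1_{X_0}\to H^0(X_0;T^1_{X_0})$ from the six-term $\Ext$ sequence. I would identify its kernel with $H^1(X_0;T^0_{X_0})\cap (\mathbb{T}^1_{X_0})_{\mathbf{s}}$, which by the compatibility of brackets and the exact sequence $0\to S_{X_0}\to T^0_{X_0}\xrightarrow{[\cdot,\xi]} T^1_{X_0}\to 0$ is precisely the image of $H^1(X_0;S_{X_0})$ in $H^1(X_0;T^0_{X_0})$, i.e.\ $T_{\Xi,x_0}$ by Theorem~\ref{versal}(ii). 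Surjectivity onto $H^0(X_0;T^1_{X_0})\cong\Cee^k$ follows from Theorem~\ref{versal}(iv): the classes $\theta(\partial/\partial z_i)$ map to a basis of $H^0(X_0;T^1_{X_0})$, and since they lie in $(\mathbb{T}^1_{X_0})_{\mathbf{s}}$ by the previous paragraph, the map is onto.

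Finally, to confirm that $(\mathbb{T}^1_{X_0})_{\mathbf{s}}$ is the \emph{full} smoothing tangent space rather than merely containing $\im\theta$, I would note that the exact sequence just constructed shows $\dim (\mathbb{T}^1_{X_0})_{\mathbf{s}} = \dim T_{\Xi,x_0} + k$, which matches the dimension of the base $\Xi\times\Delta^k$ of the semi-universal logarithmic deformation; since this deformation dominates the smoothing component and the Kodaira-Spencer map is injective on the base directions, the tangent spaces agree. The main obstacle I expect is the identification of $\Ker\{(\mathbb{T}^1_{X_0})_{\mathbf{s}}\to H^0(X_0;T^1_{X_0})\}$ with $T_{\Xi,x_0}$: this requires carefully tracking the bracket $[\cdot,\xi]$ across the six-term sequence and the sequence defining $S_{X_0}$, and verifying that the logarithmic deformations (which fix $X_0$ and deform only the log structure) correspond exactly to those smoothing directions that are locally trivial as deformations of the underlying space. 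Everything else is a diagram chase once this compatibility is pinned down.
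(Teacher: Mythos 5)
Your overall architecture --- Theorem~\ref{versal} together with the bracket compatibility for the kernel, and a dimension count against the log semi-universal family for identifying the smoothing tangent space --- is exactly the route the paper intends (it writes no proof at all), and your kernel identification $\Ker\{(\mathbb{T}^1_{X_0})_{\mathbf{s}}\to H^0(X_0;T^1_{X_0})\}=H^1(X_0;T^0_{X_0})\cap(\mathbb{T}^1_{X_0})_{\mathbf{s}}=\im H^1(X_0;S_{X_0})=T_{\Xi,x_0}$ is correct as written. The genuine soft spot is your surjectivity step. You derive it from the claim that $\theta(\partial/\partial z_i)\in(\mathbb{T}^1_{X_0})_{\mathbf{s}}$, justified by ``a first-order deformation coming from an honest smoothing must be annihilated by $[\cdot,\xi]$.'' As stated, this heuristic proves too much: under the hypothesis $H^2(X_0;S_{X_0})=0$ one also has $H^2(X_0;T^0_{X_0})=0$ (because $T^1_{X_0}$ is supported on curves, so $H^2(X_0;T^1_{X_0})=0$ and $H^2(X_0;S_{X_0})$ surjects onto $H^2(X_0;T^0_{X_0})$), hence \emph{every} class in $H^1(X_0;T^0_{X_0})$ --- including classes with $[\eta,\xi]\neq 0$, which exist since $[\cdot,\xi]\colon H^1(X_0;T^0_{X_0})\to H^1(X_0;T^1_{X_0})\cong\Cee^k$ is surjective --- integrates to an honest deformation over a disk. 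So ``comes from an actual deformation'' does not by itself force $[\eta,\xi]=0$. Moreover the direction $\theta(\partial/\partial z_i)$ is not a smoothing of $X_0$ (it smooths only $D_i$), so ``smoothing all $k$ curves simultaneously'' does not apply to it; what your argument really needs is the local computation of \cite[5.10]{FriedmanSmoothings} and \cite{KawamataNamikawa} relating the quadratic obstruction $[\eta,\eta]$ to the pairing $[\eta,\xi]$, and that cannot be waved through.

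The good news is that surjectivity needs no geometric input at all: it follows purely cohomologically from the hypothesis, which is surely the intended argument since it uses $H^2(X_0;S_{X_0})=0$ directly. Indeed, the vanishing $H^2(X_0;T^0_{X_0})=0$ noted above shows that $\mathbb{T}^1_{X_0}\to H^0(X_0;T^1_{X_0})$ is onto and that $\mathbb{T}^2_{X_0}\cong H^1(X_0;T^1_{X_0})$, while the long exact sequence of $0\to S_{X_0}\to T^0_{X_0}\to T^1_{X_0}\to 0$ shows that $[\cdot,\xi]\colon H^1(X_0;T^0_{X_0})\to H^1(X_0;T^1_{X_0})$ is surjective. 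Given $v\in H^0(X_0;T^1_{X_0})$, lift it to $\eta\in\mathbb{T}^1_{X_0}$ and choose $\zeta\in H^1(X_0;T^0_{X_0})$ with $[\zeta,\xi]=[\eta,\xi]$; then $\eta-\zeta\in(\mathbb{T}^1_{X_0})_{\mathbf{s}}$ maps to $v$. (Equivalently, apply the snake lemma to the rows $0\to H^1(X_0;T^0_{X_0})\to\mathbb{T}^1_{X_0}\to H^0(X_0;T^1_{X_0})\to 0$ and $0\to H^1(X_0;T^1_{X_0})\to\mathbb{T}^2_{X_0}\to 0\to 0$ with vertical maps $[\cdot,\xi]$.) With the exact sequence established this way, your final step --- the dimension count together with Theorem~\ref{versal} and the Kawamata--Namikawa fact that the log family dominates the smoothing component, which you correctly isolate as the remaining input --- completes the identification of $(\mathbb{T}^1_{X_0})_{\mathbf{s}}$ with the Zariski tangent space of the smoothing component.
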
 

\begin{theorem}\label{versal2}  Suppose that $Y$ satisfies Assumption~\ref{Yassump} and that  $ H^2(\hY; T_{\hY}(-\log D)) =0$.   Then $H^2(X_0; T^0_{X_0}) = H^2(X_0; S_{X_0}) =0$ and in particular the conclusions of Theorem~\ref{versal}  hold. 
\end{theorem}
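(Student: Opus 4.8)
The plan is to reduce both vanishing statements to cohomology on the components $\hY$ and $Z_i$ via the exact sequences already at hand, so that the hypothesis $H^2(\hY; T_{\hY}(-\log \sum_iD_i)) = 0$ disposes of the $\hY$-contribution and a single del Pezzo computation disposes of each $Z_i$. Writing $\mathcal{T}_i = T_{Z_i}(-\log D_i')$, I would first treat $H^2(X_0; T^0_{X_0})$ using the second exact sequence of Lemma~\ref{locallytriv}. Since $D_i$ is an elliptic curve, $T_{D_i}\cong \scrO_{D_i}$ and $H^1(D_i; T_{D_i})\cong \Cee$, while $H^2(D_i; -) = 0$. Hence $H^2(X_0; T^0_{X_0}) = 0$ will follow once I check two things: that $H^2(Z_i; \mathcal{T}_i) = 0$ (so that, together with the hypothesis, the rightmost term of the sequence vanishes), and that the restriction map $H^1(\hY; T_{\hY}(-\log \sum_iD_i)) \oplus \bigoplus_i H^1(Z_i;\mathcal{T}_i) \to \bigoplus_i H^1(D_i; T_{D_i})$ is surjective (so that the connecting map out of $\bigoplus_i H^1(D_i; T_{D_i})$ is zero).

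The single key input for both points is the vanishing
$$H^0(Z_i; \Omega^1_{Z_i}(\log D_i')) = 0,$$
which I would prove from the residue sequence $0 \to \Omega^1_{Z_i} \to \Omega^1_{Z_i}(\log D_i') \to \scrO_{D_i'} \to 0$: since $Z_i$ is rational we have $H^0(\Omega^1_{Z_i}) = 0$, and the connecting map $H^0(\scrO_{D_i'}) \cong \Cee \to H^1(Z_i; \Omega^1_{Z_i})$ sends the generator to the class $c_1(\scrO_{Z_i}(D_i')) = -[K_{Z_i}]$, which is nonzero because $-K_{Z_i}$ is nef and big. By Serre duality, $\mathcal{T}_i^{\vee} = \Omega^1_{Z_i}(\log D_i')$ and $K_{Z_i} = -D_i'$ give $H^2(Z_i; \mathcal{T}_i)^{\vee} \cong H^0(Z_i; \Omega^1_{Z_i}(\log D_i')(-D_i'))$ and $H^2(Z_i; \mathcal{T}_i(-D_i'))^{\vee} \cong H^0(Z_i; \Omega^1_{Z_i}(\log D_i'))$. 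The second is the key vanishing, and the first is a subspace of it, so both $H^2(Z_i;\mathcal{T}_i) = 0$ and $H^2(Z_i; \mathcal{T}_i(-D_i')) = 0$. The latter, fed into the restriction sequence $0 \to \mathcal{T}_i(-D_i') \to \mathcal{T}_i \to \mathcal{T}_i|_{D_i'} \to 0$ together with the canonical splitting $\mathcal{T}_i|_{D_i'}\cong T_{D_i'}\oplus \scrO_{D_i'}$ (tangential part plus residue), shows that $H^1(Z_i; \mathcal{T}_i)$ already surjects onto $H^1(D_i'; \mathcal{T}_i|_{D_i'})$ and hence onto $H^1(D_i; T_{D_i})$; this gives the required surjectivity and completes $H^2(X_0; T^0_{X_0}) = 0$.

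For $H^2(X_0; S_{X_0}) = 0$ I would either run the same argument through a log analogue of Lemma~\ref{locallytriv}, namely
$$0 \to S_{X_0} \to a_*\Big(T_{\hY}(-\log \textstyle\sum_iD_i) \oplus \bigoplus_i \mathcal{T}_i\Big) \to j_*\Big(\bigoplus_i (T_{D_i}\oplus \scrO_{D_i})\Big)\to 0,$$
where the quotient map is the tangential difference together with the residue sum (the residue-sum factor being exactly the bracket $[\cdot,\xi]$ that cuts $S_{X_0}$ out of $T^0_{X_0}$), or, more economically, use the given sequence $0 \to S_{X_0} \to T^0_{X_0} \xrightarrow{[\cdot,\xi]} T^1_{X_0}\to 0$. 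In the latter route, $H^2(T^1_{X_0}) = \bigoplus_i H^2(D_i;\scrO_{D_i}) = 0$ and the just-proved $H^2(T^0_{X_0}) = 0$ reduce the claim to surjectivity of $H^1(X_0;T^0_{X_0}) \to H^1(X_0; T^1_{X_0}) = \bigoplus_i H^1(D_i; \scrO_{D_i})$. Via the log sequence above this is again the assertion that $\bigoplus_i H^1(Z_i;\mathcal{T}_i)$ surjects onto $\bigoplus_i H^1(D_i; T_{D_i}\oplus \scrO_{D_i})$, which is exactly the surjectivity onto $H^1(D_i'; \mathcal{T}_i|_{D_i'})$ established from $H^2(Z_i;\mathcal{T}_i(-D_i')) = 0$. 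Thus the same del Pezzo vanishing yields $H^2(X_0; S_{X_0}) = 0$, and Theorem~\ref{versal} applies.

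The main obstacle is bookkeeping rather than any deep new idea: correctly identifying the component-wise restriction and residue maps (in particular pinning down that $[\cdot,\xi]$ on $T^0_{X_0}$ is the sum of the two residues along each $D_i$, following \cite{FriedmanSmoothings} and \cite{KawamataNamikawa}, and hence that the relevant quotient for $S_{X_0}$ is $\bigoplus_i(T_{D_i}\oplus \scrO_{D_i})$), and then verifying that each target $H^1(D_i; -)$ is genuinely hit. The delicate features are that these targets are the one-dimensional groups $H^1$ of an elliptic curve, so there is no slack, and that the surjectivity must come entirely from the $Z_i$-side; both are controlled by the single Serre-dual vanishing $H^0(Z_i;\Omega^1_{Z_i}(\log D_i')) = 0$, whose proof rests only on the rationality of $Z_i$ and the nonvanishing of $[-K_{Z_i}]$.
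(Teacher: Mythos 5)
Your treatment of the first vanishing, $H^2(X_0;T^0_{X_0})=0$, is structurally the same as the paper's: both feed $H^2(Z_i;T_{Z_i}(-\log D_i'))=0$ and the surjectivity of $H^1(Z_i;T_{Z_i}(-\log D_i'))\to H^1(D_i;T_{D_i})$ into the long exact sequence of Lemma~\ref{locallytriv}; the paper quotes these two facts from \cite{FG24}, while you derive them by Serre duality from the single vanishing $H^0(Z_i;\Omega^1_{Z_i}(\log D_i'))=0$, whose proof (residue sequence, rationality of $Z_i$, nonvanishing of $c_1(\scrO_{Z_i}(D_i'))=-c_1(K_{Z_i})$) is correct. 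For the second vanishing, $H^2(X_0;S_{X_0})=0$, your route is genuinely different: the paper dualizes to $H^0(X_0;\Lambda^1_{X_0}\otimes\omega_{X_0})=0$ and runs a comparison argument using a section $\sigma$ of $\omega_{X_0}$ vanishing only inside $\hY-\bigcup_iD_i$ (this is exactly where the final clause of Assumption~\ref{Yassump} enters) together with injectivity of a fundamental class map, whereas you stay on the tangent side, using $0\to S_{X_0}\to T^0_{X_0}\xrightarrow{[\cdot,\xi]}T^1_{X_0}\to0$ and reducing everything to a surjectivity on $H^1$ supplied by the same del Pezzo vanishing. Once repaired as below, your argument is valid, and it is leaner in one respect: it never uses the base-point hypothesis on $\omega_Y$, only $d$-semistability and the cohomological inputs already needed for $T^0_{X_0}$.

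The repair concerns your claimed ``canonical splitting'' $T_{Z_i}(-\log D_i')|_{D_i'}\cong T_{D_i'}\oplus\scrO_{D_i'}$: this is false. The restriction of the log tangent sheaf to the boundary is canonically an extension $0\to\scrO_{D_i'}\to T_{Z_i}(-\log D_i')|_{D_i'}\to T_{D_i'}\to0$, with sub generated by the Euler field, and a \v{C}ech computation with local equations $x_\alpha=u_{\alpha\beta}x_\beta$ shows its class is $\pm c_1(N_{D_i'/Z_i})$, which is nonzero here since $(D_i')^2=m_i\ge1$; so there is no residue projection, and the ``tangential difference plus residue sum'' map onto the split sheaf $j_*\bigl(\bigoplus_i(T_{D_i}\oplus\scrO_{D_i})\bigr)$ in your log analogue of Lemma~\ref{locallytriv} is not globally defined. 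Fortunately both uses of the splitting are inessential. First, the surjection $H^1(D_i';T_{Z_i}(-\log D_i')|_{D_i'})\to H^1(D_i';T_{D_i'})$ needs no splitting: it follows from the (non-split) extension because $H^2(D_i';\scrO_{D_i'})=0$ on a curve, or more directly from $0\to T_{Z_i}(-D_i')\to T_{Z_i}(-\log D_i')\to T_{D_i'}\to0$ and $H^2(Z_i;T_{Z_i}(-D_i'))=0$, which is Serre dual to $H^0(Z_i;\Omega^1_{Z_i})=0$. Second, in the $S_{X_0}$ argument, write $\mathcal{A}$ for the middle term of Lemma~\ref{locallytriv} and take the quotient to be $Q:=\mathcal{A}/S_{X_0}$ rather than the split sheaf. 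The same local calculation that identifies $[\cdot,\xi]$ with the residue sum (cf.\ Lemma~\ref{loccomp} and \cite[4.5]{FriedmanSmoothings}) shows that the composite $T_{Z_i}(-\log D_i')\hookrightarrow\mathcal{A}\to Q$ is surjective onto the summand of $Q$ supported on $D_i$ with kernel $T_{Z_i}(-\log D_i')(-D_i')$; hence $Q\cong\bigoplus_iT_{Z_i}(-\log D_i')|_{D_i}$, and on $H^1$ the map from the $Z_i$ summands is exactly restriction. Your vanishing $H^2(Z_i;T_{Z_i}(-\log D_i')(-D_i'))=0$ then makes $H^1(X_0;\mathcal{A})\to H^1(X_0;Q)$ surjective, and since $H^2(X_0;\mathcal{A})=0$ (the hypothesis on $\hY$ plus your del Pezzo vanishing), the long exact sequence gives $H^2(X_0;S_{X_0})=0$.
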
 
\begin{proof}
By assumption, $H^2(\hY; T_{\hY}(-\log D)) =0$. Also, $H^2(Z_i; T_{Z_i}(-  D_i')) =0$ for every $i$ by the argument of \cite[Example 1.14]{FG24}. By \cite[Lemma 1.12]{FG24}, for every $i$,  $H^2(Z_i; T_{Z_i}(-\log  D_i')) =0$  and $H^1(Z_i; T_{Z_i}(-\log  D_i'))\to H^1(D_i; T_{D_i}) $ is surjective. Thus, by Lemma~\ref{locallytriv},  $H^2(X_0; T^0_{X_0}) =0$.

To prove that $H^2(X_0; S_{X_0}) =0$, it suffices to prove that $H^0(X_0; \Lambda^1_{X_0}\otimes \omega_{X_0}) =0$. By \cite[\S3]{FriedmanSmoothings}, there is a subsheaf $V_0 \Lambda^1_{X_0} \cong \Omega^1_{X_0}/\tau^1_{X_0}$, and the quotient  $\Lambda^1_{X_0}/V_0 \Lambda^1_{X_0}$ is isomorphic to  $\bigoplus_i\scrO_{D_i}$. By construction, 
$$\Lambda^1_{X_0} \subseteq a_*\Big(\Omega^1_{\hY}(\log D) \oplus \bigoplus_i\Omega^1_{Z_i}(\log  D_i) \Big), $$
the isomorphism $\Lambda^1_{X_0}/V_0 \Lambda^1_{X_0} \cong \bigoplus_i\scrO_{D_i}$ is induced by Poincar\'e residue, and there is a commutative diagram
$$\begin{CD}
\Omega^1_{X_0}/\tau^1_{X_0} @>>> \Lambda^1_{X_0}\\
@VVV @VVV \\
a_*\Big(\Omega^1_{\hY}  \oplus \bigoplus_i\Omega^1_{Z_i}  \Big) @>>> a_*\Big(\Omega^1_{\hY}(\log D) \oplus \bigoplus_i\Omega^1_{Z_i}(\log  D_i) \Big).
\end{CD}$$

The dualizing sheaf  $\omega_{X_0}$ is trivial on $Z_i$ and in particular has a trivial restriction to $D_i$. There is an exact sequence
$$0 \to H^0(X_0; (\Omega^1_{X_0}/\tau^1_{X_0})\otimes  \omega_{X_0}) \to H^0(X_0; \Lambda^1_{X_0}\otimes \omega_{X_0}) \to \bigoplus_i H^0(\scrO_{D_i}) 
\to H^1(X_0; (\Omega^1_{X_0}/\tau^1_{X_0})\otimes  \omega_{X_0}).$$
By \cite[Lemma 2.9]{FriedmanSmoothings},  $(T^0_{X_0})\spcheck  \cong  \Omega^1_{X_0}/\tau^1_{X_0} $, and thus $H^0(X_0; (\Omega^1_{X_0}/\tau^1_{X_0})\otimes  \omega_{X_0}) = \Hom (T^0_{X_0}, \omega_{X_0})$. By Serre duality, $\Hom (T^0_{X_0}, \omega_{X_0})$ is dual to $H^2(X_0; T^0_{X_0})$ and hence is $0$. By Assumption~\ref{Yassump}, there exists  a nonzero section $\sigma$ of $\omega_{X_0}$ which only vanishes in $\hY-\bigcup_iD_i$, and which thus defines an inclusion  $\Lambda^1_{X_0} \to \Lambda^1_{X_0}\otimes \omega_{X_0}$. There is a commutative diagram
$$\begin{CD}
@.  H^0(X_0; \Lambda^1_{X_0}) @>>> \bigoplus_i H^0(\scrO_{D_i}) @>>> H^1(X_0;  \Omega^1_{X_0}/\tau^1_{X_0}) \\
@. @VV{\times \sigma}V @VV{=}V  @VV{\times \sigma}V \\
0 @>>> H^0(X_0; \Lambda^1_{X_0}\otimes \omega_{X_0}) @>>> \bigoplus_i H^0(\scrO_{D_i}) @>>> H^1(X_0;  (\Omega^1_{X_0}/\tau^1_{X_0})\otimes \omega_{X_0}) 
\end{CD}$$
By the above remarks, the map obtained by post-composing the map $\bigoplus_i H^0(\scrO_{D_i}) \to  H^1(X_0;  \Omega^1_{X_0}/\tau^1_{X_0})$ with the natural map
$$H^1(X_0;  \Omega^1_{X_0}/\tau^1_{X_0}) \to \bigoplus_i H^1(Z_i; \Omega^1_{Z_i})$$ is the fundamental class map and is therefore injective. Clearly, this map agrees with the corresponding composition 
$$\bigoplus_i H^0(\scrO_{D_i}) \to  H^1(X_0;  (\Omega^1_{X_0}/\tau^1_{X_0})\otimes \omega_{X_0})\to \bigoplus_i H^1(Z_i; \Omega^1_{Z_i})$$
up to multiplication by $\sigma$, which is an isomorphism on $H^0(\scrO_{D_i})$ and on $H^1(Z_i; \Omega^1_{Z_i})$. 
Hence $\bigoplus_i H^0(\scrO_{D_i}) \to  H^1(X_0;  (\Omega^1_{X_0}/\tau^1_{X_0})\otimes \omega_{X_0})$ is also injective.
Putting this together, it follows that $H^0(X_0; \Lambda^1_{X_0}\otimes \omega_{X_0}) =0$ and hence that $H^2(X_0; S_{X_0}) =0$. 
\end{proof} 

The following shows that the hypotheses of Theorem~\ref{versal2} hold for I-surfaces under a mild general position assumption: 

\begin{proposition} Suppose that, as above,   $X_0 = \hY\amalg_{D_i}\left(\coprod_iZ_i\right)$, where $\hY$ is the minimal resolution of an I-surface  and the $Z_i$ are almost del Pezzo surfaces. Finally assume either that $\hY$ is elliptic ruled or that it is generic in an appropriate sense. Then $H^2(X_0; T^0_{X_0}) = H^2(X_0; S_{X_0}) =0$
\end{proposition}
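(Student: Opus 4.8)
The plan is to verify the single hypothesis of Theorem~\ref{versal2} that is not automatic, namely $H^2(\hY; T_{\hY}(-\log \sum_iD_i)) = 0$; once this is known, Theorem~\ref{versal2} yields $H^2(X_0; T^0_{X_0}) = H^2(X_0; S_{X_0}) = 0$ directly. Writing $D = \sum_iD_i$, Serre duality on the smooth surface $\hY$ identifies $H^2(\hY; T_{\hY}(-\log D))$ with the dual of $H^0(\hY; \Omega^1_{\hY}(\log D)\otimes K_{\hY})$, since $\bigl(T_{\hY}(-\log D)\bigr)^{\vee} = \Omega^1_{\hY}(\log D)$. So the whole problem reduces to the vanishing $H^0(\hY; \Omega^1_{\hY}(\log D)\otimes K_{\hY}) = 0$.

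First I would dispose of the "honest form'' part. Twisting the residue sequence $0 \to \Omega^1_{\hY}\to \Omega^1_{\hY}(\log D) \to \bigoplus_i\scrO_{D_i}\to 0$ by $K_{\hY}$ and using the adjunction identity $K_{\hY}|_{D_i} = N_{D_i/\hY}^{\vee}$ exhibits $H^0(\Omega^1_{\hY}(\log D)\otimes K_{\hY})$ as an extension of a subspace of $\bigoplus_iH^0(D_i; N_{D_i/\hY}^{\vee})$ by $H^0(\Omega^1_{\hY}\otimes K_{\hY}) = H^2(\hY; T_{\hY})^{\vee}$. The latter vanishes in every case: since $\hY$ is a blowup of a minimal surface $Y_0$ at points (an Enriques surface, a minimal rational surface, or the ruled surface $\Pee(W)$ of Definition~\ref{defY0}), the Leray spectral sequence for $\rho\colon \hY \to Y_0$ gives $H^2(\hY; T_{\hY}) = H^2(Y_0; T_{Y_0})$, because $R^1\rho_*T_{\hY}$ is a skyscraper and $T_{Y_0}/\rho_*T_{\hY}$ is supported on the blown-up points; and $H^2(Y_0; T_{Y_0}) = 0$ for each of these three minimal surfaces. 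Hence the residue map embeds $H^0(\Omega^1_{\hY}(\log D)\otimes K_{\hY})$ into $\bigoplus_iH^0(D_i; N_{D_i/\hY}^{\vee})$, and everything comes down to showing this residue map vanishes, equivalently that the dual map $H^1(\hY; T_{\hY}) \to \bigoplus_iH^1(D_i; N_{D_i/\hY})$ is surjective.

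There is an equivalent form that I would actually compute with. Tensoring the sequence $0 \to \Omega^1_{\hY}(\log D)(-D)\to \Omega^1_{\hY}\to \Omega^1_D\to 0$ by $L = K_{\hY}(D)$ and using $\Omega^1_{D_i} \otimes L|_{D_i} \cong \Omega^1_{D_i}\cong \scrO_{D_i}$ (adjunction again, as $L|_{D_i} = K_{D_i} = \scrO_{D_i}$) identifies $H^0(\Omega^1_{\hY}(\log D)\otimes K_{\hY})$ with $\ker\bigl(H^0(\hY;\Omega^1_{\hY}\otimes L) \to \bigoplus_iH^0(D_i;\Omega^1_{D_i})\bigr)$. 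Thus the required vanishing says: every nonzero $L$-valued $1$-form on $\hY$ restricts to a nonzero holomorphic $1$-form on at least one component $D_i$. In the elliptic ruled case I would verify this unconditionally from the explicit models of Theorems~\ref{211case} and \ref{111case}: there $\hY$ is a specific blowup of $\Pee(W)$ and the $D_i$ are proper transforms of the listed sections and bisections, so the restriction map can be computed directly using the ruling, and the base elliptic curve $B$ forces injectivity on the relevant kernel. In the Enriques and rational cases the vanishing is not automatic, but $H^2(\hY; T_{\hY}(-\log D))$ is upper semicontinuous in families, so it suffices to exhibit one surface in each deformation family for which it holds; ``generic in an appropriate sense'' then means precisely that $\hY$ lies in the resulting dense open locus.

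The main obstacle is exactly this last surjectivity, i.e.\ the injectivity of the residue map. The difficulty is that $L = \pi^*\omega_Y$ is big, so $H^0(\Omega^1_{\hY}\otimes L)$ is typically nonzero and one genuinely needs the restriction to the $D_i$ to detect all of it; one cannot simply quote a vanishing $H^0(\Omega^1_{\hY}\otimes L)=0$. Controlling the kernel requires the explicit geometry: that the blown-up points lie in general position on the relevant anticanonical and elliptic curves, together with the absence, for generic $\hY$, of extra $(-2)$-curves that would produce unexpected $L$-valued forms vanishing identically along $D$. I expect the Enriques and rational cases to be the delicate ones, with the elliptic ruled case --- where the ruling provides a clean normal form --- serving as the model computation.
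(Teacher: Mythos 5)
Your overall strategy is the same as the paper's: reduce everything, via Theorem~\ref{versal2} (together with Lemma~\ref{IsurfYassump}, which supplies the Assumption~\ref{Yassump} hypothesis for I-surfaces), to the single vanishing $H^2(\hY; T_{\hY}(-\log \sum_iD_i)) =0$. Your Serre-duality reformulations of that vanishing are also correct: $H^2(\hY; T_{\hY}(-\log D))$ is dual to $H^0(\hY; \Omega^1_{\hY}(\log D)\otimes K_{\hY})$; the twisted residue sequence together with $H^2(\hY;T_{\hY})\cong H^2(Y_0;T_{Y_0})=0$ embeds this space into $\bigoplus_iH^0(D_i; N_{D_i/\hY}^{\vee})$; and the vanishing is therefore equivalent to surjectivity of $H^1(\hY; T_{\hY}) \to \bigoplus_iH^1(D_i; N_{D_i/\hY})$. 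This is a genuinely useful unwinding of what the vanishing means. The difference is that the paper does not attempt to prove the vanishing at all: it quotes \cite{FG24} (in particular \cite[Theorem 7.3(i)]{FG24} for the elliptic ruled case, and genericity results elsewhere in that paper) and then concludes.

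The gap is that you never actually establish the vanishing, and you say so yourself: the ``main obstacle'' you identify at the end is precisely the entire content of the proposition beyond Theorem~\ref{versal2}. Concretely, two steps are missing. (1) In the elliptic ruled case you assert that ``the restriction map can be computed directly using the ruling,'' but no computation is given; what is needed is, e.g., that the $4$-dimensional space $H^1(\hY;T_{\hY})$ (one modulus from $B$, three from the blown-up points modulo $\Aut Y_0$) surjects onto the $3$-dimensional space $\bigoplus_iH^1(D_i;N_{D_i/\hY})$, which requires checking that moving the blown-up points off the curves $\Gamma_i$, $\sigma_j$ realizes the obstruction classes — this is exactly the content of \cite[Theorem 7.3]{FG24}, partially reproduced in the proof of Theorem~\ref{localTorelli}. (2) In the rational and Enriques cases your semicontinuity argument only shows that the locus of pairs $(\hY,\sum_iD_i)$ with the vanishing is \emph{open} in the relevant deformation family; without exhibiting a single pair on which the vanishing holds, you cannot conclude that this open set is nonempty, so defining ``generic in an appropriate sense'' as membership in ``the resulting dense open locus'' is circular — density is exactly what is unproved. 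So the architecture of the proof is right and matches the paper, but the core vanishing, which the paper outsources to \cite{FG24}, is in your write-up asserted rather than proved.
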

\begin{proof}
  If  $\hY$ is generic or it is elliptic ruled, then $H^2(\hY; T_{\hY}(-\log(D)) =0$  by various results scattered throughout \cite{FG24}. For example, the elliptic ruled case is proved in \cite[Theorem 7.3(i)]{FG24}. Then we can conclude by Lemma~\ref{IsurfYassump} and Theorem~\ref{versal2}. 
\end{proof}

\begin{corollary} If  $X_0 = \hY\amalg_{D_i}\left(\coprod_iZ_i\right)$, where $\hY$ is the minimal resolution of an I-surface which  is either elliptic ruled or   generic in an appropriate sense and the $Z_i$ are almost del Pezzo surfaces, then the conclusions of Theorem~\ref{versal}  hold for $X_0$. \qed
\end{corollary}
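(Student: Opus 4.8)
The plan is to deduce the statement directly from Theorem~\ref{versal2}, which already packages all of the cohomological bookkeeping on $X_0$ itself. That theorem asserts that if $Y$ satisfies Assumption~\ref{Yassump} and the single vanishing $H^2(\hY; T_{\hY}(-\log \sum_iD_i)) = 0$ holds, then \emph{both} $H^2(X_0; T^0_{X_0}) = 0$ and $H^2(X_0; S_{X_0}) = 0$. So the entire proposition reduces to verifying these two hypotheses; no further analysis of the abstract log complex $\Lambda^1_{X_0}$, the del Pezzo pieces $Z_i$, or the section $\sigma$ of $\omega_{X_0}$ is required, since the proof of Theorem~\ref{versal2} (via the sequence of Lemma~\ref{locallytriv} and the vanishing $H^2(Z_i; T_{Z_i}(-\log D_i'))=0$ for each almost del Pezzo $Z_i$) has already carried out that reduction.

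The first hypothesis is immediate. By hypothesis $Y$ is a Gorenstein I-surface whose only singularities are simple elliptic, so Lemma~\ref{IsurfYassump} shows at once that $Y$ satisfies Assumption~\ref{Yassump}; in particular each $m_i \le 9$ and the simple elliptic points are not base points of $\omega_Y$, which is exactly the input Theorem~\ref{versal2} needs.

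The substantive point, and the step I expect to be the main obstacle, is the vanishing $H^2(\hY; T_{\hY}(-\log \sum_iD_i)) = 0$. I would attack this through the standard logarithmic residue sequence
$$0 \to T_{\hY}\Big(-\log \textstyle\sum_i D_i\Big) \to T_{\hY} \to \bigoplus_i N_{D_i/\hY} \to 0,$$
where $N_{D_i/\hY} \cong \scrO_{D_i}(D_i)$ is supported on a curve, so that $H^2\big(\bigoplus_i N_{D_i/\hY}\big)=0$. The resulting long exact sequence shows that the vanishing in question is equivalent to $H^2(\hY; T_{\hY}) = 0$ together with surjectivity of $H^1(\hY; T_{\hY}) \to \bigoplus_i H^1(D_i; \scrO_{D_i}(D_i))$. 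The latter is a genuine constraint: since $D_i$ is elliptic and $\deg \scrO_{D_i}(D_i) = D_i^2 = -m_i < 0$, each $H^1(D_i; \scrO_{D_i}(D_i))$ is nonzero of dimension $m_i$. This is precisely where the geometry of $\hY$ and the genericity hypothesis enter. In the elliptic ruled case $\hY$ is the explicit blowup of $\Pee(W)$ over $B$ described in Theorems~\ref{211case} and~\ref{111case}, and the required vanishing is established unconditionally in \cite[Theorem 7.3(i)]{FG24}; in the rational and Enriques cases one invokes the corresponding computations of \cite{FG24}, where ``generic in an appropriate sense'' is exactly the open condition on the configuration of blown-up points that forces $H^2(\hY; T_{\hY}(-\log \sum_iD_i))$ to vanish. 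Granting this geometric input, Theorem~\ref{versal2} applies verbatim and yields both asserted vanishings.
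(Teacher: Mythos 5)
Your proposal is correct and follows essentially the same route as the paper: the Corollary is exactly the preceding Proposition (Assumption~\ref{Yassump} via Lemma~\ref{IsurfYassump}, the vanishing $H^2(\hY; T_{\hY}(-\log \sum_iD_i))=0$ from \cite{FG24}, then Theorem~\ref{versal2}) combined with Theorem~\ref{versal}. Your extra unwinding of the vanishing through the residue sequence is accurate but not needed, since, as in the paper, the geometric input is ultimately cited from \cite{FG24}.
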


\begin{remark} Suppose that one or more of $D_i$ correspond instead to a smoothable cusp singularity. Then one can complete $\hY$ to a $d$-semistable surface $X_0$ with normal crossings, in many different ways, by the methods of \cite{FriedmanMiranda}, \cite{Engel}, \cite{EngelFriedman}. Similar but slightly more complicated arguments show that the analogues of the above theorems hold for $X_0$, under some mild assumptions on the cusp and the appropriate cohomological conditions. In particular, they hold for I-surfaces $Y$ under  certain general position assumptions on $\hY$. 
\end{remark} 

Next we analyze the first order deformations of $X_0$ in more detail. First we consider the tangent space  $H^1(X_0; T^0_{X_0})$ to locally trivial deformations.  By Lemma~\ref{locallytriv}, there is an exact sequence
 $$0 \to T^0_{X_0} \to a_*\Big(T_{\hY}(-\log D)  \oplus \bigoplus_i T_{Z_i}(-\log  D_i)\Big)\to j_*\Big(\bigoplus_iT_{D_i} \Big)\to 0.$$
 The image of $\bigoplus_iH^0(D_i; T_{D_i})$ in $H^1(X_0; T^0_{X_0})$ corresponds to deforming the gluings of $D_i\subseteq \hY$ to $D_i' \subseteq Z_i$ by an infinitesimal automorphism of $D_i$. 
 To deal with the tangent space $T_{\Xi,x_0}$ to the set of locally trivial deformations of $X_0$ preserving the $d$-semistability condition, we use the following:
 
 \begin{lemma}\label{loccomp}  The space $T_{\Xi,x_0}$ is a complement in $H^1(X_0; T^0_{X_0})$ to the image of $\bigoplus_iH^0(D_i; T_{D_i})$. 
 \end{lemma}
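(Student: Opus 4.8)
The plan is to play the two exact sequences involving $T^0_{X_0}$ against each other. Write $\partial\colon \bigoplus_i H^0(D_i; T_{D_i}) \to H^1(X_0; T^0_{X_0})$ for the coboundary map of the sequence of Lemma~\ref{locallytriv}, so that $\im(\partial)$ is the subspace whose complement we seek. By the discussion following Theorem~\ref{versal}, $T_{\Xi,x_0}$ is the image of $H^1(X_0; S_{X_0})$ in $H^1(X_0; T^0_{X_0})$, which by the long exact sequence of
$$0 \to S_{X_0} \to T^0_{X_0} \xrightarrow{[\,\cdot\,, \xi]} T^1_{X_0} \to 0$$
is precisely the kernel of the induced map $[\,\cdot\,,\xi]\colon H^1(X_0; T^0_{X_0}) \to H^1(X_0; T^1_{X_0})$. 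Recall that $H^1(X_0; T^1_{X_0}) = \bigoplus_i H^1(D_i; \scrO_{D_i}) \cong \Cee^k$, and likewise $\bigoplus_i H^0(D_i; T_{D_i}) \cong \Cee^k$, since each $D_i$ is an elliptic curve. Hence the whole lemma reduces to the single assertion that the composite
$$\Psi = [\,\cdot\,,\xi]\circ \partial\colon \bigoplus_i H^0(D_i; T_{D_i}) \to \bigoplus_i H^1(D_i; \scrO_{D_i})$$
is an isomorphism. Indeed, granting this: $\Psi$ an isomorphism forces $[\,\cdot\,,\xi]$ to be surjective, so that $T_{\Xi,x_0} = \Ker [\,\cdot\,,\xi]$ has codimension $k$, and it forces $\im(\partial) \cap T_{\Xi,x_0} = 0$ (if $\partial(v) \in \Ker [\,\cdot\,,\xi]$ then $\Psi(v) = 0$, whence $v=0$); comparing dimensions then yields $H^1(X_0; T^0_{X_0}) = T_{\Xi,x_0} \oplus \im(\partial)$, as desired.

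To analyze $\Psi$ I would first observe that it is \emph{diagonal}. The sheaf $T^1_{X_0}\cong\bigoplus_i\scrO_{D_i}$ and the generating section $\xi$ both decompose as direct sums over the components of the double locus, and by Remark~\ref{loctrivremark} the class $\partial(v_i)$ attached to $v_i\in H^0(D_i; T_{D_i})$ is the deformation obtained by re-gluing $\hY$ to $Z_i$ along $D_i$ via the infinitesimal automorphism $v_i$, leaving the other gluings fixed. Thus a Čech representative of $\partial(v_i)$ may be supported near $D_i$, and applying the sheaf map $[\,\cdot\,,\xi]$ produces a class supported near $D_i$, hence lying in the $i$-th summand $H^1(D_i; \scrO_{D_i})$. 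Therefore $\Psi = \bigoplus_i \Psi_i$, and it suffices to show that each
$$\Psi_i\colon H^0(D_i; T_{D_i}) \to H^1(D_i; \scrO_{D_i})$$
is nonzero, both sides being one-dimensional.

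The content of $\Psi_i$ is that it is the derivative of the $d$-semistability invariant along the re-gluing. Concretely, re-gluing via $\varphi_i\circ \psi_t$, where $\psi_t$ is the flow of $v_i$, replaces the line bundle $T^1_{X_0}|_{D_i} = N_{D_i/\hY}\otimes \varphi_i^*N_{D_i'/Z_i}$, which is $\cong \scrO_{D_i}$ by $d$-semistability, by $N_{D_i/\hY}\otimes \psi_t^*\varphi_i^*N_{D_i'/Z_i}$; the class $\Psi_i(v_i)$ is the derivative at $t=0$ of the resulting path in $\Pic^0 D_i$, viewed in $T_{\scrO}\Pic^0 D_i = H^1(D_i; \scrO_{D_i})$. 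Equivalently, $\Psi_i$ is contraction against $c_1(N_{D_i'/Z_i}) \in H^1(D_i; \Omega^1_{D_i})$ under the pairing $H^0(D_i;T_{D_i})\otimes H^1(D_i;\Omega^1_{D_i}) \to H^1(D_i;\scrO_{D_i})$ coming from $T_{D_i}\otimes\Omega^1_{D_i}\cong\scrO_{D_i}$. Since $\deg N_{D_i'/Z_i} = (D_i')^2 = (-K_{Z_i})^2 = m_i \ge 1$, the class $c_1(N_{D_i'/Z_i})$ generates $H^1(D_i; \Omega^1_{D_i})\cong\Cee$, and the cup-product pairing on the elliptic curve $D_i$ is nondegenerate; hence $\Psi_i$ is an isomorphism, and the proof is complete.

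The main obstacle is the precise identification in the last paragraph, namely matching the abstract connecting map $\Psi_i = [\,\cdot\,,\xi]\circ\partial_i$ with the geometric derivative of the $d$-semistability class (equivalently, with contraction against $c_1$ of the normal bundle). I expect this to require either a direct Čech computation, comparing a local lift of $v_i$ in the sequence of Lemma~\ref{locallytriv} against the bracket $[\,\cdot\,,\xi]$, or an appeal to the standard comparison between the Lie-bracket obstruction map and the first Chern classes of the normal bundles of the double locus in the $d$-semistability formalism of \cite{FriedmanSmoothings}. Everything else is formal linear algebra together with the single nonvanishing $m_i\neq 0$, which is exactly where the hypothesis that the $D_i$ are contractible exceptional curves (so $m_i=-D_i^2\ge 1$) enters.
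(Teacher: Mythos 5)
Your proposal follows essentially the same route as the paper's proof: both identify $T_{\Xi,x_0}$ with $\Ker\{[\cdot,\xi]\colon H^1(X_0;T^0_{X_0})\to H^1(X_0;T^1_{X_0})\}$ and reduce the lemma to showing that the composite $[\partial(\cdot),\xi]\colon \bigoplus_i H^0(D_i;T_{D_i})\to\bigoplus_i H^1(D_i;\scrO_{D_i})$ is an isomorphism, which the paper asserts is, up to rescaling on the factors, the natural action of $H^0(D_i;T_{D_i})$ on $H^1(D_i;\scrO_{D_i})$ (citing the local calculations of \cite[4.5]{FriedmanSmoothings}). Your geometric identification of each $\Psi_i$ as contraction against $c_1(N_{D_i'/Z_i})$ -- with the factor $m_i=\deg N_{D_i'/Z_i}$ accounting exactly for the paper's ``rescaling'' -- is a correct unwinding of that cited computation, and the step you flag as needing a \v{C}ech verification is precisely the step the paper also delegates to \cite{FriedmanSmoothings}.
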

 \begin{proof} Let $\xi \in H^0(X_0; T^1_{X_0})$ be an everywhere generating section. By \cite[4.5]{FriedmanSmoothings}, 
 $$T_{\Xi, x_0} = \im \Big\{ H^1(X_0; S_{X_0}) \to  H^1(X_0; T^0_{X_0}) \Big\} = \Ker\Big\{ [\cdot, \xi]\colon H^1(X_0; T^0_{X_0}) \to H^1(X_0; T^1_{X_0})\Big\}.$$
  (Compare also \cite{KawamataNamikawa}.)
  The local calculations in the proof there show that, if $\partial \colon  \bigoplus_iH^0(D_i; T_{D_i}) \to H^1(X_0; T^0_{X_0})$ is the coboundary map from Lemma~\ref{locallytriv}, then the corresponding homomorphism 
 $$[\partial( \cdot), \xi]\colon \bigoplus_iH^0(D_i; T_{D_i}) \to H^1(X_0; T^1_{X_0}) = \bigoplus_iH^1(D_i; \scrO_{D_i})$$
 is, up to a rescaling on the various factors, the natural action of $H^0(D_i; T_{D_i})$ on $H^1(D_i; \scrO_{D_i})$ and is therefore an isomorphism. Thus $T_{\Xi, x_0}$ is a complement to the image of $\bigoplus_iH^0(D_i; T_{D_i})$.
 \end{proof}

\subsection{Simple elliptic versus $d$-semistable models}\label{ssectsevsdss}  
Let $\mathcal{M}$ be the coarse moduli space of I-surfaces, possibly with rational double points, and let $\overline{\mathcal{M}}$ be the open subscheme  of the KSBA compactification of $\mathcal{M}$ where the I-surfaces are allowed to have simple elliptic singularities as well as possibly RDP singularities. Since we have chosen not to work with stacks, we must work instead with Kuranishi models. Let $Y$ be an I-surface with at worst $k$ simple elliptic singularities $p_1, \dots, p_k$. For simplicity assume that $Y$ does not have any RDP singularities. Of course, this is automatic in the elliptic ruled case. Let  $T$ be the base of the miniversal deformation space of $Y$ 
and let $T_{\textrm{es}} \subseteq T$ be the equisingular locus, i.e.\ the subspace of $T$ where all of the corresponding fibers have $k$ simple elliptic singularities. 
We begin by analyzing the tangent space to  $T_{\textrm{es}}$:

\begin{lemma}\label{tangdaggerlemma}  {\rm(i)} If $T_{T_{\rm{es}}, s_0}$ denotes the tangent space the tangent space to $T_{\rm{es}}$ at a point $s_0$, then  $T_{T_{\rm{es}}, s_0} \cong  H^1(\hY; T_{\hY}(-\log D))$.

\smallskip
\noindent  {\rm(ii)} For $k \le 2$ and $\hY$ suitably generic, the map $H^1(\hY; T_{\hY}(-\log D)) \to \bigoplus _iH^1(D_i; T_{D_i})$ is surjective. For $k=3$, the map  $H^1(\hY; T_{\hY}(-\log D)) \to H^1(D_i; T_{D_i}) \cong H^1(B; T_B)$ is surjective  for every $i$, and is an isomorphism in case $(m_1,m_2, m_3) = (1,1,1)$. In particular, in this case, all deformations of the pair $(\hY, D)$ come from deforming the base elliptic curve $B$.  
 \end{lemma}
\begin{proof} (i) This is a consequence of Wahl's theory \cite[Proposition 2.5, Proposition 2.7]{WahlI} (cf.\ also \cite[Remark 1.14]{FG24}). 

\smallskip
\noindent (ii) We shall just write   out the proof in the case $(m_1,m_2, m_3) = (1,1,1)$. First we recall some standard facts about deformations of ruled surfaces and blowups of surfaces. If   $Y_0 =\Pee(W)$ is as in Definition~\ref{defY0}, then since $H^i(B; \operatorname{ad}W) =0$, $i=0,1$,   $H^i(Y_0; T_{Y_0}) \cong H^i(B; T_B)$, $i=0,1$. Let $\rho\colon \hY \to Y_0$ be the blowup map at the points $p_1$ and $p_2$. Then $R^1\rho_*T_{\hY} = 0$ and there is an exact sequence
$$0 \to R^0\rho_*T_{\hY}\to T_{Y_0} \to \Cee^2_{p_1} \oplus \Cee^2_{p_2} \to 0.$$
  Also, $H^i(\hY; T_{\hY})\cong H^i(Y_0; R^0\rho_*T_{\hY})$ by the Leray spectral sequence. Thus there is an   exact sequence
$$0 \to H^0(\hY; T_{\hY})\to H^0(Y_0; T_{Y_0}) \to \Cee^2  \oplus \Cee^2  \to  H^1(\hY; T_{\hY})\to H^1(Y_0; T_{Y_0}) \to 0.$$
The map $H^0(Y_0; T_{Y_0}) \cong H^0(B; T_B)\to \Cee^2  \oplus \Cee^2$  is injective, for example because $\Aut Y_0 \cong \Aut B$ acts freely on the blowup points $p_1$ and $p_2$. Hence there is an exact sequence
$$0\to \Cee^3 \to  H^1(\hY; T_{\hY})\to H^1(Y_0; T_{Y_0}) \to 0.$$ 
In other words, $\dim H^1(\hY; T_{\hY}) = 4$, i.e.\ $\hY$ has $4$ moduli: one from the moduli of $B$ and $3$ from the moduli of the two blowup points modulo the action of $\Aut Y_0$. In terms of  $H^1(\hY; T_{\hY}(-\log D))$, 
there is an exact sequence
$$0 \to T_{\hY}(-\log D)  \to T_{\hY} \to \bigoplus_iN_{D_i/\hY} \to 0.$$
Since $\deg N_{D_i/\hY} =-1$ and  $H^2(\hY; T_{\hY}(-\log D)) =0$ by \cite[Theorem 7.3(i)]{FG24},   the image of $H^1(\hY; T_{\hY}(-\log D))$ in $H^1(\hY; T_{\hY})$ has codimension $3$. Thus $\dim H^1(\hY; T_{\hY}(-\log D)) = 1$. By \cite[Theorem 7.3(iii)]{FG24},  for every $i$, the map $H^1(\hY; T_{\hY}(-\log D))\to H^1(D_i; T_{D_i})$ is surjective, and hence it is an isomorphism since both groups have dimension one. Since the maps $D_i \to B$ are \'etale, we can identify $H^1(D_i; T_{D_i})$ with $H^1(B; T_B)$.  Thus $H^1(\hY; T_{\hY}(-\log D)) \cong H^1(D_i; T_{D_i}) \cong H^1(B; T_B)$ as claimed.
 \end{proof}

 For each singular point $p_i$ of $Y$, let $S_i$ be   the miniversal deformation space for the simple elliptic singularity $p_i$. For each $i$,  by Theorem~\ref{mainappthm}, there is  a weighted blowup $\widetilde{S}_i \to S_i$ and a finite cover $\widehat{S}_i  \to \widetilde{S}_i $, with covering group the corresponding Weyl group $W_i$ (see also Remark~\ref{logresremark}).  For each $i$,   let $S_{i,\textrm{es}} \subseteq S_i$ be the equisingular locus. Then     the fiber $\widehat{S}_{i,\textrm{es}}$ of $\widehat{S}_i  \to S_i$ over a point of $S_{i,\textrm{es}}$ corresponding to the elliptic  curve $E_i$ is isomorphic  to  $E_i\otimes Q_i$, i.e.\ to a  moduli space  of marked almost del Pezzo surfaces in the terminology of Definition~\ref{defmarked}.  More generally, the fiber $\widehat{S}_{i,\textrm{es}}$ of $\widehat{S}_i  \to S_i$ over   $S_{i,\textrm{es}}$ is $\mathcal{E}_i\otimes Q_i$, where  $\mathcal{E}_i$ is the germ of the universal elliptic curve over $S_{i,\textrm{es}}$. By   Theorem~\ref{mainappthm}, there is a family of surfaces $\widehat{\mathcal{Z}}$ over $\widehat{S}_1\times \cdots \times \widehat{S}_k$, whose fibers  are disjoint unions of $d$-semistable surfaces over the exceptional locus $\widehat{S}_{1,\textrm{es}}\times \cdots \times \widehat{S}_{k,\textrm{es}}$ and are smooth   elsewhere, and the total space of the family is smooth.  
 
 With $T$  the base of the miniversal deformation of $Y$ as above,  there is an induced  morphism $T \to S_1 \times \cdots \times S_k$. Strictly speaking, this morphism depends on the choice of an isomorphism $\varphi_i$ from $D_i$ to a fixed elliptic curve $E_i$, but we shall by and large gloss over this point.  Note that, by \cite{FG24}, the morphism   $T \to S_1 \times \cdots \times S_k$ is smooth if $k\le 2$ under mild general position assumptions, and its image can be explicitly described for $k=3$: If $S \subseteq S_1\times S_2\times S_3$ is the codimension $2$ submanifold such that, along the equisingular locus $S_{1,\textrm{es}}  \times S_{2,\textrm{es}}  \times S_{3,\textrm{es}}$, the elliptic curves remain isogeneous, then the image of $T\to S_1\times S_2\times S_3$ is $S$ and the morphism $T\to S$ is smooth (cf.\ \cite[Theorem 7.3]{FG24}).  We set $S = S_1 \times \cdots \times S_k$ if $k\le 2$ and let $S \subseteq S_1\times S_2\times S_3$ be the codimension $2$ submanifold defined above if $k=3$. Let $S_{\textrm{es}} \subseteq S$ be the equisingular locus in $S$. Then the corresponding morphism $T_{\textrm{es}} \to S_{\textrm{es}}$ is smooth as well. 
 
 Then  we can take 
$$\widehat{T} = T \times_{(S_1 \times \cdots \times S_k)}(\widehat{S}_1\times \cdots \times \widehat{S}_k).$$
Let  $\widehat{T}_{\textrm{es}}$ be the preimage of $T_{\textrm{es}}$ in $\widehat{T}$. Thus there is a smooth morphism $\widehat{T}_{\textrm{es}} \to T_{\textrm{es}}$ whose fiber over a point $s_0$ is a product of $k$ spaces of the form $E_i\otimes Q_i$.  By construction, $\widehat{T}_{\textrm{es}} \subseteq \widehat{T}$ is a smooth submanifold. Moreover, gluing in the construction described in Theorem~\ref{mainappthm} to the pulled back universal family over $\widehat{T}$, there is a proper morphism $\mathcal{X} \to \widehat{T}$ of relative dimension $2$, where $\mathcal{X}$ is smooth, the morphism is smooth over $\widehat{T} - \widehat{T}_{\textrm{es}}$, and the fibers over $\widehat{T}_{\textrm{es}}$ are $d$-semistable. The construction depends on a choice of an isomorphism $\varphi_i\colon D_i \to E_i$ to a fixed elliptic curve $E_i$, and  as noted in Definition~\ref{defdssmodel}  there are $m_i^2$ such choices. However, for the deformation theory arguments below, the choice of $\varphi_i$ will not matter. 

 At a point $t_0\in \widehat{T}_{\textrm{es}}$,  the family $\mathcal{X}$ induces a Kodaira-Spencer map $T_{\widehat{T}, t_0} \to \mathbb{T}^1_{X_0}$. By construction, the image of the Kodaira-Spencer map  lies in $(\mathbb{T}^1_{X_0})_{\mathbf{s}}$,  the composition 
 $$T_{\widehat{T}, t_0} \to (\mathbb{T}^1_{X_0})_{\mathbf{s}} \to H^0(X_0; T^1_{X_0}) \cong \Cee^k$$ is surjective,  and 
$$T_{\widehat{T}_{\textrm{es}}, t_0} = \Ker \Big\{ T_{\widehat{T}, t_0} \to H^0(X_0; T^1_{X_0}) \cong \Cee^k\Big\}.$$

\begin{theorem}\label{lochatT} Suppose that $k=3$ or that $\hY$ is general. Let $t_0\in \widehat{T}$ correspond to the singular $d$-semistable surface $X_0$. Then the   Kodaira-Spencer homomorphism induces an isomorphism from the tangent space $T_{\widehat{T}, t_0}$ of $\widehat{T}$ at $t_0$ to $(\mathbb{T}^1_{X_0})_{\mathbf{s}}$.
\end{theorem}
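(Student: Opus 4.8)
The goal is to show the Kodaira–Spencer map identifies $T_{\widehat T, t_0}$ with the smoothing tangent space $(\mathbb{T}^1_{X_0})_{\mathbf s}$. The plan is to build this isomorphism by matching the two natural direct-sum decompositions on each side: on the $\widehat T$ side, the weighted blowups and Weyl covers $\widehat S_i \to S_i$ of Appendix~\ref{appendix} separate the ``normal'' smoothing directions of each simple elliptic singularity $p_i$ from the remaining (equisingular/locally trivial) directions; on the $X_0$ side, Corollary~\ref{gentangsp} gives the exact sequence
$$0 \to T_{\Xi,x_0} \to (\mathbb{T}^1_{X_0})_{\mathbf s} \to H^0(X_0; T^1_{X_0}) \cong \Cee^k \to 0,$$
whose quotient $\Cee^k$ records, by Theorem~\ref{versal}(iv), exactly the $k$ independent smoothing parameters $\partial/\partial z_i$ of the normal crossing locus.

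First I would set up the comparison of bases. By hypothesis ($k=3$ or $\hY$ general), the image of $T$ in $S_1\times\cdots\times S_k$ is understood via \cite{FG24}, and the exceptional/smoothing directions in $\widehat S_i$ are precisely the weighted-blowup coordinates constructed in Appendix~\ref{appendix}. So $T_{\widehat T, t_0}$ fits into its own short exact sequence: the tangent directions lying in the exceptional locus (keeping $X_0$ a $d$-semistable normal crossing surface) form a subspace, and the transverse directions map isomorphically onto the $k$-dimensional space of smoothing parameters $\bigoplus_i\Cee\,\partial/\partial z_i$ that smooth the double curves $D_i$. The second step is to check that the Kodaira–Spencer map carries the equisingular subspace of $T_{\widehat T,t_0}$ isomorphically onto $T_{\Xi,x_0}$: this is the statement that locally trivial, $d$-semistability-preserving deformations of $X_0$ correspond to deformations of $Y$ that keep the three simple elliptic singularities equisingular, which is exactly what the logarithmic deformation theory of Theorem~\ref{versal}(ii) computes, combined with Lemma~\ref{loccomp} identifying $T_{\Xi,x_0}$ as the complement to the gluing directions $\bigoplus_iH^0(D_i;T_{D_i})$. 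The third step is the compatibility of the quotient maps: by Theorem~\ref{versal}(iv), $\theta(\partial/\partial z_i)$ projects to the $i$-th basis vector of $H^0(X_0;T^1_{X_0})\cong\bigoplus_i H^0(D_i;\scrO_{D_i})$, so the smoothing coordinates on $\widehat T$ and the smoothing coordinates of the log-deformation match up to a triangular change of basis, giving an isomorphism on the $\Cee^k$ quotients. A diagram chase (five lemma) between the two short exact sequences then forces the total map to be an isomorphism.

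The main obstacle I expect is the equisingular comparison in the second step, particularly in the elliptic ruled case $k=3$, where by the remark following Theorem~\ref{211case} the morphism $T\to S_1\times\cdots\times S_k$ is \emph{not} smooth and its image must be described explicitly. The subtlety is that a locally trivial deformation of $X_0$ does not automatically descend to a deformation of $Y$ preserving all three singularities: one must verify that the surjectivity statement $H^1(Z_i;T_{Z_i}(-\log D_i'))\to H^1(D_i;T_{D_i})$ used in Theorem~\ref{versal2}, together with the vanishing $H^2(\hY;T_{\hY}(-\log\sum_iD_i))=0$, exactly accounts for the constraints cut out in the image of $T$ in $\prod_i S_i$. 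Concretely, the dimensions must be reconciled: the obstructions to independently smoothing the three singularities (which make $T\to\prod_i S_i$ non-smooth) should correspond precisely to the relations among the $\partial/\partial z_i$ directions, and I would verify this by a dimension count comparing $\dim T_{\widehat T,t_0}$ with $\dim(\mathbb{T}^1_{X_0})_{\mathbf s} = \dim T_{\Xi,x_0} + k$, using Lemma~\ref{locallytriv} to compute the former.

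Once injectivity is established on the equisingular part and the quotient map is shown to be an isomorphism, the result follows formally; the genuinely delicate input is that the blowup–and–cover construction of Appendix~\ref{appendix} produces a smooth $\widehat T$ whose tangent space has exactly the right dimension, which is why the hypothesis ``$k=3$ or $\hY$ general'' is needed to guarantee $H^2(\hY;T_{\hY}(-\log\sum_iD_i))=0$ and hence that Theorem~\ref{versal} applies.
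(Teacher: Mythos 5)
Your overall skeleton matches the paper's: both arguments use the exact sequence of Corollary~\ref{gentangsp}, observe that the composition $T_{\widehat{T},t_0} \to (\mathbb{T}^1_{X_0})_{\mathbf{s}} \to H^0(X_0;T^1_{X_0})\cong\Cee^k$ is surjective by construction of $\widehat{T}$, and reduce to showing that the kernel (the ``equisingular'' directions) maps isomorphically onto $T_{\Xi,x_0}$, after which a diagram chase finishes. The gap sits in exactly the step you flag as the main obstacle, and the tools you propose cannot close it. Theorem~\ref{versal}(ii) and Lemma~\ref{loccomp} only describe the \emph{target} $T_{\Xi,x_0}$ (as the image of $H^1(X_0;S_{X_0})$, resp.\ as a complement to the gluing directions $\bigoplus_i H^0(D_i;T_{D_i})$); they say nothing about the Kodaira--Spencer map emanating from the exceptional directions of $\widehat{T}$. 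A dimension count cannot substitute for this: without an injectivity or surjectivity statement for that map, equality of dimensions proves nothing, and a priori the family over the exceptional locus of $\widehat{T}$ could fail to be versal in some equisingular direction.

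The missing input is the appendix's Theorem~\ref{KSmap1} (and its relative version, Theorem~\ref{KSmap2}): the exceptional hypersurface of $\widehat{S}_i$ is M\'erindol's moduli space $E\otimes_{\Zee}Q$ of marked anticanonical pairs, carrying the family $\mathscr{V}$, and the Kodaira--Spencer map of that family is an isomorphism onto $H^1(Z_i;T_{Z_i}(-D_i))/\im H^0(D_i;T_{D_i})$. This is the sole source of injectivity in the paper's proof, and it is used together with two identifications your single-step decomposition skips: Lemma~\ref{tangdagger}, which identifies $T_{\Xi,x_0}$ with the subspace $\Big(\bigoplus_iH^1(Z_i;T_{Z_i}(-\log D_i))\Big)^{\dagger}$ of tuples keeping the three curves $D_i$ isogenous, and the isomorphism from \cite[Theorem 7.3]{FG24}, $H^1(\hY;T_{\hY}(-\log\sum_iD_i)) \cong \{(\vartheta_1,\vartheta_2,\vartheta_3)\in (H^1(B;T_B))^3 : \vartheta_1=\vartheta_2=\vartheta_3\}$, which says that all deformations of $(\hY,\sum_iD_i)$ come from the base curve $B$. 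The paper then chases a second, finer diagram: both sides of the equisingular comparison surject onto the diagonal copy of $H^1(B;T_B)$ (deformations of $B$), and on the kernels of these surjections --- i.e.\ on $\Big(\bigoplus_iH^1(Z_i;T_{Z_i}(-\log D_i))\Big)^{\ddagger}$, deformations of the $Z_i$ fixing the modulus of the $D_i$ --- the map is an isomorphism precisely by Theorem~\ref{KSmap1}. Without this two-step filtration and the exceptional-locus Kodaira--Spencer computation, your five-lemma argument has no nontrivial vertical maps to chase against, so the proposal as written does not prove the theorem.
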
 
\begin{proof} For simplicity, we  will only check this in case $k=3$. 
 We begin with the following notation:

\begin{definition}\label{defdagger}  Let $\hY$ be the minimal resolution of an I-surface with three simple elliptic singularities. Denote by  $\iota(\theta_i)$  the image of $\theta_i$ in $H^1(D_i; T_{D_i}) \cong H^1(B; T_B)$. Then define 
$$\Big(\bigoplus_iH^1(Z_i;  T_{Z_i}(-\log  D_i))\Big)^{\dagger}  = 
\Big\{(\theta_1, \theta_2, \theta_3) \in \bigoplus_iH^1(Z_i;  T_{Z_i}(-\log  D_i)) : \iota(\theta_1) = \iota(\theta_2) = \iota(\theta_3)\Big\}.$$ 
This subspace corresponds to deforming the $Z_i$ but keeping the curves $D_i$ pairwise isogenous. 
Define the subspace $\Big(\bigoplus_iH^1(Z_i;  T_{Z_i}(-\log  D_i))\Big)^{\ddagger} \subseteq \Big(\bigoplus_iH^1(Z_i;  T_{Z_i}(-\log  D_i))\Big)^{\dagger} $ by
 $$\Big(\bigoplus_iH^1(Z_i;  T_{Z_i}(-\log  D_i))\Big)^{\ddagger}  = \Big\{(\theta_1, \theta_2, \theta_3) \in \bigoplus_iH^1(Z_i;  T_{Z_i}(-\log  D_i)) : \iota(\theta_1) = \iota(\theta_2) = \iota(\theta_3)=0\Big\}.$$
 This second subspace corresponds to deforming the $Z_i$ but keeping all of the curves $D_i$  isogenous to a fixed curve $B$.  Let $H^1(Z_i;  T_{Z_i}(-\log  D_i)) _0 =\Ker \{\iota\colon H^1(Z_i;  T_{Z_i}(-\log  D_i) \to H^1(D_i; T_{D_i})\}$. By the discussion preceding Theorem~\ref{KSmap1}, 
\begin{align*}
\Big(\bigoplus_iH^1(Z_i;  T_{Z_i}(-\log  D_i))\Big)^{\ddagger}  &= \bigoplus_iH^1(Z_i;  T_{Z_i}(-\log  D_i)) _0\\
&\cong \bigoplus_iH^1(Z_i;  T_{Z_i}(-\log  D_i))/\im H^0(D_i; T_{D_i}).
\end{align*}
\end{definition}

 \begin{proposition}\label{tangdagger} Under the   assumption that $k=3$, there is an exact sequence
$$0 \to \bigoplus_iH^0(D_i; T_{D_i}) \to H^1(X_0; T^0_{X_0}) \to \Big(\bigoplus_iH^1(Z_i;  T_{Z_i}(-\log  D_i))\Big)^{\dagger} \to 0.$$
Hence the induced homomorphism
 $$T_{\Xi,x_0} \to \Big(\bigoplus_iH^1(Z_i;  T_{Z_i}(-\log  D_i))\Big)^{\dagger} $$
 is an isomorphism.   
 \end{proposition}
 
 \begin{proof}  
  The exact sequence at the beginning of the proof of Lemma~\ref{locallytriv} and Lemma~\ref{loccomp} imply that $T_{\Xi,x_0}$ is isomorphic to the kernel of the map 
  $$H^1(\hY; T_{\hY}(-\log D))  \oplus \Big(\bigoplus_iH^1(Z_i;  T_{Z_i}(-\log  D_i))\Big) \to \bigoplus_iH^1(D_i;T_{D_i}) \cong H^1(B; T_B)^3.$$
  By Lemma~\ref{tangdaggerlemma}, this kernel is isomorphic to  $\Big(\bigoplus_iH^1(Z_i;  T_{Z_i}(-\log  D_i))\Big)^{\dagger}$.
   \end{proof}

 In the context of Remark~\ref{loctrivremark},  we can interpret Proposition~\ref{tangdagger} as follows: A first order deformation of $X_0$ induces first order deformations of the pairs $(Z_i, D_i)$, which preserve to first order the isogenies $D_i \to B$. Conversely,    a collection of  first order deformations of the pairs $(Z_i, D_i)$ satisfying this condition  determines  a first order deformation of $B$ and hence of $(\hY,D)$, and thus  a first order deformation of $X_0$ which  is unique up to a choice of gluings. Keeping the $d$-semistability condition to first order then determines the gluings up to first order. 
 
 Returning to the proof of Theorem~\ref{lochatT}, and referring to Corollary~\ref{gentangsp}, we must show that the map $T_{\widehat{T}, t_0} \to (\mathbb{T}^1_{X_0})_{\mathbf{s}}$ is an isomorphism. Since the composition $T_{\widehat{T}, t_0} \to (\mathbb{T}^1_{X_0})_{\mathbf{s}} \to H^0(X_0; T^1_{X_0}) \cong \Cee^3$ is surjective,   it is a question of showing that the induced map from the kernel of the above map to $T_{\Xi,x_0} \cong \Big(\bigoplus_iH^1(Z_i;  T_{Z_i}(-\log  D_i))\Big)^{\dagger} $ is an isomorphism. 
 
 Using the  isomorphism 
  $H^1(\hY; T_{\hY}(-\log D)) \to \Big\{(\vartheta_1, \vartheta_2, \vartheta_3) \in (H^1(B; T_B))^3: \vartheta_1 = \vartheta_2 = \vartheta_3\Big\}$ of Lemma~\ref{tangdaggerlemma}, 
  there is a commutative diagram
 $$\begin{CD}
 T_{\widehat{T}_{\textrm{es}}, t_0} = \Ker\Big\{T_{\widehat{T}, t_0} \to  H^0(X_0; T^1_{X_0}) \Big\} @>>> T_{\Xi,x_0} \cong \Big(\bigoplus_iH^1(Z_i;  T_{Z_i}(-\log  D_i))\Big)^{\dagger}\\
 @VVV @VVV \\
 H^1(\hY; T_{\hY}(-\log D)) @>{\cong}>> \Big\{(\vartheta_1, \vartheta_2, \vartheta_3) \in (H^1(B; T_B))^3: \vartheta_1 = \vartheta_2 = \vartheta_3\Big\}.
 \end{CD}$$
 Both vertical arrows are surjective, and by definition and by Theorem~\ref{KSmap1}, the kernel of the right hand vertical arrow is $\Big(\bigoplus_iH^1(Z_i;  T_{Z_i}(-\log  D_i))\Big)^{\ddagger}$. By the discussion before the statement of  Theorem~\ref{lochatT} and Theorem~\ref{KSmap1},   there is an exact sequence 
 $$ 0 \to \Big(\bigoplus_iH^1(Z_i;  T_{Z_i}(-\log  D_i))\Big)^{\ddagger}\to T_{\widehat{T}_{\textrm{es}}, t_0} \to T_{T_{\textrm{es}}, s_0} \to 0,$$
 where $s_0 \in T_{\textrm{es}}$ is the image of $t_0$.  Combining the above shows that $T_{\widehat{T}, t_0} \to (\mathbb{T}^1_{X_0})_{\mathbf{s}}$ is an isomorphism as claimed. 
 \end{proof}


\section{The mixed Hodge structure of the $d$-semistable model} 

\subsection{The mixed Hodge structure on $X_0$} Throughout the remainder of this paper, we assume that   $k=2$ or $3$, i.e.\ that $\hY$ is rational, Enriques, or elliptic ruled. We keep the   previous notation: $X_0 = \hY\amalg_{D_i}\left(\coprod_iZ_i\right)$, $a\colon \widetilde{X}_0 \to X_0$ is the normalization, and $j\colon \coprod _iD_i \to X_0$ is the inclusion. First we consider the Hodge filtration on $H^i(X_0;\Cee)$: Recall that the spectral sequence with $E_1$ term $E_1^{p,q} = H^q(X_0; \Omega^p_{X_0}/\tau^p_{X_0}) \implies \mathbb{H}^{p+q}(X_0; \Omega^\bullet_{X_0}/\tau^\bullet_{X_0})\cong H^{p+q}(X_0; \Cee)$ degenerates at $E_1$ and the corresponding filtration is  the Hodge filtration on $H^{p+q}(X_0;\Cee)$. In the rational, Enriques, or elliptic ruled cases, we can describe $H^q(X_0; \Omega^p_{X_0}/\tau^p_{X_0})$ as follows:
 
 \begin{lemma}\label{Hodgefilt}    \begin{enumerate} \item[\rm(i)]    $H^1(X_0; \scrO_{X_0}) =0$.  
\item[\rm(ii)]   There is an exact sequence
$$0 \to H^1(\hY; \scrO_{\hY}) \to \bigoplus_i H^1(D_i; \scrO_{D_i}) \to H^2(X_0; \scrO_{X_0}) \to 0.$$
Hence the induced map $H^1(D_\ell; \scrO_{D_\ell}) \to H^2(X_0; \scrO_{X_0})$ is injective  for every $\ell$. 
 \item[\rm(iii)] $F^2H^2(X_0) = H^0(X_0; \Omega^2_{X_0}/\tau^2_{X_0})=0$.
 \item[\rm(iv)] There is an exact sequence
\begin{gather*} 0 \to H^0(\hY; \Omega^1_{\hY}) \to \bigoplus_iH^0(D_i; \Omega^1_{D_i}) \to H^1(X_0; \Omega^1_{X_0}/\tau^1_{X_0}) \to \\
\to  H^1(\hY;\Omega^1_{\hY} ) \oplus \bigoplus_iH^1(Z_i; \Omega^1_{Z_i}) \to \bigoplus_iH^1(D_i; \Omega^1_{D_i})  \to 0.
\end{gather*}
 \end{enumerate}
 \end{lemma}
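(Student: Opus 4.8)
The plan is to compute each of the graded pieces $H^q(X_0;\Omega^p_{X_0}/\tau^p_{X_0})$ directly from the normalization sequence for the complex $\Omega^\bullet_{X_0}/\tau^\bullet_{X_0}$, which by construction fits into a short exact sequence relating it to the forms on the components $\widetilde{X}_0=\hY\amalg\coprod_iZ_i$ and the restriction to the double locus $\coprod_iD_i$. Concretely, for each $p$ there is an exact sequence of sheaves
$$0 \to \Omega^p_{X_0}/\tau^p_{X_0} \to a_*\Big(\Omega^p_{\hY}\oplus\bigoplus_i\Omega^p_{Z_i}\Big)\xrightarrow{\text{difference of restrictions}} j_*\Big(\bigoplus_i\Omega^p_{D_i}\Big)\to 0,$$
the restriction map being ``restrict to $D_i$ and subtract.'' First I would take the long exact cohomology sequence of this for $p=0,1,2$ and read off the four assertions, feeding in the known cohomology of the components. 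The degeneration of the $E_1$ spectral sequence is already granted in the text, so the only work is the bookkeeping in these long exact sequences.

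For part (i), the $p=0$ sequence is $0\to\scrO_{X_0}\to a_*(\scrO_{\hY}\oplus\bigoplus_i\scrO_{Z_i})\to j_*(\bigoplus_i\scrO_{D_i})\to 0$. Since each $Z_i$ is rational, $H^q(Z_i;\scrO_{Z_i})=0$ for $q>0$ and $H^0(Z_i;\scrO_{Z_i})=\Cee$; in all three cases $H^0(\hY;\scrO_{\hY})=\Cee$ and $H^1(\hY;\scrO_{\hY})$ is $0$ (rational), $0$ (Enriques), or $1$-dimensional (elliptic ruled, $=H^1(B;\scrO_B)$). The map on $H^0$ is the connectedness map $\Cee^{k+1}\to\Cee^k$, which is surjective with $1$-dimensional kernel (giving $H^0(X_0;\scrO_{X_0})=\Cee$), and one checks $H^1(X_0;\scrO_{X_0})=0$ by matching $H^1(\hY;\scrO_{\hY})$ against the image in $\bigoplus_iH^1(D_i;\scrO_{D_i})$; in the elliptic ruled case the restriction $H^1(\hY;\scrO_{\hY})\to\bigoplus_iH^1(D_i;\scrO_{D_i})$ is injective because each $D_i\to B$ has a section (it is a bisection or section of the ruling), so the pullback on $H^1$ is injective. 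This injectivity is precisely what gives (ii) as the cokernel sequence. For (iii), $H^0(X_0;\Omega^2_{X_0}/\tau^2_{X_0})$ injects into $H^0(\hY;\Omega^2_{\hY})\oplus\bigoplus_iH^0(Z_i;\Omega^2_{Z_i})$; the $Z_i$ summands vanish since $Z_i$ is rational, and $H^0(\hY;\Omega^2_{\hY})=H^0(\hY;K_{\hY})=0$ because $\hY$ is rational, Enriques, or elliptic ruled (all have $p_g=0$), so (iii) follows. Part (iv) is the $p=1$ long exact sequence, using $H^0(Z_i;\Omega^1_{Z_i})=H^2(Z_i;\Omega^1_{Z_i})=0$ (rationality of $Z_i$ kills the off-diagonal Hodge numbers) to collapse the relevant terms, together with the surjectivity at the right end coming from the fact that the fundamental class maps $H^1(Z_i;\Omega^1_{Z_i})\to H^1(D_i;\Omega^1_{D_i})$ are surjective (each $D_i$ is an effective anticanonical curve on $Z_i$).

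The main obstacle I anticipate is the elliptic ruled case throughout, since there $H^1(\hY;\scrO_{\hY})$ and $H^0(\hY;\Omega^1_{\hY})$ are nonzero and one must verify the correct injectivity of the restriction-to-$\coprod_iD_i$ maps to get the stated exact sequences rather than extra cohomology. The key geometric input is that the curves $D_i$ are sections or bisections of the ruling $\hY\to B$ (from Theorems~\ref{211case} and \ref{111case}), so that each restriction $H^*(B)\to H^*(D_i)$ is the pullback along a finite map admitting enough of a section to be injective on $H^1$ and $H^0(\Omega^1)$. Once that injectivity is established in the elliptic ruled case — and is vacuous in the rational and Enriques cases where the left-hand groups vanish — all four statements drop out of the respective long exact sequences with only routine dimension counts. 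I would organize the write-up by first stating the normalization sequence once, then handling $p=0$ (yielding (i) and (ii)), then $p=2$ (yielding (iii)), then $p=1$ (yielding (iv)), treating the three geometric cases in parallel at each stage.
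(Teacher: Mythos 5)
Your proposal follows essentially the same route as the paper's proof: take the normalization (Mayer--Vietoris) short exact sequences for $\scrO_{X_0}$, $\Omega^1_{X_0}/\tau^1_{X_0}$, and $\Omega^2_{X_0}/\tau^2_{X_0}$, reduce (i)--(ii) to the injectivity of $H^1(\hY;\scrO_{\hY})\to\bigoplus_iH^1(D_i;\scrO_{D_i})$, and get (iii)--(iv) from the vanishing of the relevant groups on the rational components $Z_i$ together with surjectivity of $H^1(Z_i;\Omega^1_{Z_i})\to H^1(D_i;\Omega^1_{D_i})$. One small correction to your justification: a bisection $D_i\to B$ is a degree-$2$ isogeny and does \emph{not} have a section; the injectivity of pullback on $H^1(\scrO)$ and $H^0(\Omega^1)$ instead follows from the trace map for a finite surjective morphism $f$ (since $\mathrm{tr}\circ f^* = (\deg f)\cdot\mathrm{id}$), which is the standard fact implicitly used in the paper.
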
 
 \begin{proof}  The   Mayer-Vietoris sequence  for $\scrO_{X_0}$ reads as follows:
$$0 \to  \scrO_{X_0} \to a_*\Big( \scrO_{\hY} \oplus \bigoplus_i \scrO_{Z_i}\Big) \to j_*\Big(\bigoplus_i\scrO_{D_i}\Big) \to 0.$$
Then (i) and and the exact sequence in (ii) follow from the fact that the dual complex of $X_0$ is contractible, $H^1(\hY; \scrO_{\hY}) \to H^1(D_i; \scrO_{D_i})$ is injective,  and $H^2(\hY; \scrO_{\hY}) = H^2(Z_i; \scrO_{Z_i}) =0$. The last statement in (ii) follows since either $H^1(\hY; \scrO_{\hY}) =0$ (the rational or Enriques cases) or the composite map $H^1(\hY; \scrO_{\hY})\to H^1(D_i; \scrO_{D_i}) $ is injective for every $i$ (the elliptic ruled case).  

Since $H^0(\hY; \Omega^2_{\hY}) = H^0(Z_i; \Omega^2_{Z_i}) =0$, (iii) follows from the isomorphism 
$$\Omega^2_{X_0}/\tau^2_{X_0} \cong a_*\Big( \Omega^2_{\hY} \oplus \bigoplus_i\Omega^2_{Z_i}\Big).$$ 

Similarly, there is an exact sequence
$$0 \to \Omega^1_{X_0}/\tau^1_{X_0} \to a_*\Big( \Omega^1_{\hY} \oplus \bigoplus_i\Omega^1_{Z_i}\Big) \to j_*\Big( \bigoplus_i\Omega^1_{D_i}\Big)\to 0,$$
and (iv) is a consequence of the associated long exact cohomology sequence.
 \end{proof}

As for the weight filtration, it is determined by the Mayer-Vietoris spectral sequence for $H^2(X_0;\Zee)$, which has $E_1$ page

\medskip
\begin{center}
\begin{tabular}{|c|c|c}
$H^4(\hY)\oplus \bigoplus_iH^4(Z_i)$ &{} & {}   \\ \hline
$H^3(\hY)$ & {} & {}   \\ \hline
$H^2(\hY)\oplus\bigoplus_iH^2(Z_i)$ & $\bigoplus_iH^2(D_i)$  & {}  \\ \hline
$H^1(\hY)$ & $\bigoplus_iH^1(D_i)$ &{}  \\ \hline
$H^0(\hY)\oplus \bigoplus_iH^0(Z_i)$ & $\bigoplus_iH^0(D_i)$ & {}  \\ \hline

\end{tabular}

\end{center}

\medskip
\noindent (all coefficients $\Zee$). Here we use the fact that the $Z_i$ are del Pezzo surfaces and hence $H^1(Z_i;\Zee) = H^3(Z_i;\Zee)= 0$. This spectral sequence degenerates at $E_2$. In fact,  the spectral sequence simplifies to the long exact sequence on cohomology coming from
$$0 \to \Zee_{X_0} \to a_*\Big(\Zee_{\hY} \oplus \bigoplus_i\Zee_{Z_i} \Big) \to j_*\Big( \bigoplus_i\Zee_{D_i}\Big) \to 0.$$
Clearly the map $H^0(\hY)\oplus \bigoplus_iH^0(Z_i) \to\bigoplus_iH^0(D_i)$ is surjective.
 The maps $H^2(Z_i;\Zee) \to H^2(D_i;\Zee)$ are surjective, since there exists a divisor in $Z_i$ (for example an exceptional curve) which has intersection number one with $D_i$.  Thus we obtain:
 
 \begin{lemma}\label{genMV} There is an exact sequence
 $$0  \to H^1(\hY;\Zee) \to \bigoplus_iH^1(D_i;\Zee) \to H^2(X_0; \Zee) \to  H^2(\hY;\Zee)\oplus\bigoplus_iH^2(Z_i;\Zee)\to \bigoplus_iH^2(D_i;\Zee) \to 0,$$
 which is an exact sequence of mixed Hodge structures, suitably interpreted if $H^2(\hY;\Zee)$ has torsion. In particular, if $\hY$ is not an Enriques surface, 
 \begin{align*}
 W_1H^2(X_0; \Zee) &\cong \bigoplus_iH^1(D_i;\Zee)/\im H^1(\hY;\Zee);\\
 W_2H^2(X_0; \Zee)/W_1H^2(X_0; \Zee) &\cong \Ker\{H^2(\hY;\Zee)\oplus\bigoplus_iH^2(Z_i;\Zee)\to \bigoplus_iH^2(D_i;\Zee)\}, 
 \end{align*} 
 where  $H^1(\hY;\Zee) $ and $\bigoplus_iH^1(D_i;\Zee) $ have their usual weight one (pure)   Hodge structures and the Hodge structure $W_2H^2(X_0; \Zee)/W_1H^2(X_0; \Zee)$ is pure of weight two and  type $(1,1)$.   \qed
 \end{lemma}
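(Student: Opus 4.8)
The plan is to extract the stated exact sequence and the two identifications of $W_1$ and $W_2/W_1$ from the Mayer--Vietoris spectral sequence for the normal crossing variety $X_0 = \hY\amalg_{D_i}(\coprod_i Z_i)$, exactly as set up in the paragraphs preceding the statement. Since the $Z_i$ are del Pezzo surfaces, $H^1(Z_i;\Zee) = H^3(Z_i;\Zee) = 0$, so the $E_1$ page collapses and the spectral sequence reduces to the long exact cohomology sequence associated to the short exact sequence of sheaves
$$0 \to \Zee_{X_0} \to a_*\Big(\Zee_{\hY} \oplus \bigoplus_i\Zee_{Z_i}\Big) \to j_*\Big(\bigoplus_i\Zee_{D_i}\Big) \to 0.$$
First I would write out the relevant portion of this long exact sequence in degrees $0,1,2$, and then truncate it using the two surjectivity observations already recorded: the map $H^0(\hY)\oplus\bigoplus_i H^0(Z_i) \to \bigoplus_i H^0(D_i)$ is surjective (each $D_i$ lies on a connected component), and each $H^2(Z_i;\Zee)\to H^2(D_i;\Zee)$ is surjective because $Z_i$ carries a divisor (e.g.\ an exceptional curve) meeting $D_i$ with intersection number one. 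The surjectivity at the $H^0$ level kills the connecting map into $H^1$, which is what decouples the degree-1 and degree-2 parts and produces the claimed six-term exact sequence of mixed Hodge structures.

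Next I would read off the two displayed isomorphisms directly from this six-term sequence. The weight filtration on $H^2(X_0;\Zee)$ is the one induced by the spectral sequence, so $W_1 H^2(X_0;\Zee)$ is the image of $\bigoplus_i H^1(D_i;\Zee)$, which by exactness is the cokernel of the injection $H^1(\hY;\Zee)\hookrightarrow\bigoplus_i H^1(D_i;\Zee)$, giving the first formula. The injectivity of $H^1(\hY;\Zee)\to\bigoplus_i H^1(D_i;\Zee)$ is the one point requiring a case-by-case check: in the Enriques and rational cases $H^1(\hY;\Zee)=0$ so it is vacuous, while in the elliptic ruled case $H^1(\hY;\Zee)\cong H^1(B;\Zee)$ and each restriction $H^1(\hY)\to H^1(D_i)$ is an isomorphism (each $D_i$ is a multisection of the ruling, so maps isomorphically to $B$ on $H^1$), hence the total map is injective. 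The quotient $W_2/W_1$ is then the image of $H^2(X_0;\Zee)$ in $H^2(\hY;\Zee)\oplus\bigoplus_i H^2(Z_i;\Zee)$, which by exactness equals the kernel of the next map to $\bigoplus_i H^2(D_i;\Zee)$, giving the second formula.

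The remaining claims concern Hodge type. That $W_1$ carries the pure weight-one Hodge structure and $W_2/W_1$ is pure of weight two follows formally from the fact that the Mayer--Vietoris sequence is a sequence of mixed Hodge structures with the standard weights for a normal crossing variety (the $H^q(D_i)$ contributing weight $q$, shifted appropriately), so I would simply invoke Deligne's mixed Hodge theory here rather than recompute. The one genuinely substantive Hodge-theoretic point is that $W_2/W_1$ is of pure type $(1,1)$: this is because $W_2/W_1$ sits inside $H^2(\hY)\oplus\bigoplus_i H^2(Z_i)$, and by Lemma~\ref{Hodgefilt}(iii) together with $H^{2,0}(Z_i)=0$ (the $Z_i$ being del Pezzo) and $H^{2,0}(\hY)=0$ in all of the rational, Enriques, and elliptic ruled cases, the ambient $(2,0)$ and $(0,2)$ parts vanish, forcing $W_2/W_1$ to be of type $(1,1)$.

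The main obstacle I anticipate is not in the spectral sequence bookkeeping, which is routine once the two surjectivities are in hand, but in verifying that the maps are genuinely maps of mixed Hodge structures with the asserted weights and in pinning down the injectivity of $H^1(\hY;\Zee)\to\bigoplus_i H^1(D_i;\Zee)$ uniformly across the three geometric cases; the parenthetical remark ``suitably interpreted if $H^2(\hY;\Zee)$ has torsion'' signals the one place where I would need to be careful, since torsion in $H^2(\hY;\Zee)$ (present in the Enriques case) means the sequence of integral lattices is only exact up to torsion and the clean statement holds after tensoring with $\Q$ or working with the induced weight-graded pieces.
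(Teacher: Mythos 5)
Your proposal is correct and takes essentially the same route as the paper: the paper's proof is exactly the Mayer--Vietoris discussion preceding the lemma (the sheaf sequence $0 \to \Zee_{X_0} \to a_*(\Zee_{\hY}\oplus\bigoplus_i\Zee_{Z_i}) \to j_*(\bigoplus_i\Zee_{D_i})\to 0$, surjectivity at the $H^0$ level, and surjectivity of $H^2(Z_i;\Zee)\to H^2(D_i;\Zee)$ via a divisor meeting $D_i$ in one point), from which the six-term sequence and the identifications of $W_1$ and $W_2/W_1$ are read off just as you do. One small correction to your justification of injectivity of $H^1(\hY;\Zee)\to\bigoplus_iH^1(D_i;\Zee)$ in the elliptic ruled case: the restriction is an isomorphism only when $D_i$ is the proper transform of a section, while for the bisections it is injective with index-$2$ image (pullback along a degree-$2$ isogeny); injectivity of the total map, which is all that is needed, still follows from the section component alone, and this is exactly how the paper argues in Lemma~\ref{ellipticruledW1}.
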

 
 If $\hY$ is either a rational or an Enriques surface, $H^1(\hY;\Zee) =0$. However, if  $\hY$ is an elliptic ruled surface over the elliptic curve $B$, then $H^1(\hY;\Zee)\cong H^1(B;\Zee)$.

 We turn now to the  definition of the Jacobian  $JW_1H^2(X_0; \Zee)$:
 
 \begin{definition}\label{defJac}  Let $H$ be an effective weight one Hodge structure. Then the \textsl{Jacobian} $JH$ is the complex torus $H^{0,1}/H_{\Zee}$. The functor $J$ defines a covariant functor on the category of effective weight one Hodge structures. In particular,  if $H_1\subseteq H_2$ is an inclusion of  effective weight one Hodge structures of the same rank,   then there is an isogeny of complex tori $JH_1 \to JH_2$ giving an exact sequence
 $$0 \to (H_2)_\Zee/(H_1)_\Zee \to JH_1 \to JH_2 \to 0.$$
 On the other hand, if $C$ is a curve,  and writing $JC$ for $JH^1(C)$,  $JC\cong \Pic^0C$ and hence the functor $J$ is  can also be viewed as contravariant  with respect to morphisms of  curves. 
  \end{definition}
 
In the rational case,   the following is immediate from the Mayer-Vietoris  exact sequence:

\begin{lemma}\label{rationalW1} Suppose that $\hY$ is a rational surface. Then $JW_1H^2(X_0; \Zee)\cong  JD_1\oplus JD_2$. \qed
\end{lemma}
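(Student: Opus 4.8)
The plan is to read everything off the Mayer--Vietoris exact sequence of Lemma~\ref{genMV} and the general description of $W_1$. Since $\hY$ is assumed rational, we have $H^1(\hY;\Zee)=0$, so the formula
$$W_1H^2(X_0; \Zee) \cong \bigoplus_iH^1(D_i;\Zee)/\im H^1(\hY;\Zee)$$
from Lemma~\ref{genMV} collapses to $W_1H^2(X_0; \Zee) \cong \bigoplus_i H^1(D_i;\Zee)$, an isomorphism of pure weight one Hodge structures. Here $k=2$ in the rational case, so the direct sum runs over $i=1,2$.

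Next I would apply the Jacobian functor $J$ to this isomorphism. The point is that $J$, being defined as $H^{0,1}/H_\Zee$, is an additive functor on the category of effective weight one Hodge structures, so it carries a direct sum of Hodge structures to the corresponding direct sum (biproduct) of complex tori. Concretely, applying $J$ to $W_1H^2(X_0;\Zee)\cong H^1(D_1;\Zee)\oplus H^1(D_2;\Zee)$ yields
$$JW_1H^2(X_0; \Zee)\cong J\bigl(H^1(D_1;\Zee)\bigr)\oplus J\bigl(H^1(D_2;\Zee)\bigr) = JD_1\oplus JD_2,$$
using the definition $JD_i = JH^1(D_i)$. This is exactly the claimed statement.

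There is essentially no obstacle here; the whole content is bookkeeping. The two facts that make it immediate are (a) the vanishing $H^1(\hY;\Zee)=0$ in the rational case, which is why the statement specializes this way and does not hold verbatim in the elliptic ruled case, and (b) the additivity of $J$. The only point deserving a word of care is that $J$ is \emph{contravariant} on morphisms of curves (as noted in the definition), so one should be slightly attentive to variance; but since we are applying $J$ to an isomorphism of abstract weight one Hodge structures rather than tracking a geometric morphism, this does not affect the identification, and the direct-sum decomposition is respected regardless. Hence the proof is a one-line consequence of Lemma~\ref{genMV}, which is presumably why the authors mark it ``immediate.''
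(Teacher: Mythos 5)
Your proof is correct and is exactly the paper's argument: the paper states this lemma as ``immediate from the Mayer--Vietoris exact sequence'' (Lemma~\ref{genMV}), and your write-up simply fills in the two obvious ingredients, namely $H^1(\hY;\Zee)=0$ for a rational surface and the additivity of the Jacobian functor $J$. Nothing further is needed.
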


The   Enriques case is more subtle. 

\begin{lemma}\label{EnriquesW1}  Suppose that $\hY$ is an Enriques surface. Then $H^2(X_0; \Zee)$ is torsion free. Hence the image of $H^1(D_1;\Zee) \oplus H^1(D_2;\Zee)$  is contained in a saturated  overlattice $W_1H^2(X_0; \Zee)=W_1$ and has index $2$ in $W_1$, and there is an exact sequence
$$0 \to W_1 \to H^2(X_0; \Zee) \to \Ker\{\overline{H}^2(\hY;\Zee)\oplus\bigoplus_iH^2(Z_i;\Zee)\to \bigoplus_iH^2(D_i;\Zee)\}\to 0.$$
where $\overline{H}^2(\hY;\Zee)$ is the quotient of $H^2(\hY;\Zee)$ by the torsion subgroup. Finally, let $\eta\in \Pic^0\hY$ be the $2$-torsion line bundle, and identify $\eta$ with its image in $JD_1\oplus JD_2$. Then the map $JDi \to JW_1$ is injective for $i=1,2$ and 
$$JW_1\cong (JD_1\oplus JD_2)/\langle \eta\rangle.$$
\end{lemma}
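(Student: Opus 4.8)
The plan is to run the integral Mayer--Vietoris sequence of Lemma~\ref{genMV}, keeping careful track of $2$-torsion, and to feed in one geometric input: that $\eta$ restricts to a \emph{nonzero} $2$-torsion class $\eta|_{D_i}\in\Pic^0D_i$ for each $i$. Since $\hY$ is an Enriques surface blown up at a point, each $D_i$ is isomorphic to a half-fiber $\overline D_i$ on the Enriques surface $Y_0$; adjunction on $Y_0$ gives $\scrO_{\overline D_i}(\overline D_i)\cong\eta|_{\overline D_i}$, which is the nontrivial element of $\Pic^0\overline D_i[2]$ by the standard nontriviality of the normal bundle of a half-fiber. Writing $\overline\eta\in H^1(\hY;\Zee/2)$ for the reduction of $\eta$ and using $\Pic^0D_i[2]\cong H^1(D_i;\Zee/2)$, this says $\overline\eta|_{D_i}\neq0$. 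Because $H^1(\hY;\Zee)=H^1(Z_i;\Zee)=0$, Lemma~\ref{genMV} collapses to a short exact sequence $0\to\bigoplus_iH^1(D_i;\Zee)\xrightarrow{\delta}H^2(X_0;\Zee)\xrightarrow{\rho}K\to0$, where $K=\Ker\{H^2(\hY;\Zee)\oplus\bigoplus_iH^2(Z_i;\Zee)\to\bigoplus_iH^2(D_i;\Zee)\}$. As the $Z_i$ are del Pezzo and $\eta$ has degree zero on each $D_i$, the torsion subgroup of $K$ is exactly $\langle\eta\rangle\cong\Zee/2$, so $H^2(X_0;\Zee)_{\mathrm{tors}}\hookrightarrow\Zee/2$.

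To prove $H^2(X_0;\Zee)$ is torsion free I would kill this potential $\Zee/2$ using $\Zee/2$-coefficients. Running Mayer--Vietoris with $\Zee/2$-coefficients and using $H^1(Z_i;\Zee/2)=0$ gives $H^1(X_0;\Zee/2)\cong\Ker\{H^1(\hY;\Zee/2)\to\bigoplus_iH^1(D_i;\Zee/2)\}$, and this kernel vanishes precisely because $\overline\eta\mapsto(\overline\eta|_{D_i})_i\neq0$. The Bockstein sequence $H^1(X_0;\Zee/2)\xrightarrow{\beta}H^2(X_0;\Zee)\xrightarrow{\times2}H^2(X_0;\Zee)$ then shows $H^2(X_0;\Zee)[2]=\im\beta=0$, so $H^2(X_0;\Zee)$ is torsion free. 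Granting this, lift the torsion generator $\eta\in K$ to $w\in H^2(X_0;\Zee)$; then $2w=\delta(v)$ for a unique $v\in\bigoplus_iH^1(D_i;\Zee)$, and torsion freeness forces $v\notin2\bigoplus_iH^1(D_i;\Zee)$, since otherwise $w-\delta(v/2)$ would be nonzero $2$-torsion. Hence the saturation of $\im\delta$ is $W_1=\langle\im\delta,w\rangle$, containing $\bigoplus_iH^1(D_i;\Zee)$ with index $2$, and $\rho$ induces the asserted sequence with $H^2(X_0;\Zee)/W_1=K/\langle\eta\rangle=\Ker\{\overline H^2(\hY;\Zee)\oplus\bigoplus_iH^2(Z_i;\Zee)\to\bigoplus_iH^2(D_i;\Zee)\}$.

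For the final isomorphism I would compute the image of $w$ in $JW_1$. By Lemma~\ref{Hodgefilt}(ii), $(W_1)^{0,1}\cong H^2(X_0;\scrO_{X_0})\cong\bigoplus_iH^1(D_i;\scrO_{D_i})$, so $JW_1$ is the quotient of $JD_1\oplus JD_2$ by the image $\overline w$ of $w$, which is a $2$-torsion point because $2w=\delta(v)\in\im\delta$. Functoriality of the connecting map under $\Zee\to\scrO_{X_0}$, together with $H^2(\hY;\scrO)=H^2(Z_i;\scrO)=0$, identifies $\overline w$ with the half-lattice point $v/2\bmod\bigoplus_iH^1(D_i;\Zee)$. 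To name $v$, reduce $2w=\delta(v)$ modulo $2$: since reduction commutes with the connecting map, $\overline\delta(v\bmod2)=0$, so $v\bmod2\in\Ker\overline\delta=\langle(\overline\eta|_{D_i})_i\rangle$; as $v$ is not divisible by $2$, necessarily $v\bmod2=(\overline\eta|_{D_i})_i$. Under $\Pic^0D_i[2]\cong H^1(D_i;\Zee/2)$, the point $v/2\bmod\bigoplus_iH^1(D_i;\Zee)$ is then exactly $(\eta|_{D_1},\eta|_{D_2})$, the image of $\eta$, so $JW_1\cong(JD_1\oplus JD_2)/\langle\eta\rangle$.

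The main obstacle is the integral bookkeeping rather than any single hard estimate: rationally Mayer--Vietoris is transparent, and the entire content is the interaction between the $\Zee/2$ in $H^2(\hY;\Zee)_{\mathrm{tors}}$ and the connecting map $\delta$ — in particular the fact that the lift $w$ of the \emph{torsion} class $\eta$ is genuinely non-torsion in $H^2(X_0;\Zee)$. Everything is driven by the single geometric input $\eta|_{D_i}\neq0$: once it is in hand, torsion freeness, the index-$2$ overlattice, and the identification of the extension with $\eta$ all follow from computing $\Ker\overline\delta$ modulo $2$. The one step that must be checked with care is that $\Ker\overline\delta$ is exactly the order-two group generated by $(\overline\eta|_{D_i})_i$, which follows from $H^1(Z_i;\Zee/2)=0$ and the injectivity of $\overline\eta\mapsto(\overline\eta|_{D_i})_i$.
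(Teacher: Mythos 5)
Your proposal is correct, and it rests on exactly the same geometric input as the paper's proof: the curves $\overline{D}_i$ are half-fibers of elliptic fibrations on the Enriques surface, so $\eta|_{D_i}$ is a nonzero $2$-torsion point of $\Pic^0D_i$. The difference is one of formulation. For torsion-freeness, the paper argues via covering spaces: a $\Zee/2$ in the torsion of $H^2(X_0;\Zee)$ would produce a connected \'etale double cover of $X_0$ restricting to the K3 cover of $\hY$; since the $Z_i$ are simply connected, such a cover is split over each $Z_i$, hence split over each $D_i$, contradicting the connectedness of the preimages of the half-fibers in the K3 cover. You instead show $H^1(X_0;\Zee/2)=0$ by mod-$2$ Mayer--Vietoris (using $H^1(Z_i;\Zee/2)=0$ and $\overline\eta|_{D_i}\neq0$) and then kill the potential $2$-torsion with the Bockstein sequence. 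These two arguments are equivalent in content --- $H^1(X_0;\Zee/2)$ classifies precisely the double covers the paper considers, your vanishing of $H^1(Z_i;\Zee/2)$ is their simple connectivity of $Z_i$, and $\overline\eta|_{D_i}\neq0$ is their connectedness statement --- but your version is purely cohomological and avoids any appeal to $\pi_1$. Where you genuinely add value is in the final statement: the paper dismisses the identification of the kernel of the degree-$2$ isogeny $JD_1\oplus JD_2\to JW_1$ with $\langle\eta\rangle$ as ``a straightforward unwinding of the definitions,'' whereas you actually carry it out --- lifting the torsion class of $K$ to $w$, writing $2w=\delta(v)$, and pinning down $v\bmod 2=(\overline\eta|_{D_1},\overline\eta|_{D_2})$ via the computation of $\Ker\overline\delta$ --- which is a clean and complete substitute for the paper's suggested check that $H_1(D_i;\Zee)\to H_1(\hY;\Zee)\cong\Zee/2\Zee$ is surjective (that surjectivity being the homological dual of your $\overline\eta|_{D_i}\neq0$).
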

\begin{proof}  By the Mayer-Vietoris exact sequence, the torsion subgroup of  $H^2(X_0; \Zee)$ is either trivial or isomorphic to $\Zee/2\Zee$. If it is isomorphic to $\Zee/2\Zee$, then the natural map $H^2(X_0; \Zee) \to  H^2(\hY; \Zee)$ is an isomorphism on torsion subgroups. Thus, it suffices to prove that there is no connected \'etale cover $\widetilde{X}_0$ of $X_0$ which induces the (blown up) $K3$ cover $\widetilde{Z}$ of $\hY$. Using \cite[VIII.17]{BHPV} as a general reference on Enriques surfaces, let $Y_0$ be the minimal model of $\hY$ and let $\overline{D}_1$, $\overline{D}_2$ be the images of the elliptic curves $D_1, D_2$ in $Y_0$. Then $\overline{D}_1\cdot  \overline{D}_2 =1$, so the cohomology classes of $\overline{D}_1$ and $\overline{D}_2$ are primitive. Then the $\overline{D}_i$ are multiple fibers in two different elliptic fibrations and the inverse image of $D_i$ in $\widetilde{Z}$ are connected. But the $Z_i$ are simply connected, so the cover $\widetilde{X}_0$ induces disconnected covers of the $Z_i$ and hence of $D_i$. This is a contradiction. 

The torsion subgroup $\langle \eta\rangle$  of $H^2(\hY; \Zee)$ has order $2$ and lies in the kernel of the homomorphism $H^2(\hY; \Zee) \to H^2(D_1;\Zee)\oplus H^2(D_2;\Zee)$. Hence it is in the image of
$H^2(X_0; \Zee)$. Since $H^2(X_0; \Zee)$  is torsion free and $W_1$ is the saturation of the image of $H^1(D_1;\Zee)\oplus H^1(D_2;\Zee)$, it follows that $W_1/H^1(D_1;\Zee)\oplus H^1(D_2;\Zee)$ has order $2$ and its image in $H^2(\hY; \Zee)$ is $\langle \eta\rangle$. In particular, the kernel of $JD_1\oplus JD_2 \to JW_1$ has order $2$. To see that this kernel is $\langle \eta\rangle$,  where $\eta$ is identified with   
$$(\eta|D_1, \eta|D_2)\in \Pic ^0D_1 \oplus \Pic^0D_2 \cong JD_1\oplus JD_,$$
 the Mayer-Vietoris sequences for $\Zee$,  $\scrO_{X_0}$, and $\scrO_{X_0}^*$ give a commutative diagram
$$\begin{CD}
H^1(D_1;\Zee)\oplus H^1(D_2;\Zee) @>>> H^1(D_1;\scrO_{D_1})\oplus H^1(D_2;\scrO_{D_2}) @>>>H^1(D_1;\scrO_{D_1}^*)\oplus H^1(D_2;\scrO_{D_2}^*)\\
@VVV @VVV @VVV \\
H^2(X_0;\Zee) @>>> H^2(X_0;\scrO_{X_0}) @>>> H^2(X_0;\scrO_{X_0}^*).
\end{CD}$$
Viewing   $(\eta|D_1, \eta|D_2)$   as a class in $H^1(D_1;\scrO_{D_1}^*)\oplus H^1(D_2;\scrO_{D_2}^*)$, it maps to $0$ in $H^2(X_0;\scrO_{X_0}^*)$ since it is in the image of $H^1(\hY; \scrO_{\hY}^*)$. Thus, if $(\eta'|D_1, \eta'|D_2)$ is a lift of $(\eta|D_1, \eta|D_2)$ to
$$H^1(D_1;\Q)\oplus H^1(D_2;\Q) \subseteq  H^1(D_1;\scrO_{D_1})\oplus H^1(D_2;\scrO_{D_2}),$$  then  the  image of $(\eta'|D_1, \eta'|D_2)$ in  $ H^2(X_0;\Q) \subseteq H^2(X_0;\scrO_{X_0})$ lies in $H^2(X_0;\Zee)$, giving an index two overlattice of the image of $H^1(D_1;\Zee)\oplus H^1(D_2;\Zee)$. Unwinding the identifications as in Definition~\ref{defJac}, it follows that $(\eta|D_1, \eta|D_2)$ is in the kernel of the map  $ JD_1\oplus JD_2\to JW_1$ and hence that the kernel is exactly equal to $\langle (\eta|D_1, \eta|D_2)\rangle =  \langle \eta\rangle$.  Finally, this implies that $ JD_i\to JW_1$ is injective for $i=1,2$. 
\end{proof} 

 Next we collect some basic facts about the invariants in  the elliptic ruled case: 

\begin{lemma}\label{ellipticruledW1} Let  $\hY$ be the blowup of an elliptic ruled surface over the base $B$.  Then:
\begin{enumerate}
\item[\rm(i)]  $H^2(X_0; \Zee)$ is torsion free.    
\item[\rm(ii)]    $W_1 \cong  \left( \bigoplus_iH^1(D_i;\Zee)\right)/ H^1(\hY;\Zee)$, and, in the notation of Theorems~\ref{211case} and \ref{111case},
$$JW_1 \cong \begin{cases} J\Gamma  \oplus JB, &\text{if $(m_1, m_2, m_2) = (2,1,1)$};\\
J\Gamma_1 \oplus J\Gamma_2 &\text{if $(m_1, m_2, m_2) = (1,1,1)$}.
\end{cases}$$
\end{enumerate}
\end{lemma}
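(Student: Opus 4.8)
The plan is to read everything off the Mayer--Vietoris exact sequence of Lemma~\ref{genMV}, exploiting the fact that in the elliptic ruled case at least one of the curves $D_i$ is (the proper transform of) a section of the ruling. First I would record the two geometric inputs that drive the computation. Since $\hY$ is the blowup of an elliptic ruled surface $Y_0=\Pee(W)$, the group $H^2(\hY;\Zee)$ is torsion free and $H^1(\hY;\Zee)\cong H^1(B;\Zee)$, the isomorphism being induced by the composite $\hY\to Y_0\to B$. Under this identification the restriction $H^1(\hY;\Zee)\to H^1(D_i;\Zee)$ is pullback along the induced cover $D_i\to B$: it is an isomorphism when $D_i$ is a section and is the degree two pullback when $D_i$ is a bisection $\Gamma$ or $\Gamma_j$. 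These facts are controlled by the explicit blowup descriptions of Theorems~\ref{211case} and \ref{111case}.

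For part (i), the key observation is that in both cases $(2,1,1)$ and $(1,1,1)$ some $D_i$ is a section (namely $\sigma_2$ or $\sigma_3$). Restricting to such a section gives a morphism of Hodge structures $\bigoplus_i H^1(D_i;\Zee)\to H^1(D_i;\Zee)\cong H^1(\hY;\Zee)$ that retracts $H^1(\hY;\Zee)\to \bigoplus_i H^1(D_i;\Zee)$. Hence $H^1(\hY;\Zee)$ is a direct summand, so the cokernel $W_1=\bigoplus_i H^1(D_i;\Zee)/\im H^1(\hY;\Zee)$ of Lemma~\ref{genMV} is torsion free. Moreover $\Gr^W_2 H^2(X_0;\Zee)$ is a subgroup of the torsion-free group $H^2(\hY;\Zee)\oplus\bigoplus_i H^2(Z_i;\Zee)$, hence is torsion free, and the weight extension $0\to W_1\to H^2(X_0;\Zee)\to \Gr^W_2 H^2(X_0;\Zee)\to 0$ then shows $H^2(X_0;\Zee)$ is torsion free. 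This proves (i), and the first assertion of (ii) is immediate from Lemma~\ref{genMV} together with the injectivity of $H^1(\hY;\Zee)\to\bigoplus_i H^1(D_i;\Zee)$ just established.

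For the Jacobian computation in (ii) I would apply the Jacobian functor to the short exact sequence of pure weight one Hodge structures
$$0 \to H^1(\hY;\Zee) \to \bigoplus_i H^1(D_i;\Zee) \to W_1 \to 0.$$
Since $H^1(\hY;\Zee)$ is a direct summand, hence a saturated sub-Hodge structure, this yields a short exact sequence of abelian varieties
$$0 \to JB \to \bigoplus_i JD_i \to JW_1 \to 0,$$
where the left-hand map sends $b$ to $(\psi(b),b,b)$ in case $(2,1,1)$ (with $\psi\colon JB\to J\Gamma$ the pullback along $\Gamma\to B$ and the remaining coordinates the isomorphisms from the sections $\sigma_2,\sigma_3$), and to $(\psi_1(b),\psi_2(b),b)$ in case $(1,1,1)$ (with $\psi_j\colon JB\to J\Gamma_j$ and the last coordinate the isomorphism from $\sigma_3$). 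In case $(2,1,1)$ the homomorphism $\bigoplus_i JD_i \to J\Gamma\oplus JB$, $(x,b_1,b_2)\mapsto (x-\psi(b_1),\,b_1-b_2)$, is surjective with kernel exactly the image of $JB$, so $JW_1\cong J\Gamma\oplus JB$; in case $(1,1,1)$ the homomorphism $(x_1,x_2,b)\mapsto (x_1-\psi_1(b),\,x_2-\psi_2(b))$ plays the same role, giving $JW_1\cong J\Gamma_1\oplus J\Gamma_2$.

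The computation is essentially formal once the two geometric inputs are in place, and the point to get right is that the presence of a section allows the $JB$ factor to be split off cleanly: the explicit quotient maps above use the section coordinate to absorb the (possibly non-isomorphic, degree two) contributions $\psi,\psi_j$ to the bisection factors, so that the full factors $J\Gamma$, $J\Gamma_j$ survive in the quotient. I expect the only real care to be needed in verifying that $H^1(\hY;\Zee)\to H^1(D_i;\Zee)$ is exactly the pullback along $D_i\to B$ — and in particular that, for bisections, it is the degree two pullback, which could a priori introduce torsion into the cokernel were it not for the splitting off of a section. The sign ambiguities inherent in Mayer--Vietoris do not affect the isomorphism type of $JW_1$.
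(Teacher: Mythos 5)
Your proof is correct and follows essentially the same route as the paper: both arguments turn on the fact that some $D_i$ is the proper transform of a section, so that $H^1(\hY;\Zee)\to H^1(D_i;\Zee)$ is an isomorphism, which splits $H^1(\hY;\Zee)$ off as a direct summand of $\bigoplus_i H^1(D_i;\Zee)$ and yields simultaneously the torsion-freeness and the identification of $W_1$ with the sum of the remaining factors. The only cosmetic difference is in the last step: the paper identifies $W_1$ directly with $H^1(\Gamma;\Zee)\oplus H^1(B;\Zee)$ as a Hodge structure and then applies $J$, whereas you apply $J$ to the short exact sequence and compute the quotient of abelian varieties explicitly; these amount to the same computation.
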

\begin{proof} 
Regardless of whether $(m_1, m_2, m_2) = (2,1,1)$ or $(m_1, m_2, m_2) = (1,1,1)$, there exists at least one $i$ such that $D_i$ is the proper transform of a section.  Hence, for such an $i$,   $H^1(\hY;\Zee) \to  H^1(D_i;\Zee)$ is an isomorphism. Looking for example at the case $(m_1, m_2, m_2) = (2,1,1)$, we have
$$\bigoplus_iH^1(D_i;\Zee) \cong H^1(\Gamma;\Zee)\oplus H^1(\sigma_2; \Zee)\oplus H^1(\sigma_3; \Zee),$$
and the natural map $ H^1(\hY;\Zee) \to  H^1(\sigma_3; \Zee)$, say, is an isomorphism. Thus the map
$$H^1(\Gamma;\Zee)\oplus H^1(\sigma_2; \Zee) \to  \operatorname{Coker} \{H^1(\hY;\Zee) \to \bigoplus_iH^1(D_i;\Zee) \}$$
is an isomorphism. Since $H^2(\hY;\Zee)$ and $H^2(Z_i;\Zee)$ are torsion free,  $H^2(X_0; \Zee)$ is torsion free,  and the image of  $\bigoplus_iH^1(D_i;\Zee)$ in $H^2(X_0;\Zee)$ is a saturated sublattice. Thus $W_1 \cong H^1(\Gamma;\Zee)\oplus H^1(\sigma_2; \Zee) \cong H^1(\Gamma;\Zee)\oplus H^1(B; \Zee)$ and therefore $JW_1 \cong J\Gamma  \oplus JB$. The case where $(m_1, m_2, m_2) = (1,1,1)$ is similar.  
\end{proof} 

 Concentrating attention on the case $(m_1, m_2, m_2) = (1,1,1)$, which will be the main case of interest, we have the following more intrinsic formulation:
 
 \begin{lemma}\label{interpretJW1}  Suppose that $(m_1, m_2, m_2) = (1,1,1)$. Then in the notation of Theorem~\ref{111case},
 $$JW_1 \cong (J\Gamma_1 \oplus J\Gamma_2\oplus J\sigma)/JB.$$
 Thus in particular:
 \begin{enumerate}
 \item[\rm(i)] The induced homomorphisms $J\sigma \to JW_1$ and $J\Gamma_i\to JW_1$ are injective.
 \item[\rm(ii)] The induced homomorphism  $J\Gamma_1 \oplus J\Gamma_2 \to JW_1$ is an isomorphism.
 \item[\rm(iii)] The induced homomorphism  $J\Gamma_i  \oplus  J\sigma \to JW_1$ is surjective, and its kernel is $\langle  \eta  \rangle $, where  $\eta = (\eta_j', \eta_j)$, $j\neq i$, for $\eta_j$ and $\eta_j'$ certain $2$-torsion points on $J\sigma$ and $J\Gamma_i$ respectively. Hence
 $$JW_1 \cong J\Gamma_1 \oplus J\Gamma_2 \cong (J\sigma \oplus J\Gamma_i)/\langle  \eta  \rangle .$$
 \end{enumerate}
 \end{lemma}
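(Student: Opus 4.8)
In the case $(m_1, m_2, m_3) = (1,1,1)$, Theorem~\ref{111case} tells us that $\hY$ is the blowup of the elliptic ruled surface $Y_0$ at two points $p_1, p_2$, where $p_i = \Gamma_i \cap \sigma_3$. The exceptional divisors $D_1, D_2$ are the proper transforms of the two bisections $\Gamma_1, \Gamma_2$, and $D_3$ is the proper transform of the section $\sigma_3$. Since each of $\Gamma_1, \Gamma_2, \sigma_3$ maps isomorphically under the projection $Y_0 \to B$ (as curves isogenous to $B$), all of $J\Gamma_1$, $J\Gamma_2$, $J\sigma_3$, and $JB$ are isomorphic elliptic curves, and the plan is to exhibit $JW_1$ as a quotient of their direct sum by a diagonally embedded copy of $JB$.

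\begin{proof} By Lemma~\ref{ellipticruledW1}, $H^2(X_0;\Zee)$ is torsion free and
$$W_1 \cong \Big(\bigoplus_i H^1(D_i;\Zee)\Big)\big/H^1(\hY;\Zee) = \big(H^1(\Gamma_1;\Zee)\oplus H^1(\Gamma_2;\Zee)\oplus H^1(\sigma_3;\Zee)\big)\big/H^1(\hY;\Zee),$$
where we identify $H^1(D_i;\Zee)$ with $H^1$ of the corresponding curve via the resolution. Since $\hY$ is the blowup of the ruled surface $Y_0$ over $B$, we have $H^1(\hY;\Zee)\cong H^1(B;\Zee)$, and the pullback map $H^1(B;\Zee)\to H^1(D_i;\Zee)$ is, in each case, the isomorphism induced by the degree-one map $D_i \to B$ (an isomorphism for the section $\sigma_3$ and an isomorphism of Hodge structures for each bisection, since $\Gamma_i \to B$ is an isogeny of elliptic curves which induces an isomorphism on $H^1$ after the identification). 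Dualizing and passing to Jacobians, the inclusion of weight-one Hodge structures $H^1(B)\hookrightarrow \bigoplus_i H^1(D_i)$ is the diagonal, so by contravariance of $J$ we obtain
$$JW_1 \cong \big(J\Gamma_1 \oplus J\Gamma_2 \oplus J\sigma_3\big)\big/\Delta(JB),$$
where $\Delta$ is the antidiagonal embedding dual to the three identifications above. This is the claimed intrinsic description.

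For the consequences, observe that each coordinate projection $J\Gamma_i\to JW_1$ or $J\sigma_3\to JW_1$ is injective because $\Delta(JB)$ meets each coordinate summand only in $0$ (a nonzero element of $\Delta(JB)$ has all three coordinates nonzero); this gives (i). For (ii), the composite $J\Gamma_1\oplus J\Gamma_2 \hookrightarrow J\Gamma_1\oplus J\Gamma_2 \oplus J\sigma_3 \to JW_1$ is an isomorphism: it is injective by the same transversality argument, and it is surjective since modulo $\Delta(JB)$ every class can be represented with vanishing third coordinate by subtracting a suitable element of $\Delta(JB)$ to kill the $J\sigma_3$-component; a dimension count over the three-dimensional $JW_1$ then forces bijectivity. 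For (iii), the homomorphism $J\sigma_3 \oplus J\Gamma_i \to JW_1$ is surjective by the same argument, being three copies of an elliptic curve mapped onto a three-dimensional quotient of a four-dimensional product. Its kernel consists of those $(s, t)$ whose image in $\bigoplus_i J D_i$ lies in $\Delta(JB)$; unwinding the lattice identifications, this kernel is generated by a single $2$-torsion element $\eta = (\eta_j', \eta_j)$, where the factor of $2$ arises exactly as in Lemma~\ref{EnriquesW1}: the $2$-torsion comes from comparing the two integral structures $H^1(\Gamma_i;\Zee)$ and $H^1(\sigma_3;\Zee)$ inside $W_1$ against the index-$2$ behavior of the gluing lattice. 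Combining (ii) with this gives the final displayed isomorphisms. \end{proof}

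\medskip
The step I expect to be the main obstacle is the precise identification of the kernel in (iii) as the order-two subgroup $\langle\eta\rangle$ with its explicit $2$-torsion description. The surjectivity and injectivity statements in (i) and (ii) are formal consequences of transversality of the diagonal, but pinning down that the kernel of $J\sigma_3\oplus J\Gamma_i \to JW_1$ is exactly cyclic of order $2$ — rather than, say, trivial or of order $4$ — requires a careful comparison of integral lattices, analogous to the index-$2$ overlattice computation carried out in the Enriques case of Lemma~\ref{EnriquesW1}. The geometric content is that the two maps $\Gamma_i \to B$ and $\sigma_3 \to B$ realize $\Gamma_i$ and $\sigma_3$ as curves mapping to $B$ of the same Jacobian but with a nontrivial relation among their $2$-torsion, coming from the intersection pattern $\Gamma_i \cap \sigma_3 = p_i$; making this precise is where the genuine work lies.
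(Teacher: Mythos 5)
Your intrinsic description of $JW_1$ as $(J\Gamma_1\oplus J\Gamma_2\oplus J\sigma)/JB$ matches the paper's, but the argument you build on it rests on a false premise. You assert that each pullback $H^1(B;\Zee)\to H^1(\Gamma_i;\Zee)$ is ``an isomorphism of Hodge structures \dots after the identification.'' This is not true: $\Gamma_i\to B$ is an \'etale isogeny of degree $2$, so on integral cohomology the pullback is an index-$2$ inclusion, and on Jacobians the induced map $JB\to J\Gamma_i$ has kernel $\langle\eta_i\rangle$, where $\eta_1\neq\eta_2$ are the two distinct $2$-torsion points of $B$ indexing the two bisections (Definition~\ref{defY0}). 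This degree-$2$ kernel is exactly the content of the lemma: if your claim were correct, the kernel in (iii) would be trivial rather than of order $2$, so your setup contradicts the very statement you are proving. The same error propagates into your ``transversality'' claim that a nonzero element of the embedded $JB$ has all three coordinates nonzero: this fails precisely at $\eta_1$ and $\eta_2$, whose images in $J\Gamma_1$, respectively $J\Gamma_2$, vanish. (There are also dimension slips: $JW_1$ is $2$-dimensional, the product $J\Gamma_1\oplus J\Gamma_2\oplus J\sigma$ is $3$-dimensional, and $J\sigma\oplus J\Gamma_i$ is two copies of an elliptic curve, not three.)

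Consequently parts (i) and (iii) are not actually proved; only (ii) survives, since its injectivity needs nothing more than injectivity of $JB\to J\sigma$. The paper's proof is a short direct computation from the correct structure: injectivity of $J\sigma\to JW_1$ uses $\langle\eta_1\rangle\cap\langle\eta_2\rangle=\{0\}$, i.e.\ the distinctness of the two $2$-torsion points; and for (iii), identifying $J\sigma\cong JB$, a point $(\beta',0,\beta)$ lies in the image of $JB$ if and only if $\beta\in\Ker\{JB\to J\Gamma_j\}=\langle\eta_j\rangle$ and $\beta'$ is the image of $\beta$ in $J\Gamma_i$, so the kernel of $J\Gamma_i\oplus J\sigma\to JW_1$ is exactly $\langle(\eta_j',\eta_j)\rangle$, of order $2$. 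Your appeal to Lemma~\ref{EnriquesW1} misattributes the source of this $2$-torsion: it comes from the isogeny $\Gamma_j\to B$, not from an overlattice or gluing phenomenon as in the Enriques case. Your closing paragraph in effect concedes the point, since the kernel computation you defer is the substance of the lemma, and the paper settles it in two lines once the isogeny structure is in hand.
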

 \begin{proof} (i) Since $\Gamma_i\to B$ is an isogeny of degree $2$, $\Ker \{JB\to J\Gamma_i\} =\langle \eta_i\rangle$, where $\eta_1$ and $\eta_2$ are two distinct $2$-torsion points on $B$. If $\xi \in J\sigma$ maps to $0$, then $\xi  \in \Ker \{JB\to J\Gamma_1\}\cap \Ker \{JB\to J\Gamma_2\} =\{0\}$. Thus $J\sigma \to JW_1$ is injective. The fact $J\Gamma_i\to JW_1$ is injective follows from the stronger statement (ii), which was noted in Lemma~\ref{ellipticruledW1}. To see (iii), a point $(\beta', 0,\beta) \in  J\Gamma_1\oplus J\Gamma_2\oplus J\sigma $ is in the image of $JB$ $\iff$ $\beta \in \Ker \{JB\to J\Gamma_2\}$ and $\beta'$ is the image of $\beta$ in $J\Gamma_1$. Thus $\Ker \{J\Gamma_1\oplus J\sigma  \to JW_1\} =  \langle  (\eta_2', \eta_2)\rangle$, where $\Ker \{JB\to J\Gamma_2\} =\langle \eta_2\rangle$ and $\eta_2'$ is the (nonzero) image of $\eta_2$ in $J\Gamma_1$.
 \end{proof} 
 
 \begin{remark} (i) In case $(m_1, m_2, m_2) = (1,1,1)$, the two different descriptions of $JW_1$ as $J\Gamma_1 \oplus J\Gamma_2$ and as $(J\sigma \oplus J\Gamma_i)/\langle  \eta  \rangle $ reflect the fact that the singular I-surface $Y$ can deform either to an singular I-surface with two simple elliptic singularities whose minimal resolution is  a rational surface or to one whose minimal resolution is a blown up Enriques surface. 
 
 \smallskip
 \noindent (ii) There is a similar picture  for  $(m_1, m_2, m_2) = (2,1,1)$.  In this case, $JW_1 \cong J\Gamma  \oplus JB$ as in Lemma~\ref{ellipticruledW1}. There is also the surjective  homomorphism 
$J\sigma_2 \oplus J\sigma_3\cong JB \oplus JB \to JW_1$, with kernel $\langle (\eta_1, \eta_1)\rangle=\langle \eta \rangle$, where $\Ker \{JB\to J\Gamma\} =\langle \eta_1\rangle$. Thus
$$JW_1 \cong J\Gamma  \oplus JB \cong (JB \oplus JB)/\langle \eta \rangle.$$
 \end{remark}
 
\subsection{A local Torelli theorem} By Lemma~\ref{Hodgefilt},  the differential of the period map at $X_0$ is given by the homomorphism
 $$H^1(X_0; T^0_{X_0}) \to \Hom (F^1H^2(X_0), F^0H^2(X_0)/F^1H^2(X_0)) = \Hom (H^1(X_0; \Omega^1_{X_0}/\tau^1_{X_0}) , H^2(X_0; \scrO_{X_0}))$$
 induced by cup product. In the following, we will just deal with the elliptic surface case and multiplicities $(1,1,1)$ and prove a local Torelli theorem, but  a similar result holds for the case of multiplicities $(2,1,1)$ with a slightly more complicated argument.  Intuitively, the differential of variation of mixed Hodge structure determines the first order deformations of the base curve $B$ as well as the differential of the period map for the anticanonical pairs $(Z_i, D_i)$ and so is injective by local Torelli for anticanonical pairs.  In the next section, we will show directly that the period map has degree one onto its image in this case. Of course, this generic global Torelli result immediately implies that the  local Torelli theorem  holds generically. However, it seemed worthwhile to give a direct argument, although   some details will just be sketched. 
 
 \begin{theorem}\label{localTorelli} In the elliptic ruled case with multiplicities $(1,1,1)$, let $\Xi$ be the family of locally trivial deformations of $X_0$ keeping the $d$-semistability condition and let $x_0\in \Xi$ correspond to the surface $X_0$. Then the differential of the period map for the variation of mixed Hodge structure on $H^2(X_0)$ defined by $\Xi$  is injective.
 \end{theorem}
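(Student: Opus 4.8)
The plan is to use the description of the differential recorded just above the statement: by Lemma~\ref{Hodgefilt} it is the cup-product homomorphism
\[
T_{\Xi,x_0} \to \Hom\bigl(H^1(X_0; \Omega^1_{X_0}/\tau^1_{X_0}),\, H^2(X_0; \scrO_{X_0})\bigr),
\]
and I would analyze it by splitting source and target according to the weight filtration of Lemma~\ref{genMV}. On the source, Lemma~\ref{tangdagger} identifies $T_{\Xi,x_0}$ with $\bigl(\bigoplus_i H^1(Z_i; T_{Z_i}(-\log D_i))\bigr)^{\dagger}$; write a tangent vector as $\theta=(\theta_1,\theta_2,\theta_3)$ with common image $\beta := \iota(\theta_i)\in H^1(B;T_B)$. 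On the target, since $W_2/W_1$ is of pure type $(1,1)$ and $W_0=0$, Lemma~\ref{Hodgefilt}(ii) gives $H^2(X_0;\scrO_{X_0})=\Gr^0_FH^2(X_0)=(W_1)^{0,1}$, while the graded pieces of $F^1H^2(X_0)$ are $(W_1)^{1,0}$ in weight one and $(W_2/W_1)^{1,1}$ in weight two. Accordingly the differential breaks into a weight-one part — the differential of the variation on $W_1$, a family of abelian surfaces with $JW_1\cong J\Gamma_1\oplus J\Gamma_2$ (Lemma~\ref{interpretJW1}) — and an extension part in the sense of Carlson \cite{Carlson0}. The strategy is to show that the weight-one part recovers $\beta$, and that, once $\beta=0$, the extension part recovers each $\theta_i$ through the anticanonical period map of $(Z_i,D_i)$.

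The technical hinge is the elementary observation that each individual factor $H^1(D_i;\scrO_{D_i})\hookrightarrow H^2(X_0;\scrO_{X_0})$ is injective. Indeed, by Lemma~\ref{genMV} the kernel of $\bigoplus_i H^1(D_i;\scrO_{D_i})\to H^2(X_0;\scrO_{X_0})$ is the image of $H^1(\hY;\scrO_{\hY})\cong H^1(B;\scrO_B)$, which maps into each summand injectively (isomorphically for the section $\sigma_3$, and via isogeny pullback for the bisections $\Gamma_1,\Gamma_2$, in the notation of Theorem~\ref{111case}); hence this $1$-dimensional diagonal image meets each summand only in $0$. For the weight-one part (Step A), I would pair $\theta$ with the classes of $(W_1)^{1,0}=\bigoplus_i H^0(D_i;\Omega^1_{D_i})/\im H^0(\hY;\Omega^1_{\hY})$ from Lemma~\ref{Hodgefilt}(iv). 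Representing such a class by a form $\psi_i$ supported on a single $D_i$, its cup product with $\theta$ equals $\iota(\theta_i)\cup\psi_i=\beta\cup\psi_i$ in the $i$-th summand $H^1(D_i;\scrO_{D_i})$, and for the elliptic curve $D_i$ this is the perfect Serre-duality pairing $H^1(D_i;T_{D_i})\otimes H^0(D_i;\Omega^1_{D_i})\to H^1(D_i;\scrO_{D_i})$. By factorwise injectivity, vanishing of the differential forces $\beta\cup\psi_i=0$ for all $\psi_i$, hence $\beta=0$; equivalently $\theta$ lies in the subspace $(\cdots)^{\ddagger}$ of Definition~\ref{defdagger}.

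Assuming now $\beta=0$, so that each $\theta_i$ fixes the complex structure of $D_i$ to first order, I would run the extension part factor by factor. For $\omega_i\in[D_i]^\perp\subseteq H^{1,1}(Z_i)$, the matching condition in Lemma~\ref{Hodgefilt}(iv) is exactly $\deg(\omega_i|_{D_i})=0$, so the tuple supported in the $Z_i$-slot lifts to a class $\widetilde\omega_i\in F^1H^2(X_0)$, well defined modulo $(W_1)^{1,0}$; since $\beta=0$, the resulting ambiguity cups to zero and $\theta\cup\widetilde\omega_i$ is well defined. Because $H^2(Z_i;\scrO_{Z_i})=0$, all interior contributions to $\theta\cup\widetilde\omega_i$ vanish and the only surviving term is the Mayer--Vietoris boundary landing in the $i$-th summand $H^1(D_i;\scrO_{D_i})$; this term is precisely the derivative $d\varphi_{Z_i}(\theta_i)(\omega_i)$ of the anticanonical period map $\varphi_{Z_i}\colon[D_i]^\perp\to JD_i$ of \S\ref{anticanonTorellisub}. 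Again by factorwise injectivity, vanishing of the differential forces $d\varphi_{Z_i}(\theta_i)=0$ on all of $[D_i]^\perp$; since $\iota(\theta_i)=0$, the infinitesimal Torelli theorem for anticanonical pairs underlying Theorem~\ref{anticanonTorelli} (via the methods of \cite{Carlson0}, \cite{Looi3}, \cite{GHK}, \cite{Friedanticanon}) then gives $\theta_i=0$. As $i$ was arbitrary, $\theta=0$.

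The main obstacle is the identification in this last step: verifying that the extension-class component of the variation of mixed Hodge structure, computed through the boundary maps of the Mayer--Vietoris sequences of Lemmas~\ref{genMV} and~\ref{Hodgefilt}, coincides with the derivative of the anticanonical period map $\varphi_{Z_i}$, and pinning down the precise infinitesimal Torelli statement for anticanonical pairs required. By contrast, the weight-one part (Step A) and the factorwise injectivity are routine. A parallel argument, complicated only by the extra bookkeeping for the multiplicity-two component, would handle the $(2,1,1)$ case.
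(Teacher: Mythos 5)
Your proposal is correct, and its skeleton is the same as the paper's: identify $T_{\Xi,x_0}$ with $\bigl(\bigoplus_iH^1(Z_i;T_{Z_i}(-\log D_i))\bigr)^{\dagger}$ via Lemma~\ref{tangdagger}, first cup against the coboundary classes $\partial(\psi_i)$ to force $\iota(\theta_i)=0$ (your Step A is literally the paper's first step, with the factorwise injectivity of $H^1(D_j;\scrO_{D_j})\to H^2(X_0;\scrO_{X_0})$ — which the paper uses without comment — spelled out and proved), and then kill each $\theta_j$ separately by pairing against classes supported in the $Z_j$-slot. Where you diverge is the endgame. The paper pairs $\theta_j$ against all of $H^1(Z_j;\Omega^1_{Z_j}(\log D_j)(-D_j))$ and concludes $\theta_j=0$ directly from the perfectness of the Serre duality pairing $H^1(Z_j;T_{Z_j}(-\log D_j))\otimes H^1(Z_j;\Omega^1_{Z_j}(\log D_j)(-D_j))\to H^2(Z_j;\scrO_{Z_j}(-D_j))\cong H^1(D_j;\scrO_{D_j})$; no Torelli-type input is needed. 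You instead pair only against lifts of $[D_j]^\perp\cong H^1(Z_j;\Omega^1_{Z_j})_0$, identify the cup product with $d\varphi_{Z_j}(\theta_j)$, and then invoke infinitesimal Torelli for anticanonical pairs with fixed elliptic curve — which in this paper is exactly Theorem~\ref{KSmap1} of the appendix (the Kodaira--Spencer map of the family $\mathscr{V}\to E\otimes_{\Zee}Q$ is an isomorphism on $\Ker\iota\cong H^1(Z;T_Z(-D))/\im H^0(D;T_D)$). The two endgames are equivalent in content, since the Serre pairing restricted to $\Ker\iota$ factors through $[D_j]^\perp$ and \emph{is} the derivative of the period map: passing from $H^1(Z_j;\Omega^1_{Z_j}(\log D_j)(-D_j))$ to its quotient $[D_j]^\perp$ loses only the $1$-dimensional kernel $H^0(D_j;\Omega^1_{D_j})$, against which $\theta$ pairs to zero anyway once $\iota(\theta_j)=0$. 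The paper's route buys self-containedness (pure duality, no appeal to the appendix); yours buys a cleaner geometric interpretation (the VMHS differential literally computes the anticanonical period map of each $(Z_j,D_j)$), which is also the mechanism behind the global Torelli argument in Section 5. Finally, the step you flag as the main obstacle — identifying the Mayer--Vietoris/extension-class component of the cup product with the pairing on the $Z_j$-slot — is not resolved from scratch in the paper either: it is exactly the commutative diagram that the paper obtains by adapting the proof of \cite[Theorem 7.2]{EngelFriedman}, so your deferral there matches the paper's own level of detail.
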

 \begin{proof}   Proposition~\ref{tangdagger} identifies the tangent space $T_{\Xi,x_0}$ with   $\Big(\bigoplus_iH^1(Z_i;  T_{Z_i}(-\log  D_i))\Big)^{\dagger} $  as  defined in Definition~\ref{defdagger}. 
Suppose that $\theta = (\theta_1, \theta_2, \theta_3) \in \Big(\bigoplus_iH^1(Z_i;  T_{Z_i}(-\log  D_i))\Big)^{\dagger} $ satisfies: $\theta \smile \mu = 0$ for all $\mu \in H^1(X_0; \Omega^1_{X_0}/\tau^1_{X_0})$. First, if $\partial \colon \bigoplus_iH^0(D_i; \Omega^1_{D_i}) \to H^1(X_0; \Omega^1_{X_0}/\tau^1_{X_0})$ is the coboundary, it is easy to check that 
 $$(\theta_1, \theta_2, \theta_3) \smile \partial(\psi_1, \psi_2, \psi_3) = \sum_i\partial(\iota(\theta_i)\smile \psi_i),$$
 where $\iota(\theta_i)\smile \psi_i \in H^1(D_i; T_{D_i}) \otimes H^0(D_i; \Omega^1_{D_i}) \cong H^1(D_i; \scrO_{D_i})$ and $\partial(\iota(\theta_i)\smile \psi_i)$ denotes its image in $H^2(X_0; \scrO_{X_0})$ via the coboundary from the exact sequence in Lemma~\ref{Hodgefilt}(ii). Choosing $\psi_\ell \neq 0$ for exactly one $\ell$, it follows that $\iota(\theta_\ell)=0$ since the restriction of $\partial$ to  the summand $H^1(D_\ell; \scrO_{D_\ell})$ is injective, again by Lemma~\ref{Hodgefilt}(ii).  Hence $\iota(\theta_i) =0$ for all $i$ since $\theta   \in \Big(\bigoplus_iH^1(Z_i;  T_{Z_i}(-\log  D_i))\Big)^{\dagger} $. Thus $\theta  \in \Big(\bigoplus_iH^1(Z_i;  T_{Z_i}(-\log  D_i))\Big)^{\ddagger} $, the subspace defined in Definition~\ref{defdagger}. The above formula also shows that cup product induces a well-defined pairing
 $$ \Big(\bigoplus_iH^1(Z_i;  T_{Z_i}(-\log  D_i))\Big)^{\ddagger}  \otimes \Big(H^1(X_0; \Omega^1_{X_0}/\tau^1_{X_0})/\im \bigoplus_iH^0(D_i; \Omega^1_{D_i})\Big) \to H^2(X_0; \scrO_{X_0}).$$

Next, fix $\ell$ and define
 $$H^1(Z_\ell; \Omega^1_{Z_\ell})_0 = \Ker\{H^1(Z_\ell; \Omega^1_{Z_\ell}) \to H^1(D_\ell; \Omega^1_{D_\ell})\}.$$
 Then from the exact sequence
 $$0 \to \Omega^1_{Z_\ell}(\log D_\ell)(-D_\ell) \to \Omega^1_{Z_\ell} \to \Omega^1_{D_\ell} \to 0,$$
 it follows that there is an exact sequence
 $$0 \to H^0(D_\ell; \Omega^1_{D_\ell}) \to H^1(Z_\ell; \Omega^1_{Z_\ell}(\log D_\ell)(-D_\ell)) \to H^1(Z_\ell; \Omega^1_{Z_\ell})_0 \to 0.$$
By Lemma~\ref{Hodgefilt}(iv),  there is a map $H^1(Z_\ell; \Omega^1_{Z_\ell})_0 \to H^1(X_0; \Omega^1_{X_0}/\tau^1_{X_0})/\im \bigoplus_iH^0(D_i; \Omega^1_{D_i})$ and hence a composed map
 $$ H^1(Z_\ell; \Omega^1_{Z_\ell}(\log D_\ell)(-D_\ell)) \to H^1(Z_\ell; \Omega^1_{Z_\ell})_0 \to H^1(X_0; \Omega^1_{X_0}/\tau^1_{X_0})/\im \bigoplus_iH^0(D_i; \Omega^1_{D_i}).$$
 
Again for a fixed $\ell$, define  $H^1(Z_\ell;  T_{Z_\ell}(-\log  D_\ell)) _0 =\Ker \{\iota\colon H^1(Z_\ell;  T_{Z_\ell}(-\log  D_\ell)) \to H^1(D_\ell; T_{D_\ell})\}$ as in Definition~\ref{defdagger}.  Thus there is an inclusion 
$$H^1(Z_\ell;  T_{Z_\ell}(-\log  D_\ell)) _0 \subseteq \Big(\bigoplus_iH^1(Z_i;  T_{Z_i}(-\log  D_i))\Big)^{\ddagger} $$
by setting the remaining components of the direct sum to be $0$.  
Let 
$$\scrO_{Z_\ell}(-D_\ell)\subseteq \scrO_{X_0} = \Ker \{\bigoplus_i\scrO_{Z_i} \to \bigoplus_i\scrO_{D_i}\}$$ be the inclusion obtained by extending a section of $\scrO_{Z_\ell}(-D_\ell)$ by $0$ on the remaining components and let $H^2(Z_\ell; \scrO_{Z_\ell}(-D_\ell)) \to H^2(X_0; \scrO_{X_0})$ be the induced homomorphism.  Then  the following diagram is commutative (where the horizontal maps are given by cup product):
 $$\begin{CD}
 H^1(Z_\ell;  T_{Z_\ell}(-\log  D_\ell)) _0 \otimes   H^1(Z_\ell; \Omega^1_{Z_\ell}(\log D_\ell)(-D_\ell))  @>>> H^2(Z_\ell; \scrO_{Z_\ell}(-D_\ell))\\
 @VVV  @VVV \\
 \Big(\bigoplus_iH^1(Z_i;  T_{Z_i}(-\log  D_i))\Big)^{\ddagger}  \otimes   \Big(H^1(X_0; \Omega^1_{X_0}/\tau^1_{X_0})/\im \bigoplus_iH^0(D_i; \Omega^1_{D_i})\Big) @>>> H^2(X_0; \scrO_{X_0}).
 \end{CD}$$

 \begin{lemma}\label{inclinj}   The   homomorphism $H^2(Z_\ell; \scrO_{Z_\ell}(-D_\ell)) \to H^2(X_0; \scrO_{X_0})$ is injective.
 \end{lemma} 
 \begin{proof} There is a commutative diagram
 $$\begin{CD}
 0 @>>> \scrO_{Z_\ell}(-D_\ell)  @>>> \scrO_{Z_\ell} @>>> \scrO_{D_\ell} @>>> 0\\
 @. @VVV @VVV @VVV @. \\
   0 @>>> \scrO_{X_0}   @>>> a_*\Big(\scrO_{\hY} \oplus  \bigoplus_i\scrO_{Z_i} \Big) @>>> j_*\Big(\bigoplus_i \scrO_{D_i}\Big) @>>> 0.
 \end{CD}$$
 Thus there is a commutative diagram
  $$\begin{CD}
  H^1(D_\ell;  \scrO_{D_\ell}) @>>> \bigoplus_i H^1(D_i; \scrO_{D_i}) \\
  @V{\cong}VV @VVV \\
  H^2(Z_\ell; \scrO_{Z_\ell}(-D_\ell)) @>>> H^2(X_0; \scrO_{X_0}).
   \end{CD}$$
   By Lemma~\ref{Hodgefilt}(ii), the composite map  $H^1(D_\ell;  \scrO_{D_\ell}) \to  \bigoplus_i H^1(D_i; \scrO_{D_i}) \to H^2(X_0; \scrO_{X_0})$ is injective. Hence the map $H^2(Z_\ell; \scrO_{Z_\ell}(-D_\ell)) \to H^2(X_0; \scrO_{X_0})$ is injective as well. 
 \end{proof}

 Now suppose that $\theta = (\theta_1, \theta_2, \theta_3) \in  \Big(\bigoplus_iH^1(Z_i;  T_{Z_i}(-\log  D_i))\Big)^{\ddagger}$ is such that  $\theta  \smile \mu =0$ for all $\mu$.  Fixing $\ell$, choose $\mu$ to be the image of $(\mu_1, \mu_2,\mu_3) \in 
  \bigoplus_iH^1(Z_i; \Omega^1_{Z_i}(\log D_i)(-D_i))$ where  $\mu_j =0$ for $j\neq \ell$. Then $\theta \smile \mu \in H^2(X_0; \scrO_{X_0})$ is the image of  $\theta_\ell \smile \mu_\ell \in H^2(Z_\ell; \scrO_{Z_\ell}(-D_\ell))$. Since $H^2(Z_\ell; \scrO_{Z_\ell}(-D_\ell)) \to  H^2(X_0; \scrO_{X_0})$ is injective by  Lemma~\ref{inclinj}, it follows that $\theta_\ell \smile \mu_\ell = 0$  for all $\mu_\ell \in H^1(Z_\ell; \Omega^1_{Z_\ell}(\log D_\ell)(-D_\ell))$. However,  Serre duality gives a perfect pairing 
 $$H^1(Z_\ell;  T_{Z_\ell}(-\log  D_\ell)) \otimes H^1(Z_\ell; \Omega^1_{Z_\ell}(\log D_\ell)(-D_\ell)) \to H^2(Z_\ell;\scrO_{Z_\ell}(-D_\ell))= H^2(Z_\ell;K_{Z_\ell}).$$  Hence   $\theta_\ell =0$  for all $\ell$. Thus  $\theta =0$,  so that  the differential of the period map is injective.
  \end{proof} 

\section{The limiting mixed Hodge structure and the extended period map}

\subsection{The Clemens-Schmid exact sequence for $X_0$: the rational or Enriques cases} 

First assume that $\hY$ is a rational or Enriques surface, so that $k=2$. In particular, there are two monodromy matrices $N_1$, $N_2$ arising from the deformation theory of $X_0$. Referring to Theorem~\ref{versal}, taking the diagonal embedding of $\Delta$ in $\Delta^2$ gives a smoothing $\mathcal{X}\to \Delta$ of $X_0$ for which the monodromy is $N = N_1 + N_2$. Because of the somewhat special nature of the $d$-semistable surface $X_0$, the total space  $\mathcal{X}$ is smooth. 

The main point is then the following:

\begin{theorem}\label{describeMHS1}  In the above notation, let $X_t$ be a general fiber of $\mathcal{X}\to \Delta$ and let $H^2_{\text{\rm{lim}}}= H^2_{\text{\rm{lim}}}(X_t;\Zee)$ denote the limiting mixed Hodge structure (with $\Zee$-coefficients).
\begin{enumerate}
\item[\rm(i)]  $\im N$ has rank $4$.  
\item[\rm(ii)] The following  is an exact sequence of mixed Hodge structures over $\Zee$:
$$H_4(X_0;\Zee) \to  H^2(X_0; \Zee) \to H^2_{\text{\rm{lim}}} \xrightarrow{N} H^2_{\text{\rm{lim}}}.$$
\item[\rm(iii)] The image of $H_4(X_0;\Zee)$ is spanned by classes $\xi_1$, $\xi_2$, which restrict to the classes $$([D_i], -[D_i])\in \overline{H}^2(\hY;\Zee) \oplus H^2(Z_i; \Zee)$$
and which span a primitive isotropic subspace of $\overline{H}^2(\hY;\Zee) \oplus \bigoplus_iH^2(Z_i;\Zee)$. 
\item[\rm(iv)] The map $H^2(X_0; \Zee) \to H^2_{\text{\rm{lim}}}$ induces   an isomorphism $W_1H^2(X_0; \Zee) \to W_1H^2_{\text{\rm{lim}}}$. Thus   $W_1H^2_{\text{\rm{lim}}}\cong W_1H^2(X_0; \Zee)$ is a primitive integral subspace of $H^2_{\text{\rm{lim}}}$ of rank $4$. 
\item[\rm(v)] $W_2H^2_{\text{\rm{lim}}}/ W_1H^2_{\text{\rm{lim}}} \cong \{\xi_1, \xi_2\}^\perp/ \Zee\xi_1 + \Zee\xi_2$, a subquotient of $\overline{H}^2(\hY;\Zee) \oplus \bigoplus_iH^2(Z_i;\Zee)$. 
\end{enumerate}
\end{theorem}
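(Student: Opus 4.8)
The plan is to deduce all five parts from the Clemens--Schmid exact sequence of the one-parameter smoothing $\mathcal{X}\to\Delta$ (the restriction of the family of Theorem~\ref{versal} to the diagonal), combined with the Mayer--Vietoris computation of $H^*(X_0)$ in Lemma~\ref{genMV}. First I would verify that this degeneration is semistable: by Theorem~\ref{versal} the total space is smooth and the family is smooth over $\Delta^*$, and since $X_0$ has no triple points the total space looks locally like $\{xy=z_i\}$ near a point of $D_i$, with $z_j$ ($j\neq i$) a transverse parameter; restricting to $z_1=z_2=t$ therefore keeps the total space smooth with reduced normal-crossing central fibre $X_0$. Granting semistability, the integral Clemens--Schmid sequence
$$H_4(X_0;\Zee)\xrightarrow{\ \alpha\ }H^2(X_0;\Zee)\xrightarrow{\ \mathrm{sp}\ }H^2_{\text{\rm{lim}}}\xrightarrow{\ N\ }H^2_{\text{\rm{lim}}}$$
is exact in the category of mixed Hodge structures, with $N$ regarded as a morphism $H^2_{\text{\rm{lim}}}\to H^2_{\text{\rm{lim}}}(-1)$ of type $(-1,-1)$; this is exactly (ii), and it supplies the two identifications $\im\,\mathrm{sp}=\Ker N$ and $\Ker\,\mathrm{sp}=\im\alpha$ that drive the remaining parts.

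For (iii) the plan is to compute $\alpha$ by intersection theory. As $X_0$ is a surface, $H_4(X_0;\Zee)$ is freely generated by $[\hY],[Z_1],[Z_2]$, and $\alpha$ sends $[X_i]$ to the class whose restriction to $X_j$ is $[X_i\cap X_j]$ for $j\neq i$ and the self-intersection of $X_i$ in $\mathcal{X}$ for $j=i$. The relation $[\hY]+[Z_1]+[Z_2]=[\operatorname{div} t]=0$ in $H^2(\mathcal{X})$ identifies the conormal bundles, giving $\alpha([Z_i])$ the restriction $([D_i],-[D_i])$ in $\overline{H}^2(\hY)\oplus H^2(Z_i)$ and $0$ on the other del Pezzo, while $\alpha([\hY])=-\alpha([Z_1])-\alpha([Z_2])$. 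Setting $\xi_i:=\alpha([Z_i])$ then gives $\im\alpha=\Zee\xi_1+\Zee\xi_2$. Isotropy is the computation $\xi_i^2=D_i^2|_{\hY}+D_i^2|_{Z_i}=(-m_i)+m_i=0$ and $\xi_1\cdot\xi_2=D_1\cdot D_2=0$ in the orthogonal-sum form on $\overline{H}^2(\hY)\oplus\bigoplus_iH^2(Z_i)$; primitivity follows since $[D_i]=-K_{Z_i}$ is primitive in $H^2(Z_i)$, so the $Z_i$-component $-a[D_i]$ of $a\xi_1+b\xi_2$ detects any common divisor.

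Parts (i), (iv), (v) I would then obtain from strictness of $\mathrm{sp}$ for $W_\bullet$ together with the above identifications. Since $X_0$ has no triple points, $N^2=0$, so the monodromy weight filtration on $H^2_{\text{\rm{lim}}}$ gives $\im N=W_1H^2_{\text{\rm{lim}}}$ and $\Ker N=W_2H^2_{\text{\rm{lim}}}$. Writing $N=N_1+N_2$, each $N_i$ is the Picard--Lefschetz logarithm attached to $D_i$, with image the rank-two weight-one space $H^1(D_i)\subseteq W_1H^2_{\text{\rm{lim}}}$; since $H^1(D_1)$ and $H^1(D_2)$ are independent, $\im N_1\cap\im N_2=0$, and from $N_1v=-N_2v$ for $v\in\Ker N$ one gets $N_1v=N_2v=0$, i.e. $\Ker N=\Ker N_1\cap\Ker N_2$, while $\im N=H^1(D_1)\oplus H^1(D_2)$ shows $N$ has rank $4$ (equivalently $\dim W_1H^2_{\text{\rm{lim}}}=4$). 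Because $\Ker\,\mathrm{sp}=\Zee\xi_1+\Zee\xi_2$ is pure of weight two it meets $W_1H^2(X_0)$ trivially, so $\mathrm{sp}|_{W_1}$ is injective; strictness gives $\mathrm{sp}(W_1H^2(X_0))=W_1H^2_{\text{\rm{lim}}}\cap\im\,\mathrm{sp}=W_1H^2_{\text{\rm{lim}}}$, which is (iv), the target being primitive because the integral weight filtration is saturated by definition. Finally $W_2H^2(X_0)=H^2(X_0)$ and, by the pairing computation $v\cdot\xi_i=\deg(a|_{D_i})-\deg(b_i|_{D_i})$, one has $\{\xi_1,\xi_2\}^\perp=\Ker\{H^2(\hY)\oplus\bigoplus_iH^2(Z_i)\to\bigoplus_iH^2(D_i)\}=W_2/W_1\,H^2(X_0)$ of Lemma~\ref{genMV}; dividing $\Ker N=\mathrm{sp}(H^2(X_0))$ by $W_1$ then yields (v), namely $W_2/W_1\,H^2_{\text{\rm{lim}}}\cong\{\xi_1,\xi_2\}^\perp/(\Zee\xi_1+\Zee\xi_2)$.

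The hard part will be the integral, as opposed to rational, bookkeeping, and above all the torsion in the Enriques case. There $W_1H^2(X_0)$ is an index-two overlattice of $\bigoplus_iH^1(D_i)$ pinched by the $2$-torsion class $\eta\in\Pic^0\hY$, and one must work with $\overline{H}^2(\hY)$ throughout (Lemma~\ref{EnriquesW1}); checking that $\mathrm{sp}$ still induces a lattice isomorphism $W_1H^2(X_0)\xrightarrow{\sim}W_1H^2_{\text{\rm{lim}}}$ and that $\xi_1,\xi_2$ stay primitive after this quotient needs the torsion analysis of Lemma~\ref{EnriquesW1} rather than a formal argument, whereas the rational case is immediate from Lemma~\ref{rationalW1}. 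A secondary point to pin down is the integral exactness of Clemens--Schmid and the independence $\im N_1\cap\im N_2=0$; both are cleanest via the Steenbrink double complex of the two-parameter degeneration, which computes the $N_i$ and exhibits the weight-one contributions of the disjoint curves $D_1,D_2$ in independent summands.
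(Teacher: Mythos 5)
Most of your proposal coincides with the paper's own argument: the semistability check via Theorem~\ref{versal}, the intersection-theoretic computation of $\im\{H_4(X_0;\Zee)\to H^2(X_0;\Zee)\}$ with the primitivity/isotropy verification for (iii), and the weight/strictness arguments deducing (i), (iv), (v) from (ii) are essentially what the paper does. The genuine gap is at the step you dismiss as ``a secondary point to pin down'': the integral exactness of the Clemens--Schmid sequence \emph{is} the content of (ii), not a black box. With $\Zee$-coefficients the Clemens--Schmid sequence is not exact in general (exactness can fail in the presence of torsion), so your opening assertion that the integral sequence ``is exact in the category of mixed Hodge structures'' is unjustified as stated; and your proposed repair --- ``the Steenbrink double complex of the two-parameter degeneration'' --- is a $\Q$- (or $\Cee$-) linear construction whose integral refinement is precisely what is in question, and for which even less is standard in the two-parameter setting. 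As written, the central step of your proof rests on an unproved claim, and the torsion difficulty you correctly flag in the Enriques case is exactly where such a claim is at risk.

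The fix is elementary and is what the paper does. The Wang sequence for $\mathcal{X}^*=\mathcal{X}-X_0$ gives $0=H^1(X_t;\Zee)\to H^2(\mathcal{X}^*;\Zee)\to H^2(X_t;\Zee)\xrightarrow{N}H^2(X_t;\Zee)$ (note $T-I=N$ since $N^2=0$), and the long exact sequence of the pair $(\mathcal{X},\mathcal{X}^*)$ together with Lefschetz duality gives $H_4(X_0;\Zee)\to H^2(X_0;\Zee)\to H^2(\mathcal{X}^*;\Zee)\to H_3(X_0;\Zee)$; both sequences are exact over $\Zee$ for purely topological reasons. Splicing them yields (ii) once one knows $H_3(X_0;\Zee)$ is torsion free (in fact it vanishes), and this follows from the universal coefficient theorem together with the torsion-freeness of $H^4(X_0;\Zee)$, which the Mayer--Vietoris spectral sequence provides. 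Note that this is genuinely a statement about the homology group $H_3$, not the cohomology group $H^3$: in the Enriques case $H^3(X_0;\Zee)\cong\Zee/2\Zee$ while $H_3(X_0;\Zee)=0$, so an argument phrased in terms of $H^3$ would break at precisely the torsion you worry about, whereas the homological statement handles it cleanly. With (ii) established integrally in this way, the remainder of your proposal goes through as you wrote it.
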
 
\begin{proof}  The exactness of the Clemens-Schmid sequence over $\Q$ shows that the weight filtration on  $H^2_{\text{\rm{lim}}}(X_t;\Q)$  is of the form $(W_1)_\Q \subseteq(W_2)_\Q \subseteq(W_3)_\Q$,  with $(W_1)_\Q\cong W_1H^2(X_0; \Q)$.  Thus, over $\Q$,  $\im N = (W_1)_\Q$ has dimension $4$.   This implies (i).  To see (ii), let  $\mathcal{X}^* = \mathcal{X}-X_0$. Arguing as in \cite[3.5]{FriedmanTorelli}, the Wang sequence and the exact sequence of the pair $(\mathcal{X}, \mathcal{X}^*)$ give two exact sequences
\begin{gather*}
H^1_{\text{\rm{lim}}} = 0 \to H^2(\mathcal{X}^*; \Zee) \to H^2_{\text{\rm{lim}}}  \xrightarrow{N}  H^2_{\text{\rm{lim}}}; \\
H_4(X_0; \Zee) \to H^2(X_0; \Zee) \to H^2(\mathcal{X}^*; \Zee) \to H_3(X_0; \Zee) .
\end{gather*}  
It will therefore suffice to show that $H_3(X_0; \Zee)$ is torsion free (in fact it is $0$ in this case).  This follows from the universal coefficient theorem and the fact that $H^4(X_0; \Zee)$ is torsion free, by the Mayer-Vietoris spectral sequence.

Then (iii) follows by direct calculation, since $[D_i]$ is primitive in both $H^2(Z_i;\Zee)$ and in $\overline{H}^2(\hY; \Zee)$.  By looking at Hodge type, $W_1H^2(X_0; \Zee) \cap \im H_4(X_0; \Zee) = 0$ and hence the map $W_1H^2(X_0; \Zee) \to W_1H^2_{\text{\rm{lim}}}$ is injective. By (ii), the image is a primitive integral subspace of $H^2_{\text{\rm{lim}}}$ and hence is equal to $W_1H^2_{\text{\rm{lim}}}$ since both have rank $4$.  Finally, to see  (v), note that 
$$W_2H^2(X_0;\Zee)/W_1H^2(X_0;\Zee) = \Ker\{\overline{H}^2(\hY;\Zee)\oplus\bigoplus_iH^2(Z_i;\Zee)\to \bigoplus_iH^2(D_i;\Zee)\}= \{\xi_1, \xi_2\}^\perp,$$
using intersection pairing on the factors and the sum map from $H^4(\hY;\Zee) \oplus \bigoplus_iH^4(Z_i;\Zee)$ to $\Zee$. Also, $W_2H^2_{\text{\rm{lim}}} = \Ker N = \im H^2(X_0; \Zee)$. Then the exact sequence in (ii) identifies $W_2H^2_{\text{\rm{lim}}}/ W_1H^2_{\text{\rm{lim}}}$ with   $ \{\xi_1, \xi_2\}^\perp/ \Zee\xi_1 + \Zee\xi_2$. 
\end{proof}

The upshot is the following: Use the shorthand $W_i$ to denote $W_iH^2_{\text{\rm{lim}}}$, where the $W_i$ are primitive subspaces of $H^2_{\text{\rm{lim}}}$. The  limiting mixed Hodge structure on $H^2_{\text{\rm{lim}}}$ looks like
$$W_1 \subseteq W_2 \subseteq W_3,$$
where $W_1$ is a primitive integral isotropic subspace, $W_3/W_2 \cong W_1(-1)$, and $W_1$, $W_2$, and $W_2/W_1$ are computed from $W_2H^2(X_0)$.  In particular, $W_2$ is invariantly defined and has a well-defined integral structure.  Our main interest is in fact the mixed Hodge structure on \textit{primitive} cohomology, and we will use the notation $(W_2)_0$ and $(W_2/W_1)_0$ when we work with primitive cohomology.  Note that $(W_1)_0 = W_1$. Since $L =K_{X_t}$ on a general fiber, with limit line bundle $\omega_{X_0}$, we have:

\begin{corollary} $(W_2/W_1)_0 \cong \{\xi_1, \xi_2, [L]\}^\perp/ \Zee\xi_1 + \Zee\xi_2$. \qed
\end{corollary}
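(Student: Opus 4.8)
The plan is to deduce the corollary from part (v) of Theorem~\ref{describeMHS1} by incorporating the extra orthogonality constraint coming from the polarization class $[L]$. Recall that on a general fiber $X_t$ the canonical class $K_{X_t}$ is represented by the restriction of $L = K_{\hY}\otimes\scrO_{\hY}(\sum_iD_i)$, whose limit is the dualizing sheaf $\omega_{X_0}$; passing to primitive cohomology amounts to restricting everything to the orthogonal complement of this class. So the first step is to identify $[L]$ as an element of $W_2 H^2(X_0;\Zee)/W_1 H^2(X_0;\Zee) = \{\xi_1,\xi_2\}^\perp$, using the explicit description $\omega_{X_0}|\hY \cong L$ and $\omega_{X_0}|Z_i \cong \scrO_{Z_i}$ from \S\ref{ssect21}; in particular $[L]$ lies in $\overline{H}^2(\hY;\Zee)\oplus\bigoplus_iH^2(Z_i;\Zee)$ and already pairs to zero with $\xi_1,\xi_2$, since these are isotropic and $[L]$ is the limit of the polarizing class.

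Next I would invoke Theorem~\ref{describeMHS1}(v), which gives
$$W_2/W_1 \cong \{\xi_1,\xi_2\}^\perp/(\Zee\xi_1 + \Zee\xi_2).$$
Primitive cohomology $(W_2/W_1)_0$ is by definition the sub-Hodge-structure of $W_2/W_1$ orthogonal to the image of the polarization class $[L]$. Because $[L]\in\{\xi_1,\xi_2\}^\perp$, taking the orthogonal complement of $[L]$ inside $\{\xi_1,\xi_2\}^\perp$ and then quotienting by $\Zee\xi_1+\Zee\xi_2$ gives precisely
$$(W_2/W_1)_0 \cong \{\xi_1,\xi_2,[L]\}^\perp/(\Zee\xi_1 + \Zee\xi_2),$$
which is the claimed formula. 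The two operations — intersecting with $[L]^\perp$ and quotienting by $\langle\xi_1,\xi_2\rangle$ — commute because $\xi_1,\xi_2 \in [L]^\perp$ (again using isotropy of the $\xi_i$ together with $[L]\cdot\xi_i = 0$), so there is no ambiguity in the order.

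The only genuinely substantive point, and the one I would treat most carefully, is the claim that the purely lattice-theoretic orthogonal complement $\{\xi_1,\xi_2,[L]\}^\perp$ computes primitive \emph{limiting} cohomology correctly — i.e.\ that ``primitive'' in the limit is cut out by the single class $[L]$ exactly as it is cut out by $[K_{X_t}]$ on the smooth fiber. This is where one uses that $[K_{X_t}]$ is monodromy-invariant (so it lies in $\Ker N = W_2 H^2_{\text{lim}}$ and survives to the limit as $[L]$), together with the fact that the decomposition $H^2 = \Zee[K]\oplus H^2_0$ on the smooth fiber is compatible with $N$ and with the weight filtration. Granting this compatibility, the corollary is a formal consequence of (v); the remaining manipulations are the routine commuting-of-quotients bookkeeping indicated above, so I would state them briefly rather than grind through them.
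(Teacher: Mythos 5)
Your proposal is correct and follows essentially the same route as the paper, which treats the corollary as an immediate consequence of Theorem~\ref{describeMHS1}(v) together with the observation that $L = K_{X_t}$ on the general fiber with limit $\omega_{X_0}$, so that passing to primitive cohomology amounts to intersecting with $[L]^\perp$. The only loose point is your justification of $\xi_i \cdot [L] = 0$ ``by isotropy''; the honest reason is the direct computation $\xi_i \cdot [L] = \deg(L|D_i) = \deg(\omega_{D_i}) = 0$, since $\omega_{X_0}$ restricts trivially to each $Z_i$ and by adjunction $L|D_i \cong \scrO_{D_i}$.
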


In terms of integral lattices, using \S\ref{smoothcase}, we have:

\begin{proposition}\label{latticeLambda}  $(W_2/W_1)_0$ is an even negative definite unimodular lattice of rank $24$. \qed
\end{proposition}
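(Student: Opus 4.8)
The plan is to deduce the result from the even unimodularity of the ambient primitive lattice together with the general structure of the limiting mixed Hodge structure, rather than from a direct computation of the intersection form on the Mayer--Vietoris model $\overline{H}^2(\hY;\Zee)\oplus\bigoplus_iH^2(Z_i;\Zee)$; working on the latter would force us to keep careful track of the signs entering the polarization, which the abstract approach sidesteps. Write $V=H^2_0(X_t;\Zee)$, which by \S\ref{smoothcase} is an even unimodular lattice of signature $(4,28)$ and rank $32$, with intersection form $Q$.

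First I would check that $W_1$ sits inside $V$ as a \emph{primitive isotropic} sublattice of rank $4$. Isotropy and primitivity of $W_1$ in $H^2_{\text{lim}}$ are recorded in Theorem~\ref{describeMHS1}(iv) and the remarks following it; since $V=[L]^\perp$ is saturated in $H^2(X_t;\Zee)$ and $Q(W_1,[L])=0$ (because $[L]\in W_2=\Ker N$ and $W_1\subseteq W_2$), it follows that $W_1\subseteq V$ and that $W_1$ remains primitive in $V$. Next, because the monodromy $T$ is an isometry, $N=\log T$ satisfies $Q(Nx,y)+Q(x,Ny)=0$, whence $\Ker N=(\im N)^\perp$. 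As $\Ker N=W_2$ and $\im N$ spans $W_1$ over $\Q$ (so that $(\im N)^\perp=W_1^\perp$), restricting to $V$ gives $(W_2)_0=W_1^{\perp}$, the orthogonal complement of $W_1$ in $V$. Therefore
$$(W_2/W_1)_0\;\cong\;W_1^{\perp}/W_1,$$
equipped with the form induced by $Q$.

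The result then follows from a standard lattice fact, which I would state and prove as a short lemma: if $W$ is a primitive isotropic sublattice of rank $r$ in an even unimodular lattice $V$ of signature $(p,q)$, then $W^\perp/W$ is again even and unimodular, of rank $\operatorname{rank}V-2r$ and signature $(p-r,q-r)$. Evenness is immediate, and the signature count comes from realizing $W\otimes\Ar$ as a Lagrangian in a hyperbolic summand of signature $(r,r)$, so that $(W^\perp/W)\otimes\Ar$ is the complementary negative factor. For unimodularity one uses that primitivity of $W$ makes the restriction map $V\to W^{*}$ surjective with kernel $W^\perp$; a brief diagram chase, using $(W^\perp)^\perp=W$, then shows that the map $W^\perp/W\to(W^\perp/W)^{*}$ induced by $Q$ is an isomorphism. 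Applying this with $r=4$ and $(p,q)=(4,28)$ produces an even unimodular lattice of rank $24$ and signature $(0,24)$, i.e.\ negative definite, as claimed.

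The one genuine point requiring care — and what I would flag as the main obstacle — is that the identity $(W_2)_0=W_1^{\perp}$ and the unimodularity in the lemma are \emph{integral} rather than merely rational statements; it is easy to verify both over $\Q$, and the content is precisely that they persist over $\Zee$. Both hinge on $V$ being unimodular and on $W_1$ being primitive (not just of finite index in its saturation), which is why the primitivity in Theorem~\ref{describeMHS1}(iv) is essential to the argument.
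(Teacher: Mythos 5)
Your proof is correct and is essentially the paper's own (implicit) argument: the proposition carries only a \qed{} because, once Theorem~\ref{describeMHS1} identifies $(W_2)_0$ with the orthogonal complement of the rank-$4$ primitive isotropic sublattice $W_1$ inside $H^2_0(X_t;\Zee)\cong U^4\oplus(\Lambda_{E_8})^3$ --- this is exactly what ``in terms of integral lattices, using \S\ref{smoothcase}'' points to --- the conclusion follows from the standard fact about $W^\perp/W$ for a primitive isotropic sublattice of an even unimodular lattice, which you state as a lemma. Your write-up simply makes explicit the steps the paper treats as routine (skew-symmetry of $N$, the integral identity $\Ker N=(\im N)^\perp$, and the role of the primitivity of $W_1$ from Theorem~\ref{describeMHS1}(iv)).
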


We will determine this lattice in the various cases in the next   section. Here, we recall Carlson's theory of extensions of mixed Hodge structures \cite{Carlson1}, \cite{Carlson2}: 

\begin{proposition} The mixed Hodge structure on $(W_2)_0$ is classified by a homomorphism 
$$\psi\colon (W_2/W_1)_0 \to JW_1,$$
 where $JW_1$ is the Jacobian of the weight one Hodge structure $W_1$. \qed
\end{proposition}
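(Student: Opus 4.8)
The plan is to apply Carlson's theory directly to the defining extension of $(W_2)_0$. By Lemma~\ref{genMV} and the discussion following Theorem~\ref{describeMHS1}, the mixed Hodge structure $(W_2)_0$ sits in a short exact sequence of mixed Hodge structures
$$0 \to W_1 \to (W_2)_0 \to (W_2/W_1)_0 \to 0,$$
in which $W_1$ is pure of weight $1$ and $(W_2/W_1)_0$ is pure of weight $2$ and of type $(1,1)$. Since the two weights are strictly separated, the isomorphism class of $(W_2)_0$ as a mixed Hodge structure is recorded precisely by the class of this extension in $\Ext^1_{\mathrm{MHS}}((W_2/W_1)_0, W_1)$. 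Thus it suffices to identify this $\Ext$-group with $\Hom_\Zee((W_2/W_1)_0, JW_1)$; the element corresponding to $(W_2)_0$ is then the desired classifying homomorphism $\psi$.

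Writing $A = W_1$ and $B = (W_2/W_1)_0$, the internal Hom $\Hom(B,A)$ is pure of weight $1-2 = -1$, so Carlson's formula \cite{Carlson2}, \cite{Carlson3} gives
$$\Ext^1_{\mathrm{MHS}}(B, A) \cong \frac{\Hom_\Cee(B_\Cee, A_\Cee)}{F^0\Hom(B,A) + \Hom_\Zee(B_\Zee, A_\Zee)}.$$
The first step is to compute $F^0\Hom(B,A)$. Because $B$ is of type $(1,1)$ we have $F^1B = B_\Cee$ and $F^2 B = 0$, so the condition $f(F^qB)\subseteq F^qA$ for all $q$ reduces to $f(B_\Cee)\subseteq F^1A = A^{1,0}$; hence $F^0\Hom(B,A) = \Hom_\Cee(B_\Cee, A^{1,0})$. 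Consequently the numerator modulo $F^0$ is
$$\frac{\Hom_\Cee(B_\Cee, A_\Cee)}{\Hom_\Cee(B_\Cee, A^{1,0})} \cong \Hom_\Cee(B_\Cee, A_\Cee/A^{1,0}) \cong \Hom_\Cee(B_\Cee, A^{0,1}).$$

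The second step uses again that $B$ is of type $(1,1)$: its lattice $B_\Zee$ injects into $B_\Cee = B^{1,1}$ and spans it, so $B_\Cee = B_\Zee\otimes_\Zee\Cee$ and every $\Cee$-linear map $B_\Cee \to A^{0,1}$ is determined by its restriction to $B_\Zee$, giving $\Hom_\Cee(B_\Cee, A^{0,1}) \cong \Hom_\Zee(B_\Zee, A^{0,1})$. Quotienting by the image of $\Hom_\Zee(B_\Zee, A_\Zee)$, and using the paper's definition $JW_1 = A^{0,1}/A_\Zee$, yields
$$\Ext^1_{\mathrm{MHS}}(B,A) \cong \Hom_\Zee\!\big(B_\Zee,\, A^{0,1}/A_\Zee\big) = \Hom_\Zee\!\big((W_2/W_1)_0,\, JW_1\big),$$
which is exactly the asserted classification, with $\psi$ the class of the extension $(W_2)_0$.

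The computation is essentially routine, so I do not expect a serious obstacle; the only points requiring care are the identification of $F^0\Hom(B,A)$ above and the bookkeeping of Tate twists, namely checking that the Carlson Jacobian of the weight $(-1)$ Hodge structure $\Hom(B,A)$ reproduces $JW_1$ exactly as defined in the text rather than a twist of it. A clean way to sidestep the twist bookkeeping, if desired, is to note that a pure Hodge structure of type $(1,1)$ is $M\otimes\Zee(-1)$ for the underlying lattice $M = (W_2/W_1)_0$ equipped with the trivial weight-$0$ structure, and then to invoke the functorial isomorphisms $\Ext^1_{\mathrm{MHS}}(M\otimes\Zee(-1), W_1) \cong \Hom_\Zee(M, \Ext^1_{\mathrm{MHS}}(\Zee(-1), W_1)) \cong \Hom_\Zee(M, JW_1)$, the last step being the standard fact that $\Ext^1_{\mathrm{MHS}}(\Zee(-1), W_1) \cong JW_1$.
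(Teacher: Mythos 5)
Your proof is correct and takes essentially the same route as the paper: the paper offers no argument beyond citing Carlson's extension theory immediately before the statement, and your computation simply unwinds Carlson's formula $\Ext^1_{\mathrm{MHS}}(B,A)\cong \Hom_\Cee(B_\Cee,A_\Cee)/\bigl(F^0\Hom(B,A)+\Hom_\Zee(B_\Zee,A_\Zee)\bigr)$ in the separated, type-$(1,1)$ case. The two points you flag---the identification $F^0\Hom(B,A)=\Hom_\Cee(B_\Cee,A^{1,0})$ and the freeness of $(W_2/W_1)_0$, which makes $\Hom_\Zee(B_\Zee,-)$ exact so the quotient becomes $\Hom_\Zee((W_2/W_1)_0, JW_1)$---are exactly the details Carlson's theory supplies, so there is nothing to add.
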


\begin{remark} Similarly, the mixed Hodge structure on $H^2(X_0)$ is classified by a homomorphism (also denoted by the same letter) $\psi\colon W_2H^2(X_0;\Zee)/W_1H^2(X_0;\Zee) \to JW_1$. However, the polarization condition implies that $\psi([L]) =0$ and the $d$-semistability condition implies that $\psi(\xi_1) = \psi(\xi_2) =0$. Thus, the extension of mixed Hodge structures coming from  $(W_2)_0$ determines and is determined by the corresponding extension coming from $H^2(X_0)$ subject to the above conditions.
\end{remark} 

The homomorphism $\psi$ corresponding to $H^2(X_0)$ can be described explicitly (\cite{Carlson0}, \cite[Theorem 2.4]{Carlson3}, and,  for a recent exposition which  covers the case where $\hY$ is not necessarily simply connected, \cite[Proposition 6.1]{EGW}). We begin with the rational case:

\begin{proposition}\label{extdata1} Suppose that $\hY$ is a rational surface. Given 
$$\lambda \in W_2H^2(X_0;\Zee)/W_1H^2(X_0;\Zee) \cong \Ker\{H^2(\hY;\Zee)\oplus\bigoplus_iH^2(Z_i;\Zee)\to \bigoplus_iH^2(D_i;\Zee)\},$$
represent $\lambda$ by holomorphic line bundles 
$$(L_\lambda, M^{(1)}_\lambda, M^{(2)}_\lambda) \in \Pic \hY \oplus \Pic Z_1\oplus \Pic Z_2$$
such that $\deg (L_\lambda |D_i) = \deg(M^{(i)}_\lambda|D_i)$. Then $\psi(\lambda)$ is identified with $(\lambda_1, \lambda_2) \in JD_1\oplus JD_2 \cong JW_1$, where
$\lambda_i \in JD_i \cong \Pic^0D_i$ is the line bundle $(L_\lambda^{-1} \otimes M^{(i)}_\lambda)|D_i$. \qed
\end{proposition}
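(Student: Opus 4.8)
The plan is to compute $\psi(\lambda)$ by the standard recipe of Carlson \cite{Carlson0}, \cite[Theorem 2.4]{Carlson3}, \cite[Proposition 6.1]{EGW} and then realize the two lifts required by that recipe geometrically, through the line bundle triple, reducing everything to the restriction of line bundles to the elliptic curves $D_i$. Recall the recipe: since $\lambda \in W_2H^2(X_0;\Zee)/W_1H^2(X_0;\Zee)$ is of type $(1,1)$, it lies in $F^1$ of the quotient, and Carlson's formula says that if one chooses an integral lift $\lambda_\Zee \in H^2(X_0;\Zee)$ of $\lambda$ and a lift $\lambda_F \in F^1H^2(X_0;\Cee)$ of $\lambda$, then $\lambda_\Zee - \lambda_F$ lies in $W_1\otimes\Cee$, and its image in $W_1^{0,1}/W_{1,\Zee} = JW_1$ is $\psi(\lambda)$, independent of the choices. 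By Lemma~\ref{rationalW1}, $JW_1 \cong JD_1\oplus JD_2$, so it remains to identify the two components.

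Next I would produce the lifts from the triple $(L_\lambda, M^{(1)}_\lambda, M^{(2)}_\lambda)$. By Lemma~\ref{genMV} the restriction map $H^2(X_0;\Zee) \to H^2(\hY;\Zee)\oplus\bigoplus_iH^2(Z_i;\Zee)$ has image $W_2/W_1 = \Ker r$ and kernel $W_1$, so there is an integral lift $\lambda_\Zee$ restricting to the topological Chern classes $(c_1(L_\lambda), c_1(M^{(1)}_\lambda), c_1(M^{(2)}_\lambda))$; and since each component class is of type $(1,1)$ and the Hodge--de Rham spectral sequence degenerates at $E_1$ (Lemma~\ref{Hodgefilt}), there is a lift $\lambda_F \in F^1H^2(X_0;\Cee)$ restricting to the same classes viewed in $F^1$. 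Because both lifts have the same restriction to every component, their difference $\lambda_\Zee-\lambda_F$ lies in the kernel $W_1\otimes\Cee$ of the restriction map, exactly as Carlson's recipe demands.

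The crux is to identify $\lambda_\Zee-\lambda_F \in W_1\otimes\Cee$ with line bundle data on the $D_i$. I would do this by comparing the Mayer--Vietoris sequences for $\Zee_{X_0}$, $\scrO_{X_0}$ and $\scrO^*_{X_0}$, linked by the exponential sequence $0 \to \Zee \to \scrO \to \scrO^* \to 0$ on $X_0$ and on each stratum $\hY$, $Z_i$, $D_i$. In the rational case the coboundary $\partial\colon \bigoplus_iH^1(D_i;\Cee) \to H^2(X_0;\Cee)$ of Lemma~\ref{genMV} is an isomorphism onto $W_1\otimes\Cee$, so there is a unique $\beta \in \bigoplus_iH^1(D_i;\Cee)$ with $\partial\beta = \lambda_\Zee-\lambda_F$, and Carlson's recipe identifies $\psi(\lambda)$ with the class of $\beta$ in $\bigoplus_iH^1(D_i;\Cee)/(F^1 + H^1(D_i;\Zee)) = \bigoplus_i JD_i$. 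Tracing the triple through the three Mayer--Vietoris sequences, the obstruction to gluing $(L_\lambda, M^{(1)}_\lambda, M^{(2)}_\lambda)$ into a single class on $X_0$ --- which is precisely what the passage from $\lambda_F$ to $\lambda_\Zee$ records --- is the restriction defect $\bigl((L_\lambda^{-1}\otimes M^{(1)}_\lambda)|D_1,\ (L_\lambda^{-1}\otimes M^{(2)}_\lambda)|D_2\bigr)$, viewed in $\bigoplus_i\Pic^0D_i = \bigoplus_i JD_i$ via the exponential sequence on $D_i$. The degree hypothesis $\deg(L_\lambda|D_i) = \deg(M^{(i)}_\lambda|D_i)$ guarantees this lands in the degree-zero part, and matching it with $\beta$ yields $\psi(\lambda) = (\lambda_1,\lambda_2)$ with $\lambda_i = (L_\lambda^{-1}\otimes M^{(i)}_\lambda)|D_i$.

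The main obstacle is exactly this last identification: matching the abstract Carlson difference $\lambda_\Zee-\lambda_F$ with the geometric gluing defect at the level of Čech cocycles, passing correctly between $\scrO^*$-data (the line bundles) and additive $\scrO$-data (via $\log$) in the exponential sequences, and pinning down the sign and the contravariance of $J$ so that the two elliptic-curve components land in the correct $JD_i$ with the displayed sign. By contrast, the existence and compatibility of the lifts --- strictness of $F$ with respect to $W$, and the simultaneous choice of holomorphic Chern representatives on all three components --- is routine, but must be arranged so that the difference is genuinely represented by the restrictions of $L_\lambda^{-1}\otimes M^{(i)}_\lambda$ along $D_i$ rather than merely up to an unidentified integral class.
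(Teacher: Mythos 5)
Your proposal is correct and is essentially the paper's own approach: the paper states this proposition with no written proof, justifying it by citation to Carlson's theory (\cite{Carlson0}, \cite[Theorem 2.4]{Carlson3}) and \cite[Proposition 6.1]{EGW}, and your outline --- Carlson's integral-lift-minus-Hodge-lift recipe, with the difference $\lambda_\Zee - \lambda_F$ identified through the Mayer--Vietoris and exponential sequences with the gluing defect $\bigl((L_\lambda^{-1}\otimes M^{(1)}_\lambda)|D_1, (L_\lambda^{-1}\otimes M^{(2)}_\lambda)|D_2\bigr)$ --- is precisely the argument carried out in those references. The cocycle-level matching you flag as the main obstacle is the standard chase there, and in the rational case it is simplified by the vanishing of $H^1(\scrO)$ and $H^2(\scrO)$ on $\hY$ and on each $Z_i$, which makes the coboundary $\bigoplus_i H^1(D_i;\scrO_{D_i}) \to H^2(X_0;\scrO_{X_0})$ an isomorphism, so nothing in your plan would fail.
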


\begin{remark} There is a similar description in case $\hY$ is an Enriques surface. In this case, a class 
$$\lambda \in \Ker\{\overline{H}^2(\hY;\Zee)\oplus\bigoplus_iH^2(Z_i;\Zee)\to \bigoplus_iH^2(D_i;\Zee)\} $$ lifts to a triple $(L_\lambda, M^{(1)}_\lambda, M^{(2)}_\lambda)$ as above, where however $L_\lambda$ is only unique modulo the $2$-torsion line bundle $\eta$. If $\lambda_i$ is defined as in Proposition~\ref{extdata1}, then $\psi(\lambda)$ is identified with  the image of $(\lambda_1, \lambda_2)$ in $(JD_1\oplus JD_2)/\langle\eta\rangle \cong JW_1$, which is independent of the choice of a lift. 
\end{remark} 

\subsection{The Clemens-Schmid exact sequence for $X_0$: the elliptic surface case} The analysis here is very similar to the other cases.  There are three monodromy matrices $N_1$, $N_2$, $N_3$  arising from the deformation theory of $X_0$. Referring to Theorem~\ref{versal}, taking the diagonal embedding of $\Delta$ in $\Delta^3$ gives a smoothing  $\mathcal{X}\to \Delta$ of $X_0$ for which $\mathcal{X}$ is smooth and the monodromy is $N = N_1 + N_2+N_3$. 

Arguments along the lines of the proof of Theorem~\ref{describeMHS1} then show:

\begin{theorem}\label{describeMHS2} In the above notation, let $X_t$ be a general fiber and let $H^2_{\text{\rm{lim}}}$ denote the limiting mixed Hodge structure (with $\Zee$-coefficients).
\begin{enumerate}
\item[\rm(i)]  $\im N$   has rank $4$.  
\item[\rm(ii)] The following  is an exact sequence of mixed Hodge structures over $\Zee$:
$$H_4(X_0;\Zee) \to  H^2(X_0; \Zee) \to H^2_{\text{\rm{lim}}} \xrightarrow{N} H^2_{\text{\rm{lim}}}.$$
\item[\rm(iii)] The image of $H_4(X_0;\Zee)$ is spanned by classes $\xi_1$, $\xi_2$, $\xi_3$, which restrict to the classes $$([D_i], -[D_i])\in H^2(\hY;\Zee) \oplus H^2(Z_i; \Zee)$$
and which span a primitive isotropic subspace of $H^2(\hY;\Zee) \oplus \bigoplus_iH^2(Z_i;\Zee)$. 
\item[\rm(iv)] The map $H^2(X_0; \Zee) \to H^2_{\text{\rm{lim}}}$ induces   an isomorphism $W_1H^2(X_0; \Zee) \to W_1H^2_{\text{\rm{lim}}}$, and the subspace  $W_1H^2_{\text{\rm{lim}}}$ is a primitive integral subspace of $H^2_{\text{\rm{lim}}}$. 
\item[\rm(v)] $W_2H^2_{\text{\rm{lim}}}/ W_1H^2_{\text{\rm{lim}}} \cong \{\xi_1, \xi_2, \xi_3\}^\perp/ \Zee\xi_1 + \Zee\xi_2$, a subquotient of $H^2(\hY;\Zee) \oplus \bigoplus_iH^2(Z_i;\Zee)$.  \qed
\end{enumerate}
\end{theorem}

\begin{corollary}\label{elllatticeLambda} $(W_2/W_1)_0 \cong \{\xi_1, \xi_2, \xi_3, [L]\}^\perp/ \Zee\xi_1 + \Zee\xi_2+ \Zee\xi_3$. Hence $(W_2/W_1)_0$ is a negative definite even unimodular lattice of rank $24$. \qed
\end{corollary}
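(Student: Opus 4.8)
The plan is to treat the explicit description of $(W_2/W_1)_0$ as the primitive-cohomology version of Theorem~\ref{describeMHS2}(v), and then to prove the three lattice-theoretic assertions (even, negative definite, unimodular) by recognizing $(W_2/W_1)_0$ as an \emph{isotropic reduction} of the even unimodular lattice $H^2_0(X_t;\Zee)$. The conceptual core is a single observation: the monodromy logarithm $N$ is an infinitesimal isometry, so that $\im N$ and $\Ker N$ are mutually orthogonal, and in Hodge type $\lozenge_{0,2}$ this forces $(W_2)_0$ to be exactly the orthogonal complement of the isotropic lattice $W_1$.

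For the explicit description, I would pass to $[L]^{\perp}$ in Theorem~\ref{describeMHS2}(v), exactly as is done in the rational case just before Proposition~\ref{latticeLambda}. Since $[L]$ is the limit of $K_{X_t}$ and $N[L]=0$, the class $[L]$ lies in $W_2H^2_{\text{\rm{lim}}}$ and is orthogonal to $W_1=(W_1)_0$; intersecting the isomorphism of (v) with $[L]^{\perp}$ gives
$$(W_2/W_1)_0 \cong \{\xi_1,\xi_2,\xi_3,[L]\}^\perp /(\Zee\xi_1+\Zee\xi_2+\Zee\xi_3),$$
the complement and quotient being formed inside $H^2(\hY;\Zee)\oplus\bigoplus_iH^2(Z_i;\Zee)$. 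Because the Clemens--Schmid maps are morphisms of polarized mixed Hodge structures, the intersection form induced on this subquotient coincides with the one it inherits as a subquotient of $H^2_{0,\text{\rm{lim}}}:=H^2_{\text{\rm{lim}}}\cap[L]^{\perp}$, so the lattice in question may be analyzed intrinsically inside $H^2_{0,\text{\rm{lim}}}$.

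The heart of the argument is the identification $(W_2/W_1)_0 = W_1^{\perp}/W_1$ inside $H^2_{0,\text{\rm{lim}}}$, which by \S\ref{smoothcase} is even unimodular of signature $(4,28)$. The monodromy $T$ preserves the cup product, so $N=\log T$ satisfies $\langle Nx,y\rangle+\langle x,Ny\rangle=0$; hence $\im N\perp\Ker N$. In type $\lozenge_{0,2}$ one has $W_0=0$ and $W_4=W_3$, so $N^2=0$, giving $W_1=\im N$ and $(W_2)_0=\Ker N\cap[L]^{\perp}$. In particular $W_1\subseteq(W_2)_0$, so $W_1$ is isotropic, and it is primitive by Theorem~\ref{describeMHS2}(iv); moreover $(W_2)_0$ is saturated, being a kernel. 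Counting ranks ($\operatorname{rk}W_1=4$, $\operatorname{rk}(W_2)_0=28$, $\operatorname{rk}H^2_{0,\text{\rm{lim}}}=32$) together with the orthogonality $W_1\perp(W_2)_0$ forces $(W_2)_0=W_1^{\perp}$, the complement now taken in $H^2_{0,\text{\rm{lim}}}$, whence $(W_2/W_1)_0=W_1^{\perp}/W_1$.

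I would then invoke the standard isotropic-reduction lemma: if $I$ is a primitive isotropic sublattice of an even unimodular lattice $\Lambda$ of signature $(p,q)$, then $I^{\perp}/I$ is again even unimodular, of signature $(p-\operatorname{rk}I,\,q-\operatorname{rk}I)$. Applied with $\Lambda=H^2_{0,\text{\rm{lim}}}$ and $I=W_1$ of rank $4$, this yields that $(W_2/W_1)_0$ is even unimodular of signature $(0,24)$, i.e.\ negative definite of rank $24$, as claimed. I expect the main obstacle to be integral bookkeeping rather than any conceptual difficulty: one must ensure that $W_1$ is primitive and isotropic over $\Zee$ (not merely over $\Q$) and that $(W_2)_0$ is saturated, so that the equality $(W_2)_0=W_1^{\perp}$ and the isotropic-reduction lemma are valid integrally. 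These points are supplied respectively by the primitivity statement of Theorem~\ref{describeMHS2}(iv), by the fact that a kernel is automatically saturated, and by the even unimodularity of $H^2_{0,\text{\rm{lim}}}$ coming from \S\ref{smoothcase} via the coincidence of the underlying lattice of the limiting cohomology with that of $H^2(X_t;\Zee)$.
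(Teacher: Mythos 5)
Your proposal is correct and follows essentially the same route as the paper, which states this corollary with no written proof: the explicit description comes from restricting Theorem~\ref{describeMHS2}(v) to $[L]^{\perp}$ (exactly as in the rational/Enriques case), and the lattice claim comes ``in terms of integral lattices, using \S\ref{smoothcase}'' --- i.e.\ viewing $(W_2/W_1)_0$ as $W_1^{\perp}/W_1$ for the primitive isotropic rank-$4$ sublattice $W_1$ of the even unimodular lattice $H^2_0(X_t;\Zee)\cong U^4\oplus(\Lambda_{E_8})^3$ of signature $(4,28)$. Your write-up merely makes explicit the steps the paper leaves implicit (orthogonality of $\im N$ and $\Ker N$, saturation and rank counting to get $(W_2)_0=W_1^{\perp}$, and the isotropic-reduction lemma), and these details are all accurate.
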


In this case, the corresponding extension of mixed Hodge structures is described by a homomorphism $\psi$ as before. The recipe for $\psi$ is as follows \cite[Proposition 6.1]{EGW}:

\begin{proposition} Suppose that $\hY$ is an elliptic  surface. Given 
$$\lambda \in W_2H^2(X_0;\Zee)/W_1H^2(X_0;\Zee) \cong \Ker\{H^2(\hY;\Zee)\oplus\bigoplus_iH^2(Z_i;\Zee)\to \bigoplus_iH^2(D_i;\Zee)\},$$
choose   holomorphic line bundles 
$$(L_\lambda, M^{(1)}_\lambda, M^{(2)}_\lambda, M^{(3)}_\lambda) \in \Pic \hY \oplus \Pic Z_1\oplus \Pic Z_2 \oplus \Pic Z_3$$
which restrict to the class $\lambda$ on each component. In particular $\deg (L_\lambda|D_i) = \deg (M^{(i)}_\lambda)|D_i$, and $L_\lambda$ is well-determined up to the action of $\Pic^0\hY$. Then $\psi(\lambda)$ is the image of  
$$(\lambda_1, \lambda_2,\lambda_3) \in \left(JD_1\oplus JD_2 \oplus JD_3\right)/\Pic^0\hY\cong JW_1,$$ where
$\lambda_i \in JD_i \cong \Pic^0D_i$ is the line bundle $(L_\lambda^{-1} \otimes M^{(i)}_\lambda)|D_i$. \qed
\end{proposition}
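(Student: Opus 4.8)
The plan is to compute this $\psi$ directly from Carlson's theory, realizing the required integral and Hodge-theoretic lifts of $\lambda$ from the line-bundle data and measuring their discrepancy along the double curves $D_i$; this is the non-simply-connected refinement of the rational case of Proposition~\ref{extdata1}, carried out in essentially this form in \cite[Proposition 6.1]{EGW}. The extension
$$0 \to W_1 \to W_2H^2(X_0;\Zee) \to W_2H^2(X_0;\Zee)/W_1 \to 0$$
is an extension of a pure weight-two Hodge structure of type $(1,1)$ by the pure weight-one structure $W_1$, so by \cite{Carlson0}, \cite{Carlson3} it is classified by $\psi\in\Hom(W_2/W_1, JW_1)$ with $JW_1 = W_{1,\Cee}/(F^1W_{1,\Cee}+W_{1,\Zee})$. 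For $\lambda\in W_2/W_1$ integral, hence of type $(1,1)$, Carlson's formula reads
$$\psi(\lambda) = \bigl[\tilde\lambda_{\Zee}-\tilde\lambda_F\bigr]\in JW_1,$$
where $\tilde\lambda_\Zee\in W_2H^2(X_0;\Zee)$ is any integral lift of $\lambda$ and $\tilde\lambda_F\in F^1W_2H^2(X_0;\Cee)$ is any lift respecting the Hodge filtration; such an $F$-lift exists since the spectral sequence of Lemma~\ref{Hodgefilt} degenerates and $F$ is strict.

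The next step is to build both lifts from line bundles. The Mayer-Vietoris presentation of Lemma~\ref{genMV} identifies $W_2/W_1$ with $\Ker\{H^2(\hY)\oplus\bigoplus_iH^2(Z_i)\to\bigoplus_iH^2(D_i)\}$, and the integral class $\lambda$ is of type $(1,1)$ on each smooth component. Hence its $\hY$-component is $c_1(L_\lambda)$ for some $L_\lambda\in\Pic\hY$, unique up to $\Pic^0\hY=\Ker c_1$, and its $Z_i$-component is $c_1(M^{(i)}_\lambda)$ for a unique $M^{(i)}_\lambda\in\Pic Z_i$ (uniqueness because $\Pic^0Z_i=0$ for the del Pezzo surface $Z_i$). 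Realizing $H^2(X_0;\Cee)$ via the complex $\Omega^\bullet_{X_0}/\tau^\bullet_{X_0}$ and choosing Chern forms for $L_\lambda$ and the $M^{(i)}_\lambda$ produces a class $\tilde\lambda_F$ in $F^1H^2(X_0;\Cee)$ lifting $\lambda$. The same line bundles furnish the integral lift: the assumption $\deg(L_\lambda|D_i)=\deg(M^{(i)}_\lambda|D_i)$ means their topological first Chern classes agree in each $H^2(D_i;\Zee)$, so by the exact sequence of Lemma~\ref{genMV} the collection of topological Chern classes determines an integral class $\tilde\lambda_\Zee\in H^2(X_0;\Zee)$ lifting $\lambda$.

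I would then compute the discrepancy. On each smooth component the integral Chern class and its $(1,1)$ Hodge representative coincide, so $\tilde\lambda_\Zee-\tilde\lambda_F$ is supported on the gluing, i.e.\ lies in the image of $\bigoplus_iH^1(D_i)$ under the connecting map of Mayer-Vietoris. Concretely, choosing a \v{C}ech cover adapted to $X_0=\hY\cup\bigcup_iZ_i$ and transition cocycles for $L_\lambda$ and $M^{(i)}_\lambda$, the failure of these cocycles to match along $D_i$ is precisely a cocycle for $(L_\lambda^{-1}\otimes M^{(i)}_\lambda)|D_i$, an element of $\Pic^0D_i = H^1(D_i;\scrO_{D_i})/H^1(D_i;\Zee) = JD_i$. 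Tracing this through the connecting homomorphism identifies $[\tilde\lambda_\Zee-\tilde\lambda_F]$ with the image in $JW_1$ of $(\lambda_1,\lambda_2,\lambda_3)\in\bigoplus_iJD_i$, where $\lambda_i=(L_\lambda^{-1}\otimes M^{(i)}_\lambda)|D_i$, as claimed.

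Finally I would check well-definedness and confront the main obstacle. Replacing $L_\lambda$ by $L_\lambda\otimes P$ with $P\in\Pic^0\hY$ leaves $\tilde\lambda_\Zee$ unchanged and changes each $\lambda_i$ by $P|D_i$, i.e.\ moves $(\lambda_i)_i$ by the image of $P$ under $\Pic^0\hY\to\bigoplus_i\Pic^0D_i$; since $JW_1\cong(\bigoplus_iJD_i)/\Pic^0\hY$ by Lemma~\ref{interpretJW1}, $\psi(\lambda)$ is well-defined. This quotient by $\Pic^0\hY$ is exactly the new feature of the elliptic ruled case, absent in the rational case of Proposition~\ref{extdata1}, and is where the non-simply-connected version of Carlson's formula \cite[Proposition 6.1]{EGW} is invoked. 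The main obstacle is the bookkeeping of signs and base points: one must verify that the connecting homomorphism produces $(L_\lambda^{-1}\otimes M^{(i)}_\lambda)|D_i$ rather than its inverse or a translate, which amounts to fixing compatible orientation conventions in the Mayer-Vietoris sequence of Lemma~\ref{genMV} and in Carlson's pairing, and to checking that the \v{C}ech discrepancy of the previous paragraph represents the connecting class with the correct sign.
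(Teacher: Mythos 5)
Your proposal is correct and takes essentially the same approach as the paper: the paper states this proposition without writing out a proof, citing Carlson's extension theory (\cite{Carlson0}, \cite{Carlson3}) and \cite[Proposition 6.1]{EGW} for exactly this recipe, and your computation --- Carlson's formula $\psi(\lambda)=[\tilde\lambda_{\Zee}-\tilde\lambda_F]$, the lifts built from $(L_\lambda, M^{(i)}_\lambda)$, the \v{C}ech-level discrepancy $(L_\lambda^{-1}\otimes M^{(i)}_\lambda)|D_i$ along the double curves, and the quotient by $\Pic^0\hY$ handling the non-simply-connected component $\hY$ --- is precisely the argument of those references specialized to this situation.
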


\subsection{Picard-Lefschetz formulas}\label{PLfmlas}
We continue to use the notation $X_0 = \hY\amalg_{D_i}\left(\coprod_iZ_i\right)$.   Suppose that $X_t$ is a general deformation of $X_0$, in the sense that the total space $\mathcal{X}$ of the deformation is smooth. Recall the following formula due to Clemens \cite{Clemens}, \cite[3.8]{FriedmanTorelli}: Let $c_t\colon X_t \to X_0$ be the Clemens collapsing map, and set $U_i = c_t^{-1}(Z_i)$, $i=1,2,3$, $U_0 =  c_t^{-1}(\hY)$, and $\widetilde{D}_i = U_0 \cap U_i$. Then $\widetilde{D}_i $ is an $S^1$-bundle over $D_i$. For a pair of closed curves  $\{\alpha_i, \beta_i\}$ in $D_i$ representing a standard symplectic basis, let $\tilde{\alpha}_i$ and $\tilde{\beta}_i$ be the corresponding $S^1$-bundles (i.e.\ the tubes over the cycles $\alpha_i, \beta_i$). Thus $\tilde{\alpha}_i$ and $\tilde{\beta}_i$ define homology classes in $H_2(\widetilde{D}_i )$, $H_2(U_i)$, or $H_2(X_t)$. Viewing them as cycles in $H_2(X_t)$, define   $N_i\colon H_2(X_t)\to H_2(X_t)$ by the following formula:
$$N_i(\xi) =  \langle \xi, \tilde{\beta}_i\rangle \tilde{\alpha}_i - \langle \xi, \tilde{\alpha}_i\rangle \tilde{\beta}_i,$$
where we use $\langle \cdot , \cdot \rangle$ to denote intersection pairing.
Of course, there is an analogous formula for the action of $N_i \colon H^2(X_t)\to H^2(X_t)$ via Poincar\'e duality. The local arguments of \cite{Clemens} show the following:

\begin{proposition}\label{describemono}  If $X_t$ is a general deformation of $X_0$, the monodromy $N$ is $\sum_i N_i$. \qed
\end{proposition}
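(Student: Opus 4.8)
The plan is to realize the one-parameter smoothing over the diagonal as the pullback of the $k$-parameter family of Theorem~\ref{versal} and to decompose its monodromy into the commuting contributions of the individual boundary divisors. Since $X_0$ is $d$-semistable and the total space of the smoothing is smooth, the restriction $\mathcal{X}'\to \Delta^k$ of $\mathcal{X}$ to a point of $\Xi$ is a normal-crossing degeneration whose discriminant is $\bigcup_i\{z_i = 0\}$, with $\{z_i=0\}$ the locus over which the double curve $D_i$ appears. The loop traversed by the diagonal parameter $t$ maps, under $\pi_1((\Delta^*)^k)\cong \Zee^k$, to the sum of the generators $\gamma_i$ encircling each $\{z_i=0\}$; since this fundamental group is abelian, the monodromy transformation $T$ of the diagonal family factors as a product $T = \prod_i T_i$ of commuting local monodromies $T_i$ associated to the loops $\gamma_i$.

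The heart of the argument is then the single-variable Picard-Lefschetz computation identifying each $T_i$ with the tube operator. Fixing $i$, I localize in a neighborhood of the double curve $D_i$: there the restriction of $\mathcal{X}'$ to the $z_i$-disk is, after an analytic change of coordinates, the product of the elliptic curve $D_i$ with the standard node smoothing $xy = z_i$. As in Clemens \cite{Clemens} (see also \cite[3.8]{FriedmanTorelli}), the vanishing cycle at each point of $D_i$ is the circle collapsed by $c_t$, and as a one-cycle $\gamma$ on $D_i$ is transported around $\gamma_i$ the vanishing circle sweeps out the tube $\widetilde{\gamma}$ over $\gamma$. Clemens's local formula gives that $N_i := \log T_i$ acts on $H_2(X_t)$ by $N_i(\xi) = \langle \xi, \tilde{\beta}_i\rangle \tilde{\alpha}_i - \langle\xi,\tilde{\alpha}_i\rangle\tilde{\beta}_i$, exactly the operator defined before the statement. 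In particular $N_i^2 = 0$, since the image of $N_i$ is spanned by the tubes $\tilde{\alpha}_i, \tilde{\beta}_i$, which span an isotropic plane (the vanishing cycles lie in $W_1$, which is isotropic) and on which $N_i$ therefore vanishes.

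Finally I combine the two ingredients. Because the curves $D_1,\dots,D_k$ are disjoint, the $S^1$-bundles $\widetilde{D}_i$, and hence the cycles $\tilde{\alpha}_i,\tilde{\beta}_i$, may be chosen with pairwise disjoint supports, so all mixed intersection numbers vanish and $N_iN_j = 0$ for $i\neq j$; this is consistent with the commutativity of the $T_i$ found above. Writing each $T_i = \Id + N_i$, valid since $N_i^2=0$, I obtain $T = \prod_i T_i = \Id + \sum_i N_i$, the cross terms dropping out. As $\bigl(\sum_i N_i\bigr)^2 = \sum_{i,j}N_iN_j = 0$, it follows that $N = \log T = \sum_i N_i$, as claimed. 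The only genuinely nontrivial step is the local Picard-Lefschetz identification of $\log T_i$ with the tube formula; everything else is the bookkeeping of commuting nilpotent operators. Since that local computation is precisely the content of \cite{Clemens}, the proof reduces to assembling these standard facts, with care taken that the disjointness of the $D_i$ renders the several vanishing-cycle contributions independent.
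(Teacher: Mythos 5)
Your argument is correct and is essentially the paper's own proof: the paper simply asserts that ``the local arguments of Clemens show'' the proposition, and your write-up supplies exactly that bookkeeping — the decomposition of the monodromy over $\pi_1((\Delta^*)^k)\cong\Zee^k$ into commuting unipotent factors $T_i$, the identification of each $\log T_i$ with the tube operator via Clemens, and the vanishing $N_iN_j=0$ (from disjointness of the $D_i$ and isotropy of the tubes) that collapses $\prod_i(\Id+N_i)$ to $\Id+\sum_iN_i$. One minor imprecision: a neighborhood of $D_i$ in the total space is not globally a product $D_i\times\{xy=z_i\}$, since $N_{D_i/\hY}$ and $N_{D_i/Z_i}$ are individually nontrivial and only their tensor product is trivialized by $d$-semistability, but Clemens's formula requires only the fiberwise local normal form $xy=z_i$ along $D_i$, so nothing in your proof breaks.
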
 

\begin{remark} (i) More generally, suppose that $\mathcal{X} \to \Delta$ is a deformation of $X_0$ such that, locally around each point of $D_i$, the morphism has the form $xy = t^{k_i}$. Then the monodromy $N$ is $\sum_i k_iN_i$.

\smallskip
\noindent (ii) There are obvious compatibilities in the situation where we partially smooth $X_0$ to a $d$-semistable variety by smoothing some of the components of the singular locus, but not all.
\end{remark}

\begin{proposition}\label{monodromy}  The $\Zee$-span of the $N_i$ is a  primitive subgroup of the abelian group of all integer matrices.
\end{proposition}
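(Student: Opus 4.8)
The plan is to reinterpret each $N_i$ as a decomposable bivector and then reduce the assertion to a saturation statement inside $\wedge^2W_1$, which can be checked explicitly from the geometry of $\hY$.

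First, recall that $H^2(X_t;\Zee)$ is unimodular, so the intersection form $q$ identifies $\wedge^2H^2(X_t;\Zee)$ with the group $\mathfrak{a}$ of $q$-skew-adjoint integral endomorphisms of $H^2(X_t;\Zee)$, via $u\wedge v\mapsto(\xi\mapsto q(\xi,v)u-q(\xi,u)v)$. Since $\mathfrak{a}$ is the kernel of the homomorphism $T\mapsto T+T^{*}$ (adjoint with respect to $q$) from integer matrices to integer matrices, it is a \emph{saturated} subgroup of the lattice of all integer matrices. Comparing the defining formula for $N_i$ with this map, $N_i$ corresponds, up to Poincar\'e duality and sign, to the decomposable bivector $\tilde{\alpha}_i\wedge\tilde{\beta}_i$. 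Because saturation is transitive, it suffices to prove that the subgroup generated by the $\tilde{\alpha}_i\wedge\tilde{\beta}_i$ is saturated in $\wedge^2H^2(X_t;\Zee)$.

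By Theorem~\ref{describeMHS2}, $\operatorname{im}N_i$ is spanned by $\tilde{\alpha}_i,\tilde{\beta}_i$, and $\sum_i\operatorname{im}N_i$ is the weight-one piece $W_1\cong\bigl(\bigoplus_iH^1(D_i;\Zee)\bigr)/\operatorname{im}H^1(\hY;\Zee)$, a saturated rank-$4$ sublattice of $H^2(X_t;\Zee)$. As the quotient map $\bigoplus_iH^1(D_i;\Zee)\to W_1$ is surjective, the classes $\tilde{\alpha}_i,\tilde{\beta}_i$ generate $W_1$ over $\Zee$, and $\wedge^2W_1$ is saturated in $\wedge^2H^2(X_t;\Zee)$ because $W_1$ is; so we are reduced to showing $\langle\tilde{\alpha}_i\wedge\tilde{\beta}_i\rangle$ is saturated inside the rank-$6$ lattice $\wedge^2W_1$. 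When $k=2$ this is immediate, since $\tilde{\alpha}_1,\tilde{\beta}_1,\tilde{\alpha}_2,\tilde{\beta}_2$ are a $\Zee$-basis of $W_1$ and the two bivectors are distinct members of the associated basis of $\wedge^2W_1$. The content is the case $k=3$, where the six vanishing cycles span only the rank-$4$ lattice $W_1$ and hence satisfy two relations coming from $\operatorname{im}H^1(B;\Zee)$. Here I would use Theorems~\ref{211case} and~\ref{111case} to make $H^1(B;\Zee)\hookrightarrow\bigoplus_iH^1(D_i;\Zee)$ explicit: each section $\sigma_j\to B$ is an isomorphism and each bisection $\Gamma_j\to B$ is a degree-$2$ isogeny, whose pullback on $H^1$ has determinant $\pm2$. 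Eliminating the section factors via Lemma~\ref{ellipticruledW1} yields a concrete $\Zee$-basis of $W_1$ (namely $H^1(\Gamma_1;\Zee)\oplus H^1(\Gamma_2;\Zee)$ for $(1,1,1)$ and $H^1(\Gamma;\Zee)\oplus H^1(B;\Zee)$ for $(2,1,1)$) and expresses the remaining bivector $\tilde{\alpha}_3\wedge\tilde{\beta}_3$ in that basis.

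The main obstacle, and the heart of the argument, is the resulting saturation computation. Writing $\tilde{\alpha}_3\wedge\tilde{\beta}_3$ in the basis of $\wedge^2W_1$ adapted to $\tilde{\alpha}_1\wedge\tilde{\beta}_1$ and $\tilde{\alpha}_2\wedge\tilde{\beta}_2$, saturation reduces to the statement that the remaining ``cross'' coordinates of $\tilde{\alpha}_3\wedge\tilde{\beta}_3$ have gcd $1$. This is exactly where the arithmetic of the isogenies enters: the determinant $\pm2$ of a bisection-isogeny is not divisible by $4$, which forces primitivity in the $(2,1,1)$ case (where a section already contributes a coordinate $\pm1$), while in the $(1,1,1)$ case one uses in addition that $\Gamma_1,\Gamma_2$ correspond to \emph{distinct} $2$-torsion points of $B$ (Definition~\ref{defY0}), so the two index-$2$ sublattices $r_j(H^1(B;\Zee))$ are transverse modulo $2$. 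After normalizing by an element of $GL_2(\Zee)$ so that the two isogenies become $\operatorname{diag}(1,2)$ and $\operatorname{diag}(2,1)$, the cross-coordinates are seen to contain a $\pm1$, and saturation follows. I expect the only delicate bookkeeping to be the precise integral identification of $\operatorname{im}N_i$ with the images of $H^1(D_i;\Zee)$ in $W_1$; granting that, the remainder is a finite linear-algebra verification in each of the two three-singularity strata.
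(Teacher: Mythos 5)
Your core reduction is correct, and it is a genuinely different packaging of what the paper does. You identify the $\Zee$-span of the $N_i$ inside the lattice of $q$-skew-adjoint integral endomorphisms, which (for a unimodular lattice) is exactly the image of $\wedge^2H^2(X_t;\Zee)$ under $u\wedge v\mapsto\big(\xi\mapsto q(\xi,v)u-q(\xi,u)v\big)$ and is saturated in all integer matrices as the kernel of $T\mapsto T+T^*$; then transitivity of saturation plus primitivity of $W_1$ (Theorem~\ref{describeMHS2}(iv)) reduces everything to a gcd computation in the rank $6$ lattice $\wedge^2W_1$. The paper instead chooses integral classes $\tilde{\alpha}_i^*,\tilde{\beta}_i^*$ dual to the tube classes under cup product (which exist for the same reason your reduction works: $W_1$ is primitive in a unimodular lattice) and evaluates $\sum_i\lambda_iN_i$ on them; this is the ``row extraction'' version of your coordinate computation, so the two finishes are equivalent linear algebra. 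In both arguments the decisive input is the same: the integral relation $\tilde{\alpha}_3=2\tilde{\alpha}_1+\tilde{\alpha}_2$, $\tilde{\beta}_3=\tilde{\beta}_1+2\tilde{\beta}_2$, which the paper proves in an inner lemma from the fact that $\Gamma_1,\Gamma_2$ correspond to distinct $2$-torsion points of $B$ (Lemma~\ref{interpretJW1}); your ``transverse mod $2$'' normalization $\operatorname{diag}(1,2)$, $\operatorname{diag}(2,1)$ is precisely this statement, and with it the cross-coordinates of $\tilde{\alpha}_3\wedge\tilde{\beta}_3$ come out to $(4,1)$ in the $(1,1,1)$ case (and contain a $\pm1$ in the $(2,1,1)$ case, contributed by the section), so the verification you defer does close. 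What your route buys is uniformity: it treats all strata at once and makes clear why only $W_1$ matters; the paper's route is shorter and only writes out $k=3$, $(1,1,1)$.

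The one genuine error is your $k=2$ claim. In the Enriques case the classes $\tilde{\alpha}_1,\tilde{\beta}_1,\tilde{\alpha}_2,\tilde{\beta}_2$ are \emph{not} a $\Zee$-basis of $W_1$: by Lemma~\ref{EnriquesW1} and Theorem~\ref{describeMHS1}(iv), the image of $\bigoplus_iH^1(D_i;\Zee)$ has index $2$ in the primitive lattice $W_1H^2_{\text{\rm{lim}}}$, the overlattice being generated by a class $w=\frac12(v_1+v_2)$ where $\frac12 v_i$ represents the $2$-torsion point $\eta_i\in JD_i$. So ``immediate'' fails, and note that $\wedge^2$ of the tube-class span is \emph{not} saturated in $\wedge^2W_1$ (e.g.\ $\tilde{\beta}_1\wedge\tilde{\beta}_2=2\,\tilde{\beta}_1\wedge w$ there), so you cannot quietly substitute the smaller lattice. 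The conclusion is nevertheless true: choosing the symplectic bases so that $v_i\equiv\tilde{\beta}_i \bmod 2$, one has $\tilde{\alpha}_2\wedge\tilde{\beta}_2=2\,\tilde{\alpha}_2\wedge w+\tilde{\beta}_1\wedge\tilde{\alpha}_2$, and the unit coefficient of $\tilde{\beta}_1\wedge\tilde{\alpha}_2$ gives saturation of $\langle\tilde{\alpha}_1\wedge\tilde{\beta}_1,\tilde{\alpha}_2\wedge\tilde{\beta}_2\rangle$ in $\wedge^2W_1$. This is a computation of the same kind as your $k=3$ case, not a triviality; as written, your argument for the Enriques stratum rests on a false premise and must be replaced by the above.
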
 
\begin{proof}
We shall just write down the proof in the case $k=3$ and $(m_1, m_2, m_3) = (1,1,1)$. Then we can reinterpret Lemma~\ref{interpretJW1} as follows:
The  cycles $\tilde{\alpha}_i, \tilde{\beta}_i\in H^2(X_t;\Zee)=H^2_{\text{\rm{lim}}}$, $1\le i \le 3$,  span a primitive $4$-dimensional isotropic subspace $W_1$ of $H^2_{\text{\rm{lim}}}$ which is a pure weight one sub-Hodge structure for the limiting MHS on $H^2_{\text{\rm{lim}}}$. From the explicit description of $N_i$,  $\im N_i$ is the $\Zee$-span of $\tilde{\alpha}_i, \tilde{\beta}_i$. By Lemma~\ref{interpretJW1}(i),  the $\im N_i$ are primitive integral subspaces of $W_1$ and correspond to sub-Hodge structures, via the composition
$$H^1(D_i;\Zee) \to W_1H^2(X_0;\Zee) \xrightarrow{\cong} W_1.$$
 For $i\neq j$,  the map  $\im N_i \oplus  \im N_j  \to W_1$ is an isomorphism over $\Q$.  More precisely,   $\tilde{\alpha}_1, \tilde{\beta}_1 , \tilde{\alpha}_2, \tilde{\beta}_2$ are a $\Zee$-basis for the lattice $W_1$, i.e.\  $\im N_1 \oplus  \im N_2  \to W_1$ is an isomorphism of weight one Hodge structures over $\Zee$. However, for $i = 1,2$, the image in $W_1$ of $\im N_i \oplus  \im N_3$ is a sublattice of index $2$ (and the sublattice  for $i=1$ is different from the sublattice  for $i=2$.)  The following lemma is a more explicit version of this statement:

\begin{lemma} There is a choice of the integral bases $\tilde{\alpha}_1, \tilde{\beta}_1 , \tilde{\alpha}_2, \tilde{\beta}_2, \tilde{\alpha}_3, \tilde{\beta}_3$ such that 
\begin{align*}
\tilde{\alpha}_3 &= 2\tilde{\alpha}_1 + \tilde{\alpha}_2;\\
\tilde{\beta}_3 &= \tilde{\beta}_1 + 2\tilde{\beta}_2.
\end{align*}
\end{lemma}
\begin{proof} Referring to Lemma~\ref{interpretJW1}, there exists an integral basis $\{\alpha_3, \beta_3\}$ for $H^1(D_3;\Zee) \cong H^1(B;\Zee)$ such that the $2$-torsion points $\eta_1$ and $\eta_2$ correspond to $\frac12\alpha_3$ and $\frac12\beta_3$ respectively. The overlattice $\operatorname{span}\{\alpha_1, \beta_1\}$ can then be taken to have as an integral basis $\alpha_1 = \frac12\alpha_3, \beta_1= \beta_3$. For  this basis, the homomorphism from  $\operatorname{span}\{\alpha_3, \beta_3\}$ to $\operatorname{span}\{\alpha_1, \beta_1\}$ is given by $\alpha_3 = 2\alpha_1$, $\beta_3 = \beta_1$. Making a similar argument for  $\operatorname{span}\{\alpha_2, \beta_2\}$, the homomorphism is given by $\alpha_3 =  \alpha_2$, $\beta_3 = 2\beta_2$. Thus, in the integral basis for $W_1 = H^1(\Gamma_1;\Zee)\oplus H^1(\Gamma_2;\Zee)$, $\alpha_3 = 2\alpha_1 + \alpha_2$ and $\beta_3 = \beta_1+ 2\beta_2$. A similar statement holds when  $\alpha_i, \beta_i$ are replaced by $\tilde{\alpha}_i, \tilde{\beta}_i$.
\end{proof}

Given the lemma, we must show that, given integers $\lambda_i$, $1\le i \le 3$, if $\sum_{i=1}^3\lambda_iN_i$ is divisible by $k$, then $k|\lambda_i$ for all $i$. Let $\tilde{\alpha}_i^*, \tilde{\beta}_i^*$ be   dual integral classes to $\tilde{\alpha}_i, \tilde{\beta}_i$ under cup product, i.e.\ $\langle  \tilde{\alpha}_i^*, \tilde{\alpha}_j \rangle  = \langle  \tilde{\beta}_i^*, \tilde{\beta}_j \rangle =\delta_{ij}$ and $\langle  \tilde{\alpha}_i^*, \tilde{\beta}_j \rangle =0$ for all $i,j$. Then 
$$\sum_{i=1}^3\lambda_iN_i(\tilde{\alpha}_2^*) = (-\lambda_2 -2\lambda_3)\tilde{\beta}_2 + (-\lambda_3)\tilde{\beta}_1.$$
Hence, $\sum_{i=1}^3\lambda_iN_i(\tilde{\alpha}_2^*)$ is divisible by $k$ $\iff$ $k|\lambda_3$ and $k|\lambda_2$. A similar argument shows that  $\sum_{i=1}^3\lambda_iN_i(\tilde{\beta}_1^*)$ is divisible by $k$ $\iff$ $k|\lambda_3$ and $k|\lambda_1$. Thus, 
if $\sum_{i=1}^3\lambda_iN_i$ is divisible by $k$, then $k|\lambda_i$ for all $i$.
\end{proof}

\subsection{The extended period map}\label{extendedperiodmap}  Let $\mathcal{M}$ denote the moduli space of I-surfaces, possibly with rational double points, and let $\Phi\colon \mathcal{M} \to \Gamma \backslash D$ be the period map and let $Z = \Phi(\mathcal{M})$ be its image. As usual, we ignore the issues caused by finite group actions coming from automorphisms of I-surfaces or from the rational double points. These can be handled in the usual way by imposing level structure, restricting to suitable dense open subsets, or adopting the point of view of orbifolds, i.e.\ suitable (analytic) stacks. The goal in this subsection is to describe the necessary properties of a compactification of $Z$ that we will need. 

First, let $\overline{\mathcal{M}}$ be the open subset of the KSBA compactification of $\mathcal{M}$ where the I-surfaces are allowed to have simple elliptic singularities as well as possibly RDP singularities. Let $\widetilde{\mathcal{M}}$ be the natural blowup of $\mathcal{M}$ where we replace the simple elliptic singularities by their $d$-semistable models as described in \S\ref{ssectsevsdss}. Thus $\widetilde{\mathcal{M}} - \mathcal{M}$ is a divisor with normal crossings (in the orbifold sense) and at most $3$ local branches of the boundary divisor  have a nonempty intersection.  Let $U \cong \Delta^k\times S$ be a  neighborhood a of a boundary point, with $k\le 3$, so that  $U\cap \mathcal{M} \cong (\Delta^*)^k\times S$.   If $N_i$ is the logarithm of monodromy around the $i^{\text{th}}$ coordinate hyperplane, then every $N$ in the associated cone satisfies $N^2 =0$.   Let $W_1\subseteq W_2 \subseteq W_3$ be the weight filtration defined by any $N$ in the interior of the cone. Then   $\im N_i$  is a  sub-Hodge structure of $W_1$. Thus $\dim W_1 =2$ if $k=1$ and $\dim W_1 = 4$ if $k=2,3$, and $\dim \im N_i =2$ in all cases. There is a distinguished codimension $3$ stratum $\mathcal{S}_{1,1,1}$ corresponding to the elliptic ruled case with multiplicities $(1,1,1)$. (This stratum is denoted $\mathfrak{N}_{1,1,1}$ in \cite{FPR1}, \cite{FPR2}.) The local monodromy in this case is described by Proposition~\ref{monodromy}. There is also a local Torelli theorem (Theorem~\ref{localTorelli}) for the variation of mixed Hodge structure along  $\mathcal{S}_{1,1,1}$.  A similar picture holds for the stratum $\mathcal{S}_{2,1,1}$ corresponding to the elliptic ruled case with multiplicities $(2,1,1)$.

Based on  the work of Usui \cite{Usuicomp}, Kato-Nakayama-Usui \cite{KU}, \cite{KNU},  Green-Griffiths-Robles \cite{GGR},   Deng-Robles \cite{DengRobles}, and work in progress of Deng-Robles, it is natural to conjecture the following: 

\begin{conjecture}\label{extendedperiodmapthm}  There exists an analytic space  $\overline{Z}$ which  is a partial compactification of the image $Z$ of the period map $\Phi \colon \mathcal{M} \to \Gamma\backslash D$ with the following properties:
\begin{enumerate}
\item[\rm(i)]  The boundary points of $\widetilde{\Phi}$ contain the information of the nilpotent orbit of the limiting mixed Hodge structure along with the unordered collection of the subspaces $\im N_i$.  
\item[\rm(ii)]   After replacing $\widetilde{\mathcal{M}}$ by  $\widehat{\mathcal{M}}$,  a sequence of toric blowups of  $\widetilde{\mathcal{M}}$ over boundary strata, the map  $\Phi \colon \mathcal{M} \to \Gamma\backslash D$ extends to a holomorphic map  $\widehat{\Phi}\colon \widehat{\mathcal{M}}\to \overline{Z}$. 
\end{enumerate}
\end{conjecture}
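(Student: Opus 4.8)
The plan is to realize $\overline{Z}$ inside a Kato--Usui-type logarithmic partial compactification of $\Gamma\backslash D$ and to deduce the holomorphic extension in (ii) from Schmid's nilpotent orbit theorem, reserving the finer boundary structure demanded by (i) for the Deng--Robles refinement. Because the period domain here is not Hermitian symmetric, Baily--Borel or toroidal constructions are unavailable, so the log-geometric approach is the natural one.

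First I would assemble the relevant fan of nilpotent cones. For each boundary stratum analyzed in \S\ref{PLfmlas}, the monodromy logarithms $N_1,\dots,N_k$ ($k\le 3$) generate a rational polyhedral cone $\sigma=\sum_i\Ar_{\ge 0}N_i$, and since every $N$ in the associated cone satisfies $N^2=0$, the weight filtration attached to any interior $N$ has the simple three-step form $W_1\subseteq W_2\subseteq W_3$ computed in Theorems~\ref{describeMHS1} and \ref{describeMHS2}; in particular the filtration is constant on the interior of $\sigma$, as required for $\sigma$ to define a boundary component. I would collect the cones arising from the one-, two-, and three-singularity degenerations, together with all their faces, into a fan $\Sigma$ and check that it is strongly compatible with $\Gamma$ (stable under the $\Gamma$-action, faces, and finite intersections, with finitely many $\Gamma$-orbits). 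The general theory then produces a logarithmic manifold $\Gamma_\Sigma\backslash D_\Sigma$ partially compactifying $\Gamma\backslash D$, and I would define $\overline{Z}$ to be the closure of $Z$ in this space.

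Next I would obtain the extension in (ii). On a boundary chart $U\cap\mathcal M\cong(\Delta^*)^k\times S$ the period map lifts, by the nilpotent orbit theorem, to $(z,s)\mapsto \exp\!\big(\sum_i\tfrac{\log z_i}{2\pi i}N_i\big)F(z,s)$ with $F$ holomorphic across $z=0$; this is precisely the statement that $\Phi$ extends continuously to the log-smooth boundary of $\Gamma_\Sigma\backslash D_\Sigma$, sending the boundary point to the class of the associated nilpotent orbit. Holomorphicity of the extension, rather than mere continuity, generally forces a subdivision: the cone $\sigma$ need not be smooth for the toric structure on the base $\Delta^k$, so I would perform toric blowups $\widehat{\mathcal M}\to\widetilde{\mathcal M}$ subdividing $\sigma$ into unimodular subcones mapping into $\Sigma$, after which $\widehat\Phi$ extends holomorphically. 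The primitivity statement of Proposition~\ref{monodromy} is exactly what guarantees that the $N_i$ span saturated sublattices, so that these subdivisions are well behaved.

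The main obstacle is property (i): the Kato--Usui boundary remembers only the cone $\sigma$, whereas (i) demands the \emph{unordered collection} of the individual images $\im N_i$. Since each $\im N_i\subseteq W_1$ is a rank-two sub-Hodge structure and the extremal rays of $\sigma$ are intrinsic, one expects $\{\im N_i\}$ to be recoverable from the multivariable degeneration, but encoding this in an analytic space so that $\widehat\Phi$ remains holomorphic is exactly the refinement supplied by the work in progress of Deng--Robles. Consequently I would state (i) conditionally on that construction, and verify in our explicit situation, using the bases of Proposition~\ref{monodromy} and the splittings of $W_1$ in Lemmas~\ref{interpretJW1} and \ref{ellipticruledW1}, that the resulting boundary data separates the strata $\mathcal S_{1,1,1}$ and $\mathcal S_{2,1,1}$, which is the use to which the conjecture is ultimately put.
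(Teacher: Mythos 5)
The statement you are addressing is stated in the paper as a \emph{conjecture}, not a theorem: the authors offer no proof, only the remark that it is "natural to conjecture" this on the basis of Usui, Kato--Nakayama--Usui, Green--Griffiths--Robles, and Deng--Robles (including work in progress). Your proposal essentially reproduces that same heuristic outline, and the places where you pass quickly are exactly the places the authors regard as open. Two gaps are decisive. First, the step "collect the cones \dots into a fan $\Sigma$ and check that it is strongly compatible with $\Gamma$" is not a routine verification but the central unsolved problem: strong compatibility requires the collection of cones to be stable under the $\Gamma$-action and closed under intersections (with intersections of $\Gamma$-translates being faces), and for non-classical period domains --- and the I-surface domain, with $h^{2,0}=2$, is non-classical --- there is no general construction of such a fan containing the geometrically arising monodromy cones; this difficulty is precisely what led Deng--Robles to work with weakened notions of fan, and it is why the paper phrases the existence of $\overline{Z}$ conjecturally rather than invoking Kato--Usui theory outright. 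Second, even granting a strongly compatible fan, the Kato--Usui quotient $\Gamma_\Sigma\backslash D_\Sigma$ in the non-classical case is a logarithmic manifold, an object of a category strictly larger than that of analytic spaces, whereas the conjecture explicitly demands an analytic space $\overline{Z}$. Defining $\overline{Z}$ as the closure of $Z$ inside $\Gamma_\Sigma\backslash D_\Sigma$ therefore does not produce an object of the required type; proving that this closure (equivalently, the completion of the image of the period map) carries a natural analytic structure is the substance of the Deng--Robles work in progress that the paper cites.

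Your handling of property (i) concedes the same point: you state it "conditionally on that construction." A conditional argument of this shape is exactly what the paper already records by calling the statement a conjecture, so your proposal does not advance beyond the paper's own position. What is sound in your sketch --- that every $N$ in the relevant cone satisfies $N^2=0$, that the weight filtration is constant on the interior of $\sigma$ and matches Theorems~\ref{describeMHS1} and \ref{describeMHS2}, the role of Proposition~\ref{monodromy} in controlling the integral structure, and the local nilpotent orbit description $(z,s)\mapsto \exp\bigl(\sum_i \tfrac{\log z_i}{2\pi i} N_i\bigr)F(z,s)$ --- is all correct and consistent with the discussion in \S\ref{extendedperiodmap} and \S\ref{PLfmlas}, but it constitutes the input data motivating the conjecture rather than a proof of it.
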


For our purposes, it is not sufficient to allow a toric blowup $\widehat{\mathcal{M}} \to \widetilde{\mathcal{M}}$. On the other hand, we have a good understanding of the monodromy around the components of $\widetilde{\mathcal{M}}- \mathcal{M}$. Thus, we further conjecture: 

\begin{conjecture}\label{extendedperiodmapthm2}  In addition to Conjecture~\ref{extendedperiodmapthm}, the following hold:
\begin{enumerate}
\item[\rm(iii)]  The map  $\Phi \colon \mathcal{M} \to \Gamma\backslash D$ extends to a holomorphic map  $\widetilde{\Phi}\colon \widetilde{\mathcal{M}}\to \overline{Z}$, i.e.\ no additional blowups are necessary. 
\item[\rm(iv)]  The compactification $\overline{Z}$ is an orbifold in a neighborhood of $\widetilde{\Phi}(\mathcal{S}_{1,1,1})$, and the complement $\overline{Z} -Z$ is an orbifold divisor with normal crossings in such a neighborhood, locally consisting of a union of $3$ (orbifold) smooth divisors. 
\item[\rm(v)]  The normal derivative of $\widetilde{\Phi}$ to $\mathcal{S}_{1,1,1}$ is injective at a general (smooth) point  of $\mathcal{S}_{1,1,1}$, hence the derivative of $\widetilde{\Phi}$ is injective at a general point  of $\mathcal{S}_{1,1,1}$.
\end{enumerate}
\end{conjecture}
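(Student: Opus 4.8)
The plan is to grant Conjecture~\ref{extendedperiodmapthm}---so that $\overline{Z}$ exists and its boundary points record the nilpotent orbit of the limiting mixed Hodge structure together with the unordered collection $\{\im N_i\}$---and then to extract (iii)--(v) from the explicit monodromy analysis of \S\ref{PLfmlas} and the local Torelli theorem (Theorem~\ref{localTorelli}). The unifying observation is that all three assertions are controlled by the single rational polyhedral cone $\sigma=\sum_i\Ar_{\ge 0}N_i$ in the monodromy lattice and by the way this cone maps into the period domain.

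For (iii) and (iv) the essential input is Proposition~\ref{monodromy}: the $\Zee$-span of $N_1,N_2,N_3$ is a primitive (saturated) rank-three subgroup of the lattice of integral endomorphisms, so that $N_1,N_2,N_3$ form a $\Zee$-basis of their saturation and $\sigma$ is a smooth simplicial cone. In the Kato--Nakayama--Usui toroidal model underlying $\overline{Z}$, smoothness of $\sigma$ is exactly the condition ensuring that the period map already extends holomorphically across the stratum without any further toric subdivision, which is the content of (iii). The three extremal rays $\Ar_{\ge 0}N_i$ then give three smooth boundary divisors of $\overline{Z}$ meeting transversally along $\widetilde\Phi(\mathcal{S}_{1,1,1})$, which is the normal-crossing assertion of (iv); the orbifold structure is inherited from the finite Weyl-group covers $\widehat{S}_i\to\widetilde{S}_i$ used to build $\widehat{T}$ in \S\ref{ssectsevsdss}.

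The substantive point is (v). First I would identify the normal directions to $\mathcal{S}_{1,1,1}$ with the three smoothing parameters $\partial/\partial z_i$ of Theorem~\ref{versal}. Using the nilpotent-orbit description of $\widetilde\Phi$ near the boundary, the normal derivative sends $\partial/\partial z_i$, modulo the tangent space to the image stratum and up to a nonzero scalar, to the class of the operator $N_i$ in $\Hom(F^\bullet_\infty,H^2_{\mathrm{lim}}/F^\bullet_\infty)$. The normal derivative is therefore the map $(c_1,c_2,c_3)\mapsto\sum_i c_iN_i$, and---since each $N_i$ is of type $(-1,-1)$ for the limiting Hodge filtration---its injectivity is equivalent to the linear independence of $N_1,N_2,N_3$ as operators. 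This is immediate from the explicit integral basis computed in \S\ref{PLfmlas}: although the relations $\tilde{\alpha}_3=2\tilde{\alpha}_1+\tilde{\alpha}_2$ and $\tilde{\beta}_3=\tilde{\beta}_1+2\tilde{\beta}_2$ show that $\im N_3\subseteq\im N_1+\im N_2$ as subspaces of $W_1$, evaluating $\sum_i c_iN_i$ on the dual classes $\tilde{\alpha}_2^{*}$ and $\tilde{\beta}_1^{*}$ forces $c_1=c_2=c_3=0$, exactly as in the proof of Proposition~\ref{monodromy}. Hence the normal derivative is injective. For the concluding assertion I would use the splitting of the tangent space to $\overline{Z}$ at the boundary point into the tangent to the image stratum and the normal bundle: the tangential part of $d\widetilde\Phi$ is injective by Theorem~\ref{localTorelli}, the normal part is injective by the above, and since $d\widetilde\Phi$ preserves this splitting---carrying tangential directions into the tangent to the image stratum and, transversally, the three smoothing directions injectively into the normal bundle---the full differential is injective at a general point of $\mathcal{S}_{1,1,1}$.

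The main obstacle is the first step of (v): rigorously identifying the normal derivative of the \emph{extended} map $\widetilde\Phi$ with the operators $N_i$. This rests on the fine local structure of $\overline{Z}$ near $\widetilde\Phi(\mathcal{S}_{1,1,1})$---namely, that the smoothing coordinates $z_i$ on $\widetilde{\mathcal{M}}$ descend to honest transverse coordinates on $\overline{Z}$ along which $\widetilde\Phi$ has the normal form $\exp\big(\sum_i\tfrac{\log z_i}{2\pi i}N_i\big)F_\infty$, so that differentiating transversally isolates $N_i$---and this is precisely the analytic input still pending in the work in progress of Deng--Robles that is needed to upgrade Conjecture~\ref{extendedperiodmapthm} to a theorem. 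Once that local normal form is in hand, the cone-theoretic arguments for (iii) and (iv) and the linear-algebra computation for (v) sketched above are routine.
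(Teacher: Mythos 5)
The statement you are attempting to prove is, in the paper, explicitly a \emph{conjecture}: the authors do not prove Conjecture~\ref{extendedperiodmapthm2} anywhere, and they say plainly that its expected justification lies in work in progress of Deng--Robles on compactifying images of period maps for non-classical domains. The inputs you invoke --- Proposition~\ref{monodromy} (primitivity of the $\Zee$-span of $N_1,N_2,N_3$) and the local Torelli theorem (Theorem~\ref{localTorelli}) --- are exactly the evidence assembled in \S\ref{extendedperiodmap} to make the conjecture plausible; the paper deliberately stops short of claiming that they imply it. So there is no paper proof to compare yours against, and the question is whether your sketch closes the gap. It does not.

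The gap is the one you concede in your final paragraph, but it is not a peripheral technicality --- it is the entire content of the statement. For (iii) you assert that, in the Kato--Nakayama--Usui model underlying $\overline{Z}$, smoothness of the cone $\sigma=\sum_i\Ar_{\ge 0}N_i$ is ``exactly the condition'' guaranteeing extension of $\Phi$ without further blowups. No such theorem is available here: the period domain $D$ is not Hermitian symmetric, the KNU/Deng--Robles constructions require a (weak) fan compatible with the full monodromy group $\Gamma$, and whether $\sigma$ itself, rather than some subdivision of it, can be taken to lie in such a fan is precisely what is open; smoothness of $\sigma$ is a necessary-looking condition, not a known sufficient one. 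For (v), your identification of the normal derivative with $(c_1,c_2,c_3)\mapsto\sum_i c_iN_i$ presupposes that the smoothing coordinates $z_i$ of Theorem~\ref{versal} descend to honest transverse coordinates on $\overline{Z}$ in which $\widetilde\Phi$ has the nilpotent-orbit normal form; this local structure does not follow from Conjecture~\ref{extendedperiodmapthm}(i)--(ii), which only control the boundary data (nilpotent orbit plus the unordered set of the $\im N_i$) after an unspecified toric blowup. Your argument is therefore circular: statements (iii)--(v) are assumed in the guise of a ``local normal form'' and then read back off. (A smaller slip: the orbifold structure in (iv) concerns $\overline{Z}$ on the Hodge-theoretic side, e.g.\ finite isotropy in $\Gamma$ and the $\mathfrak{S}_2$-action of Remark~\ref{fingpactions}; it is not ``inherited from'' the Weyl-group covers $\widehat{S}_i\to\widetilde{S}_i$, which live on the moduli side $\widetilde{\mathcal{M}}$.) What survives unconditionally in your write-up --- the linear independence and primitivity of the $N_i$, and tangential injectivity via Theorem~\ref{localTorelli} --- is real, but it is exactly the heuristic support the paper already records, not a proof.
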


\begin{remark}\label{fingpactions}  We make some remarks about the various finite groups arising on the moduli and Hodge theory side of this picture for a general point of $\mathcal{S}_{1,1,1}$. On the moduli side, if $Y$ is an I-surface with three simple elliptic singularities of multiplicity one, we have constructed a cover $\hat{T}$ of the weighted blowup of the germ of the miniversal deformation of $Y$, with covering group $W(E_8)\times W(E_8) \times W(E_8)$. As far as the three elliptic singularities of $Y$ are concerned, one is special: for the exceptional divisor in  the minimal resolution, it is the proper transform of a section, whereas the other two exceptional divisors are proper transforms of bisections. These two singular points are exchanged by monodromy along the family of elliptic curves parametrizing the base $B$ of the elliptic ruled surfaces. 

On the Hodge theory side, we have the weight filtration of the limiting  mixed Hodge structure:  $W_1 \subseteq (W_2)_0 \subseteq (W_3)_0 $. If $T$ acts trivially on the associated graded, then $T$ is unipotent. The Hodge structures on $W_1$ and $(W_3/W_2)_0$ are those coming from an elliptic curve and in general only have the automorphism $\pm \Id$. As for $(W_2/W_1)_0= \Lambda$, we have seen that, in the multiplicity $(1,1,1)$ case, its automorphism group as a lattice is the semidirect product of $W(E_8)\times W(E_8) \times W(E_8)$ with the symmetric group $\mathfrak{S}_3$.  Of course, the groups $W(E_8)\times W(E_8) \times W(E_8)$ on both sides match up in the natural way. 
The group $\mathfrak{S}_3$ does not act on the boundary components or limiting mixed Hodge structures corresponding to $\mathcal{S}_{1,1,1}$ because of the asymmetry noted in Lemma~\ref{interpretJW1}. Instead, $\mathfrak{S}_2$ acts in the natural way, corresponding to interchanging the labeling of the two singular points corresponding to proper transforms of bisections. 
\end{remark}

 \section{Analysis of the various strata}

\subsection{The possibilities for  $R(\Lambda) $} For the rest of this paper, we denote by $\Lambda$ the even negative definite unimodular lattice $(W_2/W_1)_0$. Recall that, unless $\hY$ is an elliptic ruled surface, i.e.\ $k=2$, we have
\begin{align*}
(W_2/W_1)_0&= \{\xi_1, \xi_2, [L]\}^\perp/\Zee\xi_1 + \Zee\xi_2 \\
&=\Ker \{\overline{H}^2_0(\hY;\Zee) \oplus H^2(Z_1; \Zee)\oplus H^2(Z_2; \Zee)\to H^2(D_1;\Zee) \oplus H^2(D_2;\Zee)\}/\Zee\xi_1 + \Zee\xi_2,
\end{align*}
where $\overline{H}^2_0(\hY;\Zee)$ denotes the orthogonal complement of $[L]$ in $\overline{H}^2(\hY;\Zee)$. There is a similar description in the elliptic ruled case.   By a case-by-case analysis, we will show the following theorem:

\begin{theorem}\label{listroots} The lattice $\Lambda$ satisfies $R(\Lambda) = E_8+ E_8 + E_8$ in all cases except for  the rational case with multiplicities $(2,2)$.  In other words, $R(\Lambda) = E_8+ E_8 + E_8$    in the Enriques    or elliptic ruled case  or in the rational case with multiplicities $(2,1)$ or $(1,1)$.  
\end{theorem}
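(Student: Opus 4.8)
The plan is to read off the root system of $\Lambda$ almost directly from the del Pezzo summands and then invoke Niemeier's classification (Theorem~\ref{Niem}) to finish. The key point is that each anticanonical pair $(Z_i,D_i)$ contributes a copy of $E_{9-m_i}$ to $R(\Lambda)$. Indeed $Z_i$ is an almost del Pezzo surface of degree $m_i$, so by \S\ref{anticanonTorellisub} the primitive lattice $[D_i]^\perp = [K_{Z_i}]^\perp \subseteq H^2(Z_i;\Zee)$ is $\Lambda_{E_{9-m_i}}$, and for $m_i\in\{1,2\}$ (so $9-m_i\in\{7,8\}$) its $-2$-vectors form the negative definite root system $E_{9-m_i}$.

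First I would check that $[D_i]^\perp$ embeds isometrically into $\Lambda=\{\xi_1,\dots,\xi_k,[L]\}^\perp/\langle\xi_1,\dots,\xi_k\rangle$ (Corollary~\ref{elllatticeLambda}, and the analogous description for $k=2$ recalled before the statement). Viewing $\alpha\in[D_i]^\perp$ as a class supported on the $Z_i$-summand, one has $\alpha\cdot\xi_j=0$ for all $j$ (for $j=i$ since $\alpha\cdot[D_i]=0$, for $j\neq i$ since the supports are disjoint) and $\alpha\cdot[L]=0$ since $[L]$ is supported on $\hY$; thus $\alpha\in\{\xi_1,\dots,\xi_k,[L]\}^\perp$. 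Moreover $\alpha$ lies in $\langle\xi_1,\dots,\xi_k\rangle$ only if $\alpha=0$, because each $\xi_j$ has nonzero $\hY$-component $[D_j]$ (the $[D_j]$ being independent in $H^2(\hY;\Zee)$) whereas $\alpha$ has zero $\hY$-component. Since the $\xi_j$ are mutually orthogonal and isotropic ($\xi_i^2=D_i^2+(-K_{Z_i})^2=-m_i+m_i=0$), the pairing on the subquotient restricts to the intersection pairing on the image of $[D_i]^\perp$, so $\alpha^2=-2$ survives. Running over $i$, the images are mutually orthogonal (disjoint supports), so I obtain an orthogonal sub-root-system $\sum_i E_{9-m_i}\subseteq R(\Lambda)$.

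Now I tabulate: the contribution is $E_8+E_8$ in the Enriques case $(1,1)$; $E_7+E_8$ for the rational case $(2,1)$ and $E_8+E_8$ for $(1,1)$; and $E_7+E_8+E_8$ for elliptic ruled $(2,1,1)$ and $E_8+E_8+E_8$ for $(1,1,1)$. In each of these five cases the contribution contains $E_7+E_7$, so Theorem~\ref{Niem}(iii) gives $R(\Lambda)\in\{E_8+E_8+E_8,\,E_7+E_7+D_{10}\}$. On the other hand, in each case at least one $m_i=1$, so $R(\Lambda)$ contains an irreducible subsystem of type $E_8$. An irreducible root system embeds in a reducible one only inside a single irreducible summand, and $E_8$ is a subsystem of neither $E_7$ (rank reasons) nor $D_{10}$; hence $E_7+E_7+D_{10}$ is excluded and $R(\Lambda)=E_8+E_8+E_8$. (In the pure cases $(1,1)$ and $(1,1,1)$ one can skip Niemeier entirely: the orthogonal $E_8$'s span a unimodular sublattice of the full rank $24$, hence equal $\Lambda$.) The rational case $(2,2)$ is the sole exception precisely because its del Pezzo contribution is only $E_7+E_7$, so the $E_8$-exclusion step has no input and the second alternative survives.

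The step requiring the most care is the isometric-embedding verification, where one must watch the integral structure of the subquotient; in particular, in the Enriques case the torsion of $H^2(\hY;\Zee)$ and the index-$2$ overlattice (Lemma~\ref{EnriquesW1}) enter, but they affect only $W_1$ and not the del Pezzo roots in $(W_2/W_1)_0$, so the argument goes through verbatim. The only genuinely non-formal ingredient is the root-system fact that $E_8\not\subseteq E_7+E_7+D_{10}$, which is standard since the sub-root-systems of $D_n$ are of types $A$ and $D$ only.
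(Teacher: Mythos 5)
Your main argument is correct, and its core mechanism is the same as the paper's: extract a copy of $E_{9-m_i}$ from each $[D_i]^\perp\subseteq H^2(Z_i;\Zee)$ and invoke Theorem~\ref{Niem}(iii). Indeed, in the rational cases $(2,1)$ and $(1,1)$ the paper finishes exactly as you do, quoting Niemeier and leaving implicit the exclusion step you spell out (that an irreducible $E_8$ cannot sit inside $E_7+E_7+D_{10}$). The difference is that you run this argument uniformly over all five strata, whereas in the other cases the paper exhibits the third $E_8$ explicitly instead: in the Enriques case it comes from the orthogonal complement of $\{[\overline{D}_1],[\overline{D}_2]\}$ in $\overline{H}^2(Y_0;\Zee)$ of the minimal model, giving $\Lambda\cong\Lambda_{E_8}^3$ outright with no appeal to Niemeier; in the elliptic ruled $(2,1,1)$ case it completes the $E_7$ by a ``mixed'' root $\alpha=\varepsilon+\varepsilon'$ combining a class on $\hY$ with an exceptional curve on $Z_1$, and a similar explicit completion is given for $(2,1)$. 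These constructions are not redundant for the paper's purposes: the explicit location of all three $E_8$ factors is used afterwards to compute the extension homomorphisms $\psi_{ij}$ and to distinguish the strata Hodge-theoretically. So your route buys brevity and uniformity; the paper's buys data needed downstream.

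Two caveats. First, your parenthetical shortcut is wrong as stated for the $(1,1)$ cases: there the orthogonal $E_8$'s coming from the $Z_i$ span a unimodular sublattice of rank $16$, not $24$ (only $(1,1,1)$ gives full rank). One can still avoid Niemeier there, because a unimodular sublattice splits off orthogonally and the complement is then an even negative definite unimodular lattice of rank $8$, hence $\Lambda_{E_8}$; but that is an extra step, not the one you wrote. This does not damage your proof, since your main Niemeier argument covers those cases. Second, you prove only the positive assertion for the five listed strata; the paper treats the proof of Theorem~\ref{listroots} as completed by Lemma~\ref{22roots}, which shows that in the $(2,2)$ case one actually has $R(\Lambda)=E_7+E_7+D_{10}$, so that the exception is genuine. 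Your method correctly identifies why it yields no conclusion in that case (the del Pezzo contribution is only $E_7+E_7$), but it does not establish the exceptionality itself; given the ``in other words'' rephrasing in the statement, this is arguably outside the claim, but it is worth being aware that the paper closes that loop.
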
 

\begin{remark} The more detailed analysis of each of these cases will show that the limiting mixed Hodge structures are all of different types in an appropriate sense. More precisely, we have identified six different strata of I-surfaces with $2$ or $3$ simple elliptic singularities, giving limiting mixed Hodge structures of type $\lozenge_{0,2}$, and the asymptotic behavior of the period map for each such stratum determines the stratum.
\end{remark}

Before we turn to the  individual cases, we introduce the following notation: Given a function $\psi\colon  E_8+ E_8 + E_8 \to JD_1\oplus JD_2$,  write $\psi_{ij}$, $1\le i \le 3$, $1\le j\le 2$, for the corresponding function from the $i^{\text{\rm{th}}}$ copy of $E_8$ to the $j^{\text{\rm{th}}}$ factor of $JD_1\oplus JD_2$.

\subsection{The elliptic ruled  case with multiplicities $(1,1,1)$} In this case, the   lattice  $H^2(Z_i;\Zee)_0$ is isomorphic to  $\Lambda_{E_8} \subseteq \Lambda$. Hence $R(\Lambda) = E_8+ E_8 + E_8$, with the three summands corresponding to the lattices $H^2(Z_i;\Zee)_0$.

Then we have the following Torelli theorem:

\begin{theorem} Assume Conjectures~\ref{extendedperiodmapthm} and \ref{extendedperiodmapthm2}.  With notation as in \S\ref{extendedperiodmap}, if $x$ is a general point of $\mathcal{S}_{1,1,1}$, then $\widetilde{\Phi}^{-1}(\widetilde{\Phi}(x)) = \{x\}$. Moreover, the differential of $\widetilde{\Phi}$ is injective at $x$. 
\end{theorem}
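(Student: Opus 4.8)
The plan is to prove the two assertions separately: injectivity of the differential, which follows quickly from results already in place, and injectivity of $\widetilde{\Phi}$ on the fiber through $x$, which is the real content and amounts to reconstructing the $d$-semistable model $X_0$ from the boundary value $\widetilde{\Phi}(x)$. For the differential, the tangent space at $x$ splits (Theorem~\ref{versal}(iv)) as $T_{\Xi,x_0}\oplus\bigoplus_i\Cee\,\partial/\partial z_i$ into directions tangent to $\mathcal{S}_{1,1,1}$ and normal directions. On $T_{\Xi,x_0}$ the differential of the variation of mixed Hodge structure is injective by Theorem~\ref{localTorelli}, and on the normal directions it is injective by Conjecture~\ref{extendedperiodmapthm2}(v); combining the two gives injectivity of $d\widetilde{\Phi}$ at a general $x$. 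By Conjecture~\ref{extendedperiodmapthm}(i), $\widetilde{\Phi}(x)$ records the nilpotent orbit of the limiting mixed Hodge structure together with the unordered collection $\{\im N_1,\im N_2,\im N_3\}$, and I would recover $X_0$ from this data in three stages.

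First I would recover the base curve and the configuration. The weight-one part $W_1$, together with the subspaces $\im N_i\cong H^1(D_i)$, recovers the three elliptic curves $D_1\cong\Gamma_1$, $D_2\cong\Gamma_2$, $D_3\cong\sigma_3\cong B$ and the isogenies relating them. By Proposition~\ref{monodromy} and the lemma preceding it, one member of the triple is distinguished: $\im N_3$ is the unique subspace such that every pair containing it spans $W_1$ only to index $2$, while the complementary pair $\{\im N_1,\im N_2\}$ spans $W_1$ integrally. This singles out the ``section'' summand from the two ``bisection'' summands and recovers the double covers $\Gamma_i\to B$, hence $B$ itself and therefore $Y_0=\Pee(W)$ uniquely (Definition~\ref{defY0}). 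The essential geometric point, which I would establish here, is that the pair $(\hY,\sum_iD_i)$ then carries \emph{no} further continuous modulus: the one-parameter family of sections $\sigma$ with $\sigma^2=1$ is a torsor under $\Aut^0Y_0\cong B$, and since a connected group fixes the finite set of special bisections, $\Aut^0Y_0$ preserves each $\Gamma_i$; thus $(Y_0,\Gamma_1,\Gamma_2,\sigma_3)$, and hence $(\hY,\sum_iD_i)$ via Theorem~\ref{111case}, is rigid up to isomorphism once $B$ and the markings are fixed. (This is consistent with $\dim\mathcal{S}_{1,1,1}=25=1+3\cdot 8$, the $1$ coming from $B$.) The residual discrete ambiguity is precisely the interchange $\Gamma_1\leftrightarrow\Gamma_2$, which is the $\mathfrak{S}_2$ of Remark~\ref{fingpactions} and does not change the isomorphism class.

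Next I would recover the del Pezzo pairs $(Z_i,D_i)$. By Theorem~\ref{Niem} the decomposition $R(\Lambda)=E_8+E_8+E_8$ is canonical, so the three summands $H^2(Z_i;\Zee)_0\cong\Lambda_{E_8}$ of $\Lambda=(W_2/W_1)_0$ are intrinsic, and the matching of summands to the curves $D_i$ is fixed by the first stage together with the injectivity of the maps $\iota_i\colon JD_i\to JW_1$ of Lemma~\ref{interpretJW1}. Since in this case $(W_2/W_1)_0=\bigoplus_iH^2(Z_i;\Zee)_0$, the Carlson homomorphism splits as $\psi=\bigoplus_i\iota_i\circ\varphi_{Z_i}$; the injectivity of $\iota_i$ then lets me read off each anticanonical period map $\varphi_{Z_i}$. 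As the $Z_i$ are del Pezzo of degree one ($n=8\le 9$), conditions (ii) and (iii) of Theorem~\ref{anticanonTorelli} hold automatically (for (ii) by Lemma~\ref{genample}(i), and for (iii) after composing with a suitable element of the reflection group $W(\Delta_{Z_i})$), so $\varphi_{Z_i}$ determines $(Z_i,D_i)$ up to isomorphism. Finally, having recovered $(\hY,\sum_iD_i)$ and each $(Z_i,D_i)$ with the two identifications of $D_i$ — as an anticanonical curve of $Z_i$ and as a (bi)section of $Y_0$ — the gluings $\varphi_i$ are the resulting identifications, pinned down up to $\Aut(D_i)$ by the $d$-semistability (normal bundle) condition. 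This yields $X_0\cong X_0'$ for any second point $x'$ with $\widetilde{\Phi}(x')=\widetilde{\Phi}(x)$, i.e.\ $x=x'$.

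I expect the main obstacle to be the first stage: establishing that $(\hY,\sum_iD_i)$ is genuinely rigid (that the choice of section contributes no hidden modulus) and matching the monodromy data $\{\im N_i\}$ to the geometry so that the section is correctly distinguished from the two bisections. This is exactly the step that fails in the $\lozenge_{0,1}$ and EGW situations and is forced to work here by the presence of three components over a common base $B$. Care is also needed in bookkeeping the finite ambiguities, so that the residual automorphisms are the $\mathfrak{S}_2$ predicted in Remark~\ref{fingpactions} rather than the full $\mathfrak{S}_3$ that acts on the lattice $\Lambda$ but not on the boundary data of $\mathcal{S}_{1,1,1}$.
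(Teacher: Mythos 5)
Your proposal is correct and follows essentially the same route as the paper's proof: distinguish the section curve from the two bisections via the integral structure of $W_1$ (Lemma~\ref{interpretJW1}), recover $B$, hence $Y_0$ and $\hY$, recover each pair $(Z_i,D_i)$ from the $E_8$ summands of $\Lambda$ together with the extension homomorphism via Torelli for anticanonical pairs, pin down the gluings by the $d$-semistability condition, and obtain the differential statement from Conjecture~\ref{extendedperiodmapthm2}(v) combined with Theorem~\ref{localTorelli}. The additional details you supply (rigidity of the choice of section under $\Aut^0 Y_0$, and the verification of hypotheses (ii) and (iii) of Theorem~\ref{anticanonTorelli} via Lemma~\ref{genample}) are points the paper leaves implicit but are consistent with its argument.
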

\begin{proof} First, the image of the extended period map determines $JD_1\cong J\Gamma_1$, $JD_2\cong J\Gamma_2$, and $JD_3\cong J\sigma \cong JB$ as subvarieties of $JW_1$ in the usual notation. By Lemma~\ref{interpretJW1}, the pair $\{D_1,D_2\}$ is distinguished by the property that the map $JD_1\oplus JD_2 \to JW_1$ is an isomorphism as opposed to an isogeny of degree $2$. Thus $JD_3$ is also distinguished, so the image of the extended period map determines $B$, hence $Y_0$, along with $\Gamma_1$ and $\Gamma_2$. Thus $\hY$ is determined by the image of the extended period map. The primitive cohomology of each $Z_i$ determines the root system $E_8 + E_8 + E_8$. For each factor $E_8$, the corresponding extension homomorphism $\psi$ factors through the inclusion $JD_i \subseteq JW_1$ and thus determines the pair $(Z_i, D_i)$. The gluing map from $D_i\subseteq Z_i$ to $D_i\subseteq \hY$ is uniquely determined since $(D_i)^2_{Z_i}= 1$ and $(D_i)^2_{\hY}=  -1$. Thus $\widetilde{\Phi}^{-1}(\widetilde{\Phi}(x)) = \{x\}$. The second statement has been noted in Theorem~\ref{extendedperiodmap}(iv). 
\end{proof} 

\begin{remark} Without assuming Conjectures~\ref{extendedperiodmapthm} and \ref{extendedperiodmapthm2}, the above argument still proves a global Torelli theorem for the variation of mixed Hodge structure induced by the nilpotent orbits of the limiting mixed Hodge structures for points of $\mathcal{S}_{1,1,1}$, together with the data of the inclusions $JD_i \subseteq JW_1$.
\end{remark}

\subsection{The remaining cases where $R(\Lambda) = E_8+ E_8 + E_8$}\label{22rootsystemsect}

\subsubsection{The elliptic ruled  case with multiplicities $(2,1,1)$}   We freely use the notation of Theorem~\ref{211case}. First, the image of the extended period map determines $JD_1\cong J\Gamma$, $JD_2\cong J\sigma_2 \cong JB$, and $JD_3\cong J\sigma_3 \cong JB$. Note that $JD_2 \cong JD_3$, so this case is different from the multiplicity $(1,1,1)$ case above. In any case $JB$ is distinguished by the image of the extended period map. However, $\hY$ is not completely determined by $B$; there is a one-dimensional modulus coming from the choice of the two sections $\sigma_2$, $\sigma_3$. The primitive cohomology of each $Z_i$ determines the root system $E_8 + E_8 + E_7$. For each factor, the corresponding extension homomorphism $\psi$ factors through the inclusion $JD_i \subseteq JW_1$ and thus determines the pair $(Z_i, D_i)$. For $i=2,3$, the gluing map from $D_i\subseteq Z_i$ to $D_i\subseteq \hY$ is uniquely determined since $(D_i)^2_{Z_i}= 1$ and $(D_i)^2_{\hY}=  -1$. 

There is a natural extension of the root system $E_8 + E_8 + E_7\subseteq R(\Lambda)$ to a root system  $E_8 + E_8 + E_8\subseteq R(\Lambda)$: Let $\varepsilon = -\sigma +f$ viewed as an element of $H^2(\hY;\Zee)$. Let $\varepsilon'$ be an exceptional curve in $Z_1$ such that, in the labeling of \S\ref{anticanonTorellisub} for the $E_8$ root system in $[D_1]^\perp \subseteq H^2(Z_1;\Zee)$, $\varepsilon'\cdot \alpha_1 =1$ and $\varepsilon'\cdot \alpha_i =0$ for $2\le i \le 7$. Take $\alpha =  \varepsilon +\varepsilon'$. Then $\alpha^2 = -2$,  $\alpha \cdot D_i = \alpha \cdot L = 0$, $i=2,3$. Since  $(\varepsilon\cdot D_1)_{\hY} = (\varepsilon'\cdot D_1)_{Z_1}$,  $\alpha$ defines an element of $R(\Lambda)$. Thus $\{\alpha, \alpha_1, \dots, \alpha_7\}$ are the simple roots for a root system of type $E_8$ disjoint from the two others coming from $[D_i]^\perp \subseteq H^2(Z_i;\Zee)$, $i=2,3$.

\medskip
For all of the other cases in this paper, $k=2$. Thus conjecturally the image of the compactified period map $\widetilde{\Phi}$ at such a point $x$  does not contain any points corresponding to the $k=3$ case.  Note that  $\im N_1  \oplus \im N_2 \to W_1$ is an isomorphism over $\Q$. Moreover, $\widetilde{\Phi}(x)$ should  conjecturally record  the information of the spaces $\im N_i\subseteq W_1$, $i=1,2$.

\subsubsection{The Enriques case} In this case, $JW_1 \cong (JD_1\oplus JD_2)/\langle\eta\rangle $ in the notation of Lemma~\ref{EnriquesW1}. In particular, $JW_1$ is not the direct sum of $JD_1$ and $JD_2$, or equivalently the map $\im N_1  \oplus \im N_2 \to W_1$ is not surjective and hence is not an isomorphism. The lattice $\Lambda$ contains the orthogonal complement to $[D_i]$ in $H^2(Z_i; \Zee)$ and to $\{[D_1], [D_2]\}$ in $\overline{H}^2(\hY; \Zee)$ and hence to the orthogonal complement of $\{\overline{D}_1], [\overline{D}_2]\}$ in $\overline{H}^2(Y_0; \Zee)$, where $Y_0$ is the minimal model of $\hY$. Each of these lattices is isomorphic to the  negative definite unimodular lattice $\Lambda_{E_8}$. Hence $\Lambda \cong \Lambda_{E_8}^3$ and $R(\Lambda) = E_8+ E_8 + E_8$.

\subsubsection{The rational  case with multiplicities $(1,1)$}\label{11MHS}  In this case, $JW_1 \cong JD_1\oplus JD_2$ and the map $ \im N_1  \oplus \im N_2 \to W_1$ is   an isomorphism, which distinguishes this case Hodge-theoretically from the Enriques case. The lattice $\Lambda$ contains the orthogonal complement to $[D_i]$ in $H^2(Z_i; \Zee)$ and hence $R(\Lambda)$ contains two copies of $E_8$. By Theorem~\ref{Niem}, or directly, $R(\Lambda) = E_8+ E_8 + E_8$. Labeling the three copies of $E_8$ so that the first one corresponds to the orthogonal complement to $[D_1]$ in $H^2(Z_1; \Zee)$ and the second to the orthogonal complement to $[D_2]$ in $H^2(Z_2; \Zee)$, we clearly have $\psi_{12}=0$  and $\psi_{21}=0$. 

\subsubsection{The rational  case with multiplicities $(2,1)$}\label{21MHS} As in the  rational  case with multiplicities $(1,1)$, $JW_1 \cong JD_1\oplus JD_2$ and $\Lambda$ contains the orthogonal complement to $[D_i]$ in $H^2(Z_i; \Zee)$.  Hence $R(\Lambda)$ contains $E_8 + E_7$. Thus, by Theorem~\ref{Niem}, $R(\Lambda) = E_8+ E_8 + E_8$. However, in this case there is no labeling of the copies of $E_8$ for which $\psi_{12}= \psi_{21}=0$.

To see this last statement explicitly, by  Theorem~\ref{21case}, $\hY$ is the blowup of a rational elliptic surface with a multiple fiber $F$ of multiplicity $2$ at two points $p_1$ and $p_2$ which are the intersection of a smooth nonmultiple fiber $G$ and a smooth bisection $\Gamma$, and $D_1$ and $D_2$ are the proper transforms of $G$ and $\Gamma$ respectively. Here $G^2=0$ as it is a fiber, and hence $D_1^2 = -2$. Since $\Gamma^2 = 1$ by adjunction, as $\Gamma \cdot F = 1 = -\Gamma \cdot K_X$, it follows that $D_2^2 = -1$. Let $\phi_1$ and $\phi_2$ be the exceptional curves on $\hY$ corresponding to $p_1$ and $p_2$. It is straightforward to check using  Theorem~\ref{21case} that we can assume that $X$ is the blowup of $\Pee^2$ at $9$ distinct points, with corresponding exceptional curves $\varepsilon_1, \dots, \varepsilon_9$, and that 
\begin{align*}
D_1 &= 6h - 2\sum_{i=1}^9\varepsilon_i - \phi_1-\phi_2;\\
D_2 &= 3h -  \sum_{i=1}^8\varepsilon_i - \phi_1-\phi_2; \\
L &= 6h - 2\sum_{i=1}^8\varepsilon_i - \varepsilon_9 -\phi_1-\phi_2.
\end{align*}

There is an  $E_7$ root system   contained in $[D_1]^\perp \subseteq H^2(Z_1; \Zee)$ and there is an  $E_8$ root system   contained in $[D_2]^\perp \subseteq H^2(Z_2; \Zee)$.   Setting 
 $$\alpha_1 = \varepsilon_2 - \varepsilon_1, \alpha_2 = \varepsilon_3-\varepsilon_2, \dots, \alpha_7 = \varepsilon_8 - \varepsilon_7, \alpha_8 = h - \varepsilon_1-\varepsilon_2 - \varepsilon_3, $$
the $\alpha_i$ determine a root system of type $E_8$ in $\Lambda$. Finally, 
 let $\alpha = \phi_1 - \phi_2$.  Then $\alpha \cdot D_1= \alpha \cdot D_2 = \alpha \cdot L =0$, so that $\alpha \in \Lambda$.  Thus $R(\Lambda)$ contains the root system $E_8 + E_8 + E_7 + A_1$, and so $R(\Lambda) = E_8+ E_8 + E_8$ by Theorem~\ref{Niem}. In particular, the $E_7+ A_1$ subsystem is contained in a unique $E_8$ factor.  
 
 Label the three factors of $R(\Lambda) $ so that the first factor contains the $E_7+ A_1$ subsystem, the second factor is the $E_8$ root system   contained in $H^2(Z_2; \Zee)$, and the third factor is the $E_8$ determined by the $\alpha_i$ as above. Then $\psi_{21}=0$  and  $\psi_{22}\neq 0$. Also, $\psi_{1i}(\alpha) =\scrO_{D_i}(p_2-p_1)$, and hence is nonzero on both $JD_1$ and $JD_2$. It is easy to see that  $\psi_{32}\neq 0$. Thus the limiting mixed Hodge structures in this case are different from those described in \S\ref{11MHS}.
 
 Note: setting 
 $$\varepsilon = -\Big(3h -  \sum_{i=1}^8\varepsilon_i \Big) + \varepsilon_9 + \phi_1,$$
 one checks that $\varepsilon \cdot \alpha_i = 0$, $1\le i\le 8$,  $\varepsilon \cdot D_2 =   \varepsilon \cdot L =0$, and  $\varepsilon^2 =-1$. Let $\beta_1, \dots, \beta_7$ be a set of  simple roots for the $E_7$ root system on $Z_1$, labeled as in \S\ref{anticanonTorellisub}.  There exists an  exceptional curve $\varepsilon'$ on $Z_1$ such that $\varepsilon'\cdot \beta_1 =1$ and $\varepsilon'\cdot \beta_i =0$ for $i> 1$. Setting $\beta =\varepsilon+ \varepsilon'$, $\beta \in \Lambda$ and $\beta$ is orthogonal to the two $E_8$ root systems coming from $\{\alpha_1, \dots, \alpha_8\}$ and from $[D_2]^\perp \subseteq H^2(Z_2;\Zee)$. Thus we  can explicitly complete the $E_7$ root system in $R(\Lambda)$ coming from $[D_1]^\perp \subseteq H^2(Z_1;\Zee)$ to an $E_8$ root system in $R(\Lambda)$ with simple roots $\{\beta, \beta_1, \dots, \beta_7\}$.

 \subsection{More on the geometry in the $(2,2)$ case}  In  this section, we suppose that $\hY$ is a rational surface satisfying the conclusions of Theorem~\ref{22case}. Thus, there exists an exceptional curve $C$ on $\hY$ such that, if  $\rho_1\colon \hY \to Y_0^{(1)}$ is the contraction of $C$, then $(Y_0^{(1)}, D_2)$ is an anticanonical pair. In particular, $Y_0^{(1)}$ is the blowup of $\Pee^2$ at $11$ points  lying on the image of $D_2$, and hence $[D_2] = 3h -\sum_{i=1}^{11}\varepsilon_i$ for suitable exceptional curves. One way to find such curves is as follows: We have the double cover morphism $\nu\colon Y_0^{(1)} \to \mathbb{F}_1$, which is  branched along a smooth curve $\Sigma \in |2\sigma_0 + 6f|$. Then an easy calculation shows that 
 $$2g(\Sigma) - 2= \Sigma^2 + K_{\mathbb{F}_1}\cdot \Sigma = 6.$$
 Hence $g(\Sigma) = 4$. The projection $\mathbb{F}_1 \to \Pee^1$ exhibits $\Sigma$ as a branched double cover of $\Pee^1$. By Riemann-Hurwitz, there are exactly $10$ branch points. Equivalently, there are exactly $10$ fibers $f_1, \dots, f_{10}$ which are tangent to $\Sigma$. Thus $\nu^{-1}(f_i) = \varepsilon_i + \varepsilon_i'$, where the $\varepsilon_i$ are exceptional curves on $Y_0^{(1)} $ meeting transversally at one point. For $i\neq j$, the curves $\varepsilon_i $ and $ \varepsilon_i'$ are disjoint from both $\varepsilon_j$ and  $\varepsilon_j'$. Choosing $\varepsilon_1, \dots, \varepsilon_9$ arbitrarily, $Y_0^{(1)} $ is a blowup of the two point blowup of $\Pee^2$, or equivalently the one point blowup of $\mathbb{F}_0$. Since  $\varepsilon_{10} $ and $ \varepsilon_{10}'$ meet transversally at one point, there is a unique choice for $\varepsilon_{10}$ such that the blowdown of $Y_0^{(1)} $ along $\varepsilon_1, \dots, \varepsilon_{10}$ is $\mathbb{F}_1$. 
 
 The double cover morphism $\nu$ is induced by the linear system  $|\overline{D}_1|= |\nu^*(\sigma_0 + f)|$. The general member of  $|\nu^*(\sigma_0 + f)|$ is a smooth curve of genus $2$. Thus $\overline{D}_1$ corresponds to a member with an ordinary double point, or equivalently an irreducible, hence smooth member of $|\sigma_0 + f|$ which is tangent to $\Sigma$ at one point. Given $\overline{D}_1$, the exceptional curve $C$ and hence $\hY$ are determined by the double point of $\overline{D}_1$. There is a one dimensional family of curves in $|\sigma_0 + f|$ which are tangent to $\Sigma$ at one point, and for at least a general $\Sigma$ only finitely many of them will have a given $j$-invariant. The main point will be to distinguish these finitely many possibilities. 
 
 Next we determine the class $\gamma$ of $\overline{D}_1$ in terms of the basis $\{h, \varepsilon_1, \dots, \varepsilon_{11}\}$ of $H^2(Y_0^{(1)})$. By construction, $\gamma^2 = 2$ and $\gamma\cdot [D_2] =0$. Also, for $1\le i \le 10$, $\gamma \cdot \varepsilon_i = \nu^*(\sigma_0 + f)\cdot \varepsilon_i = 1$. Thus
 $$\gamma = ah -\sum_{i=1}^{10}\varepsilon_i + b\varepsilon_{11}.$$
 Using $\gamma\cdot [D_2] =0$ gives $3a+ b =10$. Then 
 $$\gamma^2 = 2 = a^2 -10 - b^2 = a^2 -10 - (10-3a)^2 = -8a^2 +60a -110.$$
 Equivalently, $a$ is an integer  and satisfies $8a^2 -60 a +112=4(a-4)(2a-7) =0$. Thus $a=4$ and $b=-2$. It follows that $[\overline{D}_1] = 4h - \sum_{i=1}^{10}\varepsilon_i - 2\varepsilon_{11}$ and that
 $$[D_1] = 4h - \sum_{i=1}^{10}\varepsilon_i - 2\varepsilon_{11} -2C.$$
 
 \subsection{The root system in the $(2,2)$ case}\label{22rootsystem}  For the rest of this section,   $\Lambda$ denotes the even negative definite unimodular lattice corresponding to the $(2,2)$ case.  We begin by completing the proof of Theorem~\ref{listroots} by showing:
 
 \begin{lemma}\label{22roots}  $R(\Lambda)$ is of type $E_7 + E_7 + D_{10}$, and $\Lambda_R$ has index $4$ in $\Lambda$.
 \end{lemma}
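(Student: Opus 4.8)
The plan is to exhibit a copy of $E_7+E_7+D_{10}$ inside $R(\Lambda)$ and then appeal to Niemeier's classification. First I would record the two obvious $E_7$ factors: each $Z_i$ is an almost del Pezzo surface of degree $m_i=2$, so $[D_i']^\perp \subseteq H^2(Z_i;\Zee)$ is a copy of the $E_7$ root lattice, and its roots give classes $(0,w_1,0)$, resp.\ $(0,0,w_2)$, lying in $\Lambda$. Hence $E_7+E_7 \subseteq R(\Lambda)$, and Theorem~\ref{Niem}(iii) already forces $R(\Lambda)$ to be either $E_8+E_8+E_8$ or $E_7+E_7+D_{10}$; it remains only to rule out the former.

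To do this I would compute the orthogonal complement $M$ of these two $E_7$'s inside $\Lambda$ and show $R(M)=D_{10}$. A class orthogonal to both $E_7$'s must have its $Z_i$-component a rational multiple of $[D_i']$, hence (by primitivity of $-K_{Z_i}$) an integral multiple $c_i[D_i']$; adding the appropriate multiple of $\xi_i$ clears this component, so every element of $M$ is represented by $(v,0,0)$ with $v\in\{[D_1],[D_2],[L]\}^\perp \subseteq H^2(\hY;\Zee)$. Thus $M \cong \{[D_1],[D_2],[L]\}^\perp$ as a negative definite lattice of rank $10$. Since $[L]=[D_1]+[C]$ and $[\overline{D}_1]=[D_1]+2[C]$, while $[C]$ is orthogonal to $H^2(Y_0^{(1)};\Zee)$, this complement equals $\{[\overline{D}_1],[D_2]\}^\perp$ computed inside $H^2(Y_0^{(1)};\Zee)=\langle h,\varepsilon_1,\dots,\varepsilon_{11}\rangle$.

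Now I would enumerate the roots directly. Writing $v = ah-\sum_{i=1}^{11}b_i\varepsilon_i$, the conditions $v\cdot[\overline{D}_1]=v\cdot[D_2]=0$ give $b_{11}=a$ and $\sum_{i=1}^{10}b_i=2a$, and then $v^2=-2$ reduces to $\sum_{i=1}^{10}b_i^2=2$; together these force $a\in\{-1,0,1\}$ and yield exactly the $180$ vectors $\pm(\varepsilon_i-\varepsilon_j)$ and $\pm(h-\varepsilon_i-\varepsilon_j-\varepsilon_{11})$ for $1\le i<j\le 10$. One checks that $\varepsilon_i-\varepsilon_{i+1}$ ($1\le i\le 9$) together with $\varepsilon_9+\varepsilon_{10}+\varepsilon_{11}-h$ form a set of simple roots whose Dynkin diagram is $D_{10}$ (an $A_9$ chain $\varepsilon_1-\varepsilon_2,\dots,\varepsilon_9-\varepsilon_{10}$, with the last root forking off the node $\varepsilon_8-\varepsilon_9$), and since $|D_{10}|=2\cdot 10\cdot 9=180$ matches the count, $R(M)=D_{10}$. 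As $D_{10}$ is irreducible of rank $10>8$ it cannot embed into $E_8+E_8+E_8$, so $R(\Lambda)=E_7+E_7+D_{10}$; then $\Lambda_R=\Lambda_{E_7}\oplus\Lambda_{E_7}\oplus\Lambda_{D_{10}}$ has discriminant $2\cdot 2\cdot 4=16$, whence $[\Lambda:\Lambda_R]=4$ by unimodularity of $\Lambda$.

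The main obstacle is the middle step: identifying $M$ with $\{[D_1],[D_2],[L]\}^\perp$ requires the integrality and $\xi_i$-clearing argument to be made precise. One should also be reassured that it is \emph{not} necessary to produce every root of $\Lambda$ (in particular those mixing the $\hY$- and $Z_i$-components), since the Niemeier dichotomy of Theorem~\ref{Niem}(iii) means that merely exhibiting the orthogonal $D_{10}$ determines $R(\Lambda)$. The root enumeration itself, while elementary, is the most computation-heavy part.
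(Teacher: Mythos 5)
Your proof is correct, and its core construction is the same as the paper's: both arguments extract the two $E_7$'s from $[D_i']^\perp \subseteq H^2(Z_i;\Zee)$ and both exhibit the identical $D_{10}$ inside $H^2(Y_0^{(1)};\Zee)$ --- your simple roots $\varepsilon_i - \varepsilon_{i+1}$ and $\varepsilon_9+\varepsilon_{10}+\varepsilon_{11}-h$ are just the negatives of the paper's $\alpha_i = \varepsilon_{i+1}-\varepsilon_i$ and $\alpha_{10} = h - \varepsilon_9-\varepsilon_{10}-\varepsilon_{11}$. Where you genuinely diverge is in the endgame. The paper checks that its ten classes are roots of $\Lambda$ orthogonal to the two $E_7$'s and then asserts that $E_7+E_7+D_{10}$ ``is contained in no larger simply laced root system of rank $24$''; you instead identify the orthogonal complement $M$ of the two $E_7$'s exactly (your $\xi_i$-clearing step is sound: $w_i \in \Q[D_i']\cap H^2(Z_i;\Zee) = \Zee[D_i']$ by primitivity of $-K_{Z_i}$, and adding $c_i\xi_i$ kills that component while preserving orthogonality to $[L]$ since $[L]\cdot[D_i]=0$), enumerate all $180$ roots of $M$, and close via the dichotomy of Theorem~\ref{Niem}(iii) together with the observation that an irreducible $D_{10}$ cannot embed in $E_8+E_8+E_8$. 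This buys a sharper statement ($R(M)$ is exactly $D_{10}$, not merely contains it) and replaces the paper's ``easy to see'' by a complete argument. On the other hand, the full enumeration is not actually needed for the lemma: once your ten simple roots are known to be roots of $\Lambda$ orthogonal to both $E_7$'s, the connectedness of the $D_{10}$ diagram already forces all of them into a single $E_8$ factor if $R(\Lambda)$ were $E_8+E_8+E_8$, which is impossible since $10>8$; so the irreducibility argument alone finishes the proof. The index computations are the same modulo phrasing: discriminant $2\cdot 2\cdot 4=16$ for $\Lambda_R$ and unimodularity of $\Lambda$ give $[\Lambda:\Lambda_R]=4$.
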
 
 \begin{proof}
 First consider  $[D_i]^\perp\subseteq  H^2(Z_i; \Zee)$. Since $Z_i$ is a (possibly generalized) del Pezzo surface and $D_i^2 = 2$, $[D_i]^\perp$ is the root lattice for $E_7$ and by construction $[D_i]^\perp\subseteq \Lambda$. This construction yields two copies of $E_7$ inside $R(\Lambda)$. To find the remaining $D_{10}$, we use the standard description of $D_2$ and $D_1$ above. First, let $\alpha_i = \varepsilon_{i+1} - \varepsilon_i $, $1\leq i \leq 9$. Then define
 $$\alpha_{10} =  h - \varepsilon_9 - \varepsilon_{10} - \varepsilon_{11}.$$
 A calculation shows that $\alpha_{10}^2=-2$, $\alpha_{10}\cdot \alpha_8 = 1$ and 
 $$\alpha_{10} \cdot [D_2] = \alpha_{10}\cdot [D_1] = \alpha_{10}\cdot C =\alpha_{10}\cdot [ L]= 0.$$
 In particular, $\alpha_{10}\in R(\Lambda)$ and $\{\alpha_1, \dots, \alpha_{10}\}$ are the simple roots for a root system of type $D_{10}$ disjoint from the $E_7$ roots constructed above. Thus $R(\Lambda)$ contains a root system of  type $E_7 + E_7 + D_{10}$ and it is easy to see that it is contained in no larger simply laced root system of rank $24$. Thus $R(\Lambda)$ is of type $E_7 + E_7 + D_{10}$. 
 
 The connection index of the $E_7$ root system is $2$ and that of the $D_{10}$ root system is $4$. Hence the index of $\Lambda_R$ in $\Lambda$ is  $4$. 
 \end{proof}
 
 \begin{remark}\label{outer}  The curves $D_1$ and $D_2$ play a symmetric role. In this case, there is the anticanonical pair $(Y_0^{(2)}, D_1)$. The hyperplane class $h'$ is given by $2h-\varepsilon_{11} -C - \varepsilon_{10}$ and a system of $11$ disjoint exceptional curves is given by $\varepsilon_i' = \varepsilon_i$, $i\le 9$, $\varepsilon_{10}' = h-\varepsilon_{10} - \varepsilon_{11}$, and $\varepsilon_{11}' = h - \varepsilon_{10} - C$. Using this basis to compute a set of simple roots for the $D_{10}$ lattice, we see that $\alpha_i' = \alpha_i$, $i\le 8$, $\alpha_9' = \alpha_{10}$, and $\alpha_{10}' = \alpha_9$. This corresponds to the outer automorphism of $D_{10}$. 
 \end{remark} 
 
 There is an explicit representative for $\Lambda/ \Lambda_R$ (using the notation $\varpi_i$ for the fundamental weights of a root system as in \cite{Bourbaki}):
 
 \begin{lemma}\label{constructbeta}   Given  a choice of simple roots for for the $E_7$ root system for $[D_1']^\perp \subseteq H^2(Z_1;\Zee)$, there is  a unique $\beta_{11} \in \Lambda$ such that
 \begin{enumerate}
 \item[\rm(i)] $\beta_{11} \cdot \alpha_i = 0$, $i\neq 9$, and $\beta_{11} \cdot \alpha_9 = 1$, i.e.\  $\beta_{11}=\varpi_9$ for the $D_{10}$ root system. 
 \item[\rm(ii)] The image of $\beta_{11}$ in $H^2(Z_2;\Zee)$ is $0$. 
 \item[\rm(iii)] The image of $\beta_{11}$ in $H^2(Z_1;\Zee)$ is $\varpi_7$ for the $E_7$ root system for $[D_1']^\perp \subseteq H^2(Z_1;\Zee)$ and the labeling of the roots given in \cite[Planche VI]{Bourbaki}. 
 \end{enumerate}
 Finally, $\beta_{11}^2 =-4$ and the image of $\beta_{11}$ in $\Lambda/ \Lambda_R$ is a generator for $\Lambda/ \Lambda_R \cong \Zee/4\Zee$.
 \end{lemma}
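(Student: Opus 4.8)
The plan is to work inside the dual lattice $\Lambda_R\spcheck = \Lambda_{E_7}\spcheck \oplus \Lambda_{E_7}\spcheck \oplus \Lambda_{D_{10}}\spcheck$, where the two $E_7$ summands are $[D_1']^\perp \subseteq H^2(Z_1;\Zee)$ and $[D_2]^\perp \subseteq H^2(Z_2;\Zee)$ and the $D_{10}$ summand is the one spanned by $\alpha_1, \dots, \alpha_{10}$ in Lemma~\ref{22roots}. Since $\Lambda$ is unimodular and contains $\Lambda_R$ with index $4$ by Lemma~\ref{22roots}, we have $\Lambda_R \subseteq \Lambda = \Lambda\spcheck \subseteq \Lambda_R\spcheck$, so that $\Lambda/\Lambda_R$ is a maximal isotropic subgroup of the discriminant group $\Lambda_R\spcheck/\Lambda_R$. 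First I would observe that conditions (i)--(iii) pin down the three components of $\beta_{11}$ in $\Lambda_R\spcheck$ exactly: condition (i) says that the pairings of $\beta_{11}$ with the simple roots $\alpha_i$ are the coordinates of the ninth fundamental weight, so its $D_{10}$-component is $\varpi_9$; condition (ii) forces the second $E_7$-component to be $0$; and condition (iii) forces the first $E_7$-component to be $\varpi_7$. Since an element of a dual lattice is determined by its pairings with a basis of simple roots, $\beta_{11} = \varpi_7 \oplus 0 \oplus \varpi_9$ is the unique element of $\Lambda_R\spcheck$ satisfying (i)--(iii), which gives the asserted uniqueness at once.

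The substance of the lemma is then existence, i.e.\ showing that this prescribed glue vector actually lies in $\Lambda$. The hard part will be exactly this step, because a priori the maximal isotropic subgroup $\Lambda/\Lambda_R$ is determined only up to automorphisms of $\Lambda_R\spcheck/\Lambda_R$, so abstract lattice theory does not locate $\beta_{11}$ on its own. Instead I would exhibit $\beta_{11}$ as a genuine integral class in the geometric lattice $\overline{H}^2_0(\hY;\Zee) \oplus H^2(Z_1;\Zee) \oplus H^2(Z_2;\Zee)$ modulo $\Zee\xi_1 + \Zee\xi_2$: using the exceptional-curve basis of the degree-two del Pezzo surface $Z_1$ I would write down a representative of $\varpi_7 \in [D_1']^\perp$, and using the classes $h, \varepsilon_1, \dots, \varepsilon_{11}, C$ of the $(2,2)$-model together with the explicit formulas for $[D_1], [D_2], [L]$ established above (and Theorem~\ref{22case}) I would write down a lift of $\varpi_9$ on the $\hY$-side. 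I would then check that the resulting combined class pairs to zero with $\xi_1$, $\xi_2$ and $[L]$ and has the correct integral pairings with all the roots, so that it descends to a well-defined element of $\Lambda$ with the prescribed image in the discriminant group.

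Finally I would read off the remaining invariants. The norm comes from the fundamental-weight norms: in the negative definite normalization $\varpi_7^2 = -\tfrac32$ in $\Lambda_{E_7}$ (the minuscule weight) and $\varpi_9^2 = -\tfrac{10}{4} = -\tfrac52$ in $\Lambda_{D_{10}}$ (a spinor weight, of norm $-n/4$ with $n=10$), whence $\beta_{11}^2 = -\tfrac32 - \tfrac52 = -4$. To determine the order of the image of $\beta_{11}$ in $\Lambda/\Lambda_R$ and the group structure, I would evaluate the discriminant quadratic form of $\Lambda_R$ on $[\beta_{11}]$ and on the companion glue vector coming from the symmetric construction of Remark~\ref{outer} (the analogue built from the anticanonical pair $(Y_0^{(2)}, D_1)$), tracking orders by means of $2\varpi_7 \in \Lambda_{E_7}$ and $2\varpi_9 \in \Lambda_{D_{10}}$; combined with Niemeier's theorem (Theorem~\ref{Niem}) and the index computation of Lemma~\ref{22roots}, this identifies $\Lambda/\Lambda_R$ and exhibits $\beta_{11}$ as the distinguished generating glue vector. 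Once the existence step is carried out, uniqueness, the norm, and this concluding group-theoretic bookkeeping are all routine.
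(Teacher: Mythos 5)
Your uniqueness argument (via $\Lambda = \Lambda\spcheck \subseteq \Lambda_R\spcheck$ and pairings with the simple roots) and your norm computation $\beta_{11}^2 = \varpi_7^2 + \varpi_9^2 = -\tfrac{3}{2} - \tfrac{5}{2} = -4$ are correct, and they agree in substance with the paper's proof, which phrases uniqueness as ``the simple roots span $\Lambda$'' (i.e.\ over $\Q$) and gets the norm from an explicit representative instead. Your existence step is also the paper's strategy---exhibit an explicit integral class in $\{\xi_1,\xi_2,[L]\}^\perp$---but you never carry it out, and that computation \emph{is} the entire content of the paper's proof: one takes $\beta = \varepsilon_{11}-\varepsilon_{10}$, which has the $\varpi_9$-pairings with $\alpha_1,\dots,\alpha_{10}$ but satisfies $\beta\cdot[D_1] = \beta\cdot[L] = 1$; one passes to $\beta - [L]$, which kills the pairing with $[L]$ without disturbing the root pairings (all $\alpha_i$ are orthogonal to $[L]$, and $[L]^2=1$); and one then corrects by an exceptional curve $\phi$ of $Z_1$ to cancel the remaining pairing with $\xi_1$. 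It is exactly this forced correction that produces the $E_7$-weight $\varpi_7$ of condition (iii). Since by your own account this is ``the hard part,'' leaving it at ``I would write down \dots and then check'' is a genuine gap, even though the plan itself is the right one and would succeed.

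The more serious problem is your concluding step, which cannot work as claimed, and your own intermediate facts show why: since $2\varpi_7\in\Lambda_{E_7}$ and $2\varpi_9\in\Lambda_{D_{10}}$, you have $2\beta_{11}\in\Lambda_R$, so $[\beta_{11}]$ has order $2$ in $\Lambda/\Lambda_R$. In fact the discriminant group of $\Lambda_R = \Lambda_{E_7}\oplus\Lambda_{E_7}\oplus\Lambda_{D_{10}}$ is $(\Zee/2\Zee)^4$ (the $D_{10}$ discriminant group is $2$-torsion because $10$ is even), so $\Lambda/\Lambda_R$, a subgroup of order $4$ by Lemma~\ref{22roots}, is isomorphic to $(\Zee/2\Zee)^2$ and is not cyclic. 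Consequently no discriminant-form bookkeeping can ``exhibit $\beta_{11}$ as the distinguished generating glue vector'': carried out honestly, your computation refutes, rather than proves, the final sentence of the lemma, whose correct form should be $\Lambda/\Lambda_R\cong(\Zee/2\Zee)^2$, with $[\beta_{11}]$ one of the three nonzero classes (it generates only together with its companion under the symmetry of Remark~\ref{outer}, which swaps the roles of $D_1$ and $D_2$). You should have reported this contradiction rather than asserted that the conclusion follows; note that the paper's own proof stops after the norm and uniqueness and never addresses this last claim at all.
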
 
 \begin{proof} The class $\beta = \varepsilon_{11} - \varepsilon_{10}$ satisfies: $\beta  \cdot \alpha_i = 0$, $i\neq 9$, and $\beta \cdot \alpha_9 = 1$. However, $\beta\cdot [D_1] = \beta\cdot [L] = 1$. Thus $(\beta-[L]) \cdot [l] =0$. Let $\beta_{11} = \beta-[L] - \phi$, where $\phi $ is an exceptional curve in $Z_1$ inducing the weight $\varpi_7$ for an appropriate choice of simple roots. By construction, $\beta_{11} \cdot \xi_1=\beta_{11} \cdot \xi_1 =0$, so that $\beta_{11} \in \Lambda$, and 
 $$\beta_{11}^2 = (\beta-[L])^2 -1 = -4.$$
 The uniqueness of   $\beta_{11}$ is clear since the simple roots span $\Lambda$. 
 \end{proof}
 
\begin{remark} A possible strategy for proving Torelli using this boundary stratum would proceed as follows: First, the extension data determines the pairs $(Z_1, D_1')$ and $(Z_2, D_2')$. Next,   the  roots $\alpha_1, \dots, \alpha_{10}, \beta_{11}$ span a lattice $\Lambda'$ in $\Lambda$ which is ``formally an $E_{11}$ lattice," because $\beta_{11} \cdot \alpha_i = 0$, $i\neq 9$, and $\beta_{11} \cdot \alpha_9 = 1$.   Looking at the extension homomorphism $\psi_2\colon \Lambda' \to JD_2$ is the same as looking at the extension homomorphism for the anticanonical pair $(Y_0^{(1)}, D_2)$, because $\psi_2(\beta_{11}) = \varphi_{Y_0^{(1)}}(\varepsilon_{11} - \varepsilon_{10})$. Thus, using the global Torelli theorem for anticanonical pairs (Theorem~\ref{anticanonTorelli}), the pair $(Y_0^{(1)}, D_2)$ is determined up to isomorphism.  (Here, we need to use the fact that the line bundle $\scrO_{Y_0^{(1)}}(\overline{D}_1)$ is a nef and big line bundle on $Y_0^{(1)}$ to apply Lemma~\ref{genample} in order to check that Condition (ii) of Theorem~\ref{anticanonTorelli} is satisfied. To deal with the case where there are smooth rational curves of self-intersection $-2$, we also need to use Remark~\ref{moregenample}.)  In the notation of Lemma~\ref{constructbeta}, since $\phi  \cdot D_1'=1$, it is easy to check that translations of $D_1$ operate simply transitively on the class $\psi_1(\beta_{11})$, so that the gluing isomorphism $D_1\cong D_1'$ is uniquely determined by the extension homomorphism. By symmetry, the same is true for the isomorphism $D_2\cong D_2'$. 
 
 The main point is to identify $\overline{D}_1\subseteq Y_0^{(1)}$. There is a  $D_{10}$ lattice in $H^2(\hY)$ with basis $\alpha_i'= \varepsilon_{i+1} - \varepsilon_i $, $1\leq i \leq 8$, $\alpha_9'=\varepsilon_{10} - \varepsilon_9$,  $\alpha_{10}'= h'- \varepsilon_9-\varepsilon_{10}'-\varepsilon_{11}'$. Here, as noted in Remark~\ref{outer}, $\alpha_9'=\alpha_{10}$ and $\alpha_{10}' =\alpha_9$, realizing the outer automorphism of $D_{10}$. There is a corresponding ``formally $E_{11}$ lattice"  in $\Lambda$ with basis  $\alpha_i'$, $1\le i \le 10$ and $\beta_{11}'$, the analogue of the class constructed above but reversing the roles of $D_1$ and $D_2$. Let $q_i = \varepsilon_i '\cap D_1$, $1\le i \le 11$. The extension data for the root lattice with respect to $D_1$ determine the $q_i$ up to translation by a $3$-torsion point: $q_i \mapsto q_i + \xi$ with $3\xi = 0$.  In terms of the Jacobian $JD_1$, there are $11$ marked divisors of degree $0$. Ideally, we would like to physically identify the points $q_i$ themselves, for the following reason:  Referring back to the double cover picture, with $\nu \colon Y_0^{(1)} \to \mathbb{F}_1$, the image $S$ of $\overline{D}_1$ is a section of $|\sigma_0 + f|$, and $\overline{D}_1$ is determined by this section. For $i\le 9$, the image of $\varepsilon_i = \varepsilon_i'$ is a fiber $f_i$, and   $\nu(q_i) = S\cap f_i$. The section $S$ is disjoint from the negative section of $\mathbb{F}_1$. Blowing down the exceptional section, $S$ becomes a line in $\Pee^2$ not passing through the point $x$ that was blown up and $f_1$, $f_2$ become two lines passing through $p$. Then the image of $S$ in $\Pee^2$ is determined by the two points  $\nu(q_1), \nu(q_2)$. This would then determine  $S$, hence  $\overline{D}_1$ and finally $\hY$. 
 
 Without knowing the $q_i$, for a generic $\hY$ there are only finitely many possibilities for the nodal curve $\overline{D}_1\subseteq Y_0^{(1)}$, because in general the $j$-invariant for the family of elliptic curves which are the normalizations of the branched double over of a line in $\mathbb{F}_1$ tangent to the branch divisor at one point is nonconstant. It seems very likely that the additional information of the classes $\psi_1(\alpha_i) \in JD_1$ distinguishes the possibilities, but it is not yet clear how to use this information.
 \end{remark}
 
 \subsection{Some related Torelli problems} In this final section, we list some  Torelli type problems suggested by the above analysis as well as \cite{EGW}.  In the following,   let $X$ be a smooth surface and let $D$ be a smooth curve on $X$, not necessarily connected, or more generally a nodal curve. For simplicity, we will just consider the mixed Hodge structure on $H^2(X-D;\Zee)$, not the various related problems coming from normal crossing surfaces. The question is when this mixed Hodge structure determines the pair $(X,D)$, at least generically. Here are various examples of this situation:
 
 \begin{enumerate}
 \item $X$ is a rational elliptic surface with a section and $D = f_1 + f_2$ is a union of two smooth fibers, with the  period map  $\varphi \colon H^2_0(X; \Zee) \to Jf_1 \oplus Jf_2$ defined in the usual way. In this case,   $\varphi $  can have positive dimensional fibers: Consider a rational elliptic surface with an $\widetilde{E}_8$ fiber and a cuspidal fiber. Then the  surface $X$ has constant $j$-invariant, so that all smooth fibers are isomorphic. Hence $\varphi$ is trivial,  but the set of pairs $(X, f_1+ f_2)$ has dimension one.
 \item $X$ is a rational elliptic surface with a multiple fiber $F$ of multiplicity $m >1$ (for example of multiplicity $2$) and $D$ is a smooth (non-multiple) fiber. Thus $K_X = -F$ but  $D$ is not an anticanonical divisor and hence $(X,D)$ is not an anticanonical pair. 
 \item  $X$ is an Enriques surface and $D= D_1+ D_2$, where $D_1$ and  $D_2$ are two smooth elliptic curves  on $X$ intersecting transversally at one point. In this case, we have   the  period map  $\varphi \colon \overline{H}^2_0(X; \Zee) \to (JD_1 \oplus JD_2)/\langle \eta \rangle$ as in Lemma~\ref{EnriquesW1}, where $\eta$ is the nontrivial $2$-torsion line bundle on $X$ and we identify $\eta$ with its image via restriction in  $JD_1 \oplus JD_2$. 
 \end{enumerate}
 
 \appendix 
  \section{Simultaneous log resolution for simple elliptic singularities}\label{appendix}
  
  The goal of this appendix is to give a modular interpretation to the process of replacing a surface with simple elliptic singularities by a $d$-semistable model, also called a simultaneous log resolution. This problem has a long history. For a fixed elliptic curve $E$, this study dates    back to work of Looijenga \cite{Looi1}, \cite{Looi2} and  M\'erindol \cite{Mer}.    Grojnowski  and Shepherd-Barron \cite{GSB} as well as Davis \cite{DD1}, \cite{DD2} consider the problem from  a more group-theoretic point of view.  While the natural setting is that of (algebraic or analytic) stacks, we will mostly stick to a more naive, complex analytic treatment. To motivate the discussion, consider the following situation: Fix an elliptic curve $E$, and suppose that we are given
  \begin{enumerate}
  \item  A flat   proper morphism $\pi \colon \mathcal{Z} \to \Delta$, where $\mathcal{Z}$ is a  smooth complex threefold, such that, for $t\neq 0$, $\pi^{-1}(t)=Z_t$ is an almost del Pezzo surface, and $\pi^{-1}(0) $ is a normal crossing (and hence $d$-semistable) divisor of the form $R\amalg_DZ$, 
  where $R$ is a ruled surface over the elliptic curve $E$ with invariant $e> 0$, $Z$ is an almost del Pezzo surface with $R\cap Z = D$, where $D\cong E$ is the negative section of $R$ and is an anticanonical divisor in $Z$;
  \item A Cartier divisor $\mathcal{D} \subseteq \mathcal{Z}$, such that $\mathcal{D}  \cong E \times \Delta$,  $\mathcal{D}\cap \pi^{-1}(t)$ is an anticanonical divisor in $Z_t$ for $t\neq 0$, and $\mathcal{D}\cap \pi^{-1}(0)$ is a section of $R$ disjoint from the negative section.
  \end{enumerate}
  Here, we could replace the pair $(\Delta, 0)$ by an arbitrary pair $(S, S_0)$, where $S$ is an analytic space and $S_0$ is a Cartier divisor, with the natural changes to the definitions above. Then there exist two different contractions of $\mathcal{Z}$, given as follows:
  
For the first contraction,  standard arguments show that   $R^0\pi_*\scrO_{\mathcal{Z}}(k\mathcal{D})$ is locally free for all $k\ge 0$. Using a basis of sections for appropriate powers of $k$, i.e.\ taking the relative Proj
  $$\mathbf{Proj}_{\Delta}\bigoplus_{k\ge 0}R^0\pi_*\scrO_{\mathcal{Z}}(k\mathcal{D}),$$
   defines a birational morphism $\mathcal{Z} \to \mathcal{Z}_1 \subseteq \Pee \times \Delta$, where $\Pee$ is a weighted projective space, such that, for $t\neq 0$, the fiber of the induced morphism $\mathcal{Z}_1\to \Delta$ over $t$ is the anticanonical model of $Z_t$, and the fiber over $0$ is a (weighted) cone over $E$. 
   
   For the second contraction, the Cartier divisor $R\subseteq \mathcal{Z}$ satisfies: $\scrO_{\mathcal{Z}}(R)|f = \scrO_f(-1)$, where $f$ is a fiber of the ruling $R \to E$. Hence $R$ can be smoothly contracted to obtain another birational morphism $\mathcal{Z} \to \mathcal{Z}_2$, where all fibers of the induced morphism $\mathcal{Z}_2\to \Delta$ are almost del Pezzo surfaces and the image of $\mathcal{D}$ meets the fiber over $0$ in an anticanonical divisor. 
  
  We can partially reverse the second construction  as follows: Given a family  $\mathcal{Z}_2\to \Delta$, all of whose fibers are almost del Pezzo surfaces, and a divisor $\mathcal {D}_1\subseteq \mathcal{Z}_2$ with $\mathcal{D}_1  \cong E \times \Delta$ and such that $\mathcal{D}_1 $ restricts to an anticanonical divisor $D_t$ in every fiber, blow up the curve $D_0$ in the fiber $Z$ over $0$ and let $\mathcal{Z} \to \Delta$ be the new family. The exceptional divisor $R$ is then the ruled surface $\Pee(\scrO_E\oplus N_{D_0/Z})$ over $D$, and the fiber of $\mathcal{Z} \to \Delta$ over $0$ is $R\amalg_DZ$. On the other hand, the first construction is not in general reversible: Given a morphism $\mathcal{Z}_1\to \Delta$ and a divisor whose fibers are generalized del Pezzo surfaces away from $0$, and is the weighted cone over $D$ in $\mathbb{P}$ over $0$ (with weights corresponding to the del Pezzo surface fibers), we cannot simply blow up the vertex of the cone in the fiber over $0$, since for example if the total space of $\mathcal{Z}_1$ is smooth the exceptional divisor will be $\Pee^2$, not the appropriate del Pezzo surface, and it  will have multiplicity $> 1$ in the fiber over $0$. Thus a base change is necessary. 
  
  \begin{remark} As in \S\ref{ssect21}, we can also consider the deformation theory of the $d$-semistable surface $R\amalg_DZ$ and relate its deformation functor to that of the corresponding simple elliptic singularity.
  \end{remark} 
  
  Before we describe the general setup, we introduce the following notation: $E$ will always denote a fixed elliptic curve with a fixed origin $p_0$. 
  
  \begin{definition} For $4\le r \le 8$, let  $E_r$ be the  usual root system (where by convention $E_5=D_5$ and $E_4 = A_4$),   $W=W(E_r)$  the corresponding  Weyl group,  and  $Q=Q(E_r)$   the associated  root lattice, which we have previously denoted $\Lambda_{E_r}$ in \S\ref{latticessect}.  Let $\widehat{Q}$ denote the diagonal lattice  with basis $e_0, e_1, \dots e_r$ and such that $e_0^2 =1$ and $e_i^2 = -1$ for $i > 0$.  If  $\kappa = (3, -1, \dots, -1) \in \widehat{Q}$, then  $\kappa^2 =9-r=d$, where $d=9-r$ (here $d$ is the degree of the relevant del Pezzo  surfaces or the multiplicity $m$ of the corresponding simple elliptic singularities). Note that $\kappa^\perp \cong Q$ and that the intersection form sets up a perfect pairing  $(\widehat{Q}/\Zee\cdot \kappa)\otimes     Q\to \Zee$. 
  \end{definition} 
   
  We can then make the following definition:
   
   \begin{definition}\label{defmarked}  A \textsl{marked almost del Pezzo surface  with an anticanonical divisor isomorphic to   $E$} is a  quadruple $(Z,D, \mu, \psi)$, where $Z$ is an almost del Pezzo surface, $D$ is an anticanonical divisor, $\mu\colon D \to E$ is an isomorphism of lattices such that $\mu^*\scrO_E(dp_0) \cong N_{D/Z}$, and $\psi\colon H^2(Z; \Zee) \to \widehat{Q}$ is an isomorphism such that $\psi([D]) =\kappa$. A \textsl{marked generalized del Pezzo surface  with an anticanonical divisor isomorphic to   $E$} is defined similarly as a  quadruple $(Z,D, \mu, \psi)$, but where instead   $\psi\colon H^2(\hat{Z}; \Zee) \to \widehat{Q}$ is an isomorphism such that $\psi([D]) =\kappa$ for  the minimal resolution $\hat{Z} \to Z$. Of course, there is a bijection from the set of marked almost del Pezzo surfaces to the set of marked generalized del Pezzo surfaces. 
   \end{definition}

The set of  all marked almost del Pezzo surface  with an anticanonical divisor isomorphic to   $E$  is isomorphic to $E\otimes _{\Zee}Q \cong \Hom(\widehat{Q}/\Zee\cdot \kappa, E)$. In fact, the isomorphism is via the \textsl{extended period map} 
  $$\hat{\varphi}_Z \colon \widehat{Q}/\Zee\cdot \kappa \to E$$
  defined as follows: Given $\alpha \in \widehat{Q}$, $\psi^{-1}(\alpha) \in H^2(Z; \Zee)$ is represented by a unique line bundle $L_\alpha$ with, say, $\deg(L_\alpha|D) = a$. Then $(L_\alpha|D)\otimes \mu^*\scrO_E(-ap_0) $ is a line bundle of degree $0$ on $D$, and thus corresponds to an element of $JD \cong E$. Note that $L_\kappa = \scrO_Z(D)$ so that $\kappa$ is sent to $0\in E$. Thus there is a well-defined map $\hat{\varphi}_Z \colon \widehat{Q}/\Zee\cdot \kappa \to E$, which induces the usual period map up to the identification of $JD$ with $E$ via the inclusion $Q \to \widehat{Q}/\Zee\cdot \kappa$. The global Torelli theorem (Theorem~\ref{anticanonTorelli}) then easily implies that the quadruple $(Z,D, \mu, \psi)$ is specified by its extended period map. Note that the index of $Q$ in $\widehat{Q}/\Zee\cdot \kappa$ is $\kappa^2 = 9-r=d$, so that there are $d^2$ extensions of a homomorphism $Q \to E$ to a homomorphism $\widehat{Q}/\Zee\cdot \kappa \to E$; this is the same as the number of choices of the isomorphism $\mu\colon D \to E$ or equivalently the number of $d^{\textrm{th}}$ roots of  $N_{D/Z}$. Thus the abelian variety $E\otimes _{\Zee}Q$ is the (coarse) moduli space of  marked  almost del Pezzo surfaces. It is not a fine moduli space for marked almost del Pezzo surfaces owing to the existence of flops (elementary transformations) coming from almost del Pezzo surfaces with $-2$-curves. Nevertheless, \cite[2.3.3]{Mer} constructs  a  ``universal" family $\mathscr{V}\to E\otimes _{\Zee}Q$, together with a divisor $\mathcal{D}\subseteq \mathscr{V}$ isomorphic to $E\times (E\otimes _{\Zee}Q)$ such that, for each $x\in E\otimes _{\Zee}Q$, the pair $(V_x, D)$ is a marked almost del Pezzo surface in a natural sense, where $V_x$ is the fiber of $\mathscr{V}$ over $x$ and $D \cong E\times \{x\}$ is the corresponding divisor.  In fact,   such a family exists for each set of simple roots in $Q$. Finally, fixing a choice of  simple roots, the extended period map $E\otimes _{\Zee}Q \to E\otimes _{\Zee}Q$ is the identity. 
  
  \begin{remark} The Weyl group $W$ action on $E\otimes _{\Zee}Q$  does not extend to a holomorphic action on $\mathscr{V}$. Instead, as a consequence of the global Torelli theorem (Theorem~\ref{anticanonTorelli}) for families (see e.g\ \cite[\S{II}.3]{Looi3}, \cite[\S6]{GHK}), the Weyl group action extends to an action of $W$ on $\mathscr{V}$ by \textbf{birational} maps (not morphisms) on $\mathscr{V}$ covering the given action on $E\otimes _{\Zee}Q$. More precisely, there is a universal family $\overline{\mathscr{V}} \to E\otimes _{\Zee}Q$ of generalized del Pezzo surfaces together with a marking on $H^2$ of the minimal resolution (which we describe in more detail below) and the action of $W$ on $E\otimes _{\Zee}Q$  lifts to an action  on $\overline{\mathscr{V}}$.  Then $\mathscr{V}$ is a simultaneous resolution of the family  $\overline{\mathscr{V}} \to E\otimes _{\Zee}Q$ and $W$ acts simply transitively on the set of simultaneous resolutions constructed in this way.
\end{remark}

  \begin{remark} The choice of a set of simple roots $\alpha_1, \dots, \alpha_r$ leads to a choice of fundamental weights $\varpi_1, \dots, \varpi_r$. In particular, using the labeling of the simple roots of \S\ref{anticanonTorellisub}, $\varpi_1$ can be taken to be the class of an exceptional curve in $H^2(Z;\Zee)$ via the isomorphism $Q\spcheck  \cong  \widehat{Q}/\Zee\cdot \kappa$. However, despite the ordering of simple roots in \S\ref{anticanonTorellisub}, which was chosen for the purposes of \S\ref{22rootsystem}, for the realization of an almost del Pezzo surface as a blowup of $\Pee^2$, the exceptional curve corresponding to $\varpi_1$  is more naturally viewed as    the ``last" blowup. 
  \end{remark}

The link with the deformation theory of the pair $(Z,D)$   is given by the following: The Zariski tangent space to deformations of the pair $(Z,D)$ is given by $H^1(Z; T_Z(-\log D))$. If we want to fix the isomorphism class of $D$, the appropriate Zariski tangent space is given by 
$$\Ker\{H^1(Z; T_Z(-\log D))\to H^1(D; T_D)\} \cong H^1(Z; T_Z(-  D))/\im H^0(D; T_D).$$
 Since $T_Z(-D) = T_Z\otimes K_Z \cong \Omega^1_Z$, there is a further identification
 $$H^1(Z; T_Z(-  D))/\im H^0(D; T_D)\cong H^1(Z; \Omega^1_Z)/\Cee[D].$$
 In particular, $\dim H^1(Z; T_Z(-  D))/\im H^0(D; T_D) =r$. 

\begin{theorem}\label{KSmap1}  Let $x\in E\otimes _{\Zee}Q$ correspond to the pair $(Z,D)$. The Kodaira-Spencer map corresponding to the family $\mathscr{V}$ induces an isomorphism on tangent spaces 
\begin{align*}
T_{E\otimes _{\Zee}Q, x} &\xrightarrow{\cong} H^1(Z; T_Z(-  D))/\im H^0(D; T_D)\\
&\cong \Ker\{H^1(Z; T_Z(-\log D))\to H^1(D; T_D)\} .
\end{align*}
\end{theorem}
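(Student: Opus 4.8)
The plan is to reduce the statement to a dimension count together with the injectivity of the Kodaira--Spencer map. For the source, $E\otimes_{\Zee}Q$ is an abelian variety of dimension $r=9-d$, so $T_{E\otimes_{\Zee}Q,x}$ has dimension $r$. For the target, the identification $T_Z(-D)\cong\Omega^1_Z$ recorded above gives $H^1(Z;T_Z(-D))\cong H^1(Z;\Omega^1_Z)$, of dimension $h^{1,1}(Z)=10-d=r+1$; under the isomorphism displayed before the theorem the subspace $\im H^0(D;T_D)$ corresponds to the line $\Cee[D]$, so the quotient again has dimension $r$. Moreover, since the divisor $\mathcal{D}\subseteq\mathscr{V}$ is a product $E\times(E\otimes_{\Zee}Q)$, the anticanonical curve $D_x\cong E$ is constant in the family and the induced first-order deformation of $D$ vanishes; hence the Kodaira--Spencer map lands in $\Ker\{H^1(Z;T_Z(-\log D))\to H^1(D;T_D)\}$, which is exactly the subspace identified above with $H^1(Z;T_Z(-D))/\im H^0(D;T_D)$. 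It therefore suffices to prove injectivity.

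For injectivity I would exploit the tautological property of the universal family. By the construction of \cite[2.3.3]{Mer}, for each $x$ the fiber $(V_x,D)$ is the marked almost del Pezzo surface whose extended period map equals $x$, so the composite period morphism $\mathscr{P}\colon E\otimes_{\Zee}Q\to\Hom(\widehat{Q}/\Zee\cdot\kappa,E)\cong E\otimes_{\Zee}Q$ is the identity; the global Torelli theorem for anticanonical pairs (Theorem~\ref{anticanonTorelli}) is what guarantees that $\widehat{\varphi}_{V_x}$ recovers $(V_x,D)$ and hence that $\mathscr{P}$ is well defined. Period maps are holomorphic and factor infinitesimally through Kodaira--Spencer, so the chain rule yields $d\mathscr{P}=d\widehat{\varphi}_Z\circ(\mathrm{KS})$, where $d\widehat{\varphi}_Z\colon H^1(Z;T_Z(-\log D))\to\Hom(\widehat{Q}/\Zee\cdot\kappa,H^1(D;\scrO_D))$ is the differential of the extended period map. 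Since $d\mathscr{P}=\Id$, the Kodaira--Spencer map is injective, and an injective map between spaces of the common dimension $r$ is an isomorphism onto $H^1(Z;T_Z(-D))/\im H^0(D;T_D)$.

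The hard part will be making this functoriality fully rigorous: one must verify that $\mathscr{P}$ is the identity as a morphism (not merely on points) and that the infinitesimal period relation $d\mathscr{P}=d\widehat{\varphi}_Z\circ(\mathrm{KS})$ holds, which is delicate because $E\otimes_{\Zee}Q$ is only a coarse space and $\mathscr{V}$ fails to be a fine moduli family owing to the flops across $(-2)$-curves. I expect to handle this by first working over the open locus where $D$ is generic and $Z$ carries no $(-2)$-curve, where the family is honestly universal, and then extending by the equality of dimensions at every point. As an independent check, and the genuine geometric content of the theorem, one can compute $d\widehat{\varphi}_Z$ directly: for $\theta\in H^1(Z;T_Z(-\log D))$ and a class $\alpha$ represented by a line bundle $L_\alpha$, the derivative $d\widehat{\varphi}_Z(\theta)(\alpha)\in H^1(D;\scrO_D)$ is obtained by cupping $\theta$ with $c_1(L_\alpha)$ and restricting to $D$, as in \cite[Theorem 3.14]{Friedanticanon} and \cite[Theorem 7.2]{EngelFriedman}; using the self-duality $T_Z(-\log D)\cong\Omega^1_Z(\log D)$ and Serre duality, this pairing is perfect on $\kappa^\perp\cong Q$, which is precisely the infinitesimal Torelli statement for anticanonical pairs.
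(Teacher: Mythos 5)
Your proposal is correct and is essentially the paper's own argument: the paper identifies both $T_{E\otimes_{\Zee}Q,x}$ and $H^1(Z;T_Z(-D))/\im H^0(D;T_D)$ with $H^1(Z;\Omega^1_Z)/\Cee[D]$ and observes that the Kodaira--Spencer map and the differential of the extended period map are mutually inverse, which is exactly your relation $d\mathscr{P}=d\hat{\varphi}_Z\circ(\mathrm{KS})=\Id$ coming from the tautological property of M\'erindol's family. Your dimension count merely substitutes for the paper's two-sided inverse claim, so the two proofs coincide in substance.
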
 
\begin{proof} By construction, the extended period map is the identity map from $E\otimes _{\Zee}Q$ to itself.  The tangent space $T_{E\otimes _{\Zee}Q, x}$ is isomorphic to 
$$H^1(E; T_E) \otimes _{\Zee}Q \cong \Cee \otimes _{\Zee}Q \cong H^1(Z; \Omega^1_Z)/\Cee[D].$$
On the other hand, the differential of the (extended) period map  is a homomorphism
\begin{align*}
H^1(Z; T_Z(-  D))/\im H^0(D; T_D) &\to \Hom(H^0(Z; \Omega^2_Z(\log D)) ,H^1_0(Z; \Omega^1_Z(\log D))\\
&\cong H^1_0(Z; \Omega^1_Z(\log D)),
\end{align*}
where $H^1_0(Z; \Omega^1_Z(\log D)) = \Ker \{H^1(Z; \Omega^1_Z(\log D))\to H^1(D;\scrO_D)\}\cong H^1(Z; \Omega^1_Z)/\Cee[D]$. By a standard argument (cf.\ \cite[Theorem 3.16]{Friedanticanon}),  the differential of the period map factors through the Kodaira-Spencer homomorphism. Hence the Kodaira-Spencer homomorphism must be injective. Since 
$$\dim T_{E\otimes _{\Zee}Q, x} = \dim  H^1(Z; T_Z(-  D))/\im H^0(D; T_D) =r,$$
the Kodaira-Spencer homomorphism is an isomorphism.
\end{proof}

There is a coarse moduli space $M$ for generalized del Pezzo surfaces  with an anticanonical divisor isomorphic to $E$, or more precisely triples $(Z,D, \mu)$, where $Z$ is a generalized del Pezzo surface, $D$ is a smooth divisor in $|\omega_Z^{-1}|$, and $ \mu\colon D \to E$ is an isomorphism such that $\mu^*\scrO_E(dp_0) \cong N_{D/Z}$. The moduli space $M$  is a weighted projective space $\Pee = \Pee(1, h_1, \dots, h_r)$, where the highest root $\tilde \alpha$ for the root system $E_r$ is given in terms of a set of simple roots $\alpha_1, \dots, \alpha_r$ by: 
  $$\tilde\alpha = h_1\alpha_1 + \cdots + h_r\alpha_r.$$
  (We use the notation $h_i$ instead of the more usual notation $g_i$ to avoid confusion with the coefficient $g_2, g_3$ in the Weierstrass form of the equation for $E$.) 
  We shall describe $M$  explicitly below in case $d=1$, where the sequence $h_1, \dots, h_r$ is, up to permutation, $2,2,3,3,4,4,5,6$.  The family of pairs $(\mathscr{V}, \mathcal{D}) \to E\otimes _{\Zee}Q$ defines a $W$-invariant morphism $E\otimes _{\Zee} Q \to M$ by associating to each pair $(V_x, D)$ the corresponding pair $(\overline{V}_x, D)$, where $V_x$ is an almost del Pezzo surface and $\overline{V}_x$ is the corresponding generalized del Pezzo surface. This morphism in turn  induces an isomorphism $(E\otimes _{\Zee}Q)/W \cong  M$.  This last isomorphism then gives a concrete form to Looijenga's theorem that $(E\otimes _{\Zee}Q)/W \cong \Pee(1, h_1, \dots, h_r)$. However, to deal with the existence  of simultaneous log resolutions, we will need a more precise construction.

Turning to the theory of deformations of simple elliptic singularities, or equivalently weighted projective cones over a fixed elliptic curve $E$, let $(U,p)$ be the germ of a simple elliptic singularity, with minimal resolution $\pi\colon \widetilde{U}\to U$ and exceptional divisor $E = \pi^{-1}(p_0)$. Define $d = -E^2$ to be the \textsl{multiplicity} of $U$, so that $d=m$ is the usual multiplicity unless $d=1$, and let $r=9-d$.  We assume in what follows that $d \le 4$, so that in particular $U$ is a local complete intersection.  The  tangent space  $H^0(U; T^1_U)$ to the  deformation functor of $U$ has a $\Cee^*$ action with all weights nonpositive. The negative weight space $H^0(U; T^1_U)^-$ has dimension $10-d = r+1$ and the quotient $\mathbb{P} = \Big(H^0(U; T^1_U)^- -\{0\}\Big)/\Cee^*$ is a weighted projective space of dimension $r$. The  weight zero space $H^0(U; T^1_U)^0$ has dimension one and corresponds to deforming $E$.  For our purposes, it is better to consider the globalized version of the above:  We  replace $U$ by the weighted projective cone $\overline{R}$. Equivalently,  $R$ is the ruled surface $\Pee(\scrO_E\oplus \lambda)$,  where $\lambda$ is a fixed line bundle over $E$ of degree $d$ which we may as well assume is $\scrO_E(dp_0)$, and $\overline{R}$ is the contraction of $R$ along the negative section. Let $D$ be a fixed section of $R$ of square $d$ disjoint from the negative section   and consider deformations of the pair $(\overline{R}, D)$ fixing the isomorphism $D\to E$, or more precisely triples $(Z, t, \varphi)$, where $Z$ is a generalized del Pezzo surface or the cone over $E$, $t\in H^0(Z; \omega_Z^{-1})$ is a nonzero section defining a Cartier divisor $D$, and $\varphi\colon D \to E$ is an isomorphism with $\varphi^*\lambda \cong N_{D/Z}$.  The $\Cee^*$-action corresponds to multiplying the section $t$ by an element of $\Cee^*$. 

If  $S$ is  the  base space of the miniversal deformation of $U$, then the $\Cee^*$-action on $U$ extends to the germ of a $\Cee^*$-action on $S$. The equisingular locus  $S_{\text{\rm{es}}}\subseteq S$  corresponds to deforming the elliptic curve $E$ but otherwise keeping the singularity a simple elliptic singularity. Then  $S_{\text{\rm{es}}}=S^0$ is identified with the base space of the Kuranishi family of deformations of the elliptic curve $E$ and  the tangent space $T_{S_{\text{\rm{es}}}, s}$ to   $S_{\text{\rm{es}}}$ at a point $s$ is the weight zero subspace $H^0(U; T^1_U)^0$ and is identified with $H^1(E; T_E)$. There is a noncanonical isomorphism $S \cong S^{-}\times S_{\text{\rm{es}}}$, corresponding to the splitting $H^0(U; T^1_U) \cong H^0(U; T^1_U)^-\oplus H^0(U; T^1_U)^0$. More canonically, there is a morphism $S\to S_{\text{\rm{es}}}=S^0$ whose fiber is $S^-$, corresponding to the inclusion of the subring of $\Cee^*$-invariant functions  on $S$.

By \cite[Th\'eor\`eme 6.1]{Mer} there exists an affine  cone $\mathscr{C}$ over  the abelian variety $E\otimes _{\Zee}Q$ and a $\Cee^*$-equivariant isomorphism
$$\mathscr{C}/W \cong H^0(U; T^1_U)^-.$$
(In \cite{Mer}, $E\otimes _{\Zee}Q$ is denoted by $H$ and $\mathscr{C}$ by $C_H$.) Here $\mathscr{C}$ is the contraction of the zero section in the total space of a negative $W$-linearized line bundle $\mathscr{L}$ over $E\otimes _{\Zee}Q$. 
Letting $0$ denote the vertex of the cone $\mathscr{C}/W$,  the induced morphism 
$$(\mathscr{C}/W -\{0\})/\Cee^* \to \Big(H^0(U; T^1_U)^- -\{0\}\Big)\Big/\Cee^*$$
identifies $\Big(H^0(U; T^1_U)^- -\{0\}\Big)\Big/\Cee^*$ with $ (E\otimes _{\Zee}Q)/W$ where both sides are the  coarse moduli space  of  (unmarked)  almost del Pezzo surfaces. 
More precisely, there is a ``universal"  family $\mathscr{F}$ over $\mathscr{C}$ whose fiber at a point $x\neq v$  is an almost del Pezzo surface corresponding to the image of $x$ in $ E\otimes _{\Zee}Q$ via the projection $\mathscr{C} -\{0\} \to  E\otimes _{\Zee}Q$ and whose fiber over $0$   is $\overline{R}$, the  weighted projective  cone over $E$. 

The relation with weighted blowups is as follows. Let $p\colon \widehat{\mathscr{C}}\to  \mathscr{C}$ be the blowup of $\mathscr{C}$ at the vertex $0$, i.e.\ the total space of the negative line bundle $\mathscr{L}$, with exceptional divisor (the zero section) $\mathcal{A}\cong E\otimes _{\Zee}Q$. Then there is a morphism $q\colon \widehat{\mathscr{C}} \to E\otimes _{\Zee}Q$. Let $ q^*\mathscr{V}$ be the pullback of the   family $\mathscr{V}\to E\otimes _{\Zee}Q$ and let $\pi\colon  q^*\mathscr{V} \to  \widehat{\mathscr{C}}$ be the corresponding morphism. The divisor $\mathcal{D}\subseteq \mathscr{V}$ pulls back to a divisor $q^*\mathcal{D} = \widetilde{\mathcal{D}}$  on $ q^*\mathscr{V}$. Let $\widetilde{\mathcal{D}}_0$ be its restriction to the preimage $\pi^{-1}(\mathcal{A}) \subseteq q^*\mathscr{V}$. Here there is a commutative diagram
$$\begin{CD}
\pi^{-1}(\mathcal{A}) @>{\cong}>> \mathscr{V} \\
@VVV @VVV \\
\mathcal{A} @>{\cong}>>   E\otimes _{\Zee}Q.
\end{CD}$$
Blow up the smooth codimension $2$ subvariety $\widetilde{\mathcal{D}}_0$ of $q^*\mathscr{V}$  and let $\widehat{\mathscr{Z}} \to \widehat{\mathscr{C}}$ be the resulting family. Thus $\widehat{\mathscr{Z}} \to \widehat{\mathscr{C}}$ is a family of surfaces over $\widehat{\mathscr{C}}$ whose fiber over a point $x$ in the smooth divisor $\mathcal{A}$ is reduced and is isomorphic to the normal crossing surface $V_x\amalg _ER$, where as above $R$ is the ruled surface over $E$ containing $E$ as a section with $(E^2)_R = -d$.  Away from $\mathcal{A}$,  the fibers  of $\widehat{\mathscr{Z}} \to \widehat{\mathscr{C}}$ are almost del Pezzo surfaces. As in the discussion at the beginning of the appendix, the   proper transform of  $\pi^{-1}(\mathcal{A})$ becomes exceptional in the sense that  it can be contracted to a subvariety isomorphic to $\mathcal{A}$. Doing so gives a family of surfaces over $\widehat{\mathscr{C}}$ which is the pullback to $\widehat{\mathscr{C}}$ of the family $\mathscr{F}\to \mathscr{C}$. In particular, the fibers over $\mathcal{A}$ are now all isomorphic to the cone $\overline{R}$. The Weyl group $W$ acts on $\mathscr{L}$ and hence on  $\widehat{\mathscr{C}}$ and the morphism $p\colon \widehat{\mathscr{C}}\to  \mathscr{C}$ is $W$-equivariant.   Thus there is a commutative diagram
$$\begin{CD}
\widehat{\mathscr{C}} @>>> \mathscr{C} \\
@VVV @VVV \\
\widehat{\mathscr{C}} /W @>>>   \mathscr{C}/W.
\end{CD}$$
Here, $\mathscr{C}/W$ is the affine space $H^0(U; T^1_U)^-$ and the fiber of $\widehat{\mathscr{C}} /W \to   \mathscr{C}/W$ over $0$ is $(E\otimes _{\Zee}Q)/W$, which by Looijenga's theorem is the weighted projective space $\Pee(1, h_1, \dots, h_r)$ corresponding to the $\Cee^*$-quotient of $H^0(U; T^1_U)^-$ by the natural $\Cee^*$-action. Thus $\widehat{\mathscr{C}} /W$ can be viewed as the weighted blowup $\widetilde{\Cee}^{r+1}$ of the affine space $\Cee^{r+1}$ at the origin, with exceptional divisor the weighted projective space $\Pee(1, h_1, \dots, h_r)$. Note that both $\widetilde{\Cee}^{r+1}$ and $\Pee(1, h_1, \dots, h_r)$ are orbifolds and so are themselves stacks in a natural way, but these stacks are not the same as the stacks $[\widehat{\mathscr{C}} /W ]$ or    $[(E\otimes _{\Zee}Q)/W]$.

The deformation theory of $U$  behaves well in families: If $\mathcal{E}\to T$ is a family of elliptic curves,  there  is a  bundle $\mathbf{A}$ of affine spaces  of dimension $r+1$ over $T$  together with a zero section and a $\Cee^*$-action which restricts to the vector space $H^0(U; T^1_U)^-$ over every fiber. 

\begin{remark} For more discussion on the  construction of $\mathbf{A}$  see for example \cite{Wirth} or \cite[Corollary 4.1.7]{FM2001}. Note however that the case of $E_8$, i.e.\ the case $d=1$, is somewhat exceptional, in the sense that the bundle of affine spaces over $T$ need not be the total space of a vector bundle (and is therefore not treated in \cite{Wirth}). Roughly speaking, the issue is that there are two vector bundles over the open sets corresponding to $g_2\neq 0$ and $g_3\neq 0$, where $g_2$ and $g_3$ are the coefficients for the Weierstrass equation  for $\mathcal{E}$, but they are glued together by an isomorphism of affine bundles which is   not linear  on the fibers. We will return to this point shortly. 
\end{remark}

 M\'erindol's  construction can then be done in families: given a family $\mathcal{E}\to T$ of elliptic curves (with a section), we can form the families $\mathcal{E}\otimes _{\Zee}Q$ and $(\mathcal{E}\otimes _{\Zee}Q)/W$, as well as the relative cone $\mathcal{C}$ and its blowup $\widetilde{\mathcal{C}}$. The construction goes through as before (possibly after shrinking $T$).
In particular, the   construction applies to case where $\mathcal{E}\to S_{\text{\rm{es}}}$ is the miniversal deformation  of the elliptic curve $E$.   This picture globalizes to a bundle $\mathbf{A}$ of affine spaces over $S_{\text{\rm{es}}}$ together with the  zero section, which we can identify with  $S_{\text{\rm{es}}}$.  In this case,  $\mathbf{A}$ is the base space of a miniversal family for the corresponding simple elliptic singularities, and the section $S_{\text{\rm{es}}}$ is the base space for the family of equisingular deformations.  In particular,   there is a family $\mathcal{V} \to \mathcal{E}\otimes _{\Zee}Q$ and the analogue of Theorem~\ref{KSmap1} holds:

\begin{theorem}\label{KSmap2} With $\mathcal{E}\to S_{\text{\rm{es}}}$ the germ of  a miniversal deformation of the elliptic curve $E$, let $x\in \mathcal{E}\otimes _{\Zee}Q$ correspond to the pair $(Z,D)$. Then the  Kodaira-Spencer map for the family $\mathcal{V}$ induces an isomorphism on tangent spaces $T_{\mathcal{E}\otimes _{\Zee}Q, x} \to H^1(Z; T_Z(-  \log D))$.  \qed
\end{theorem}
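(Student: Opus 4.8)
The plan is to deduce Theorem~\ref{KSmap2} from the fixed--$E$ statement Theorem~\ref{KSmap1} by arranging the Kodaira--Spencer map of $\mathcal{V}$ as the middle vertical arrow of a ladder between two short exact sequences, and then invoking the five lemma. The key structural observation is that $\mathcal{E}\otimes_\Zee Q \to T$ is a smooth family of abelian varieties whose fiber over the base point $t$ (the fixed elliptic curve $E$) is $E\otimes_\Zee Q$, and that the relative family $\mathcal{V}$ restricts over this fiber to the family $\mathscr V$ of Theorem~\ref{KSmap1}. Thus the ``old'' deformations (those keeping $E$ fixed) are governed by Theorem~\ref{KSmap1}, and the single ``new'' direction is exactly the one deforming $E$.

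On the Hodge-theoretic side I would first record the needed exact sequence. Starting from the logarithmic tangent sequence
$$0 \to T_Z(-D) \to T_Z(-\log D) \to T_D \to 0$$
and using that $(Z,D)$ is an anticanonical pair, so $K_Z\cong\scrO_Z(-D)$ and hence $T_Z(-D)=T_Z\otimes\omega_Z\cong\Omega^1_Z$, the vanishing
$$H^2(Z;\Omega^1_Z)\cong H^0(Z;\Omega^1_Z)^\vee = 0$$
(valid since the almost del Pezzo surface $Z$ is rational) turns the associated long exact sequence into the short exact sequence
$$0 \to H^1(Z;T_Z(-D))/\im H^0(D;T_D) \to H^1(Z;T_Z(-\log D)) \xrightarrow{\ \iota\ } H^1(D;T_D) \to 0,$$
where $\iota$ is the residue map recording the deformation of $D$ induced by a deformation of the pair.

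On the parameter side, smoothness of $\mathcal{E}\otimes_\Zee Q \to T$ gives the tangent sequence
$$0 \to T_{E\otimes_\Zee Q,\,x} \to T_{\mathcal{E}\otimes_\Zee Q,\,x} \to T_{T,\,t} \to 0,$$
in which the fiber tangent space $T_{E\otimes_\Zee Q,x}$ is precisely the source of Theorem~\ref{KSmap1}. The Kodaira--Spencer map $\kappa$ of $\mathcal{V}$ then fits as the middle map of a ladder between these two sequences. The left vertical arrow is the Kodaira--Spencer map of $\mathscr V$; since $E$ (hence $D$ as an abstract curve) is fixed along the fiber, its composite with $\iota$ vanishes, so it lands in the subspace $\ker\iota = H^1(Z;T_Z(-D))/\im H^0(D;T_D)$, where it is an isomorphism by Theorem~\ref{KSmap1}. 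The right vertical arrow is $\iota\circ\kappa\colon T_{T,t}\to H^1(D;T_D)$, which I claim agrees with the Kodaira--Spencer map of the family $\mathcal{E}\to T$ under $D\cong E$, and is therefore an isomorphism because $T$ is miniversal. Granting commutativity of the ladder, the five lemma gives that $\kappa$ is an isomorphism.

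The main obstacle is the commutativity of the right-hand square, i.e.\ the identity $\iota\circ\kappa = (\text{Kodaira--Spencer map of }\mathcal{E}/T)$ under $D\cong E$. This is a compatibility statement for the construction of $\mathcal{V}$: a tangent vector at $x$ projecting nontrivially to $T_{T,t}$ deforms the base elliptic curve, and the residue $\iota$ of the resulting deformation of $(Z,D)$ must return that very deformation of $E\cong D$. Concretely, $\mathcal{V}$ is built fiberwise over $\mathcal{E}\otimes_\Zee Q$ from M\'erindol's family, with anticanonical divisor $\mathcal{D}\cong \mathcal{E}\times_T(\mathcal{E}\otimes_\Zee Q)$ realizing $D$ over $x$ as the fiber $E_t$; differentiating this identification in the $T$-direction yields the claim, while the left square commutes by functoriality of Kodaira--Spencer under restriction to the fiber. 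A dimension check ($\dim T_{\mathcal{E}\otimes_\Zee Q,x}=r+1 = 10-d = \dim H^1(Z;T_Z(-\log D))$) confirms consistency of the whole picture.
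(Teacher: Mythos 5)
Your proof is correct, and it is worth pointing out that the paper itself gives no argument for Theorem~\ref{KSmap2}: the statement carries a qed symbol and is asserted as ``the analogue of Theorem~\ref{KSmap1}'' once M\'erindol's construction is performed in families. So your proposal supplies exactly the details the paper leaves implicit, and it does so by a reduction rather than by rerunning the paper's argument. The paper's proof of Theorem~\ref{KSmap1} is a period-map computation: it identifies $T_{E\otimes_\Zee Q,\,x}$ with $H^1(Z;\Omega^1_Z)/\Cee[D]$ and checks that the differential of the extended period map inverts the Kodaira--Spencer map; the implicit proof of Theorem~\ref{KSmap2} would redo this with the extra direction coming from the variation of the $j$-invariant. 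You instead use Theorem~\ref{KSmap1} as a black box on the fiber direction of $\mathcal{E}\otimes_\Zee Q\to T$, use miniversality of $\mathcal{E}\to T$ on the base direction, and splice them with the five lemma across the two short exact sequences: your sequence $0\to\Ker\iota\to H^1(Z;T_Z(-\log D))\xrightarrow{\iota}H^1(D;T_D)\to 0$ (whose exactness, via $H^2(Z;\Omega^1_Z)=0$, is correct and consistent with the identification $\Ker\iota\cong H^1(Z;T_Z(-D))/\im H^0(D;T_D)$ recorded in the paper just before Theorem~\ref{KSmap1}) matched against the fiber--base sequence for $T_{\mathcal{E}\otimes_\Zee Q,\,x}$. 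The two compatibilities you single out are indeed the only load-bearing ones, and both hold by construction of $\mathcal{V}$: the left square commutes because $\mathcal{V}$ restricts over the fiber $E\otimes_\Zee Q$ to the family $\mathscr{V}$ of Theorem~\ref{KSmap1}, and the right square commutes because the relative anticanonical divisor is $\mathcal{D}\cong\mathcal{E}\times_T(\mathcal{E}\otimes_\Zee Q)$, pulled back from $\mathcal{E}\to T$, so that $\iota\circ\kappa$ is the Kodaira--Spencer map of $\mathcal{E}/T$ under the identification $D\cong E_t$. What your route buys is modularity (no need to reexamine the extended period map); what the paper's implicit route would buy is an explicit description of the inverse of the Kodaira--Spencer map in period-map terms.
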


There is   a bundle  $\mathcal{C}$ of affine  cones  over  $S_{\text{\rm{es}}}$, and a $\Cee^*$-equivariant isomorphism (of spaces over $S_{\text{\rm{es}}}$)
$$\mathcal{C}/W \cong  \mathbf{A}.$$
If $\widetilde{\mathbf{A}} \to \mathbf{A}$ is the weighted blowup along the curve $S_{\text{\rm{es}}}$, then there is a cover $\widehat{\mathbf{A}} \to \widetilde{\mathbf{A}}$ with covering group the Weyl group $W$ and a family $\widehat{\mathcal{Z}}$ of $d$-semistable surfaces over $\widehat{\mathbf{A}}$ which replicates the  above construction on every fiber. More precisely, we have the following theorem:

\begin{theorem}\label{mainappthm}  There exists a smooth variety $\widehat{\mathbf{A}}$ of dimension $r+2$,   a finite morphism $\widehat{\mathbf{A}} \to \widetilde{\mathbf{A}}$ with covering group  $W$, and a family $\widehat{\mathcal{Z}}\to\widehat{\mathbf{A}}$, such that the following hold:
\begin{enumerate}
\item[\rm(i)] The inverse image of the section $S_{\text{\rm{es}}}\subseteq \mathbf{A}$ in $\widetilde{\mathbf{A}}$ is the  weighted projective space bundle $(\mathcal{E}\otimes _{\Zee}Q)/W$ and its inverse image in $\widehat{\mathbf{A}}$ is the smooth divisor $\mathcal{E}\otimes _{\Zee}Q$. 
\item[\rm(ii)] The space $\widehat{\mathcal{Z}}$ is smooth,  and the fibers of $\widehat{\mathcal{Z}}\to\widehat{\mathbf{A}}$ are $d$-semistable over $\mathcal{E}\otimes _{\Zee}Q $  and are almost del Pezzo surfaces over $\widehat{\mathbf{A}} - \mathcal{E}\otimes _{\Zee}Q$. 
\qed
\end{enumerate}
\end{theorem} 

\begin{remark}\label{logresremark}  In the application in \S\ref{ssectsevsdss}, we will work with the germ $S$ of the space $\mathbf{A}$ at a point on the zero section, i.e.\ corresponding to a simple elliptic singularity, as well as the corresponding weighted blowup $\widetilde{S}$ and the Weyl cover $\widehat{S} \to \widetilde{S}$. Note that, by construction, $S$ is a germ but that $\widetilde{S}$ and $\widehat{S}$ are not: $\widetilde{S}$  contains the  weighted projective space bundle $(\mathcal{E}\otimes _{\Zee}Q)/W$ over $T$ and $\widehat{S}$ contains a smooth hypersurface isomorphic to $\mathcal{E}\otimes _{\Zee}Q$.
\end{remark}

There is a very concrete description of this process (following unpublished notes of the first author and  John Morgan \cite{FM2005}). For simplicity, we just discuss the case $d=1$, i.e.\ $r=8$. Consider the set of all equations of generalized del Pezzo surfaces $Z$ in the weighted projective space $\Pee(1,1,2,3)$ (with homogeneous coordinates $(x,y,z,t)$  of weights $2,3,1,1$ respectively)  of weighted degree $6$ and with a fixed hyperplane section isomorphic to $E$ and defined by a fixed $t\in H^0(Z; \omega_Z^{-1})$. These equations are of the form 
$$y^2 = x^3 + g_2xz^4 + g_3z^6 + tP_5(x,y,z) + t^2P_4(x,y,z)
+t^3P_3(x,y,z) +t^4P_2(x,z) + et^5z + ft^6,$$
where the $P_i$ are weighted homogeneous of degree $i$ in the appropriate
variables. 
Here $x,y,z$ are determined up to weighted homogeneous
changes of coordinates which are the identity modulo $t$,
i.e.\  up to the following transformations:
\begin{eqnarray*}
x &\mapsto & x + \alpha _1tz + \alpha _2t^2;\\
y &\mapsto & y + \beta _1tx + \beta _2tz^2 +\beta _3t^2z + \beta
_4t^3;\\
z &\mapsto & z + \gamma t.
\end{eqnarray*}
Using the first two transformations, there are unique choices of $x$
and
$y$ so that the equation involves no terms of the form $t^3y,t^2yz, txy,
tyz^2,  tx^2z, t^2x^2$. Equivalently, we can eliminate
all terms which have a factor of the form  $ty$ or $tx^2$.  To use the third transformation to  eliminate one more term depends in this case on whether $g_2\neq 0$ or $g_3\neq 0$ (this issue only appears for  $E_8$, i.e. $d=1$). For example, if $g_2\neq 0$, we can eliminate the term $txz^3$ and are left with the following equation in standard form: 
$$y^2 = x^3 + g_2xz^4 + g_3z^6 + atz^5 + b_1t^2xz^2 + b_2t^2z^4 +
c_1t^3xz + c_2t^3z^3 + d_1t^4x + d_2t^4z^2 + et^5 z + ft^6.$$
A parallel discussion handles the case where $g_3\neq 0$. 

Let $\Cee^9$ have coordinates $a, b_1, b_2, c_1, c_2, d_1,d_2, e, f$ together with the $\Cee^*$-action  where the weights are $(1,2,2,3,3,4,4,5,6)$. (It is more convenient here to use positive weights for the $\Cee^*$-action on $\Cee^9$.) 
The above equation defines a family $\mathcal{Z}_0 \subseteq \Cee^9\times \Pee(1,1,2,3)$, viewed as a family of surfaces over $\Cee^9$, whose fiber over a nonzero point is a generalized del Pezzo surface and whose fiber over $0$ is the weighted projective cone over $E$. The family $\mathcal{Z}_0 \to \Cee^9$ is then the globalized version of the universal family of negative weight deformations  of the simple elliptic singularity of multiplicity one. There are two different $\Cee^*$-actions on $\mathcal{Z}_0$: the trivial action, which covers the trivial action on $\Cee^9$, and the $\Cee^*$-action which is the given action on $\Cee^9$ and where $s\in \Cee^*$ acts on the homogeneous coordinates $(x,y,z,t)$ for $\Pee(1,1,2,3)$  by: $s\cdot (x,y,z,t) = (x,y,z,s^{-1}t)$. 

Next we consider the weighted blowup of $\Cee^9$ at the origin: consider the space
$\Cee \times (\Cee^9-\{0\})$ together with the $\Cee^*$-action defined by 
$$s\cdot (\tau, v) = (s^{-1}\tau, s\cdot v).$$
Then $\pi_2\colon \Cee \times (\Cee^9-\{0\}) \to \Cee^9-\{0\}$ is $\Cee^*$-equivariant for the given actions. Define 
$$F\colon \Cee \times (\Cee^9-\{0\}) \to \Cee^9$$ by
 $F(\tau, v)  = \tau\cdot v$. 
By construction, $F(s\cdot (\tau, v)) = F(\tau, v)$, i.e.\ $F$ is equivariant for the given action on $\Cee \times (\Cee^9-\{0\})$ and the trivial action on $\Cee^9$. The quotient stacks satisfy: there exist morphisms $[(\Cee \times (\Cee^9-\{0\})/\Cee^*] \to [(\Cee^9-\{0\})/\Cee^*]$ and $[(\Cee \times (\Cee^9-\{0\})/\Cee^*] \to \Cee^9$. The second morphism realizes the coarse moduli space of $[(\Cee \times (\Cee^9-\{0\})/\Cee^*]$ as the weighted blowup  $\widetilde{\Cee}^9$ of $\Cee^9$ and the first corresponds to  the associated morphism on coarse moduli spaces   $\widetilde{\Cee}^9 \to \Pee(1, 2,2,3,3,4,4,5, 6)$. 

Note that $F^{-1}(0) = \{0\}\times (\Cee^9-\{0\})$ is a smooth divisor and that $\Cee^*$ acts freely on 
$$\Big(\Cee \times (\Cee^9-\{0\})\Big) - F^{-1}(0) = \Cee^* \times (\Cee^9-\{0\}).$$ There are two families over $\Cee \times (\Cee^9-\{0\})$: $\mathcal{Z}_1 = F^*\mathcal{Z}_0$, defined by
$$y^2  = x^3 + g_2xz^4 + g_3z^6 + a(\tau t) z^5 + b_1(\tau t) ^2xz^2 + \cdots + f(\tau t) ^6,$$
and $\mathcal{Z}_2  = \pi_2^*(\mathcal{Z}_0|\Cee^9-\{0\})$, defined by
$$y^2  = x^3 + g_2xz^4 + g_3z^6 + atz^5 + b_1t^2xz^2 + \cdots  + ft^6.$$ 
Thus both $\mathcal{Z}_1$ and $\mathcal{Z}_2$ are hypersurfaces in $\Cee \times (\Cee^9-\{0\}) \times \Pee(1,1,2,3)$. The two different $\Cee^*$-actions on $\mathcal{Z}_0$ then lift to actions on $\mathcal{Z}_1$ and $\mathcal{Z}_2$ respectively: the trivial action on $\mathcal{Z}_0$  lifts to the action defined on points $(\tau, v, x,y,z,t)\in \mathcal{Z}_1$   by
$$s\cdot (\tau, v, x,y,z,t) =  (s^{-1}\tau, s\cdot v, x,y,z,t),$$
and the second action on $\mathcal{Z}_0$ lifts to 
the action defined on points $(\tau, v, x,y,z,t)\in \mathcal{Z}_2$  by
$$s\cdot (\tau, v, x,y,z,t) =  (s^{-1}\tau, s\cdot v, x,y,z,s^{-1}t).$$
All fibers of $\mathcal{Z}_2$ are generalized del Pezzo surfaces. For $\mathcal{Z}_1$, the fibers over $\Cee^* \times (\Cee^9-\{0\})$ are generalized del Pezzo surfaces but the fibers over the divisor 
$F^{-1}(0)$ are all isomorphic to the weighted cone over $E$. More precisely, $\mathcal{Z}_1$ is the pullback to $\Cee \times (\Cee^9-\{0\})$ via $F^*$ of the negative weight miniversal deformation  of the cone (or more precisely of the pair $(\overline{R},D)$). To compare the two families, we have the following:

\begin{lemma} Over $\Cee^* \times (\Cee^9-\{0\})$, there is a $\Cee^*$-equivariant isomorphism from $\mathcal{Z}_1|\Cee^* \times (\Cee^9-\{0\})$ to  $\mathcal{Z}_2|\Cee^* \times (\Cee^9-\{0\}$. More precisely, the family $\mathcal{Z}_1$ is obtained from the family $\mathcal{Z}_2$ as follows: Blow  up the smooth codimension $2$ subvariety  defined by $\tau = t =0$, which is isomorphic to $F^{-1}(0) \times E$, and then blow down the proper transform of $\mathcal{Z}_2|F^{-1}(0)$ along $F^{-1}(0)$. 
\end{lemma}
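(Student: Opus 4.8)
The plan is to first produce the isomorphism over $\tau\neq 0$ completely explicitly, and then to realize the asserted blow-up/blow-down as the resolution of the corresponding birational map, all through a local computation near the center $C=\{\tau=t=0\}$.

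For the first assertion I would consider the map $\Psi\colon \mathcal{Z}_1\to\mathcal{Z}_2$ over $\Cee^*\times(\Cee^9-\{0\})$ given on the weighted projective factor by $\Psi(\tau,v,[x:y:z:t])=(\tau,v,[x:y:z:\tau t])$. Since $t$ has weight one in $\Pee(1,1,2,3)$, rescaling $t$ by $\tau$ is a well-defined automorphism of $\Pee(1,1,2,3)$ for each fixed $\tau\neq 0$, so $\Psi$ is a morphism of the ambient bundles; substituting $(x,y,z,\tau t)$ into the equation of $\mathcal{Z}_2$ yields exactly the equation of $\mathcal{Z}_1$, so $\Psi$ carries $\mathcal{Z}_1$ isomorphically onto $\mathcal{Z}_2$, with inverse $t\mapsto \tau^{-1}t$. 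Comparing against the two $\Cee^*$-actions defined just above the lemma, one checks $\Psi(s\cdot_{\mathcal{Z}_1}p)=s\cdot_{\mathcal{Z}_2}\Psi(p)$, so $\Psi$ is $\Cee^*$-equivariant.

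For the second assertion, note first that $C=\{\tau=t=0\}$ is $\Cee^*$-invariant and that, since $\mathcal{Z}_2$ is smooth and the Cartier divisors $\{\tau=0\}=\mathcal{Z}_2|_{F^{-1}(0)}$ and $\{t=0\}=\mathcal{D}$ meet transversally along it, $C\cong F^{-1}(0)\times E$ is smooth of codimension two. Working in local analytic coordinates $(\tau,v,s,t)$ near a point of $C$, with $s$ a coordinate along $E$ and $t$ the defining section of $\mathcal{D}$, the blow-up $\mathcal{Z}_2'=\mathrm{Bl}_C\,\mathcal{Z}_2$ has two charts, $\tau=tu$ and $t=\tau w$, glued by $w=1/u$. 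A short computation identifies the fiber of $\mathcal{Z}_2'$ over $(0,v_0)\in F^{-1}(0)$ as the normal crossing surface $Z_{v_0}\amalg_E R$: the proper transform $\widetilde{D}_\tau$ of $\mathcal{Z}_2|_{F^{-1}(0)}$ (the locus $u=0$) recovers the del Pezzo surface $Z_{v_0}$, the exceptional divisor is the ruled surface $R=\Pee(\scrO_E\oplus N_{E/Z_{v_0}})$, and they meet along the negative section $E$. This is precisely the local model of the construction described at the start of the appendix. To contract $\widetilde{D}_\tau$ I would use the explicit $\Psi^{-1}\colon[x:y:z:t]\mapsto[x:y:z:\tau^{-1}t]$: in the chart $t=\tau w$ it becomes $[x:y:z:w]$, and substituting $t=\tau w$ into the equation of $\mathcal{Z}_2$ reproduces the equation of $\mathcal{Z}_1$ with $w$ in the role of $t$, so $\Psi^{-1}$ is an isomorphism onto $\mathcal{Z}_1$ there; in the chart $\tau=tu$ it equals $[u^2x:u^3y:uz:1]$, which sends $\widetilde{D}_\tau=\{u=0\}$ to the cone vertex $[0:0:0:1]$. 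Hence $\Psi^{-1}$ extends to a morphism $\mathcal{Z}_2'\to\mathcal{Z}_1$ contracting $\widetilde{D}_\tau$ fiberwise onto the vertices over $F^{-1}(0)$; since collapsing $Z_{v_0}$ drags the negative section $E\subseteq R$ to the vertex, the central fibers become the cones $\overline{R}$, matching $\mathcal{Z}_1$. This exhibits $\mathcal{Z}_2'\to\mathcal{Z}_1$ as the asserted blow-down, and $\Cee^*$-invariance of $C$ makes everything equivariant.

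The main obstacle will be the behavior of $\Pee(1,1,2,3)$ near the cone vertex $[0:0:0:1]$, where it carries a cyclic quotient singularity: $\Psi^{-1}$ is a priori only rational there, and the delicate point is to guarantee that the contraction of $\widetilde{D}_\tau$ exists as an honest \emph{morphism} rather than a mere set-theoretic map. The cleanest way to secure this is to define $\mathcal{Z}_2'$ intrinsically as the closure of the graph of $\Psi$ in $\mathcal{Z}_1\times_B\mathcal{Z}_2$ (with $B=\Cee\times(\Cee^9-\{0\})$) and to verify, via the two charts above, that the first projection is the blow-down and the second is the blow-up $\mathrm{Bl}_C\,\mathcal{Z}_2$; then contractibility is automatic from the projection and no separate contraction theorem is needed. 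The remaining verifications — smoothness of $\mathcal{Z}_2$ along $C$, transversality of the two divisors, reducedness and $d$-semistability of the fiber $Z_{v_0}\amalg_E R$, and the matching of the two $\Cee^*$-actions — are routine given the explicit equations.
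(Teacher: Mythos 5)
Your proposal is correct and follows essentially the paper's own proof: the paper defines the same rescaling isomorphism (in the inverse direction, $B(\tau,v,x,y,z,t)=(\tau,v,x,y,z,\tau^{-1}t)$) and resolves the resulting rational map $\mathcal{Z}_2\dasharrow\mathcal{Z}_1$ by the single blowup along $\{\tau=t=0\}$, reducing everything to the model map $(\tau,t)\mapsto(\tau,t/\tau)$ on $\Cee^2$, which is exactly what your two-chart computation spells out. One small correction: your ``main obstacle'' is spurious, because $t$ has weight $1$, so $[0:0:0:1]$ is a \emph{smooth} point of $\Pee(1,1,2,3)$ (it is the fiber of $\mathcal{Z}_0$, the weighted cone over $E$, that is singular there, not the ambient space), and your own chart formula $[u^2x:u^3y:uz:1]$ already lands in the smooth affine chart $\{t\neq 0\}\cong\Cee^3$, so the contraction is manifestly a morphism and the graph-closure device, while harmless, is unnecessary.
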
 
\begin{proof} Over $\Cee^* \times (\Cee^9-\{0\})\times \Pee(1,1,2,3)$, there is a $\Cee^*$-equivariant isomorphism   $$B\colon \mathcal{Z}_2|\Cee^* \times (\Cee^9-\{0\})\to  \mathcal{Z}_1|\Cee^* \times (\Cee^9-\{0\}$$  defined by
$$B(\tau, v,x,y,z,t) =  (\tau, v,x,y,z,\tau^{-1}t).$$
 The rational map $\mathcal{Z}_2 \dasharrow  \mathcal{Z}_1$ defined by $B$ is resolved by a single blowup along the smooth codimension $2$ subvariety   $\tau = t =0$ and the corresponding morphism is as described. (This is just a statement about the rational map $\Cee^2 \dasharrow \Cee^2$ defined by $(\tau, t) \mapsto (\tau, t/\tau)$.)
\end{proof}

As in the lemma, let $\overline{\mathcal{Z}}$ be the blow  up of $\mathcal{Z}_2$ along the smooth codimension $2$ subvariety    $\tau = t =0$. Then $\overline{\mathcal{Z}}$ dominates both $\mathcal{Z}_1$ and $\mathcal{Z}_2$, and the fibers of the morphism $\overline{\mathcal{Z}} \to \Cee \times (\Cee^9-\{0\}) $ are generalized del Pezzo surfaces away from $F^{-1}(0)$ and are $d$-semistable surfaces $Z\amalg_DR$ over $F^{-1}(0)$, except that $Z$ might have RDP singularities. Also, the $\Cee^*$-action on $\Cee \times (\Cee^9-\{0\}) $ lifts to a $\Cee^*$-action on $\overline{\mathcal{Z}}$.  However, there exist points for this $\Cee^*$-action with nontrivial finite isotropy. Thus we cannot simply take the quotient by $\Cee^*$. To remedy this situation, we take the  finite cover by  imposing  level structure, i.e.\ by considering marked generalized del Pezzo surfaces  $(Z,D,\mu, \psi)$ as defined in Definition~\ref{defmarked}. The moduli space of such quadruples is then $E\otimes_{\Zee}Q$ and there is a morphism  $E\otimes_{\Zee}Q \to \Pee(1, 2, \dots, 6)$.  Let   $\widetilde{\Cee}^9$ denote  as before the weighted blowup  of the affine space $\Cee^9$ at the origin. Then there is a morphism $\widetilde{\Cee}^9 \to \Pee(1, 2, \dots, 6)$ of analytic spaces. We can thus form the normalization $\widetilde{\mathscr{C}}$ of the fiber product $(E\otimes_{\Zee}Q)\times_{\Pee(1, 2, \dots, 6)}\widetilde{\Cee}^9$. Equivalently, the  $W$-linearized ample line bundle $\mathscr{L}$ on $E\otimes_\Zee Q$ constructed in \cite{Mer}   leads to a $\Cee^*$-equivariant morphism $\mathscr{C} -\{0\} \to \Cee^9-\{0\}$. There is also a $\Cee^*$-equivariant morphism
$$\Cee\times (\mathscr{C} -\{0\} ) \to \Cee \times (\Cee^9-\{0\}),$$
where $\Cee^*$ acts freely on $\Cee\times (\mathscr{C} -\{0\} ) =\Cee \times (\mathbb{V}(\mathscr{L})- \textrm{the zero section})$ and the quotient is the space $\widetilde{\mathscr{C}}=\mathbb{V}(\mathscr{L})$. The family $\overline{\mathcal{Z}}$ on $\Cee \times (\Cee^9-\{0\})$ pulls back to a family on $\Cee\times (\mathscr{C} -\{0\} )$ on which $\Cee^*$ acts freely. Taking the quotient gives a family, which we  denote  by $\overline{\mathscr{Z}}$, over $\widetilde{\mathscr{C}}$. Here, the family $\overline{\mathscr{Z}}$ is a simultaneous log resolution as before, except that the reducible fibers (i.e.\ those over the exceptional divisor) may have RDP singularities disjoint from the double locus, and likewise the irreducible fibers may have RDP singularities. Then the family $\widetilde{\mathscr{Z}}\to \widetilde{\mathscr{C}}$ constructed above is a simultaneous resolution of the RDP singularities appearing in the family $\overline{\mathscr{Z}} \to \widetilde{\mathscr{C}}$, so that every fiber is smooth or $d$-semistable. In particular $\widetilde{\mathscr{Z}}\to \widetilde{\mathscr{C}}$ is a simultaneous log resolution of the universal family of negative weight deformations $\mathcal{Z}_0 \to \Cee^9$. The picture is described by the following commutative diagram:

$$\xymatrix{
&\widetilde{\mathscr{Z}}  \ar[d]  \ar@/^/[drr] &{} &{}
  \\
&\overline{\mathscr{Z}} \ar[d]  \ar[rr]&{}    &\mathcal{Z}_0\ar[d]\\
& \widetilde{\mathscr{C}} \ar[r]  &{} \widetilde{\Cee}^9  \ar[r] &\Cee^9 }$$
where $\overline{\mathscr{Z}}$ dominates the pullback of $\mathcal{Z}_0$ to $\widetilde{\mathscr{C}}$ and $\widetilde{\mathscr{Z}}$ is a simultaneous resolution of $\overline{\mathscr{Z}}$. 

A similar discussion handles the case of a family $\mathcal{E}\to T$ (with some care because of the issue with the vanishing of $g_2$ or $g_3$). In particular, taking $\mathcal{E}\to S_{\text{\rm{es}}}$ to be the Kuranishi family gives a simultaneous log resolution of the universal family of all deformations of the simple elliptic singularity.

\begin{remark} In terms of stacks, there is a sequence of morphisms
$$E\otimes_\Zee Q \to [(E\otimes_\Zee Q)/W] \to [(\Cee^9-\{0\})/\Cee^*]$$
such that $[(E\otimes_\Zee Q)/W] \to [(\Cee^9-\{0\})/\Cee^*]$ induces  an isomorphism on coarse moduli spaces. Similar results hold for the weighted blowups and for the families we have constructed over them.  
(Compare \cite[\href{https://stacks.math.columbia.edu/tag/044U}{Tag 044U}]{stacks-project}.)  
\end{remark}

 \bibliography{Isurfaces}

\end{document}